\documentclass[11pt,colorlinks]{amsart}
\pdfoutput=1

\usepackage{hyperref}
\usepackage[obeyFinal,textsize=scriptsize,shadow,loadshadowlibrary]{todonotes}
\usepackage[shortlabels]{enumitem}
\usepackage{amsmath}
\usepackage{amssymb}
\usepackage{amsthm}
\usepackage{booktabs}
\usepackage[nameinlink]{cleveref}
\usepackage{derivative}
\usepackage{graphicx}
\usepackage{mathdots}
\usepackage{mathrsfs}
\usepackage{mathtools}
\usepackage{microtype}

\usepackage{asymptote}
\usepackage{tikz-cd}
\usetikzlibrary{decorations.pathmorphing}

\allowdisplaybreaks

\usepackage[backend=biber,backref=true,style=alphabetic]{biblatex}

\usepackage{thmtools}
\declaretheorem[name=Theorem,numberwithin=section]{theorem}
\declaretheorem[name=Lemma,sibling=theorem]{lemma}
\declaretheorem[name=Proposition,sibling=theorem]{proposition}
\declaretheorem[name=Corollary,sibling=theorem]{corollary}
\declaretheorem[name=Conjecture,sibling=theorem]{conjecture}
\declaretheorem[name=Hypothesis,sibling=theorem]{hypothesis}
\declaretheorem[name=Assumption,sibling=theorem]{assume}
\declaretheorem[name=Definition,sibling=theorem,style=definition]{definition}
\declaretheorem[name=Example,sibling=theorem,style=definition]{example}
\declaretheorem[name=Remark,sibling=theorem,style=definition]{remark}

\providecommand{\ol}{\overline}

\providecommand{\eps}{\varepsilon}
\providecommand{\half}{\frac{1}{2}}
\providecommand{\inv}{^{-1}}
\providecommand{\CC}{\mathbb C}
\providecommand{\FF}{\mathbb F}

\providecommand{\QQ}{\mathbb Q}

\providecommand{\ZZ}{\mathbb Z}
\providecommand{\ts}{\textsuperscript}

\providecommand{\ii}{\item}
\newcommand{\surjto}{\twoheadrightarrow}

\DeclareMathOperator{\BC}{BC}
\DeclareMathOperator{\End}{End}
\DeclareMathOperator{\Gal}{Gal}
\DeclareMathOperator{\GK}{GK}
\DeclareMathOperator{\GL}{GL}
\DeclareMathOperator{\Hom}{Hom}
\DeclareMathOperator{\Int}{Int}
\DeclareMathOperator{\Lie}{Lie}
\DeclareMathOperator{\Mat}{Mat}
\DeclareMathOperator{\Norm}{N}
\DeclareMathOperator{\Nm}{Nm}
\DeclareMathOperator{\Orb}{Orb}

\DeclareMathOperator{\PU}{PU}

\DeclareMathOperator{\SO}{SO}
\DeclareMathOperator{\Sat}{Sat}
\DeclareMathOperator{\Serre}{ST}
\DeclareMathOperator{\Spec}{Spec}
\DeclareMathOperator{\Spf}{Spf}
\DeclareMathOperator{\SU}{SU}
\DeclareMathOperator{\Sym}{Sym}
\DeclareMathOperator{\Tr}{Tr}
\DeclareMathOperator{\U}{U}
\DeclareMathOperator{\Vol}{Vol}

\DeclareMathOperator{\diag}{diag}
\DeclareMathOperator{\id}{id}
\DeclareMathOperator{\rproj}{proj}
\DeclareMathOperator{\tr}{tr}

\newcommand{\cc}{\mathbf c}
\newcommand{\nn}{\mathbf n}
\newcommand{\uu}{\mathbf u}
\newcommand{\vv}{\mathbf v}
\newcommand{\DD}{\mathcal D}
\newcommand{\HH}{\mathcal H}
\newcommand{\MM}{\mathcal M}
\newcommand{\EE}{\mathbb E}
\newcommand{\TT}{\mathbb T}
\newcommand{\VV}{\mathbb V}
\newcommand{\XX}{\mathbb X}

\newcommand{\RZ}{\mathcal N} 
\newcommand{\ZD}{\mathcal Z} 
\newcommand{\ZO}[1]{\mathcal Z^{\dagger}_{\SO(#1)}}
\newcommand{\DT}{\mathcal D^{\tr = 0}}
\newcommand{\ODT}{\OO_\DD^{\tr = 0}}

\newcommand{\guv}{{(\gamma, \uu, \vv^\top)}}
\newcommand{\oneV}{\mathbf{1}_{\OO_F^n \times (\OO_F^n)^\vee}}
\newcommand{\rs}{_{\mathrm{rs}}}
\newcommand{\TTr}{\mathcal{T}_{\mathbf{1}_{K, \le r}}}
\newcommand{\fr}{\mathbf{1}_{K, \le r}}

\newcommand{\OO}{O} 
\newcommand{\Sheaf}{\mathcal{O}} 

\newcommand{\jiao}{\mathop{\otimes}^{\mathbf{L}}} 

\usepackage{listings}
\lstset{basicstyle=\ttfamily,
  backgroundcolor=\color{yellow!3!white},
  breakatwhitespace=true,
  breaklines=true,
  columns=fullflexible,
  commentstyle=\color{red!70!black},
  frame=shadowbox,
  frame=single,
  framerule=1pt,
  identifierstyle=\color{green!40!black},
  keywordstyle=\bfseries,
  keywordstyle=\bfseries\color{blue!80!black},
  numbers=left,
  numbersep=8pt,
  numberstyle=\scriptsize\sffamily\itshape\color{black!80},
  rulecolor=\color{blue!70!black},
  rulesepcolor=\color{blue!30!black},
  showstringspaces=false,
  stringstyle=\color{orange},
  tabsize=4,
  xleftmargin=15pt,
  xrightmargin=15pt,
  resetmargins=true,
}

\hypersetup{pdfauthor={Evan Chen},pdftitle={Semi-Lie arithmetic fundamental lemma for the full spherical Hecke algebra}}

\usepackage{amsaddr}
\addbibresource{refs.bib}

\title[Semi-Lie Hecke AFL]
{Semi-Lie arithmetic fundamental lemma for the full spherical Hecke algebra}

\author{Evan Chen}
\date{\today}
\address{Department of Mathematics, Massachusetts Institute of Technology}
\email{evanchen@alum.mit.edu}
\subjclass[2010]{11G18, 11F70}
\keywords{arithmetic fundamental lemma}

\begin{document}

\begin{abstract}
  As an analog to the Jacquet-Rallis fundamental lemma that appears in the
  relative trace formula approach to the Gan-Gross-Prasad conjectures,
  the arithmetic fundamental lemma was proposed by Wei Zhang and used in an approach
  to the arithmetic Gan-Gross-Prasad conjectures.
  The Jacquet-Rallis fundamental lemma was recently generalized by Spencer Leslie
  to a statement holding for the full spherical Hecke algebra.
  In the same spirit, Li, Rapoport, and Zhang
  have recently formulated a conjectural generalization of the arithmetic
  fundamental lemma to the full spherical Hecke algebra.
  This paper formulates another analogous conjecture for the semi-Lie version
  of the arithmetic fundamental lemma proposed by Yifeng Liu.
  Then this paper produces explicit formulas for particular cases
  of the weighted orbital integrals in the two conjectures mentioned above,
  and proves the first non-trivial case of the conjecture.
\end{abstract}

\maketitle

\tableofcontents
\newpage

\section{Introduction}
Throughout this whole paper, $p > 2$ is a prime,
$F$ is a finite extension of $\QQ_p$,
and $E/F$ is an unramified quadratic field extension.

\subsection{Brief history and motivation for the arithmetic fundamental lemma}
The primary motivation for this paper arises from
the study of conjectured variants of the arithmetic fundamental lemma
for spherical Hecke algebras proposed in \cite{ref:AFLspherical}.
This section briefly provides an overview of the historical context
that led to the formulation of these conjectures.
This history is also summarized in \Cref{fig:history}.

Because this subsection is meant for motivation only, in this survey we do not give
complete definitions or statements, being content to outline a brief gist.
A more detailed account can be found in \cite{ref:survey}.

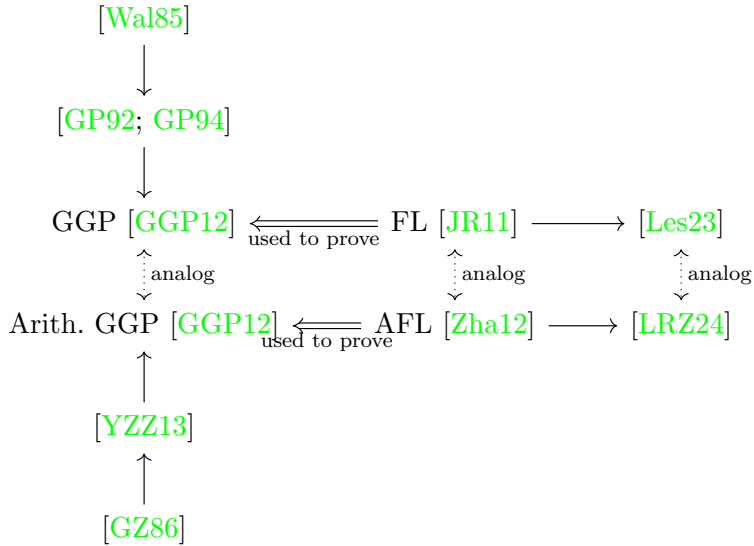
\begin{figure}[ht]
  \centering
  \begin{tikzcd}
      \text{\cite{ref:waldspurger}} \ar[d] \\
      \text{\cite{ref:GP1,ref:GP2}} \ar[d] \\
    \text{GGP \cite{ref:GGP}}
      \ar[d, dotted, leftrightarrow, "\text{analog}"]
      & \ar[l, Rightarrow, "\text{used to prove}"] \text{FL \cite{ref:JR}}
        \ar[d, dotted, leftrightarrow, "\text{analog}"] \ar[r]
      & \text{\cite{ref:leslie}}
        \ar[d, dotted, leftrightarrow, "\text{analog}"] \\
    \text{Arith.\ GGP \cite{ref:GGP}}
      & \ar[l, Rightarrow, "\text{used to prove}"] \text{AFL \cite{ref:AFL}} \ar[r]
      & \text{\cite{ref:AFLspherical}} \\
    \text{\cite{ref:GZshimura}} \ar[u] \\
    \text{\cite{ref:gross_zagier}} \ar[u]
  \end{tikzcd}
  \caption{The history behind the fundamental lemma and its arithmetic counterpart.
    Unlabeled arrows denote generalizations.}
  \label{fig:history}
\end{figure}

\subsubsection{The GGP conjectures, and the fundamental lemma of Jacquet-Rallis}
In modern arithmetic geometry, a common theme is that there are deep connections
between geometric data with the values of related $L$-functions.

This story begins with a result of
Waldspurger \cite{ref:waldspurger} which showed a formula
relating the nonvanishing of an automorphic period integral
to the central value of the same $L$-functions.
Later, a conjecture that generalizes Waldspurger's formula
was proposed by Gross-Prasad in \cite{ref:GP1,ref:GP2}.
This was further generalized to a series of conjectures
now known as the Gan-Gross-Prasad (GGP) conjectures,
which were proposed in 2012 in \cite{ref:GGP};
they generalize the Gross-Prasad conjecture to different classical groups.
Specifically, the GGP conjecture predict the nonvanishing of a period integral
based on the values of the $L$-function of a certain cuspidal automorphic representation.

In 2011, Jacquet-Rallis \cite{ref:JR} proposed an approach to the Gross-Prasad conjectures
for unitary groups via a relative trace formula (RTF).
The idea is to compare an RTF for the general linear group to one for a unitary group.
This approach relies on a so-called \emph{fundamental lemma},
which links values of certain orbital integrals
over two reductive groups over a non-Archimedean local field.

Let's be a bit more precise about what this fundamental lemma says.
Let $\VV_n^+$ denote the split $E/F$-Hermitian space of dimension $n$ (unique up to isomorphism),
fix a unit vector $w_0$ in it,
and let $(\VV_n^+)^\flat$ be the orthogonal complement of the span of $w_0$.
Let $(G')^\flat \coloneqq \GL_{n-1}(E)$, $G' \coloneqq \GL_n(E)$,
$G^\flat \coloneqq \U((\VV_n^+)^\flat)(F)$ and $G \coloneqq \U(\VV_n^+)(F)$.
For certain
\[ \gamma \in (G')^\flat \times G', \qquad g \in G^\flat \times G \]
the Jacquet-Rallis fundamental lemma proposes a relation between two orbital integrals.
Specifically, it supplies a relation between
\begin{itemize}
\item the orbital integral of $\gamma$ with respect to
  the indicator function $\mathbf{1}_{(K')^\flat \times K'}$
  of the natural hyperspecial compact subgroup
  \[ (K')^\flat \times K' \subset (G')^\flat \times G' = \GL_{n-1}(E) \times \GL_n(E); \]
  and
\item the orbital integral of $g$ with respect to
  the indicator function $\mathbf{1}_{K^\flat \times K}$
  of the natural hyperspecial compact subgroup
  \[ K^\flat \times K \subset G^\flat \times G = \U((\VV_n^+)^\flat)(F) \times \U(\VV_n^+)(F). \]
\end{itemize}
In other words, it states that
\begin{equation}
  \Orb(\gamma, \mathbf{1}_{(K')^\flat \times K'}) = \omega(\gamma) \Orb(g, \mathbf{1}_{K^\flat \times K})
  \label{eq:old_FL}
\end{equation}
where $\omega(\gamma)$ is a suitable \emph{transfer factor}.
The fundamental lemma has since been proved completely;
a local proof was given by Beuzart-Plessis \cite{ref:BeuzartPlessis}
while a global proof was given for large characteristic by W.\ Zhang \cite{ref:Wei2021}.

\subsubsection{The arithmetic GGP conjectures, and the arithmetic fundamental lemma}
At around the same time Waldspurger's formula was published,
Gross-Zagier \cite{ref:gross_zagier} proved a formula
relating the height of Heegner points
on certain modular curves to the derivative at $s=1$ of certain $L$-functions.
The Gross-Zagier formula was then generalized over several decades,
culminating in \cite{ref:GZshimura} where the formula is established
for Shimura curves over arbitrary totally real fields.

An arithmetic analogue of the original Gan-Gross-Prasad conjectures,
which we henceforth refer to as \emph{arithmetic GGP} \cite{ref:GGP},
can then be formulated, further generalizing Gross-Zagier's formula.
Here the modular curves in Gross-Zagier
are replaced with higher dimensional Shimura varieties.
Rather than the period integrals considered previously,
one instead takes intersection numbers of cycles on some Shimura varieties.
Specifically, if one considers the Shimura variety associated to a classical group,
the arithmetic GGP conjecture predicts a relation between intersection numbers
on this Simura variety with the central derivative of automorphic $L$-functions.

By analogy to the work Jacquet-Rallis \cite{ref:JR},
the arithmetic GGP conjectures should have a corresponding
\emph{arithmetic fundamental lemma} (henceforth AFL),
which was proposed by W.\ Zhang \cite[Conjecture 2.9]{ref:AFL}.
The arithmetic fundamental lemma then relates the derivative
of the weighted orbital integral with respect to the indicator function
$\mathbf{1}_{(K')^\flat \times K'} \in \HH((G')^\flat \times G, (K')^\flat \times K')$, that is
\[ \left. \pdv{}{s} \right\rvert_{s=0} \Orb(\gamma, \mathbf{1}_{(K')^\flat \times K'}, s) \]
for $\gamma \in (G')^\flat \times G'$,
to arithmetic intersection numbers on a certain Rapoport-Zink formal moduli space.
The AFL in \cite{ref:AFL} has since been proven over $p$-adic fields for any large
prime $p$ in Mihatsch-Zhang \cite{ref:MZ2021}, W.\ Zhang \cite{ref:Wei2021},
and then for any odd prime $p$ by Z.\ Zhang \cite{ref:Zhiyu}.

\subsubsection{The semi-Lie version of the AFL proposed by Liu}
There is another different version of the AFL, proposed by Yifeng Liu in
\cite[Conjecture 1.12]{ref:liuFJ},
which is often referred to as the \emph{semi-Lie version} of the AFL.
Its statement has been shown to be equivalent to AFL,
see \cite[Remark 1.13]{ref:liuFJ} and is thus now proven.
In contrast, the original AFL proposed by Zhang in \cite[Conjecture 2.9]{ref:AFL}
is sometimes referred to as the \emph{group version}.

A more detailed account of this equivalence is described in \cite[\S1.4]{ref:liuFJ}.

\subsubsection{Generalizations of FL and AFL to the full spherical Hecke algebra}
Recently it was shown by Leslie \cite{ref:leslie} that in fact
\eqref{eq:old_FL} holds in greater generality where the indicator function
$\mathbf{1}_{K^\flat \times K}$ can be replaced by any element in the spherical
Hecke algebra $\varphi \in \HH((G')^\flat \times G', (K')^\flat \times K')$.
In that case, $\mathbf{1}_{(K')^\flat \times K}$ needs to be replaced
by the corresponding element $\varphi'$ under a certain base change homomorphism
\begin{align*}
  \HH((G')^\flat \times G', (K')^\flat \times K') &\to \HH(G^\flat \times G, K^\flat \times K) \\
  \varphi' &\mapsto \varphi
\end{align*}

In that case, the identity \eqref{eq:old_FL} still hold as
\begin{equation}
  \Orb(\gamma, \varphi') = \omega(\gamma) \Orb(g, \varphi).
  \label{eq:eq:leslie_FL}
\end{equation}
To complete the analogy illustrated in \Cref{fig:history},
there should thus be a generalization of the AFL in which
$\mathbf{1}_{(K')^\flat \times K}$ is replaced by any element of the Hecke algebra
$\HH((G')^\flat \times G, (K')^\flat \times K)$.
This conjectural generalization
(for the group version of the AFL) is \cite{ref:AFLspherical}
and we discuss it momentarily;
while the analogous conjectural generalization for the semi-Lie version of the AFL
is our \Cref{conj:semi_lie_spherical} also discussed in the next section.

\subsection{Formulation of AFL conjectures to the full spherical Hecke algebra}
\subsubsection{The inhomogeneous version of the arithmetic fundamental lemma for spherical Hecke algebras proposed by Li-Rapoport-Zhang}
In contrast to the vague motivational cheerleading in the previous section,
starting in this section we will give more precise statements,
even though we will necessarily need to reference definitions appearing in later sections.

Retain the notation $G' \coloneqq \GL_n(E)$, and $G \coloneqq \U(\VV_n^+)(F)$,
with $K' \subset G'$ and $K \subset G$ the natural hyperspecial compact subgroups.
Also, let $q$ denote the residue characteristic of $F$.
Moreover, define the symmetric space
\[ S_n(F) \coloneqq \left\{ g \in \GL_n(E) \mid g \bar{g} = \id_n \right\}. \]
Finally, let $\VV_n^-$ be the non-split Hermitian space of dimension $n$
(unique up to isomorphism),
while $\VV_n^+$ continues to denote the split one (again unique up to isomorphism).

For concreteness, we focus on the inhomogeneous version
of the arithmetic fundamental lemma, which is \cite[Conjecture 6.2.1]{ref:AFLspherical}.
This allows us to deal with just $G'$ instead of $(G')^\flat \times G'$, etc.,
so that the Hecke algebra $\HH((G')^\flat \times G', (K')^\flat \times K')$
can be replaced by the simpler one $\HH(\GL_n(E)) \coloneqq \HH(G', K')$.
Similarly, $\HH(G^\flat \times G, K^\flat \times K)$
can be replaced by the simpler $\HH(\U(\VV_n^+)) \coloneqq \HH(G, K)$.

The AFL conjecture provides a bridge between a geometric left-hand side
(given by an intersection number)
and an analytic right-hand side (given by a weighted orbital integral).
Stating it requires several pieces of data.
We only mention these pieces by name here, with definitions given later:
\begin{itemize}
  \ii On the geometric side, we have an intersection number.
  It uses the following ingredients.
  \begin{itemize}
    \ii We choose a regular semisimple element $g \in \U(\VV_n^-)\rs$.
    (The notation $\U(\VV_n^-)\rs$ denotes the regular semisimple elements of $\U(\VV_n^-)$, etc.
    The notion of regular semisimple is defined in \Cref{def:regular}.)
    \ii We choose a function $f \in \HH(\U(\VV_n^+))$ from the spherical Hecke algebra,
    defined in \Cref{ch:background}.
    \ii We define a certain \emph{intersection number} $\Int(g, f)$
    in \Cref{def:intersection_number_inhomog}.
    These intersection numbers take place in a Rapoport-Zink space
    described in \Cref{ch:geo}.
  \end{itemize}

  \ii On the analytic side, we have a weighted orbital integral.
  It uses the following ingredients.
  \begin{itemize}
    \ii We choose a regular semisimple element $\gamma \in S_n(F)\rs$.
    \ii We choose a test function $\phi$ which comes
    from a certain $\HH(\GL_n(E))$-module that we will denote $\HH(S_n(F))$.
    This module $\HH(S_n(F))$ is defined in \Cref{ch:background}.
    \ii The weighted orbital integral $\Orb(\gamma, \phi, s)$
    is itself defined in \Cref{def:orbital0}.
    (It is connected to an unweighted orbital integral on the unitary group
    according to \Cref{thm:rel_fundamental_lemma}.)
    We abbreviate
    \[ \partial \Orb(\gamma, \phi)
      \coloneqq \left. \pdv{}{s} \right\rvert_{s=0} \Orb(\gamma, \phi, s). \]
    \ii There is also an extra transfer factor $\omega \in \{\pm1\}$
    which we define in \Cref{ch:transf}.
  \end{itemize}

  \ii We need a way to connect the inputs between the two parts of our conjecture.
  Specifically, $f$ and $\phi$ need to be linked, and $g$ and $\gamma$ need to be linked.
  This is done as follows.
  \begin{itemize}
    \ii Once the regular semisimple element $g \in \U(\VV_n^-)\rs$ is chosen,
    we require $\gamma \in S_n(F)\rs$ to be a \emph{matching} element.
    This matching is defined in \Cref{def:matching_inhomog}.
    (Alternatively, one could imagine picking $\gamma \in S_n(F)\rs$ first
    and finding corresponding $g$.
    It turns out $\gamma$ will match an element of $\U(\VV_n^\pm)\rs$ in general,
    and the conjecture is only formulated in the case where $g \in \U(\VV_n^-)$.)

    \ii Once $f \in \HH(\U(\VV_n^+))$ is chosen, we select
    \[ \phi = (\BC_{S_n}^{\eta^{n-1}})^{-1}(f) \]
    to be the image of $f$ under a \emph{base change}.
    This base change is defined and then calculated explicitly for $n = 3$ in \Cref{ch:satake}.
  \end{itemize}
\end{itemize}
With all our protagonists now having names and references,
we can now state the conjecture proposed in \cite{ref:AFLspherical}.

\begin{conjecture}
  [Inhomogeneous version of the AFL for the full spherical Hecke algebra,
  {\cite[Conjecture 6.2.1]{ref:AFLspherical}}]
  \label{conj:inhomog_spherical}
  Let $f \in \HH(\U(\VV_n^+))$ be any element of the Hecke algebra, and let
  \[ \phi = (\BC_{S_n}^{\eta^{n-1}})^{-1}(f) \in \HH(S_n(F)) \]
  be its image under base change as defined in \Cref{ch:satake}.
  Then for matching (as defined in \Cref{def:matching_inhomog}) regular semisimple elements
  \[ g \in \U(\VV_n^-)\rs \longleftrightarrow \gamma \in S_n(F)\rs \]
  we have
  \begin{equation}
    \Int\left( (1,g), \mathbf{1}_{K^\flat} \otimes f \right)
    = -\frac{\omega(\gamma)}{\log q} \partial \Orb(\gamma, \phi)
    \label{eq:inhomog}
  \end{equation}
  where the weighted orbital integral $\Orb(\dots)$ is defined in \Cref{def:orbital0},
  the transfer factor $\omega$ is defined in \Cref{ch:transf},
  and the intersection number $\Int(\dots)$ is defined in \Cref{ch:geo}.
\end{conjecture}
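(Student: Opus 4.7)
The approach is to establish the first non-trivial case of the stated conjecture rather than the full statement. The case $f = \mathbf{1}_K$ recovers the classical AFL, which has been proven in \cite{ref:MZ2021, ref:Wei2021, ref:Zhiyu}, so a first genuinely new case is $n = 3$ together with $f$ chosen to be a non-unit generator of the spherical Hecke algebra $\HH(\U(\VV_3^+))$ produced by the Satake isomorphism. The first step is to use the explicit base change formula for $n = 3$ derived in \Cref{ch:satake} to write $\phi = (\BC_{S_3}^{\eta^{2}})^{-1}(f) \in \HH(S_3(F))$ as a concrete finite linear combination of characteristic functions of double cosets in $S_3(F)$. This decomposition breaks the analytic side of \eqref{eq:inhomog} into a manageable finite sum.

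For the analytic side, I would parameterize the orbit of a regular semisimple $\gamma \in S_3(F)\rs$ by the standard invariants of its characteristic polynomial together with appropriate valuations, compute each $\Orb(\gamma, \phi, s)$ on the relevant stratum as a rational function in $q^s$, and differentiate at $s = 0$ before multiplying by $-\omega(\gamma)/\log q$. The expected output is a closed form polynomial in the valuations of the invariants of $\gamma$ in which the unit-case formula serves as the leading term and the extra contributions encode the additional support of $f$.

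For the geometric side, I would compute $\Int((1, g), \mathbf{1}_{K^\flat} \otimes f)$ on the Rapoport-Zink moduli space of \Cref{ch:geo} by realizing $f$ as a finite algebraic Hecke correspondence. The intersection number then becomes a weighted sum, indexed by compatible lattice chains (equivalently, isogeny classes of formal $\Sheaf_E$-modules) attached to $g$, which can be stratified by lattice type and evaluated using special-cycle techniques in the style of Kudla-Rapoport. The final step is a term-by-term comparison with the closed form obtained on the analytic side.

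The main obstacle will be the geometric side. In contrast to the unit case, a non-trivial Hecke correspondence can produce intersections of unexpected dimension on the Rapoport-Zink space, so the computation may require either a derived intersection formalism or an explicit, possibly desingularized, description of the intersection scheme. Even once both sides are in closed form, matching them is unlikely to be term-by-term obvious: I expect the proof to hinge on a non-trivial combinatorial identity, and a significant part of the work will be organizing the two expansions so that such an identity becomes visible.
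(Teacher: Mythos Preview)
The statement you are addressing is a \emph{conjecture} in this paper, not a theorem: the paper does not contain a proof of \Cref{conj:inhomog_spherical} for any $n$, and in particular not for $n=3$. The $n=2$ case is already settled in \cite[Theorem 7.5.1]{ref:AFLspherical}, and the present paper's contribution is entirely on the semi-Lie side (\Cref{conj:semi_lie_spherical}), which it proves for $n=2$ as \Cref{thm:semi_lie_n_equals_2}. The paper explicitly notes in the Acknowledgments that some additional calculations related to the $n=3$ case of \Cref{conj:inhomog_spherical} appear in the author's thesis \cite{ref:evans_thesis} but were not included here, and even those are not claimed to constitute a full proof. So there is no ``paper's own proof'' to compare your proposal against.

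That said, your outline does mirror the strategy the paper uses for the semi-Lie case at $n=2$: explicit base change (the paper computes $\BC_{S_3}$ in \Cref{prop:BC_S3} precisely for this kind of application), explicit orbital integral formulas, and an explicit geometric computation followed by matching. The genuine obstacle you identify on the geometric side is real and is exactly why the paper stops where it does. For the semi-Lie $n=2$ case the paper exploits two special features unavailable at $n=3$: the identification $\RZ_2 \simeq \MM_2$ with Lubin--Tate space via the Serre tensor construction, and the Gross--Keating formula (\Cref{prop:GK}) for triple intersections on $\MM_2 \times \MM_2$. Neither tool has an evident analog for $\RZ_3$, so your proposal, while a sensible research plan, would require substantially new geometric input beyond what is developed in this paper.
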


At present, the (inhomogeneous) AFL is the case where $f = \mathbf{1}_K$,
and is thus proven.
Note that in the case of interest where $\gamma \in S_n(F)\rs$
matches an element of $\U(\VV_n^-)\rs$ (rather than $\U(\VV_n^+)\rs$),
the actual value of $\Orb(\gamma, \phi, s)$ at $s = 0$ is always zero
by \Cref{thm:rel_fundamental_lemma};
so the conjecture instead looks at the first derivative at $s = 0$.

The generalized conjecture is also proved in full for
$n = 2$ in \cite[Theorem 1.0.1]{ref:AFLspherical}
(in that reference, our $n$ denotes the $n+1$ in \emph{loc.\ cit.}).
The part of the calculation involving the weighted orbital integral has two parts:
\begin{itemize}
  \ii The calculation makes $\BC_{S_{n}}^{\eta^{n-1}}$
  completely explicit in a natural basis for $n = 2$.
  The result is \cite[Lemma 7.1.1]{ref:AFLspherical}.

  \ii The calculation makes explicit the value of the weighted orbital integral
  \[ \Orb(\gamma, \phi, s) \]
  for any $\gamma \in S_n(F)\rs$ and $\phi \in \HH(S_n(F))$,
  in terms of invariants of $\gamma$ and a decomposition of $\phi$ in a natural basis.
  The result is \cite[Proposition 7.3.2]{ref:AFLspherical}.
\end{itemize}
Combining these two (hence obtaining the right-hand side of \eqref{eq:inhomog})
with a calculation of intersection numbers in \cite[Corollary 7.4.3]{ref:AFLspherical}
(which is the left-hand side of \eqref{eq:inhomog})
shows that \Cref{conj:inhomog_spherical} holds for $n = 2$,
cf.\ \cite[Theorem 7.5.1]{ref:AFLspherical}.

\subsubsection{A proposed arithmetic fundamental lemma for spherical Hecke algebras in the semi-Lie case}
The primary focus of this paper is an analogous conjecture to \Cref{conj:inhomog_spherical}
for the semi-Lie version (also called the Fourier-Jacobi case).
It serves to complete the analogy given in \Cref{tab:semi_lie_analogy}.

\begin{table}[ht]
  \centering
  \begin{tabular}{lll}
    \toprule
    Version & AFL for $\mathbf{1}$ (now proven) & Full spherical Hecke \\
    \midrule
    Group & \cite[Conjecture 2.9]{ref:AFL} & \cite[Conjecture 6.2.1]{ref:AFLspherical} \\
    Semi-Lie & \cite[Conjecture 1.12]{ref:liuFJ} & \Cref{conj:semi_lie_spherical} \\
    \bottomrule
  \end{tabular}
  \caption{Table showing the analogy between the proposed
    \Cref{conj:semi_lie_spherical} and the existing conjectures.}
  \label{tab:semi_lie_analogy}
\end{table}

In this variation, as in \cite{ref:liuFJ},
rather than matching $g \in \U(\VV_n^-)\rs$ to $\gamma \in S_n(F)\rs$,
we consider an augmented space larger than $\U(\VV_n^-)$ and $S_n(F)$.
Specifically, one considers a matching between tuples of regular semisimple elements
\[ (g, u) \in (\U(\VV_n^-) \times \VV_n^-)\rs
  \longleftrightarrow (\gamma, \uu, \vv^\top) \in (S_n(F) \times V'_n(F))\rs \]
where $V'_n(F) = F^n \times (F^n)^\vee$ (defined in \Cref{def:matching_semi_lie})
consists of pairs of column vectors and row vectors of length $n$,
and the space $\VV_n^-$ is defined in \Cref{def:VV_n_nonsplit}.
The notion of \emph{matching} is defined in \Cref{def:matching_semi_lie} as well.

Meanwhile, we still use the same test functions $f$ and $\phi$,
as we did for \cite[Conjecture 6.2.1]{ref:AFLspherical}.
The derivative of interest will be denoted
\[ \partial \Orb(\guv, \phi) \coloneqq
  \left. \pdv{}{s} \right\rvert_{s=0}
  \Orb((\gamma, \uu, \vv^\top), \phi \otimes \oneV, s). \]
Finally, we also update the definition of intersection number
to accommodate the new term $u$ in \Cref{def:intersection_number_semi_lie_spherical}.
\begin{conjecture}
  [Semi-Lie version of the AFL for the full spherical Hecke algebra]
  Let $f \in \HH(\U(\VV_n^+))$ be any element of the Hecke algebra, and let
  \[ \phi = (\BC_{S_n}^{\eta^{n-1}})^{-1}(f) \in \HH(S_n(F)) \]
  be its image under base change defined in \Cref{ch:satake}.
  Then for matching (as defined in \Cref{def:matching_semi_lie}) regular semisimple elements
  \[ (g, u) \in (\U(\VV_n^-) \times \VV_n^-)\rs \longleftrightarrow
    \guv \in (S_n(F) \times V'_n(F))\rs \]
  we have
  \begin{equation}
    \Int\left( (g,u), f \right) = -\frac{\omega\guv}{\log q} \partial \Orb(\guv, \phi \otimes \oneV)
  \end{equation}
  where the orbital integral $\Orb(\dots)$ is defined in \Cref{def:orbitalFJ},
  the transfer factor is defined in \Cref{ch:transf},
  and the intersection number $\Int(\dots)$ is defined in \Cref{ch:geo}.
  \label{conj:semi_lie_spherical}
\end{conjecture}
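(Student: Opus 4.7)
The plan is to follow the blueprint set by the $n=2$ proof of the inhomogeneous group version \cite[Theorem 7.5.1]{ref:AFLspherical}, adapting each ingredient to the semi-Lie setting where the test data is augmented by the vectors $u$ and $(\uu, \vv^\top)$. Since the full conjecture is out of reach in a single paper, I would first restrict attention to the first non-trivial case: fix $n$ small (the natural target is $n=3$, since $n=2$ together with $f = \mathbf{1}_K$ is essentially subsumed by the already-known semi-Lie AFL \cite{ref:liuFJ}) and pick $f$ to be a distinguished basis element of $\HH(\U(\VV_n^+))$ beyond the unit. Reducing in this way trades generality for the hope that both sides of the proposed identity can be made completely explicit.

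On the analytic side, I would first make the base change homomorphism $(\BC_{S_n}^{\eta^{n-1}})^{-1}$ explicit for $n = 3$ in a natural basis of $\HH(S_n(F))$, paralleling \cite[Lemma 7.1.1]{ref:AFLspherical}; this is a Satake-theoretic calculation using the constructions of \Cref{ch:satake}. Next, I would produce a closed-form formula for $\Orb(\guv, \phi \otimes \oneV, s)$ as a function of the invariants of $\guv$ and of the coordinates of $\phi$ in the chosen basis. The semi-Lie orbital integral introduces the extra integration over $V'_n(F) = F^n \times (F^n)^\vee$ through the factor $\oneV$, so the computation should reduce to the group-version integral \cite[Proposition 7.3.2]{ref:AFLspherical} multiplied or convolved by an elementary lattice-counting factor coming from $\uu$ and $\vv^\top$. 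Differentiating at $s=0$ then gives $\partial\Orb(\guv, \phi)$ as an explicit polynomial (with logarithmic terms) in the invariants.

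On the geometric side, I would compute the intersection number $\Int((g,u), f)$ on the Rapoport-Zink space of \Cref{ch:geo} using the updated definition \Cref{def:intersection_number_semi_lie_spherical}. The extra vector $u \in \VV_n^-$ contributes a special divisor $\mathcal Z(u)$ in addition to the Hecke correspondence attached to $f$, so one is computing an arithmetic intersection of the form $\mathcal Z(u) \cdot \mathbb T_f \cdot \Delta_g$ (with $\Delta_g$ the graph of the action of $g$). For the chosen basis element $f$ one should be able to decompose $\mathbb T_f$ into Hecke translates of the structure sheaf supported on explicit sub-loci, reducing the problem to computing lengths of local rings at isolated quasi-canonical points; this is where the techniques for semi-Lie AFL from \cite{ref:liuFJ} together with the spherical-Hecke analysis of \cite{ref:AFLspherical} should feed directly.

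The hard part will be matching the two sides through the transfer factor $\omega\guv$, and it is genuinely the geometric computation that I expect to be the main obstacle. Even for $n=3$, a single non-unit basis Hecke operator already produces a nontrivial formal scheme-theoretic support for $\mathbb T_f$, and controlling the arithmetic intersection with $\mathcal Z(u)$ requires analyzing deformations of $p$-divisible groups carrying the additional vector $u$. A secondary difficulty is purely bookkeeping: one must check that the transfer factor appearing in \Cref{ch:transf} really interpolates correctly between the analytic and geometric sides for the whole orbit of $\guv$, so that the sign in $-\omega\guv/\log q$ is consistent across components of the Rapoport-Zink space. My expectation is that once both sides are reduced to explicit piecewise-polynomial expressions in the invariants, the identity becomes a finite combinatorial check; producing those explicit expressions is essentially the content of the paper.
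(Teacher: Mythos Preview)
Your proposal has a genuine strategic gap: you target $n=3$, dismissing $n=2$ as ``essentially subsumed'' by \cite{ref:liuFJ}. This is a misjudgment. Liu's result covers only $f = \mathbf{1}_K$, and the full Hecke algebra case for $n=2$ is both new and tractable precisely because of tools that are unavailable for $n=3$. The paper proves the conjecture for $n=2$ and all $f$; there is no $n=3$ result.

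Two specific expectations in your plan would not pan out. First, on the analytic side you anticipate that the semi-Lie orbital integral ``should reduce to the group-version integral \ldots\ multiplied or convolved by an elementary lattice-counting factor.'' It does not: the integration is over $\GL_n(F)$ (not $\GL_{n-1}(F)$), the vector conditions $h\uu, \vv^\top h^{-1} \in \OO_F^n$ interact nontrivially with the quadratic constraint coming from $h\gamma h^{-1}$, and the resulting formula (\Cref{thm:semi_lie_formula}) has a genuinely different shape from \cite[Proposition 7.3.2]{ref:AFLspherical}. The computation is done from scratch via Iwasawa decomposition and a case analysis on a completed-square parameter. Second, on the geometric side the decisive input is specific to $n=2$: the Serre tensor construction identifies $\RZ_2$ with the Lubin-Tate space $\MM_2$, the Hecke correspondence $\TTr$ becomes an explicit orthogonal divisor $\ZO4(\varpi^r)$, and the triple intersection $\langle \ZO4(1), \ZO4(\bar u\sqrt\eps u), \ZO4((g\cdot\varpi^r)_-)\rangle$ is then evaluated by the Gross-Keating formula. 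None of this machinery is available for $n=3$; your proposed ``decompose $\mathbb T_f$ into Hecke translates and compute lengths at quasi-canonical points'' would require entirely new geometric input.

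The actual matching step is also more delicate than ``a finite combinatorial check'': the paper first proves that a single Gross-Keating value equals a \emph{sum} of two consecutive $\partial\Orb$ values (\Cref{thm:miracle}), then telescopes both in $v(e)$ and in $r$, and finally feeds this through the base change identity $\BC_S^\eta(\mathbf{1}_{K'_{S,\le r}} + \mathbf{1}_{K'_{S,\le(r-1)}}) = (-1)^r \mathbf{1}_{K,r}$ to recover the conjecture inductively from the known $r=0$ case.
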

Note that in this version the new orbital integral $\Orb(\guv, \phi \otimes \oneV, s)$
is defined similarly.
However, as far as we know, no analog of \Cref{thm:rel_fundamental_lemma}
(linking it to an unweighted orbital integral on the unitary side) appears in the literature.
Thus we record the corresponding statement as \Cref{conj:rel_fundamental_lemma_semilie}.
Like before, \Cref{conj:rel_fundamental_lemma_semilie}
predicts that $\Orb(\guv, \phi \otimes \oneV, 0) = 0$ in the case of interest,
which in this case can be checked independently.

\begin{remark}
  For $n = 1$, the Hecke algebra $\HH(S_n(F))$ is trivial
  and therefore \Cref{conj:semi_lie_spherical}
  becomes a special case of the known result \cite{ref:liuFJ}.
  Therefore $n=2$ is the first case of \Cref{conj:semi_lie_spherical} worth examining.
\end{remark}

\subsubsection{A proposed large image conjecture}
\label{sec:intro_large_kernel}

We let $S_n(F)\rs^-$ (resp.\ $(S_n(F) \times V'_n(F))\rs^-$)
denote the subsets of $S_n(F)\rs$ (resp.\ $(S_n(F) \times V'_n(F))\rs$)
consisting of regular semisimple elements that also match with an element of
$\U(\VV_n^-)\rs$ (resp.\ $(\U(\VV_n^-) \times \VV_n^-)\rs$),
i.e.\ those for which \Cref{conj:inhomog_spherical}
and \Cref{conj:semi_lie_spherical} apply.

In \cite{ref:AFLspherical}, it was observed that for $n = 2$
there was a rather large space of $\phi \in \HH(S_2(F))$ such that
\[ \partial \Orb \left(\gamma, \phi \right) = 0 \]
held identically across all $\gamma \in S_2(F)\rs^-$.
If we consider the map
\begin{align*}
  \partial\Orb \colon \HH(S_2(F)) &\to \mathcal{C}^\infty(S_2(F)\rs^-) \\
  \phi &\mapsto \left( \gamma \mapsto \partial \Orb \left(\gamma, \phi \right) \right)
\end{align*}
then \cite[Theorem 8.2.3]{ref:AFLspherical} in fact
shows this map has a kernel of codimension $2$
(equivalently, the image of the map was only two-dimensional).
They thus stated \cite[Conjecture 1.0.2]{ref:AFLspherical} which asserts that for $n \ge 2$,
a similarly defined map (albeit for more than one Hecke algebra) has a large kernel.

It is therefore natural to ask whether a similar large kernel result
could hold for the analogous orbital integral in the semi-Lie case.
We instead propose the following \emph{large image} conjecture,
which we have proved for $n = 2$.
\begin{conjecture}
  [Large image conjecture for $(S_n(F) \times V'_n(F))\rs^-$]
  \label{conj:kernel_semi_lie}
  Let $n \ge 2$.
  The map
  \begin{align*}
    \partial\Orb \colon \HH(S_n(F)) &\to \mathcal{C}^\infty\left( (S_n(F) \times V'_n(F))^- \right) \\
    \phi &\mapsto \left( \guv \mapsto \partial \Orb \left(\guv, \phi \right) \right)
  \end{align*}
  is injective.
\end{conjecture}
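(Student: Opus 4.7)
The plan is to fix a natural basis $\{\phi_r\}$ of $\HH(S_n(F))$ coming from the Cartan decomposition of $S_n(F)$ (for $n=2$ this basis is made explicit in \cite[Lemma 7.1.1]{ref:AFLspherical}), write any hypothetical $\phi = \sum_r c_r \phi_r$ in $\ker \partial \Orb$ as a finite linear combination, and then force each $c_r$ to vanish by exhibiting enough distinguishing test arguments $\guv \in (S_n(F) \times V'_n(F))\rs^-$. Equivalently, one wants to show that the family of functions $\{\guv \mapsto \partial\Orb(\guv, \phi_r)\}_r$ is linearly independent on $(S_n(F) \times V'_n(F))\rs^-$. The heuristic is that the genuine $2$-dimensional image produced by $\partial\Orb$ on $\HH(S_2(F))$ in the group case \cite[Theorem 8.2.3]{ref:AFLspherical} gets enlarged here because the new invariants contributed by $(\uu,\vv^\top)$ break the coincidences responsible for the large kernel in loc.\ cit.

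For $n = 2$, I would first make $\Orb(\guv, \phi_r \otimes \oneV, s)$ completely explicit as a function of the regular-semisimple invariants of $\guv$, for example $\tr \gamma$, $\det \gamma$, $\vv^\top \uu$, and $\vv^\top \gamma \uu$. This is a direct analogue of \cite[Proposition 7.3.2]{ref:AFLspherical}, modified to incorporate the extra factor $\oneV$ in the integrand; the support of $\oneV$ chops the integration domain into pieces indexed by the valuations of the coordinates of $g^{-1}\uu$ and $\vv^\top g$, on each of which $\phi_r(g^{-1}\gamma g)$ reduces to an elementary computation in the Iwasawa coordinates of $g$. Differentiating at $s = 0$ would yield a formula of the shape $\partial\Orb(\guv,\phi_r) = P_r(\guv) \log q$, with $P_r$ a piecewise-polynomial function in the valuations of the invariants. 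Injectivity then follows if the $P_r$ are linearly independent, and concretely one hopes to show that each $P_r$ has a distinguished ``leading region'' in the invariants (e.g.\ a range of values of $\mathrm{val}(\vv^\top \gamma \uu) - \mathrm{val}(\vv^\top \uu)$ depending on $r$) on which $P_r$ is nonzero but $P_{r'}$ vanishes for $r' < r$, giving a triangular system in the $c_r$.

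The main obstacle is the explicit derivative computation itself: the integrand splits into many cases according to the relative valuations of the entries of $g$, $\uu$, $\vv$, and the diagonal entries of the Cartan cell supporting $\phi_r$, and tracking the logarithmic contributions without error is delicate. A secondary obstacle is the passage to $n \ge 3$, where the Cartan basis of $\HH(S_n(F))$ becomes genuinely multi-parameter and the simple triangularity argument sketched above no longer suffices on its own; a promising route is induction on $n$ using the embedding $V'_{n-1}(F) \hookrightarrow V'_n(F)$ obtained by placing a zero in one coordinate of $\uu$, but formulating this cleanly appears to require genuinely new ideas, and is why we only assert the conjecture for general $n$ and prove it here only for $n = 2$.
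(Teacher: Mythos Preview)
Your plan is correct and matches the paper's approach for $n=2$: compute $\partial\Orb(\guv,\mathbf{1}_{K'_{S,\le r}})$ explicitly via Iwasawa coordinates (this is \Cref{thm:semi_lie_formula} and \Cref{cor:semi_lie_derivative_single}), then show the resulting functions are linearly independent by exhibiting a triangular structure (this is \Cref{lem:semi_lie_ker_full_rank}). The one refinement worth noting is that the paper finds it suffices to fix $\gamma$ entirely and vary only $v(e)=v(\vv^\top\uu)$, and the triangularity is not visible directly but appears only after two rounds of row operations on the matrix $\big(\partial\Orb((\gamma,\uu,\varpi^i\vv^\top),\mathbf{1}_{K'_{S,\le r}})\big)_{i,r}$; your anticipated ``leading region'' heuristic is morally right but the actual diagonal entries are messier than a simple support condition.
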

In fact for $n=2$ we have a more precise result
showing that the injectivity essentially comes from $v(\uu\vv^\top)$ alone in this case.
See \Cref{thm:semi_lie_ker_trivial} and \Cref{thm:semi_lie_ker_huge} momentarily.

\subsection{Results}

\subsubsection{Formulas for the orbital side of the semi-Lie AFL conjecture for $n=2$}
\label{sec:semi_lie_intro_formulas}
The main case of interest in this thesis is the new conjectured AFL
for the spherical Hecke algebra in the semi-Lie situation in the
specific case $n = 2$ where one can provide evidence for the conjecture.
On the orbital side, the various ingredients can be described concretely
in the following way:
\begin{itemize}
  \ii The Hecke algebra $\HH(S_2(F))$ has a natural basis of
  indicator functions $\mathbf{1}_{K', \le r}$ for each $r \ge 0$;
  see \Cref{ch:orbitalFJ0} for a definition.

  \ii Suppose $\guv \in (S_2(F) \times V'_2(F))\rs^-$.
  Then under the action $\GL_2(F)$ we may assume $\guv$ is of the form
  \[
    \guv = \left( \begin{pmatrix} a & b \\ c & d \end{pmatrix},
      \begin{pmatrix} 0 \\ 1 \end{pmatrix},
      \begin{pmatrix} 0 & e \end{pmatrix} \right)
    \in (S_2(F) \times V_2'(F))\rs^-
  \]
  (that is, we can find a $\GL_2(F)$-orbit representative of this form).
  The parameters $a$, $b$, $c$, $d$ need to satisfy certain dependencies
  for the matching to hold;
  these requirements are documented in \Cref{lem:semi_lie_params}.
\end{itemize}

Then we were able to derive the following fully explicit formula
in terms of the representative detailed in \Cref{lem:semi_lie_params}.
See \Cref{sec:proof_semi_lie_formula} for some concrete examples
and illustrations of \Cref{thm:semi_lie_formula}.
\begin{restatable}[Explicit orbital integral on $S_2(F) \times V'_2(F)$]{theorem}{semilieformula}
  \label{thm:semi_lie_formula}
  Let
  \[
    \guv = \left( \begin{pmatrix} a & b \\ c & d \end{pmatrix},
      \begin{pmatrix} 0 \\ 1 \end{pmatrix},
      \begin{pmatrix} 0 & e \end{pmatrix} \right)
    \in (S_2(F) \times V_2'(F))\rs^-
  \]
  satisfy the requirements in \Cref{lem:semi_lie_params}.
  Let $r \ge 0$.

  If $v(e) < 0$ or $v(b) + v(c) < -2r$, then
  \[ \Orb(\guv, \mathbf{1}_{K'_{S, \le r}} \otimes \oneV, s) = 0 \]
  holds identically for all $s \in \CC$.

  Otherwise define
  \[ \nn_\guv(k) \coloneqq \min\left( \left\lfloor \tfrac{k + (v(b)+r)}{2} \right\rfloor,
    \left\lfloor \tfrac{(2v(e)+v(c)+r)-k}{2} \right\rfloor, N \right) \]
  where
  \[ N \coloneqq \min \left(
      v(e), \tfrac{v(b)+v(c)-1}{2} + r,
      v(d-a) + r \right). \]
  Also, if $v(d-a) < v(e) - r$ and $v(b) + v(c) > 2v(d-a)$, then additionally define
  \begin{align*}
    \cc_\guv(k) &= \min\big( k - (2v(d-a)-v(b)+r), \\
      &\qquad (2v(e)+v(c)-2v(d-a)-r)-k, v(e)-v(d-a)-r \big).
  \end{align*}
  Otherwise define $\cc_\guv(k) = 0$.
  Then we have
  \begin{align*}
    \Orb(\guv, \mathbf{1}_{K'_{S, \le r}}, s)
    &= \sum_{k = -(v(b)+r)}^{2v(e)+v(c)+r} (-1)^k
    \left( 1 + q + q^2 + \dots + q^{\nn_\guv(k)} \right) (q^s)^k \\
    &+ \sum_{k = 2v(d-a)-v(b)+r}^{2v(e)+v(c)-2v(d-a)-r} (-1)^k \cc_\guv(k) q^{v(d-a) + r} (q^s)^k.
  \end{align*}
\end{restatable}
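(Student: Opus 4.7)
The plan is to unfold the orbital integral defined in \Cref{def:orbitalFJ} as an integral over $\GL_2(F)$ and then reduce via the Iwasawa decomposition $\GL_2(F) = B(F) K$, where $B$ is the Borel of upper triangular matrices and $K = \GL_2(\OO_F)$. Since both test functions $\mathbf{1}_{K'_{S, \le r}}$ and $\oneV$ are invariant under the relevant $K$-action, integrating out $K$ reduces the problem to an integral over $B(F)$. Parametrizing $h = \begin{pmatrix} t_1 & n \\ 0 & t_2 \end{pmatrix}$ with $t_1, t_2 \in F^\times$ and $n \in F$, the weight $|\det h|^s \eta(\det h)$ yields the universal factor $(-1)^k (q^s)^k$ in the answer, where $k$ is the appropriately signed valuation of $\det h = t_1 t_2$ (the sign on $s$ is fixed by whichever normalization is in force in \Cref{def:orbitalFJ}).

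With the explicit representative in hand, each test-function condition converts into a system of valuation inequalities on the triple $(v(t_1), v(t_2), v(n))$. The condition $\oneV(h^{-1}\uu, \vv^\top h) = 1$, using $\uu = (0,1)^\top$ and $\vv^\top = (0, e)$, becomes a bound on $v(t_2)$ involving $v(e)$; this already shows vanishing when $v(e) < 0$. The condition $\mathbf{1}_{K'_{S, \le r}}(h^{-1}\gamma h) = 1$ requires every entry of $h^{-1}\gamma h$ and of its inverse to have valuation at least $-r$; expanding the four entries in terms of $a, b, c, d, t_1, t_2, n$ gives the remaining inequalities, and in particular forces the admissible region to be empty when $v(b) + v(c) < -2r$, matching the second vanishing hypothesis.

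Fixing $k = v(\det h)$ (up to sign), I would then count the volume contribution slice by slice. For each admissible $k$, integrating $n$ over consecutive $\OO_F$-cosets produces the geometric series $1 + q + \cdots + q^{\nn_\guv(k)}$. The three entries of the minimum defining $\nn_\guv(k)$ correspond to three active inequalities: two from the endpoints of the admissible range of $v(n)$ (giving the two floor expressions), and the third from the global bound $N$, which saturates at $v(e)$, at $\tfrac{1}{2}(v(b)+v(c)-1) + r$, or at $v(d-a) + r$. Summing over $k$ in the window $-(v(b)+r) \le k \le 2v(e)+v(c)+r$ assembles the first line of the claimed formula.

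The main obstacle is the correction sum involving $\cc_\guv(k)$ and the weight $q^{v(d-a)+r}$. This arises from a resonance phenomenon: when $v(d-a) < v(e)-r$ and $v(b)+v(c) > 2v(d-a)$, there is a narrow band of $v(n)$ in which the naive valuation estimates for the entries of $h^{-1}\gamma h$ are not tight, because the leading contributions from the $d-a$ term and from the off-diagonal entries $b, c$ exhibit nontrivial cancellation. I expect the bulk of the work to consist in a careful analysis of this cancellation: identifying precisely which triples $(v(t_1), v(t_2), v(n))$ fall into the band, verifying that each such triple contributes the fixed volume $q^{v(d-a)+r}$, and checking that the resulting range in $k$ matches $2v(d-a)-v(b)+r \le k \le 2v(e)+v(c)-2v(d-a)-r$ with exactly $\cc_\guv(k)$ contributing triples. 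Aside from this resonance analysis, the remainder of the proof is mechanical verification that the floor and $\min$ expressions correctly encode the intersection of the affine inequalities on $(v(t_1), v(t_2), v(n))$ derived in the second step.
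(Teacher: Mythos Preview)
Your overall strategy---Iwasawa decomposition, then valuation bookkeeping on the Borel---is exactly what the paper does. But you have the mechanism behind both the main term and the correction term slightly off, and the second of these is the genuine content of the proof.

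First, a minor correction: the geometric series $1+q+\dots+q^{\nn_\guv(k)}$ does not arise from integrating $n$ over cosets. For fixed $(v(t_1),v(t_2))$ the $n$-integral contributes a single disk volume; the geometric series instead comes from summing over the range $0\le v(t_2)\le v(e)$ at fixed $k$.

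Second, the key point you are missing is that one of the four entries of $h^{-1}\gamma h$ is \emph{quadratic} in $n$: it is (up to a scalar) $-cn^2+(d-a)n+b$. The paper completes the square and sets $\theta=\min(v(b)+v(c),2v(d-a))$. When the required valuation is at most $\theta-2v(c)$ (their ``Case 1''), the solution set in $n$ is a single disk centered at $\tfrac{d-a}{2c}$, and everything is straightforward. When it exceeds $\theta-2v(c)$---which can only happen when $\theta=2v(d-a)$ is even, i.e.\ exactly under your hypotheses $v(b)+v(c)>2v(d-a)$---the quadratic factors and the solution set becomes \emph{two disjoint disks} centered at $\tfrac{d-a}{2c}\pm\tau$ where $\tau^2=\tfrac{b}{c}+\tfrac{(d-a)^2}{4c^2}$. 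These two roots have unequal valuations (one equals $v(\tau)$, the other $v(b)-v(c)-v(\tau)$), so the two disks give different ranges of admissible $(v(t_1),v(t_2))$; the paper tracks them separately as ``Case 2\textsuperscript{+}'' and ``Case 2\textsuperscript{$-$}''. The correction $\cc_\guv(k)\,q^{v(d-a)+r}$ is precisely the combined contribution of these two disks, and the condition $v(d-a)<v(e)-r$ is what makes Case 2\textsuperscript{$-$} nonempty. Your phrase ``narrow band of $v(n)$ with nontrivial cancellation'' is in the right neighborhood but misses this two-root structure; once you see that the constraint is quadratic, the rest is a lengthy but routine case analysis and resummation.
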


Differentiating this yields the following result:
\begin{restatable}[Derivative at $s=0$ for $S_2(F) \times V'_2(F)$]{corollary}{semiliederiv}
  \label{cor:semi_lie_derivative_single}
  Retain the setting of \Cref{thm:semi_lie_formula}.
  Also define $\varkappa \coloneqq v(e) - (v(d-a)+r)$.
  If both $\varkappa \ge 0$ and $v(b) + v(c) > 2v(d-a)$, then we have the formula
  \begin{align*}
    \frac{(-1)^{v(c)+r}}{\log q}
    &\partial \Orb(\guv, \mathbf{1}_{K'_{S, \le r}}) \\
    &= \sum_{j=0}^N \left( \frac{2v(e)+v(b)+v(c)+1}{2} + r - 2j \right) \cdot q^j \\
    & - q^{v(d-a)+r} \cdot
    \begin{cases}
      \frac{\varkappa}{2} & \text{if }\varkappa \equiv 0 \pmod 2 \\
      \left( v(e)+\frac{v(b)+v(c)}{2}-2v(d-a)-r \right) - \frac{\varkappa}{2}
      & \text{if }\varkappa \equiv 1 \pmod 2.
    \end{cases}
  \end{align*}
  Otherwise we instead have the formula
  \[
    \frac{(-1)^{v(c)+r}}{\log q}
    \partial \Orb(\guv, \mathbf{1}_{K'_{S, \le r}}) \\
    = \sum_{j=0}^N \left( \frac{2v(e)+v(b)+v(c)+1}{2} + r - 2j \right) \cdot q^j.
  \]
\end{restatable}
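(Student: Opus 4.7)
The plan is to obtain the corollary by differentiating the expression of \Cref{thm:semi_lie_formula} in $s$ at $s = 0$ and then reorganizing the resulting double sums. Since $\frac{d}{ds}(q^s)^k = k(\log q)(q^s)^k$, evaluation at $s = 0$ gives $k\log q$, so
\[
  \tfrac{1}{\log q} \partial\Orb(\guv, \mathbf{1}_{K'_{S,\le r}}) = \sum_k (-1)^k k\,(1 + q + \cdots + q^{\nn_\guv(k)}) + q^{v(d-a)+r}\sum_k (-1)^k k\,\cc_\guv(k).
\]
The remaining work is algebraic: each of the two sums is to be rearranged and collapsed using the elementary identity $\sum_{k=A}^{B}(-1)^k k = (-1)^A(A+B)/2$ when $B-A$ is even, and $\sum_{k=A}^{B}(-1)^k k = (-1)^{A+1}(B-A+1)/2$ when $B-A$ is odd.

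For the first (main) sum I would swap the order of summation via the layer-cake identity $1 + q + \cdots + q^{\nn_\guv(k)} = \sum_{j=0}^{N} q^j \mathbf{1}[\nn_\guv(k) \ge j]$. For $j \le N$ the third clause of the minimum in the definition of $\nn_\guv$ is inactive, and the condition $\nn_\guv(k) \ge j$ becomes $A_j \le k \le B_j$ with $A_j = 2j - v(b) - r$ and $B_j = 2v(e) + v(c) + r - 2j$. The coefficient of $q^j$ is then $\sum_{k = A_j}^{B_j} (-1)^k k$. Under the parity of $v(b) + v(c)$ forced by the matching hypotheses of \Cref{lem:semi_lie_params}, the alternating-sum identity collapses this to $(-1)^{v(c) + r}\big((2v(e) + v(b) + v(c) + 1)/2 + r - 2j\big)$; dividing through by the global sign $(-1)^{v(c) + r}$ produces the main sum of the corollary.

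For the correction sum I would apply the same idea using the layer-cake representation $\cc_\guv(k) = \sum_{m = 1}^{\varkappa} \mathbf{1}[A' + m \le k \le B' - m]$ with $A' = 2v(d-a) - v(b) + r$ and $B' = 2v(e) + v(c) - 2v(d-a) - r$. Swapping the order of summation and applying the alternating-sum identity reduces the contribution to $(-1)^{v(c)+r} q^{v(d-a)+r} \sum_{m = 1}^{\varkappa} (-1)^m (C - m)$ for an explicit constant $C$; this inner sum evaluates differently depending on the parity of $\varkappa$, since $\sum_{m = 1}^{\varkappa} (-1)^m$ vanishes when $\varkappa$ is even but equals $-1$ when $\varkappa$ is odd. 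This is exactly the origin of the two-case split of the correction term. The ``otherwise'' case of the corollary, where $\varkappa < 0$ or $v(b) + v(c) \le 2v(d-a)$, matches the absence of the $\cc_\guv$ contribution in \Cref{thm:semi_lie_formula}, so only the main sum contributes.

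The main obstacle is sign and parity bookkeeping: tracking when $(-1)^{v(b) + r}$ equals $\pm(-1)^{v(c) + r}$ via the matching parity of $v(b) + v(c)$, ensuring the correct branch of the alternating-sum identity is applied to each telescoping piece, and verifying that the boundary terms at $k = A_j$, $k = B_j$ (and analogously in the correction) are handled consistently. Once this accounting is carried out carefully, no input beyond \Cref{thm:semi_lie_formula} itself is required.
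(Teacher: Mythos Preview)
Your proposal is correct, and for the main sum $\sum_k (-1)^k k(1+q+\cdots+q^{\nn_\guv(k)})$ it coincides exactly with the paper's argument: both rewrite $1+q+\cdots+q^{\nn_\guv(k)}=\sum_{j=0}^N q^j\mathbf{1}[\nn_\guv(k)\ge j]$, swap the sums, observe that $\nn_\guv(k)\ge j$ cuts out the interval $[2j-v(b)-r,\,2v(e)+v(c)+r-2j]$ of even length, and collapse the inner alternating sum.

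For the correction term your route differs from the paper's. The paper does \emph{not} layer-cake $\cc_\guv$; instead it splits the summation range of $k$ into the two ramps and the plateau of the trapezoid $\cc_\guv$, introduces an auxiliary ``extremely easy lemma'' computing $\sum_{k=a_0}^{a_1}(k-a_0)k(-1)^k$ and $\sum_{k=a_0}^{a_1}(a_1-k)k(-1)^k$ in closed form, applies it to each ramp, handles the plateau by pairing, and then adds the three pieces. Your approach writes $\cc_\guv(k)=\sum_{m=1}^{\varkappa}\mathbf{1}[A'+m\le k\le B'-m]$ and reuses the same alternating-sum identity already used for the main term. This is more uniform and shorter (no auxiliary lemma, no three-region split), at the cost of having to check that $A'+m\le B'-m$ for all $m\le\varkappa$, which follows from the hypothesis $v(b)+v(c)>2v(d-a)$. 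The paper's approach has the mild advantage that the three-piece split mirrors the picture in \Cref{fig:semi_lie_sketch}, but computationally yours is cleaner.
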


The formula simplifies even further if one considers instead
$\mathbf{1}_{K'_{S, \le r}} + \mathbf{1}_{K'_{S, \le (r-1)}}$;
and indeed we will see that this particular combination
comes up naturally in \Cref{ch:finale} with a special role
as the base change mentioned in \cite[Lemma 7.1.1]{ref:AFLspherical}.

\begin{restatable}[The special case $\partial \Orb(\guv, \mathbf{1}_{K'_{S, \le r}} + \mathbf{1}_{K'_{S, \le (r-1)}})$]{corollary}{semiliecombo}
  \label{cor:semi_lie_combo}
  Retain the setting of \Cref{thm:semi_lie_formula}.
  Also define $\varkappa \coloneqq v(e) - (v(d-a)+r)$.
  For $r \ge 1$ define
  \begin{align*}
    C &\coloneqq
    \begin{cases}
      \frac{\varkappa-1}{2}
        & \text{if } \varkappa > 0 \text{ is odd}
          \text{ and } v(b) + v(c) > 2v(d-a)  \\
      \frac{\varkappa+v(b)+v(c)-2v(d-a)-1}{2}
        & \text{if } \varkappa \ge 0 \text{ is even}
          \text{ and } v(b) + v(c) > 2v(d-a)  \\
      v(e) - N
        & \text{if } v(e) \ge \frac{v(b)+v(c)-1}{2} + r
        \text{ and } 2v(d-a) > v(b) + v(c) \\
      0 & \text{otherwise}
    \end{cases} \\
    C' &\coloneqq
    \begin{cases}
      C + 1 & \text{if } \varkappa \ge 0 \text{ and } v(b)+v(c) > 2v(d-a) \\
      0 & \text{otherwise}.
    \end{cases}
  \end{align*}
  Then
  \begin{align*}
    \frac{(-1)^{v(c)+r}}{\log q} &
    \partial\Orb(\guv, \mathbf{1}_{K'_{S, \le r}} + \mathbf{1}_{K'_{S, \le (r-1)}}) \\
    &= (q^N + q^{N-1} + \dots + 1) + C q^N + C' q^{N-1}
  \end{align*}
\end{restatable}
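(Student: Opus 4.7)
The corollary follows by applying Corollary~\ref{cor:semi_lie_derivative_single} at both levels $r$ and $r-1$ and combining, taking care that the sign $(-1)^{v(c)+r}$ flips when $r$ is decremented. Explicitly, writing
\[ \frac{(-1)^{v(c)+r}}{\log q}\partial\Orb(\guv, \mathbf{1}_{K'_{S,\le r}}) = \Sigma_r + D_r \]
as in the previous corollary---with $\Sigma_r$ denoting the main sum $\sum_{j=0}^{N_r}\bigl(\tfrac{2v(e)+v(b)+v(c)+1}{2}+r-2j\bigr)q^j$ and $D_r$ denoting the correction term supported at $q^{v(d-a)+r}$ when present---the target quantity expands as
\[ \frac{(-1)^{v(c)+r}}{\log q}\partial\Orb\bigl(\guv,\, \mathbf{1}_{K'_{S,\le r}} + \mathbf{1}_{K'_{S,\le (r-1)}}\bigr) = (\Sigma_r - \Sigma_{r-1}) + (D_r - D_{r-1}). \]

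First I would compare $N_{r-1}$ with $N \coloneqq N_r$: since only $v(e)$ is independent of $r$ in the definition of $N_r$, we have $N_{r-1} = N$ exactly when $v(e)$ is the strict minimum, and $N_{r-1} = N - 1$ otherwise. In the first subcase $\Sigma_r - \Sigma_{r-1}$ already telescopes to the pure geometric sum $\sum_{j=0}^N q^j$. In the second subcase the upper endpoint shifts, producing also a boundary contribution $\bigl(\tfrac{2v(e)+v(b)+v(c)+1}{2}+r-2N-1\bigr)q^N$. A short algebraic check shows this boundary evaluates to $v(e)-N$ when $N = \tfrac{v(b)+v(c)-1}{2}+r$ (which matches the third branch of $C$), and otherwise combines with the $D$-correction analyzed below when $N = v(d-a)+r$.

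Next I would analyze $D_r - D_{r-1}$, which is nonzero only when $\varkappa \ge 0$ and $v(b)+v(c) > 2v(d-a)$; in that regime $N = v(d-a)+r$, so $D_r$ lives in degree $q^N$ while $D_{r-1}$ lives in degree $q^{N-1}$. The decisive observation is that $\varkappa = v(e)-(v(d-a)+r)$ flips parity between $r$ and $r-1$, so $D_r$ and $D_{r-1}$ draw from different branches of the piecewise formula in Corollary~\ref{cor:semi_lie_derivative_single}. Substituting $\varkappa \mapsto \varkappa+1$ and $r \mapsto r-1$ in the correction formula and adding the result to the Step~2 boundary contribution produces precisely the claimed $Cq^N + C'q^{N-1}$.

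The main obstacle is case bookkeeping rather than any conceptual difficulty: the piecewise definitions of $C$ and $C'$ cover four overlapping branches, and one must simultaneously track (i) which of $v(e)$, $\tfrac{v(b)+v(c)-1}{2}+r$, $v(d-a)+r$ attains the minimum $N$, and (ii) the parity of $\varkappa$. I would organize the verification as a single table indexed by these two data, checking that each cell collapses to the claimed uniform expression $(q^N+q^{N-1}+\cdots+1) + Cq^N + C'q^{N-1}$.
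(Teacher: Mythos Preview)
Your proposal is correct and follows essentially the same approach as the paper: both apply Corollary~\ref{cor:semi_lie_derivative_single} at $r$ and $r-1$, exploit the sign flip to obtain $(\Sigma_r-\Sigma_{r-1})+(D_r-D_{r-1})$, telescope the main sums for $j\le N-1$, and then handle the boundary $q^N$ term and the $\varkappa$-correction by cases according to which of $v(e)$, $\tfrac{v(b)+v(c)-1}{2}+r$, $v(d-a)+r$ realizes $N$. The paper organizes the final bookkeeping into exactly those three cases rather than your proposed table, but the substance is identical.
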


\begin{example}[Examples of \Cref{cor:semi_lie_combo}]
  We show some examples of \Cref{cor:semi_lie_combo}:
  \begin{itemize}
  \ii When $r=5$, $v(b) = -20$, $v(c) = 37$, $v(e) = 35$ and $v(d-a) > \frac{v(b)+v(c)}{2} = 8.5$
  the derivative in \Cref{cor:semi_lie_combo} equals
  \[ \log q \cdot (23q^{13} + q^{12} + q^{11} + q^{10} + q^9 + \dots + q + 1). \]
  \ii When $r = 6$, $v(b) = 10$, $v(c) = 5$, $v(e) = 7$, $v(d-a) > v(e)-r = 1$,
  the derivative in \Cref{cor:semi_lie_combo} equals
  \[ -\log q \cdot (q^7 + q^6 + q^5 + \dots + q + 1). \]
  \ii When $r = 8$, $v(b) = -101$, $v(c) = 1000$, $v(e) = 29$, $v(d-a) = 11$,
  the derivative in \Cref{cor:semi_lie_combo} equals
  \[ \log q \cdot (444 q^{19} + 445q^{18} + q^{17} + q^{16} + q^{15} + \dots + q + 1). \]
  \end{itemize}
\end{example}

\subsubsection{Kernel and image results for the semi-Lie orbital integral when $n=2$}
As we mentioned our earlier conjecture \Cref{conj:kernel_semi_lie}
is true for $n = 2$.
More precisely, we have the following two theorems.

\begin{restatable}[$\partial\Orb$ is injective even for fixed $\gamma \in S_2(F)$]{theorem}{semiliekertriv}
  \label{thm:semi_lie_ker_trivial}
  Fix any $\guv \in (S_2(F) \times V'_2(F))\rs^-$.
  Then there doesn't exist any nonzero function $\phi \in \HH(S_2(F))$ such that
  \[ \partial \Orb \left( (\gamma, \uu, \varpi^i \vv^\top), \phi, \right) = 0 \]
  holds for every integer $i$.
  Thus \Cref{conj:kernel_semi_lie} holds for $n = 2$.
\end{restatable}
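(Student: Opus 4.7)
The plan is to expand $\phi$ in the basis $\{\mathbf{1}_{K'_{S,\le r}}\}_{r \ge 0}$ of $\HH(S_2(F))$ and reduce the statement to a linear independence claim for an explicit family of functions. Writing $\phi = \sum_{r=0}^R \alpha_r \mathbf{1}_{K'_{S,\le r}}$ and assuming for contradiction that $\alpha_R \ne 0$, I would use that the substitution $\vv^\top \mapsto \varpi^i \vv^\top$ shifts only the parameter $v(e)$ of \Cref{cor:semi_lie_derivative_single} by $+i$, leaving $v(b), v(c), v(d-a)$ unchanged. Setting $E := v(e) + i$ and
\[ f_r(E) := \partial\Orb\bigl((\gamma,\uu,\varpi^{E-v(e)}\vv^\top),\, \mathbf{1}_{K'_{S,\le r}}\bigr), \]
the hypothesis becomes $\sum_{r=0}^R \alpha_r f_r(E) = 0$ for every $E \in \ZZ$, and the task is to deduce $\alpha_R = 0$.

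The central structural feature extracted from \Cref{cor:semi_lie_derivative_single} is that each $f_r$ has a sharp ``phase transition'' at a threshold $E_r$, equal to $v(d-a) + r$ in the ``correction'' case $v(b)+v(c) > 2v(d-a)$ and to $\lfloor (v(b)+v(c)-1)/2 \rfloor + r$ otherwise, and these thresholds strictly increase in $r$: $E_0 < E_1 < \cdots < E_R$. For $E < E_r$ the formula for $f_r(E)$ genuinely depends on $q^E$, while for $E \ge E_r$ the $q^E$-dependence disappears and $f_r$ becomes piecewise affine in $E$ with two branches, one per parity of $\varkappa = E - v(d-a) - r$. The strategy is to exploit this strict monotonicity of thresholds: at an $E$ just below $E_R$, only $f_R$ still carries a $q^E$-term, so a finite-difference combination that annihilates the asymptotic linear-plus-parity-oscillating behavior of the $f_r$ with $r < R$ should isolate the contribution of $\alpha_R$.

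Concretely, I would take a finite-difference operator $D$ of order at most four --- for instance $Df(E) := f(E+3) - f(E+2) - f(E+1) + f(E)$, which annihilates both linear functions of $E$ and functions of the form $(-1)^E \cdot c$ --- and evaluate $D\bigl(\sum_r \alpha_r f_r\bigr)$ at an $E$ below $E_R$ at which $Df_r$ has already stabilized to zero for every $r < R$. A direct bookkeeping from \Cref{cor:semi_lie_derivative_single} should compute $Df_R(E)$ explicitly as a nonzero multiple of $q^E(q-1)$ in the non-degenerate subcase, and as a slightly deeper finite difference (for example at $E = E_R - 2$) in the borderline subcase where the first natural choice vanishes. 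In either case one obtains an equation of the form $\alpha_R \cdot (\text{nonzero quantity}) = 0$, whence $\alpha_R = 0$; descending induction on $R$ then yields $\alpha_r = 0$ for all $r$.

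The main obstacle will be the case analysis needed to choose $D$ and $E$ compatibly across the several subcases of \Cref{cor:semi_lie_derivative_single} --- the correction-vs-otherwise dichotomy, the parity of $v(b)+v(c)$, and the parity of $\varkappa$ --- and to verify in each subcase that the extracted coefficient does not accidentally vanish (this is exactly where the ``kink'' of $f_R$ at $E_R$ carries more information than the kinks of the smaller $f_r$). Once this is in place, the assertion that \Cref{conj:kernel_semi_lie} holds for $n = 2$ is immediate: a nonzero $\phi$ in the kernel of the global $\partial\Orb$ would in particular vanish against every $(\gamma,\uu,\varpi^i\vv^\top)$ with fixed $(\gamma,\uu,\vv)$, which the theorem rules out.
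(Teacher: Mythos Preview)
Your approach is essentially the same as the paper's: the paper proves a full-rank lemma for the matrix $M_{i,r} = \frac{(-1)^r}{\log q}\partial\Orb\bigl((\gamma,\begin{smallmatrix}0\\1\end{smallmatrix},(0\ \varpi^i)),\,\mathbf{1}_{K'_{S,\le r}}\bigr)$ by applying the row operations $M'_{i,r}=M_{i,r}-M_{i-1,r}$ and then $M''_{i,r}=M'_{i,r}-M'_{i-2,r}$, which is exactly your finite-difference operator $Df(E)=f(E+3)-f(E+2)-f(E+1)+f(E)$, and then reads off an upper-triangular block whose diagonal sits at $i=r+\lfloor\theta/2\rfloor+1$, precisely your threshold $E_r$. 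The paper also hits the borderline case you anticipate (when $\theta\le 1$ the entry at $(i,r)=(1,0)$ can vanish, so one row must be swapped), so your case-analysis caveat is on point.
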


In particular, for $n=2$ the map
\begin{align*}
  \partial\Orb \colon \HH(S_2(F)) &\to \mathcal{C}^\infty\left( (S_2(F) \times V'_2(F))^- \right) \\
  \phi &\mapsto \left( \guv \mapsto \partial \Orb \left(\guv, \phi \right) \right)
\end{align*}
is indeed injective.

\begin{restatable}[The kernel of $\partial\Orb$ is large for fixed $(\uu, \vv^\top) \in V'_2(F)$]{theorem}{semiliekerlarge}
  \label{thm:semi_lie_ker_huge}
  Let $N \ge 0$ be an integer.
  Consider all $\guv \in (S_2(F) \times V'_2(F))\rs^-$ for which $v(\uu\vv^\top) \le N$.
  Then the space of $\phi \in \HH(S_2(F))$ for which
  \[ \partial \Orb \left( \guv, \phi \right) = 0 \]
  holds for all such $\guv$ is a $\QQ$-vector subspace of $\HH(S_2(F))$
  whose codimension is at most $N + 2$.

  Moreover, this subspace of $\HH(S_2(F))$
  is not contained in any maximal ideal of $\HH(S_2(F))$
  when $\HH(S_2(F))$ is viewed as a ring under the isomorphism of \Cref{ch:satake}.
\end{restatable}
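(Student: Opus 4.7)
The plan is to extract both assertions from the explicit derivative formulas of \Cref{cor:semi_lie_derivative_single} and \Cref{cor:semi_lie_combo}. The codimension of the subspace of $\phi \in \HH(S_2(F))$ where $\partial\Orb(\guv, \phi) = 0$ for all $\guv$ with $v(\uu\vv^\top) \le N$ equals the dimension of the $\QQ$-linear span of the functionals $\partial\Orb(\guv, \cdot)$ on $\HH(S_2(F))$. Using the basis $\{\mathbf{1}_{K'_{S,\le r}}\}_{r \ge 0}$, each such functional is identified with the sequence $r \mapsto \partial\Orb(\guv, \mathbf{1}_{K'_{S,\le r}})$, so the task is to bound the dimension of the span of these sequences inside $\QQ^{\ZZ_{\ge 0}}$ by $N+2$.

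I would substitute the normalized representative of \Cref{lem:semi_lie_params} into \Cref{cor:semi_lie_derivative_single} and exploit the constraint $v(e) = v(\uu\vv^\top) \le N$ to bound the inner cutoff $\min(v(e), \tfrac{v(b)+v(c)-1}{2}+r, v(d-a)+r)$ by $N$. For $r$ sufficiently large the main sum reads $(\alpha_\guv + r)\sum_{j=0}^{v(e)} q^j - 2\sum_{j=0}^{v(e)} jq^j$; after the sign $(-1)^{v(c)+r}$ is absorbed, this is a linear combination of the two universal sequences $(-1)^r$ and $r(-1)^r$, so the large-$r$ behavior contributes at most two dimensions for each fixed parity of $v(c)$. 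The remaining dimensions come from the transitional range in $r$ where the inner minimum changes, or where the correction term $q^{v(d-a)+r}$ is nontrivial; the inequality $v(e) \le N$ bounds the window in which these transitions occur, producing at most $N$ further independent sequences and hence the bound $N+2$.

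The main obstacle I foresee is the case analysis forced by \Cref{cor:semi_lie_derivative_single}: the formula splits into two regimes depending on whether $\varkappa = v(e) - (v(d-a)+r) \ge 0$ and whether $v(b)+v(c) > 2v(d-a)$, with a further parity split of $\varkappa$ in the first regime, each producing a correction of different shape. A priori, varying $v(d-a)$ over its admissible values could generate many independent correction sequences, so the proof must carefully enumerate the transitional contributions and verify they collectively embed into a single $N$-dimensional ambient space. Executing this enumeration, and verifying the bookkeeping across the case boundaries where the inner minimum switches between its three arguments, is where I expect the real technical work to lie.

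For the second assertion I would apply the Satake isomorphism of \Cref{ch:satake} to view $\HH(S_2(F))$ as a (Laurent) polynomial ring, so that maximal ideals correspond to evaluation characters. Since the kernel has finite codimension, it suffices to exhibit for every character $\chi$ some element of the kernel not annihilated by $\chi$. Via the simpler formula \Cref{cor:semi_lie_combo} I would write down explicit kernel elements as linear combinations of the form $\sum_r c_r (\mathbf{1}_{K'_{S,\le r}} + \mathbf{1}_{K'_{S,\le (r-1)}})$ engineered to vanish for all $\guv$ with $v(e) \le N$. I would then verify that the Satake transforms of two appropriately chosen such kernel elements are coprime in the polynomial ring, so that the ideal they generate is the unit ideal and hence no maximal ideal can contain the kernel.
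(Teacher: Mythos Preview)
Your approach to the codimension bound is the dual of the paper's: you bound the span of the functionals $\partial\Orb(\guv,\cdot)$, whereas the paper exhibits explicit elements of the kernel. Both are equivalent, but the paper's route is shorter and sidesteps precisely the case analysis you flag as the main obstacle. The key point (\Cref{lem:semi_lie_large_r}) is that \Cref{cor:semi_lie_combo} directly gives
\[
\frac{(-1)^{v(c)+r}}{\log q}\,\partial\Orb\bigl(\guv,\ \mathbf{1}_{K'_{S,\le r}} + \mathbf{1}_{K'_{S,\le (r-1)}}\bigr) = \sum_{j=0}^{v(e)} q^j
\]
for every $r \ge v(e)+1$, a quantity independent of $r$. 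Subtracting at $r$ and $r-1$ shows that $\psi_r \coloneqq \mathbf{1}_{K'_{S,\le r}} + 2\mathbf{1}_{K'_{S,\le (r-1)}} + \mathbf{1}_{K'_{S,\le (r-2)}}$ lies in the kernel whenever $r \ge v(e)+2$, hence whenever $r \ge N+2$. Since these $\psi_r$ together with $\mathbf{1}_{K'_{S,\le 0}},\dots,\mathbf{1}_{K'_{S,\le N+1}}$ form a basis, the codimension bound is immediate. In your language this is the recurrence $a_r + 2a_{r-1} + a_{r-2} = 0$ (characteristic root $-1$ with multiplicity two), which is exactly your observation that the tails lie in the span of $(-1)^r$ and $r(-1)^r$; but reaching it via \Cref{cor:semi_lie_combo} rather than \Cref{cor:semi_lie_derivative_single} avoids the regimes on $\varkappa$ and $v(d-a)$ entirely. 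Your proposed bookkeeping through those regimes would work, but it is unnecessary.

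For the second assertion your plan coincides with the paper's: push the explicit kernel elements $\psi_r$ through the Satake isomorphism to polynomials $P_r(Y)$ and show they have no common zero in $\CC^\times$. The paper does this not by exhibiting two coprime ones, but by proving (\Cref{lem:chebyshev}) that for no fixed $Y$ is $P_r(Y)$ eventually constant in $r$, via an elementary manipulation of the recursion $Y^{\pm k} = Y^{\pm 1}\cdot Y^{\pm(k-1)} - Y^{\pm(k-2)}$. Your coprimality idea would also suffice but would require actually computing a resultant, which the paper's argument avoids.
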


\begin{remark}
  [On formalizing large kernels]
  In each case the Hecke algebra is isomorphic as a $\QQ$-algebra to $\QQ[T]$
  for a single variable $T = Y+Y^{-1}$.
  So actually it's mildly surprising that we get a result on finite codimension.
  In general, a finite codimension vector subspace of $\QQ[T]$ could
  be contained in a maximal ideal,
  such as the codimension one subspace $T \mathbb Q[T] \subset \QQ[T]$.
  Conversely, a \emph{finite} dimension vector subspace might still
  not be contained in any maximal ideal,
  such as the one-dimensional space $\QQ \subseteq \QQ[T]$.

  Thus neither finite codimension nor generating all of $\QQ[T]$ as an ideal imply each other.
  It might be interesting to consider other different ways
  of formalizing the notion of ``large kernel''.
\end{remark}

\subsubsection{The geometric side of the semi-Lie AFL conjecture for $n=2$}
To provide evidence for \Cref{conj:semi_lie_spherical}, we prove that:
\begin{theorem}
  [Semi-Lie AFL for the full Hecke algebra for $n=2$]
  \label{thm:semi_lie_n_equals_2}
  Our generalized AFL conjecture, \Cref{conj:semi_lie_spherical}, holds for $n = 2$.
\end{theorem}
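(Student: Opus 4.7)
The plan is to follow the strategy of \cite[Theorem 7.5.1]{ref:AFLspherical}, adapted from the group (inhomogeneous) version to the semi-Lie version, and assemble three independent computations. The two sides of the identity in \Cref{conj:semi_lie_spherical} are computed separately in closed form, and then matched term-by-term as a function of the invariants attached to the regular semisimple datum.

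First I would pin down the analytic side. The Hecke algebra $\HH(\U(\VV_2^+))$ has a natural basis indexed by $r \ge 0$, and the base change $(\BC_{S_2}^{\eta})^{-1}$ is already computed explicitly for $n = 2$ in \cite[Lemma 7.1.1]{ref:AFLspherical}; the upshot is that the image of each basis element is a short sum of the form $\mathbf{1}_{K'_{S, \le r}} + \mathbf{1}_{K'_{S, \le (r-1)}}$ (up to normalization), exactly the combination singled out in \Cref{cor:semi_lie_combo}. Substituting this into \Cref{cor:semi_lie_combo} then gives a completely explicit polynomial expression for $\partial\Orb(\guv, \phi \otimes \oneV)$ in terms of the integer invariants $v(b), v(c), v(e), v(d-a), r$ attached to the chosen representative $\guv$ from \Cref{lem:semi_lie_params}. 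This is the analytic right-hand side of \eqref{eq:inhomog}, up to the sign provided by the transfer factor $\omega\guv$ defined in \Cref{ch:transf}.

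Second I would compute the geometric side $\Int((g,u), f)$. For $n = 2$ the relevant Rapoport-Zink formal scheme from \Cref{ch:geo} is low-dimensional and its Bruhat-Tits stratification is fully understood, so one can decompose $\Int((g, u), f)$ into contributions from Kudla-Rapoport-type cycles $\ZD(u)$ intersected with the Hecke correspondence attached to $f$, then reduce the intersection calculation to lattice counts in the Hermitian space $\VV_2^-$. The answer should be a closed-form polynomial in $q$ whose exponents and multiplicities match those of the analytic formula.

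Third I would verify equality: comparing coefficients of $q^j$ for each $0 \le j \le N$, checking the extra correction terms $C q^N$ and $C' q^{N-1}$ from \Cref{cor:semi_lie_combo} against the boundary strata appearing in the geometric computation, and confirming the sign via the explicit transfer factor. The main obstacle I expect is the geometric side: unlike the analytic side, where \Cref{thm:semi_lie_formula} and \Cref{cor:semi_lie_combo} already hand us a very clean polynomial, the intersection number with a non-trivial Hecke operator $f$ (as opposed to $f = \mathbf{1}_K$, which is the known semi-Lie AFL of \cite{ref:liuFJ}) requires carefully tracking how the Hecke correspondence moves the special divisor $\ZD(u)$ across strata and correctly resolving any excess intersection. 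Once both sides are in polynomial form in the same variables, the match is a finite combinatorial check, and this final step will confirm \Cref{conj:semi_lie_spherical} for $n=2$.
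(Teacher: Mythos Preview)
Your plan for the analytic side is correct and matches the paper exactly: the base change from \cite[Lemma 7.1.1]{ref:AFLspherical} feeds directly into \Cref{cor:semi_lie_combo}, and the transfer factor is handled as you describe.

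The gap is on the geometric side. ``Bruhat--Tits stratification'' and ``lattice counts in $\VV_2^-$'' is not the mechanism the paper uses, and it is not clear that approach would produce a closed-form polynomial for the Hecke-twisted intersection number. What the paper actually does is pass through the Serre tensor construction $\RZ_2 \xrightarrow{\sim} \MM_2$ to the Lubin--Tate space, identify $\VV_2^-$ with the quaternion division algebra $\DD$, and then invoke the Gross--Keating formula (\Cref{prop:GK}) to compute the triple intersection $\langle \ZO4(1), \ZO4(x), \ZO4(y)\rangle$ explicitly. Two further ingredients you do not mention are essential: first, an identification $\TTr \simeq \ZO4(\varpi^r)$ (Hypothesis~\ref{hypo:serre_pullback_space}, supplied by the in-preparation \cite{ref:CLZ}), which is what makes the Hecke operator tractable; second, the pullback $\Serre^\ast \ZD(u)^\circ \simeq \ZO3(\bar u \sqrt\eps u)^\circ$ (\Cref{lem:serre_pullback_divisor}, also from \cite{ref:CLZ}).

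Finally, the matching step is not a direct coefficient comparison. The Gross--Keating output equals a \emph{sum} of two $\partial\Orb$ values at consecutive $v(e)$ (\Cref{thm:miracle}), not a single one; the paper then subtracts, telescopes over $u \mapsto u/\varpi^i$, and inducts on $r$ from the known base case $r=0$ (Liu's original semi-Lie AFL). Without this telescoping and the $r=0$ anchor, the two sides do not line up termwise in the way you suggest.
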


The proof of \Cref{thm:semi_lie_n_equals_2} is built up gradually
throughout the paper, culminating in \Cref{ch:finale}.

We comment briefly on the strategy of the proof.
The proof is made possible because the intersection numbers for $n=2$
are easier to work with for a few reasons.
\begin{itemize}
\ii First, one can identify $\VV_2^-$ with an $E/F$-quaternion division algebra,
equipped with a compatible Hermitian form defined via quaternion multiplication.
This makes it possible to describe $\U(\VV_2^-)$ concretely as transformations obtained
via left multiplication by an element of $E$ and right multiplication by a quaternion.
\ii Secondly, it becomes possible to replace the so-called Rapoport-Zink spaces $\RZ_2$
used in the definition of the intersection number with a Lubin-Tate space $\MM_2$.
Thus the problem of computing the intersection number
$\Int\left( (g,u), f \right) \log q$
can be reduced to calculating the intersection of certain special
Kudla-Rapoport divisors on the space $\MM_2$.

However, on the Lubin-Tate space $\MM_2$,
there is a result known as the Gross-Keating formula \cite{ref:GK}
which allows one to make this intersection number fully explicit.
One can then match the resulting equation to the formulas described in
\Cref{cor:semi_lie_derivative_single}
and verify that, under the base change \Cref{ch:satake}
and the matching condition described in \Cref{ch:rs_matching},
the two obtained formulas are identical.
\end{itemize}

\subsection{Roadmap}
The rest of the paper is organized as follows.

\subsubsection{Background information}
The paper begins with some general background information.
\begin{itemize}
  \ii In \Cref{ch:background} we provide some preliminary background
  on the spaces appearing in the overall paper and the Hecke algebras
  $\HH(\U(\VV_n^+))$ and $\HH(S_n(F))$.

  \ii Further background is stated in \Cref{ch:rs_matching},
  where we describe the matching of regular semisimple elements
  so that we may speak of $S_n(F)\rs^-$ and $(S_n(F) \times V'_n(F))\rs^-$.

  \ii In \Cref{ch:satake} we provide reminders on the Satake transform.
\end{itemize}

\subsubsection{Introduction and calculation of the orbital integrals}
In \Cref{ch:orbital0} we introduce the weighted orbital integral
for the group version of the AFL for full spherical Hecke algebra briefly.
This is only for comparison with the definition in later sections
and will not be used again afterwards.

In \Cref{ch:orbitalFJ0} we introduce the weighted orbital integral
for the semi-Lie version of the AFL for full spherical Hecke algebra,
and state the parameters to be used in \Cref{lem:semi_lie_params}.
The main calculation is carried out in \Cref{ch:orbitalFJ1,ch:orbitalFJ2},
which is used to prove \Cref{thm:semi_lie_formula} and its corollaries.

\subsubsection{Large kernel and image}
Having completely computed the orbital integrals in these cases,
we take a side trip in \Cref{ch:ker} to prove the large image results asserted.
We establish \Cref{conj:kernel_semi_lie} for $n = 2$ by proving the asserted
\Cref{thm:semi_lie_ker_trivial} and \Cref{thm:semi_lie_ker_huge}
for the orbital integral on $(S_2(F) \times V'_2(F))\rs^-$.

\subsubsection{Geometric side}
We then turn our attention to the other parts of the two versions of the AFL.
In addition to stating the relevant definitions,
the subsequent chapters aim to prove \Cref{thm:semi_lie_n_equals_2}.
\begin{itemize}
  \ii In \Cref{ch:transf} we briefly define the transfer factors $\omega \in \{\pm1\}$.
  \ii In \Cref{ch:geo}, we describe the Rapoport-Zink spaces $\RZ_n$
  that the geometric side is based on, and define the intersection numbers
  $\Int((1,g), \mathbf{1}_{K^\flat} \otimes f)$ and $\Int((g,u), f)$
  that appear in the two version of the generalized AFL.
  The main ingredients are the Hecke operator introduced in \cite{ref:AFLspherical}
  and the KR-divisor $\ZD(u)$ introduced in \cite{ref:KR}.
  \ii In \Cref{ch:jiao}, we specialize to the situation $n = 2$
  for the intersection numbers in the semi-Lie AFL only.
  The Rapoport-Zink space $\RZ_2$ become replaced with Lubin-Tate space $\MM_2$,
  and we introduce the Gross-Keating formula
  that will be our primary tool for the calculation.
\end{itemize}
Finally, in \Cref{ch:finale} we tie everything together and establish
\Cref{thm:semi_lie_n_equals_2}.

\section{Acknowledgments}
This paper is a subset of the author's PhD thesis \cite{ref:evans_thesis}
which was completed under the supervision of Wei Zhang.
(The thesis \cite{ref:evans_thesis} contains some additional calculations
related to the $n=3$ case of \Cref{conj:inhomog_spherical}
which were not included in this paper, but can be found in the published thesis.
Some Sage source files are also provided in the published thesis.)
I thank my advisor Wei Zhang for suggesting this project,
for his infinite patience and kindness throughout my entire time during graduate school,
and his seemingly instantaneous response times at all hours of the day
to the many, many stupid questions I asked.

I thank Ryan C.~Chen for his interest in this project,
reading a draft of the thesis, and for suggesting \cite{ref:CLZ}
which helped to remove the hypotheses from the main result of this paper.
I also thank Mark Sellke for proofreading a draft of my PhD thesis
and finding several corrections.

I thank the organizers and attendees of the fall 2024
learning seminar on arithmetic inner product formula
for inviting me to speak about my work-in-progress, and their helpful comments on it
leading directly to improvements to this paper.

This work was partially supported by NSF GRFP under grant numbers 1745302 and 2141064.

\section{General background}
\label{ch:background}

\subsection{Notation}
We provide a glossary of notation that will be used in this paper.
As mentioned in the introduction, $p > 2$ is a prime,
$F$ is a finite extension of $\QQ_p$,
and $E/F$ is an unramified quadratic field extension.

\begin{itemize}
  \ii For any $a \in E$, we let $\bar a$ denote the image of $a$
  under the nontrivial automorphism of $\Gal(E/F)$.
  (Hence $a = \bar a$ exactly when $a \in F$.)
  \ii Fix $\eps \in \OO_F^\times$ such that $E = F[\sqrt{\eps}]$.
  \ii Denote by $\varpi$ a uniformizer of $\OO_F$, such that $\bar \varpi = \varpi$.
  \ii Let $q \coloneqq |\OO_F/\varpi|$ be the residue characteristic.
  (Hence $|\OO_E / \varpi| = q^2$.)
  \ii Let $v$ be the associated valuation for $\varpi$.
  \ii Let $\eta$ be the quadratic character attached to $E/F$ by class field theory,
  so that $\eta(x) = -1^{v(x)}$.
  \ii $\VV_n^+$ denotes a split $E/F$-Hermitian space of dimension $n$ (unique up to isomorphism).
  \ii Let $\beta$ denote the $n \times n$ antidiagonal matrix
  \[ \beta \coloneqq \begin{pmatrix} && 1 \\ & \iddots \\ 1 \end{pmatrix} \]
  and pick the basis of $\VV_n^+$ such that the Hermitian form on $\VV_n^+$ is given by
  \[ \VV_n^+ \times \VV_n^+ \to E \qquad (x,y) \mapsto x^\ast \beta y. \]
  \ii Set
  \[ \U(\VV_n^+) = \{ g \in \GL_n(\OO_E) \mid g^\ast \beta g = \beta\} \]
  the unitary group over $\VV_n^+$.
  Note that $\beta$ is \emph{antidiagonal}, in contrast to the convention $\beta = \id_n$
  that is often used for unitary matrices with entries in $\CC$.
  The natural hyperspecial maximal compact subgroup
  is \[ \U(\VV_n^+) \cap \GL_n(\OO_E). \]
  In some parts of the paper we abbreviate $G = \U(\VV_n^+)$ and $K = \U(\VV_n^+) \cap \GL_n(\OO_E)$
  following the convention in \cite{ref:AFLspherical}.
  \ii Let $K' \coloneqq \GL_n(\OO_E)$ denote the hyperspecial maximal compact subgroup of $G' \coloneqq \GL_n(E)$.
  \ii Let $\VV_n^-$ denote the non-split $E/F$-Hermitian space of dimension $n$
  (unique up to isomorphism), and $\U(\VV_n^-)$ the corresponding unitary group.
  This space will be realized in \Cref{ch:geo}.
\end{itemize}

\subsection{Intersection of disks in an ultrametric space}
The following two lemmas will be useful for both versions of the orbital integral.
It is a slight rephrasing of \cite[Lemma 4.4]{ref:AFL}.

\begin{lemma}
  [One-disk volume lemma]
  \label{lem:volume}
  Let $\xi \in \OO_E^\times$, $\rho \in \ZZ$, and $n \ge \max(\rho, 1)$ an integer.
  Then
  \begin{align*}
    &\Vol\left( \left\{ x \in E \mid v(1-x \bar x) = n,
      \; v(x-\xi) \ge \rho \right\} \right) \\
    &=
    \begin{cases}
      0 & \text{if } v(1-\xi\bar\xi) < \rho \\
      q^{-n}(1-q^{-2}) & \text{if } \rho \le 0 \\
      q^{-(n+\rho)}(1-q\inv) & \text{if } v(1-\xi\bar\xi) \ge \rho \ge 1.
    \end{cases}
  \end{align*}
\end{lemma}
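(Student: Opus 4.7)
The plan is to set $y := x - \xi$ and expand
\[ 1 - x\bar x = (1 - \xi\bar\xi) - \tr(\xi \bar y) - y\bar y, \]
where, writing $m := v(1-\xi\bar\xi)$, the three summands have valuations exactly $m$, at least $\rho$, and at least $2\rho$ respectively (using $v(y) \ge \rho$ and $\xi \in \OO_E^\times$). If $m < \rho$ the first summand strictly dominates, so $v(1-x\bar x) = m < \rho \le n$, contradicting $v(1-x\bar x) = n$; hence the set is empty, which covers the first case of the lemma.

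For the case $\rho \le 0$, since $n \ge 1$ the equation $v(1-x\bar x) = n$ forces $x\bar x \in 1 + \varpi\OO_F$ and thus $x \in \OO_E^\times$, which makes $v(x-\xi) \ge 0 \ge \rho$ automatic. The remaining task is to compute $\Vol\{x \in \OO_E^\times : v(1-x\bar x) = n\}$, and I plan to exploit the norm map $N \colon \OO_E^\times \twoheadrightarrow \OO_F^\times$: being a continuous surjective group homomorphism with compact kernel (since $E/F$ is unramified), its pushforward of Haar measure on $\OO_E^\times$ is a scalar multiple of Haar on $\OO_F^\times$, with constant $\Vol(\OO_E^\times)/\Vol(\OO_F^\times) = (1-q^{-2})/(1-q^{-1}) = 1 + q^{-1}$. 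Combined with $\Vol_F\{t \in \OO_F^\times : v(1-t) = n\} = \Vol_F(1 + \varpi^n \OO_F^\times) = q^{-n}(1-q^{-1})$, this yields the claimed $q^{-n}(1-q^{-2})$.

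For the remaining case $\rho \ge 1$ with $m \ge \rho$, I plan to substitute $y = \bar\xi^{-1}\varpi^\rho w$ (which contributes the measure factor $q^{-2\rho}$) and then parametrize $w = t/2 + c\sqrt{\eps}$ with $(t,c) \in \OO_F^2$ (measure preserving since $p \ne 2$), giving $\tr(w) = t$ and $N(w) = t^2/4 - c^2\eps$. This transforms the condition $v(1-x\bar x) = n$ into
\[ v\bigl(\delta_0 - t - \varpi^\rho u(t^2/4 - c^2\eps)\bigr) = n - \rho, \]
where $\delta_0 := (1-\xi\bar\xi)/\varpi^\rho \in \OO_F$ and $u := N(\xi)^{-1} \in \OO_F^\times$. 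For each fixed $c$, the map $t \mapsto \delta_0 - t - \varpi^\rho u(t^2/4 - c^2\eps)$ has derivative $-(1 + \varpi^\rho ut/2) \in -1 + \varpi^\rho \OO_F$, hence is a unit; by Hensel's lemma it is therefore a measure-preserving bijection $\OO_F \to \OO_F$. The inner $t$-integral accordingly equals $\Vol\{s \in \OO_F : v(s) = n-\rho\} = q^{-(n-\rho)}(1-q^{-1})$, and integrating over $c \in \OO_F$ (of volume $1$) and rescaling by $q^{-2\rho}$ yields $q^{-(n+\rho)}(1-q^{-1})$. The main subtlety lies in setting up the right substitution in this last case: the specific choice $y = \bar\xi^{-1}\varpi^\rho w$ is designed so that $\tr(\xi \bar y) = \varpi^\rho \tr(w)$ cleanly separates from $y\bar y$, and the $p \ne 2$ hypothesis is used both to make $2$ a unit (ensuring the trace-coordinate parametrization of $\OO_E$ is measure-preserving) and to keep the derivative above from landing in the wrong residue class.
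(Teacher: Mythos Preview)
Your proof is correct. The approach differs from the paper's in a useful way. The paper invokes an integral identity from \cite[Lemma~4.4]{ref:AFL}, namely
\[
  \int_E \psi(x)\,dx = \frac{1}{1-q^{-1}} \int_{t \in F} \int_{y \in \OO_E^\times} \psi\Bigl(x_t \cdot \frac{y}{\bar y}\Bigr)\,dy\,dt,
\]
to compute the $\rho \le 0$ case directly, and then simply defers the $\rho \ge 1$ case to that same reference. Your argument is self-contained: the pushforward-of-Haar step for $\rho \le 0$ is morally the same content as the Schwartz identity above (both amount to disintegrating along the norm map), but for $\rho \ge 1$ you give an explicit change of variables $y = \bar\xi^{-1}\varpi^\rho w$ followed by a Hensel/unit-Jacobian argument in the $t$-coordinate, which the paper does not spell out. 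Your route is more elementary and avoids the external citation; the paper's route is shorter on the page precisely because it outsources the work. One small remark: in your first case it is worth noting that $m = v(1-\xi\bar\xi) \ge 0$ (since $\xi \in \OO_E^\times$), so $m < \rho$ forces $\rho \ge 1$ and hence $2\rho > \rho > m$, which is what makes the domination by the first summand go through cleanly.
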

\begin{proof}
  When $\rho \le 0$, the condition $v(x - \xi) \ge \rho$ is vacuously true,
  so we just are computing
  \[ \Vol\left( \left\{ x \in E \mid v(1-x \bar x) = n \right\} \right). \]
  However, in general for any Schwartz function $\psi$ on $E$ we have an identity
  \begin{equation}
    \int_E \psi(x) \odif x
    = \frac{1}{1-q\inv} \int_{y \in F} \int_{t \in \OO_E^\times}
    \psi\left( x_t \cdot \frac{y}{\bar y} \right) \odif y \odif t
    \label{eq:schwartz}
  \end{equation}
  where $x_t$ is any choice of element $x_t \in E$ such that $t = x_t \bar{x_t}$
  (see the proof of \cite[Lemma 4.4]{ref:AFL}).
  Note the measures here are additive despite $t \in \OO_E^\times$.
  So if one selects
  \[ \psi(x) = \mathbf{1}_{v(1 - \Norm(x)) = n} \]
  then we get
  \begin{align*}
    \Vol\left( \left\{ x \in E \mid v(1-x \bar x) = n \right\} \right)
    &= \frac{1}{1-q\inv} \int_{t \in F} \int_{y \in \OO_E^\times}
    \mathbf{1}_{v\left( 1 - \Norm\left(x_t \cdot \frac{y}{\bar y} \right) \right) = n} \odif y \odif t \\
    &= \frac{1}{1-q\inv} \int_{t \in F} \int_{y \in \OO_E^\times}
    \mathbf{1}_{v\left( 1 - t \right) = n} \odif y \odif t \\
    &= \frac{\Vol_E(\OO_E^\times)}{1-q\inv} \cdot \Vol_F(1 + \varpi^{-n} \OO_F^\times) \\
    &= \frac{1 - \frac{1}{q^2}}{1-q\inv}
      \cdot \left( q^{-n} \cdot \left( 1 - \frac 1q \right) \right) \\
    &= q^{-n} (1-q^{-2}).
  \end{align*}
  The case $\rho > 0$ is proved in \cite[Lemma 4.4]{ref:AFL}
  using the same method of using \eqref{eq:schwartz}.
\end{proof}

We also comment on the well-known fact that in an ultrametric space,
any two disks are either disjoint or one is contained in the other.
See \Cref{fig:no_mastercard}.
\begin{lemma}
  [No ultrametric MasterCard logo]
  Choose $\xi_1, \xi_2 \in E$ and $\rho_1 \geq \rho_2$.
  Consider the two disks:
  \begin{align*}
    D_1 &= \left\{ x \in E \mid v(x-\xi_1) \ge \rho_1 \right\} \\
    D_2 &= \left\{ x \in E \mid v(x-\xi_2) \ge \rho_2 \right\}.
  \end{align*}
  Then, if $v(\xi_1-\xi_2) \geq \rho_2$, we have $D_1 \subseteq D_2$.
  If not, instead $D_1 \cap D_2 = \varnothing$.
  \label{lem:no_mastercard}
\end{lemma}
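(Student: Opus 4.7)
The plan is to prove both assertions by direct application of the ultrametric (strong triangle) inequality $v(a+b) \geq \min(v(a), v(b))$, which is available because $E$ is a non-Archimedean local field. The entire lemma is really just a packaging of this inequality, so the proof should be short and contain essentially no computation beyond invoking it.

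For the containment assertion, assume $v(\xi_1 - \xi_2) \geq \rho_2$ and pick any $x \in D_1$. Then $v(x - \xi_1) \geq \rho_1 \geq \rho_2$, so writing $x - \xi_2 = (x - \xi_1) + (\xi_1 - \xi_2)$ and applying the ultrametric inequality gives $v(x - \xi_2) \geq \min(\rho_1, \rho_2) = \rho_2$, which places $x$ in $D_2$. This yields $D_1 \subseteq D_2$.

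For the disjointness assertion, I would argue by contradiction: suppose $v(\xi_1 - \xi_2) < \rho_2$ but some $x$ lies in $D_1 \cap D_2$. Then $v(x - \xi_1) \geq \rho_1 \geq \rho_2$ and $v(x - \xi_2) \geq \rho_2$, and writing $\xi_1 - \xi_2 = (\xi_1 - x) + (x - \xi_2)$ would force $v(\xi_1 - \xi_2) \geq \rho_2$, contradicting the assumption. Hence $D_1 \cap D_2 = \varnothing$.

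There is no real obstacle here; the only thing to be careful about is respecting the hypothesis $\rho_1 \geq \rho_2$ so that the minimum in the first step comes out to $\rho_2$ (otherwise the containment goes the other way, which is why the hypothesis is imposed). The lemma is stated purely to be cited later in intersection-disk counting arguments, so no sharper form is needed.
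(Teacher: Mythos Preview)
Your proof is correct and is essentially the same approach as the paper's: both arguments reduce immediately to the ultrametric inequality, with the paper phrasing it as the general fact that ultrametric balls are either nested or disjoint and then testing whether $\xi_1 \in D_2$, while you unpack that fact directly.
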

\begin{proof}
  Because $E$ is an ultrametric space and $\Vol(D_1) \leq \Vol(D_2)$,
  we either have $D_1 \subseteq D_2$ or $D_1 \cap D_2 = \varnothing$.
  The latter condition checks which case we are in by testing if $\xi_1 \in D_2$,
  since $\xi_1 \in D_1$.
\end{proof}

\begin{figure}
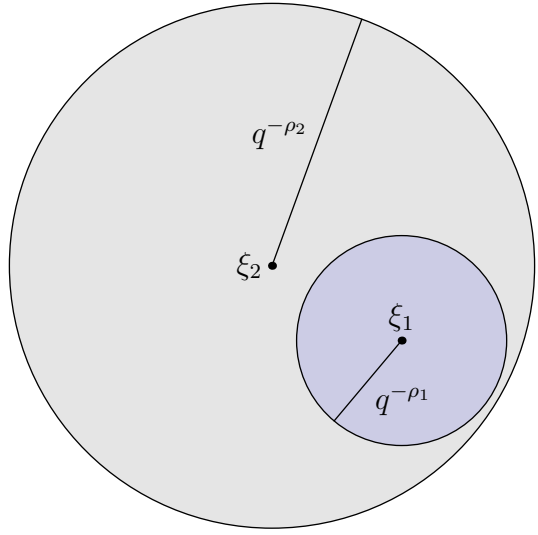

\centering
\begin{asy}
  defaultpen(fontsize(12pt));
  size(7cm);
  pair O1 = 0.57*dir(-30);
  pair O2 = (0,0);
  real r1 = 0.4;
  real r2 = 1;
  filldraw(circle(O2, r2), rgb(0.9, 0.9, 0.9));
  filldraw(circle(O1, r1), rgb(0.8, 0.8, 0.9));
  pair P1 = O1+r1*dir(230);
  pair P2 = O2+r2*dir( 70);
  draw(O1--P1);
  draw(O2--P2);
  dot("$\xi_1$", O1, dir(90));
  dot("$\xi_2$", O2, dir(180));
  label("$q^{-\rho_1}$", midpoint(O1--P1), dir(P1-O1)*dir(90));
  label("$q^{-\rho_2}$", midpoint(O2--P2), dir(P2-O2)*dir(90));
\end{asy}
\caption{Figure corresponding to \Cref{lem:no_mastercard}.}
\label{fig:no_mastercard}
\end{figure}

We package both of these results together in this lemma that will be used repeatedly.
\begin{lemma}
  [Two-disk volume lemma]
  \label{lem:quadruple_ineq}
  Let $\xi_1, \xi_2 \in \OO_E^\times$ and let $\rho_1 \ge \rho_2$ be integers.
  Also let $n \ge \max(\rho_1, 1)$ be an integer.
  Then the set of points $x \in E$ satisfying all of the equations
  \begin{align*}
    v(x - \xi_1) &\ge \rho_1 \\
    v(x - \xi_2) &\ge \rho_2 \\
    v(1 - x \ol x) &= n
  \end{align*}
  has positive volume if and only if
  \[ v(1 - \xi_1 \bar{\xi_1}) \ge \rho_1, \qquad \rho_2 \le v(\xi_1 - \xi_2). \]
  In that case, the volume is equal to
  \[
    \begin{cases}
      q^{-(n+\rho_1)}(1-q\inv) & \text{if } \rho_1 \ge 1 \\
      q^{-n}(1-q^{-2}) & \text{if } \rho_1 \le 0.
    \end{cases}
  \]
\end{lemma}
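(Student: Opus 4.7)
The plan is simply to combine the two previously proved lemmas: \Cref{lem:volume} (the one-disk volume lemma) and \Cref{lem:no_mastercard} (the ``no ultrametric MasterCard logo'' lemma). The three conditions on $x$ decouple naturally: the first two are purely geometric (they cut out an intersection of two ultrametric disks $D_1 \cap D_2$), while the third is the norm condition handled by \Cref{lem:volume}. So the proof has two stages.

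First, I would analyze $D_1 \cap D_2$ using \Cref{lem:no_mastercard}. Since $\rho_1 \ge \rho_2$ the disk $D_1$ is no larger than $D_2$, so the lemma gives a dichotomy: either $D_1 \subseteq D_2$, which happens precisely when $v(\xi_1 - \xi_2) \ge \rho_2$, or else $D_1 \cap D_2 = \varnothing$. In the latter case the set in question is empty and has volume $0$, and moreover the condition in the statement that $\rho_2 \le v(\xi_1 - \xi_2)$ fails, so the claimed answer (``positive volume if and only if\dots'') is correct in this direction.

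Second, assume $D_1 \subseteq D_2$. Then the second condition $v(x - \xi_2) \ge \rho_2$ is redundant, and we are computing
\[ \Vol\left(\{ x \in E \mid v(x - \xi_1) \ge \rho_1,\; v(1 - x\bar x) = n \}\right). \]
This is exactly the quantity computed in \Cref{lem:volume} with $\xi = \xi_1$ and $\rho = \rho_1$ (the hypothesis $n \ge \max(\rho_1, 1)$ matches). Reading off the three cases there: if $v(1 - \xi_1 \bar{\xi_1}) < \rho_1$ the volume vanishes; if $\rho_1 \le 0$ the volume is $q^{-n}(1-q^{-2})$ (and in this regime the condition $v(1-\xi_1\bar{\xi_1}) \ge \rho_1$ is automatic since $n \ge 1 > \rho_1$ forces $\xi_1 \in \OO_E^\times$ to satisfy $v(1-\xi_1\bar{\xi_1}) \ge 0 \ge \rho_1$); and if $v(1-\xi_1\bar{\xi_1}) \ge \rho_1 \ge 1$ the volume is $q^{-(n+\rho_1)}(1-q^{-1})$. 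Assembling these with the geometric dichotomy yields exactly the ``if and only if'' and the stated piecewise formula.

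There is no real obstacle here; the content of the lemma is already in the two earlier results, and all that remains is bookkeeping to verify that the two regimes $\rho_1 \ge 1$ and $\rho_1 \le 0$ in the conclusion line up correctly with the three cases of \Cref{lem:volume} (in particular, checking that the condition $v(1-\xi_1\bar{\xi_1}) \ge \rho_1$ is automatic when $\rho_1 \le 0$, so that it does not need to be listed as a separate hypothesis).
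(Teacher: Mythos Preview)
Your proposal is correct and is precisely the intended argument: the paper does not even write out a proof for this lemma, simply presenting it as a packaging of \Cref{lem:volume} and \Cref{lem:no_mastercard}, and your two-stage analysis is exactly that packaging made explicit.
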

In the situation where $\xi_i \notin \OO_E^\times$,
the quantity $v(x-\xi_i) = \min(0, v(\xi_i))$
becomes independent of the value of $x$,
and so \Cref{lem:quadruple_ineq} becomes unnecessary
(\Cref{lem:volume} will suffice).
We will deal with this situation when it arises;
it turns out this will only occur when $v(b) \neq 0$.

\subsection{The spaces $S_n(F)$ and $S_n(F) \times V'_n(F)$}
For the analytic side of the two AFL conjectures we investigate,
the following two spaces will be used as inputs to our weighted orbital integrals.
\begin{definition}
  [$S_n(F)$; {\cite[(4.10)]{ref:highdim2024}}]
  We define the symmetric space
  \[ S_n(F) \coloneqq \left\{ g \in \GL_n(E) \mid g \bar g = \id_n \right\}. \]
  It has a natural left action of $\GL_n(E)$ by
  \begin{align*}
    \GL_n(E) \times S_n(F) &\to S_n(F) \\
    g \cdot \gamma &\coloneqq g \gamma \bar g^{-1}.
  \end{align*}
\end{definition}

\begin{definition}
  [$V'_n(F)$; {\cite[(4.11)]{ref:highdim2024}}]
  We set
  \[ V'_n(F) \coloneqq F^n \times (F^n)^\vee \]
  where $-^\vee$ denotes the $F$-dual space, i.e., $(F^n)^\vee = \Hom_F(F^n, F)$.
  Then we may also consider the augmented space
  \[ S_n(F) \times V'_n(F). \]
  If we identify $F^n$ with column vectors of length $n-1$ and $(F^n)^\vee$
  with row vectors of length $n$ then we have a left action of $\GL_n(F)$ by
  \begin{align*}
    \GL_n(F) \times (S_n(F) \times V'_n)(F)
    &\to S_n(F) \times V'_n(F) \\
    h \cdot (\gamma, \uu, \vv^\top)
    &\coloneqq (h \gamma h^{-1}, h\uu, \vv^\top h^{-1}).
  \end{align*}
  Note that according to the embedding
  \begin{align*}
    S_n(F) \times V'_n(F)
    &\hookrightarrow \Mat_{n+1}(E) \\
    (\gamma, \uu, \vv^\top)
    &\mapsto \begin{pmatrix} \gamma & \uu \\ \vv^\top & 0 \end{pmatrix}
  \end{align*}
  we can consider elements of $S_n(F) \times V'_n(F)$ as elements of $\Mat_{n+1}(E)$ too.
  In that case the action of $h \in \GL_{n+1}(F)$
  coincides with $h \cdot g \mapsto hg\bar{h}^{-1}$ as well.
\end{definition}

\begin{definition}
  [$K'_S$]
  For brevity, let
   \[ K'_S \coloneqq S_n(F) \cap \GL_n(\OO_F). \]
\end{definition}

\subsection{Definition of Hecke algebra}
We reminder the reader the definition of a Hecke algebra.
For this subsection, $G$ will denote \emph{any}
unimodular locally compact topological group,
and $K$ any closed subgroup of $G$.

\begin{definition}
  [$\HH(G,K)$]
  The \emph{Hecke algebra}
  \[ \HH(G, K) \coloneqq \QQ[K \backslash G \slash K] \]
  is defined as the space of compactly supported $K$-bi-invariant
  locally constant functions on $G$.
  (The adjective \emph{spherical} Hecke algebra refers to the special case
  where $K$ is a maximal compact subgroup of $G$,
  which is the main case of interest for us.)

  Given two such functions $f_1$ and $f_2$ in $\HH(G,K)$,
  one can consider define the convolution
  \[ (f_1 \ast f_2)(g) \coloneqq \int_G f_1(g\inv x) f_2(x) \; \odif x \]
  which makes $\HH(G, K)$ into a $\QQ$-algebra,
  whose identity element is $\mathbf{1}_K$.
\end{definition}

In other sources, this may be denoted $\HH_K(G)$ or even just $\HH_K$.
(So what is written in $\HH_{K'^\flat \otimes K'}$ in other places
will be written as $\HH(G'^\flat \otimes G', K'^\flat \otimes K')$ here).

In the case where $G$ is a reductive Lie group and
$K$ is the maximal compact subgroup
(or more generally whenever $(G,K)$ is a Gelfand pair),
this Hecke algebra is actually commutative.

\subsection{The specific Hecke algebras $\HH(\GL_n(E))$ and $\HH(\U(\VV_n^+))$ and the module $\HH(S_n(F))$}
For our purposes, we define shorthands for two specific Hecke algebras
that will come up consistently:
\begin{align*}
  \HH(\GL_n(E)) &\coloneqq \HH(\GL_n(E), \GL_n(\OO_E)) \\
  \HH(\U(\VV_n^+)) &\coloneqq \HH(\U(\VV_n^+), \U(\VV_n^+) \cap \GL_n(\OO_E)).
\end{align*}
Note that $\GL_n(\OO_E)$ and $\U(\VV_n^+) \cap \GL_n(\OO_E)$
are the natural hyperspecial maximal compact subgroups of $\GL_n(E)$ and $\U(\VV_n^+$),
respectively.

Now the symmetric space $S_n(F)$ is not a group,
so it does not make sense to define the same thing here.
Nevertheless, we introduce
\[ \HH(S_n(F)) \coloneqq \mathcal{C}_{\mathrm{c}}^{\infty}(S_n(F))^{K'} \]
as the set of smooth compactly supported functions on $S_n(F)$
which are invariant under the action of $K' \subseteq G'$;
this is an $\HH(\GL_n(E))$-module,
where the action of $f \in \HH(\GL_n(E))$ on $\phi \in \HH(S_n(F))$ is given by
\[ (f \cdot \phi)(\gamma) \coloneqq \int_G f(g) \phi(g \cdot \gamma) \odif g \]
for $\gamma \in S_n(F)$.
This does \textbf{not} have a multiplication structure at the moment, \emph{a priori}.
However, we will later (in \Cref{ch:satake}) give an isomorphism from $\HH(S_n(F))$
to $\HH(\U(\VV_n^+))$ as $\QQ$-vector spaces;
since the latter is a $\QQ$-algebra,
this induces a multiplication structure on $\HH(S_n(F))$
and consequently we may speak of $\HH(S_n(F))$ as a ring under this isomorphism.

Throughout this paper, to be consistent with the notation, we denote
\begin{itemize}
  \ii elements of $\HH(\U(\VV_n^+))$ using $f$ or $f_i$ or similar
    (i.e.\ lowercase Roman letters);
  \ii elements of $\HH(\GL_n(E))$ by $f'$ or $f'_i$ or similar
    (i.e.\ lowercase Roman letters with apostrophes);
  \ii elements of $\HH(S_n(F))$ by $\phi$ or $\phi_i$
    (i.e.\ lowercase Greek letters).
\end{itemize}

\section{Regular semi-simplicity and matching}
\label{ch:rs_matching}

\subsection{Regular semi-simple elements}
We first recall the notion of regularity
that first appeared in \cite[\S6]{ref:multoneconj}.

\begin{definition}
  [Regular semisimple in $\Mat_n(E)$]
  \label{def:rs}
  Consider a $n \times n$ matrix
  \[ \begin{pmatrix} A & \uu \\ \vv^\top & d \end{pmatrix} \in \Mat_n(E) \]
  where $A$ is an $(n-1) \times (n-1)$ matrix.
  Then we say this matrix is \emph{regular semi-simple} if
  \[ \left< \uu, A\uu, \dots, A^{n-2}\uu \right> \]
  and \[ \left< \vv^\top, \vv^\top A, \dots, \vv^\top A^{n-2} \right> \]
  are each a basis of $E^{n-1}$.
  Equivalently, the matrix
  \[ \left[ \vv^\top A^{i+j-2} \uu \right]_{i,j=1}^{n-1} \]
  should be nonsingular.
\end{definition}

\begin{remark}
  [Equivalent definition of regular semisimple]
  In \cite[Theorem 6.1]{ref:multoneconj}, this definition is shown to be equivalent to
  requiring that, under the action of conjugation by $\GL_{n-1}(E)$:
  \begin{itemize}
  \ii the matrix has trivial stabilizer; and
  \ii the $\GL_{n-1}(\ol E)$-orbit is a Zariski-closed subset of $\GL_n(\ol E)$.
  \end{itemize}
  Here $\ol E$ is as usual an algebraic closure of $E$.
\end{remark}

\begin{remark}
  [Invariants under $\GL_{n-1}(E)$ conjugation; {\cite[Proposition 6.2]{ref:multoneconj}}]
  It turns out we can detect whether two regular semisimple elements
  \[
    \begin{pmatrix} A_1 & \uu_1 \\ \vv_1^\top & d_1 \end{pmatrix},
    \begin{pmatrix} A_2 & \uu_2 \\ \vv_2^\top & d_2 \end{pmatrix}
    \in \Mat_{n}(E)
  \]
  are conjugate by an element of $\GL_{n-1}(E)$.
  This happens if and only if the following conditions all hold:
  \begin{itemize}
    \ii The matrices $A_1$ and $A_2$ have the same characteristic polynomial;
    \ii We have \[ \vv_1^\top A_1^i \uu_1 = \vv_2^\top A_2^i \uu_2 \]
    for every $i = 0, 1, \dots, n-2$; and
    \ii We have $d_1 = d_2$.
  \end{itemize}
  Thus, this gives a set of invariants that completely classify the orbits
  under the action of $\GL_{n-1}(E)$.

  Put another way, the invariants of
  \[ \begin{pmatrix} A & \uu \\ \vv^\top & d \end{pmatrix} \in \Mat_n(E) \]
  are the (monic) characteristic polynomial of $A$
  (which has $n-1$ coefficients besides the leading coefficient),
  the values of $\vv^\top A^i \uu$ for $i = 0, \dots, n-2$
  and the number $d$, for a total of $2n-1$ numbers.
  \label{rem:invariants}
\end{remark}

We can now speak of regular-simplicity in each of the four
particular cases relevant to this paper.
\begin{definition}
  [Regular semisimple]
  In the group version of the AFL:
  \begin{itemize}
    \ii We say $\gamma \in S_n(F)$ is regular semisimple
    if its image under the inclusion $S_n(F) \subseteq \Mat_n(E)$ is regular semisimple.
    We write $\gamma \in S_n(F)\rs$.

    \ii For $g \in \U(\VV_n^\pm)$,
    we say $g$ is regular semisimple
    if its image under the inclusion $\U(\VV_n^\pm) \subseteq \Mat_n(E)$ is regular semisimple.
    We write $g \in \U(\VV_n^\pm)\rs$.
  \end{itemize}
  In the semi-Lie version of the AFL:
  \begin{itemize}
    \ii We say $(\gamma, \uu, \vv^\top) \in S_n(F) \times V'_n(F)$
    is regular semisimple if its image under the embedding
    \begin{equation}
      \begin{aligned}
        S_n(F) \times V'_n(F) &\hookrightarrow \Mat_{n+1}(E) \\
        (\gamma, \uu, \vv^\top) &\mapsto \begin{pmatrix} \gamma & \uu \\ \vv^\top & 0 \end{pmatrix}
      \end{aligned}
      \label{eq:embed_FJ_analytic}
    \end{equation}
    is regular semisimple.
    In other words, we require that
    both of the sets
    $\left( \uu, \gamma \uu \dots, \gamma^{n-1}\uu \right)$
    and
    $\left( \vv^\top, \vv^\top \gamma, \dots, \vv^\top \gamma^{n-1} \right)$
    are bases of $E^n$.
    In this case we write $(\gamma, \uu, \vv^\top) \in (S_n(F) \times V'_n(F))\rs$.

    \ii For $(g,u) \in \U(\VV_n^\pm) \times \VV_n^\pm$ we say $(g, u)$
    is regular semisimple if its image under the embedding
    \begin{equation}
      \begin{aligned}
        \U(\VV_n^\pm) \times \VV_n^\pm &\hookrightarrow \Mat_{n+1}(E) \\
        (g, u) &\mapsto \begin{pmatrix} g & u \\ u^\ast & 0 \end{pmatrix}
      \end{aligned}
      \label{eq:embed_FJ_geometric}
    \end{equation}
    is regular semisimple.
    Here $u^\ast$ is the conjugate transpose.

    This is equivalent to the set $\left(  u, gu, \dots, g^{n-1}u \right)$
    being linearly independent (i.e.\ form a basis of $\VV_n^\pm$);
    in this case the independence of $\left( u^\ast, u^\ast g, \dots, u^\ast g^{n-1} \right)$
    is redundant, so it's enough to verify one condition.
    We write $(g,u) \in (\U(\VV_n^\pm) \times \VV_n^\pm)\rs$.
  \end{itemize}
  \label{def:regular}
\end{definition}

\subsection{Matching in the group version of the inhomogeneous AFL}
We now describe the matching condition used in the group version of AFL.
\begin{definition}
  [Matching $S_n(F)\rs \longleftrightarrow \U(\VV_n^\pm)\rs$;
  {\cite[p.\ 202]{ref:AFL}}]
  We say $\gamma \in S_n(F)\rs$ matches the element $g \in \U(\VV_n^\pm)\rs$
  if $g$ is conjugate to $\gamma$ by an element of $\GL_{n-1}(E)$.
  By \Cref{rem:invariants}, this is an assertion that
  the invariants for $\gamma$ and $g$ coincide.
  \label{def:matching_inhomog}
\end{definition}
In that case, we have the following result.
\begin{proposition}
  [{\cite[Lemma 2.3]{ref:AFL}}; see also {\cite[(3.3.2)]{ref:AFLspherical}}]
  \label{prop:valuation_delta_matching_group}
  This definition of matching gives
  a bijection of regular semisimple orbits
  \[ [S_n(F)]\rs \xrightarrow{\sim} [\U(\VV_n^+)]\rs \amalg [\U(\VV_n^-)]\rs. \]
  Moreover, we can detect whether $\gamma \in S_n(F)\rs$ matches an orbit of
  $\U(\VV_n^+)\rs$ or $\U(\VV_n^-)\rs$ as follows.
  Suppose we write $\gamma$ in the format of \Cref{def:rs} and consider
  \[ \Delta \coloneqq \det \left[ \vv^\top A^{i+j-2} \uu \right]_{i,j=1}^{n-1} \neq 0. \]
  Then
  \begin{itemize}
    \ii $\gamma$ matches an orbit in $\U(\VV_n^+)\rs$ if $v(\Delta)$ is even;
    \ii $\gamma$ matches an orbit in $\U(\VV_n^-)\rs$ if $v(\Delta)$ is odd.
  \end{itemize}
\end{proposition}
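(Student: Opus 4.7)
The plan is to decompose the proof into two essentially independent claims: first, establishing the bijection on $\GL_{n-1}(E)$-orbits, and second, verifying the dichotomy via $v(\Delta)$. Both statements are quoted from \cite{ref:AFL, ref:AFLspherical}, so the goal is to reconstruct the argument in outline.

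For the bijection claim, the main input is the classification recalled in \Cref{rem:invariants}: the $\GL_{n-1}(E)$-orbit of any regular semisimple element of $\Mat_n(E)$ is determined by the $2n-1$ invariants consisting of the characteristic polynomial of $A$, the pairings $\vv^\top A^i \uu$ for $i = 0, \dots, n-2$, and the entry $d$. I would first observe that the invariants attached to any $\gamma \in S_n(F)\rs$ are constrained by the relation $\gamma \bar\gamma = 1$ to lie in a specific subvariety of the space of invariants (and likewise for $g \in \U(\VV_n^\pm)\rs$ via $g^\ast \beta g = \beta$). One then checks that these two subvarieties coincide, so that every orbit in $[S_n(F)]\rs$ has a candidate match among the unitary orbits. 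Given such a compatible tuple, the question of whether it is realized in $\U(\VV_n^+)\rs$ versus $\U(\VV_n^-)\rs$ is controlled by a Galois-cohomology obstruction class landing in $F^\times / \Nm(E^\times) \cong \ZZ/2$, exhausting the two possibilities exactly once.

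For the discriminant criterion, the key step is to identify $\Delta$ up to a norm from $E^\times$ with the discriminant of the Hermitian form obtained by restricting the form on $\VV_n^\pm$ to the subspace of $E^{n-1}$ spanned by $\uu, A\uu, \dots, A^{n-2}\uu$. When $g \in \U(\VV_n^\pm)\rs$ is written in the block form of \Cref{def:rs}, the identity $g^\ast \beta g = \beta$ together with the antidiagonal shape of $\beta$ allows one to check that $\vv^\top A^{i+j-2}\uu$ agrees, up to a unit, with the Hermitian pairing of $A^{i-1}\uu$ against $A^{j-1}\uu$. Hence $\bigl[\vv^\top A^{i+j-2}\uu\bigr]_{i,j=1}^{n-1}$ is (up to unit) a Gram matrix for this Hermitian form, and its determinant modulo $\Nm(E^\times)$ precisely distinguishes $\VV_n^+$ from $\VV_n^-$ by the classification of Hermitian forms over a non-Archimedean local field.

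The hardest step I anticipate is bookkeeping the $\GL_{n-1}(E)$-ambiguity: one must verify that conjugation changes $\Delta$ only by a norm from $E^\times$ (so that the parity of $v(\Delta)$ is well-defined on $\GL_{n-1}(E)$-orbits), and then that this parity matches the sign in $F^\times/\Nm(E^\times)$ classifying $\VV_n^\pm$. Both facts follow from careful computations with the block decomposition and the unitarity relation; once they are established, the existence of matches and the sign assignment follow essentially formally.
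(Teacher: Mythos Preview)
The paper does not actually prove this proposition: it is stated with a citation to \cite[Lemma 2.3]{ref:AFL} (and a pointer to \cite[(3.3.2)]{ref:AFLspherical}) and no proof environment follows. So there is no ``paper's own proof'' to compare your proposal against; the result is imported wholesale from the literature.

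That said, your sketch is a faithful reconstruction of the standard argument underlying the cited lemma. The two-step decomposition (orbit bijection via the invariants of \Cref{rem:invariants}, then the sign via a discriminant computation) is exactly how the proof goes in \cite{ref:AFL}. Your identification of $\Delta$ as, up to a norm from $E^\times$, the Gram determinant of $\uu, A\uu, \dots, A^{n-2}\uu$ in the ambient Hermitian space is the crux, and your remark that $\GL_{n-1}(E)$-conjugation changes $\Delta$ only by $\Nm_{E/F}(\det h)$ is the correct well-definedness check. One point worth making precise if you were to write this out fully: the block form in \Cref{def:rs} is with respect to the decomposition $\VV_n^\pm = (\VV_n^\pm)^\flat \oplus E w_0$, and the unitarity relation $g^\ast \beta g = \beta$ translates into $\vv$ being the image of $\uu$ under the Hermitian form on $(\VV_n^\pm)^\flat$ (up to conjugation and a twist by $A$); this is where the Gram-matrix identification comes from, and it depends on the chosen $w_0$ having unit norm, which is fine for $\VV_n^+$ but requires a word of justification in the non-split case.
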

In this paper, \Cref{conj:inhomog_spherical}
requires that $\gamma$ should match an element of $\U(\VV_n^-)\rs$
and consequently we will usually only be interested in the latter case.
We write the following abbreviation:
\begin{definition}
  [$S_n(F)\rs^\pm$]
  We let \[ S_n(F)\rs^- \subset S_n(F)\rs \]
  denote the subset of elements in $S_n(F)\rs$ that match
  with an element in $\U(\VV_n^-)\rs$.
  Define $S_n(F)\rs^+$ similarly.
  Hence $S_n(F)\rs = S_n(F)\rs^- \amalg S_n(F)\rs^+$.
\end{definition}

\subsection{Matching in the semi-Lie version of the AFL}
For the semi-Lie version matching is defined analogously:
\begin{definition}
  [Matching $(S_n(F) \times V'_n(F))\rs \longleftrightarrow (\U(\VV_n^\pm) \times \VV_n^\pm)\rs$;
  {\cite[\S1.3]{ref:liuFJ}}]
  We say $(\gamma, \uu, \vv^\top) \in (S_n(F) \times V'_n(F))\rs$
  matches the element $(g, u) \in (\U(\VV_n^\pm) \times \VV_n^\pm)\rs$ if
  their images under the embeddings \eqref{eq:embed_FJ_analytic}
  and \eqref{eq:embed_FJ_geometric} are conjugate by an element of $\GL_n(E)$.

  Unwrapping this with \Cref{rem:invariants},
  an equivalent definition is to require both of the following conditions:
  \begin{itemize}
    \ii As elements of $\GL_n(E)$,
    both $g$ and $\gamma$ have the same characteristic polynomial.
    \ii We have $\vv^\top \gamma^i \uu = \left< g^i u, u \right>$ for all $0 \le i \le n-1$,
    where $\left< -,- \right>$ is the Hermitian form on $\VV_n^\pm$.
  \end{itemize}
  \label{def:matching_semi_lie}
\end{definition}
We have the following analogous criteria for matching.
\begin{proposition}
  [\cite{ref:liuFJ}]
  \label{prop:valuation_delta_matching_semilie}
  This definition of matching gives a bijection of regular semisimple orbits
  \[ [S_n(F) \times V'_n(F)]\rs \xrightarrow{\sim} [\U(\VV_n^+) \times \VV_n^+]\rs \amalg [\U(\VV_n^-) \times \VV_n^-]\rs. \]
  Moreover, we can detect whether $\guv \in S_n(F)\rs$ matches an orbit of
  $(\U(\VV_n^+) \times \VV_n^+)\rs$ or $(\U(\VV_n^-) \times \VV_n^-)\rs$ as follows:
  consider the determinant
  \[ \Delta \coloneqq \det \left[ \vv^\top \gamma^{i+j-2} \uu \right]_{i,j=1}^n \neq 0. \]
  Then
  \begin{itemize}
    \ii $\gamma$ matches an orbit in $(\U(\VV_n^+) \times \VV_n^+)\rs$ if $v(\Delta)$ is even;
    \ii $\gamma$ matches an orbit in $(\U(\VV_n^-) \times \VV_n^-)\rs$ if $v(\Delta)$ is odd.
  \end{itemize}
\end{proposition}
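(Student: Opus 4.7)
The plan is to mirror the proof of \Cref{prop:valuation_delta_matching_group}. By \Cref{rem:invariants} applied to the embeddings \eqref{eq:embed_FJ_analytic} and \eqref{eq:embed_FJ_geometric}, both of which place regular semisimple elements into $\Mat_{n+1}(E)$ with bottom-right entry zero, the matching condition is equality of the $\GL_n(E)$-conjugation invariants: the characteristic polynomial of $\gamma = g$ and the $n$ moments $\vv^\top \gamma^i \uu = u^\ast g^i u$ for $i = 0, \dots, n-1$, with $u^\ast$ understood as the Hermitian adjoint of $u$ so that $u^\ast g^i u = \langle g^i u, u\rangle$. For the bijection, these invariants reconstruct a Hermitian form on $E^n$: the Toeplitz matrix $T \coloneqq [\vv^\top \gamma^{i-j} \uu]_{i,j=1}^n$ is Hermitian (using $\bar \gamma = \gamma^{-1}$ and $\uu, \vv$ being $F$-valued), and it serves as the Gram matrix of the cyclic basis $(u, gu, \dots, g^{n-1}u)$. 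Since Hermitian forms over the unramified extension $E/F$ are classified by their discriminant class in $F^\times / \Nm_{E/F}(E^\times) \cong \ZZ/2\ZZ$, the resulting space is uniquely $\VV_n^+$ or $\VV_n^-$ according to the parity of $v(\det T)$. Constructing $g$ as the companion-matrix action of the characteristic polynomial of $\gamma$, the Toeplitz structure combined with $\gamma \bar\gamma = \id_n$ forces $g$ to be an isometry and yields the desired matching representative.

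For the parity criterion, I compare $v(\det T)$ with $v(\Delta) = v(\det M)$ for $M \coloneqq [\vv^\top \gamma^{i+j-2} \uu]_{i,j=1}^n$. Setting $U \coloneqq [u \mid gu \mid \dots \mid g^{n-1}u]$, the identity $T = U^\ast \beta U$ (with $\beta$ the defining Gram matrix of $\VV_n^\pm$) gives $\det T = \Nm_{E/F}(\det U) \cdot \det \beta$ and hence $v(\det T) \equiv v(\det \beta) \pmod 2$, which is even precisely on $\VV_n^+$. To bridge to $\det M$, diagonalize $g = V_0 \Lambda V_0^{-1}$ over an algebraic closure (distinct eigenvalues $\lambda_k$ by regular semisimplicity), and set $\tilde u \coloneqq V_0^{-1} u$ and $b_k \coloneqq (V_0^\ast \beta V_0)_{kk}$ (elements of $F^\times$, since unitarity of $g$ forces $V_0^\ast \beta V_0$ to be a diagonal matrix with $F$-entries). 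A direct Vandermonde computation then gives
\[
  \det T = \Bigl(\prod_k b_k \,|\tilde u_k|^2\Bigr) \Bigl|\prod_{k<l}(\lambda_l - \lambda_k)\Bigr|^2,
  \qquad
  \det M = \Bigl(\prod_k b_k \,|\tilde u_k|^2\Bigr) \Bigl(\prod_{k<l}(\lambda_l - \lambda_k)\Bigr)^2,
\]
whose ratio is a unit in $\OO_E^\times$; hence $v(\det M) \equiv v(\det T) \pmod 2$, completing the parity criterion.

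The main obstacle I anticipate is twofold. First, one must properly interpret $u^\ast$ in the embedding \eqref{eq:embed_FJ_geometric} as the Hermitian adjoint (in coordinates, $u^\ast \beta$) rather than the naive conjugate transpose, since otherwise the identity $u^\ast g^i u = \langle g^i u, u\rangle$ fails and the parity computation breaks. Second, verifying that the companion matrix of $\gamma$ acts as an isometry for $T$ requires checking the wraparound entry via Cayley-Hamilton combined with $\gamma \bar \gamma = \id_n$, which is routine but needs careful bookkeeping. The Vandermonde computation itself is short once the eigendecomposition of $g$ is set up. A shortcut is to cite \cite{ref:liuFJ} directly for the full proof.
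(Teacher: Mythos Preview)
The paper does not prove this proposition; it simply cites \cite{ref:liuFJ}, so there is no ``paper's own proof'' to compare against. Your shortcut of citing Liu is exactly what the paper does. That said, the substantive sketch you give is largely sound: the Toeplitz matrix $T_{ij} = \vv^\top \gamma^{i-j}\uu$ is indeed Hermitian (using $\bar\gamma = \gamma^{-1}$ and $\uu,\vv$ being $F$-valued), it is the Gram matrix of the cyclic basis on the unitary side, and the parity of $v(\det T)$ detects the split/non-split dichotomy. The companion-matrix construction for surjectivity is also fine.

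The genuine gap is in your Vandermonde step comparing $v(\det T)$ with $v(\Delta)$. Regular semisimplicity of the \emph{pair} $(\gamma,\uu,\vv^\top)$ does not force $\gamma$ (or $g$) to be diagonalizable: take $n=2$, $\gamma = \left(\begin{smallmatrix} 1 & \sqrt\eps \\ 0 & 1\end{smallmatrix}\right)$, $\uu = (0,1)^\top$, $\vv^\top = (1,0)$. Then $\gamma\bar\gamma = \id$, both cyclic conditions hold, yet $\gamma$ is a nontrivial unipotent. Even when $g$ is diagonalizable, the eigenvalues need not lie in $E$, so the symbol $\bar\lambda_k$ and the claim that the $b_k$ lie in $F^\times$ are not well-posed over the algebraic closure.

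There is a much simpler fix that avoids diagonalization entirely. Let $A$ be the matrix with $j$th column $\gamma^{j-1}\uu$ and $B$ the matrix with $i$th row $\vv^\top\gamma^{i-1}$. Then $M = BA$ directly, while $T = B\bar A$ because $\gamma^{-(j-1)}\uu = \bar\gamma^{j-1}\bar\uu = \overline{\gamma^{j-1}\uu}$. Hence $\det M / \det T = \det A / \overline{\det A}$, which has valuation zero since $v$ is Galois-invariant on the unramified extension $E/F$. This gives $v(\Delta) = v(\det T)$ on the nose, and you are done.
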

In this paper, \Cref{conj:semi_lie_spherical}
requires that $\guv$ should match an element of $(\U(\VV_n^-) \times \VV_n^-)\rs$
and consequently we will usually only be interested in the latter case.
Accordingly we write the following shorthand:
\begin{definition}
  [$(S_n(F) \times V'_n(F))\rs^\pm$]
  We let \[ (S_n(F) \times V'_n(F))\rs^- \subset (S_n(F) \times V'_n(F))\rs  \]
  denote the subset of those elements in $(S_n(F) \times V'_n(F))\rs$ that match
  with an element of $(\U(\VV_n^-) \times \VV_n^-)\rs$.
  Define $(S_n(F) \times V'_n(F))\rs^+$ analogously.
  Hence $(S_n(F) \times V'_n(F))\rs = (S_n(F) \times V'_n(F))\rs^- \amalg (S_n(F) \times V'_n(F))\rs^+$.
\end{definition}

\section{Base change}
\label{ch:satake}

This section introduces necessary background material on the base change
\[ \BC_{S_n}^{\eta^{n-1}} \colon \HH(S_n(F)) \to \HH(\U(\VV_n^+)). \]

Throughout this section we let $\Sym(n)$ denote the symmetric group in $n$ variables
with order $n!$
(since $S_n(F) \subseteq \GL_n(E)$ is already reserved for the symmetric space).

\subsection{Background on the Satake transformation in transformation}
We recall a general form of the Satake transformation, which will be used later.

For this subsection, $G$ will denote an arbitrary connected reductive group
over some non-Archimedean local field $F$.
We will not distinguish between $G$ and $G(F)$ when there is no confusion.

To simplify things, we will assume $G$ is unramified;
but we do \emph{not} assume $G$ is split.
Introduce the following notation:
\begin{itemize}
  \ii Let $K$ be a hyperspecial maximal compact subgroup of $G$
  (it exists because $G$ is unramified).
  \ii Let $A$ denote a maximal $F$-split torus in $G$.
  All the maximal $F$-split tori in $G$ are conjugate; let $A$ denote one of them.
  \ii Let $M$ be the centralizer of $A$; this is itself a maximal torus in $G$.
  \ii Let $\prescript{\circ}{} M \coloneqq M(F) \cap K$
  be the maximal compact subgroup of $M$.
  \ii Let $P$ denote a minimal $F$-parabolic containing $A$.
  \ii Let $\delta$ denotes the modulus character of $P$.
  It can be describes as follows.
  Let $\varpi$ denote a uniformizer for $F$ and $q$ the residue characteristic.
  Then if $\rho$ is the Weyl vector and $\mu$ is a positive cocharacter, then
  \[ \delta(\mu(\varpi)) = q^{- \left< \mu, \rho\right>}. \]
  \ii Let $N$ denote the unipotent radical of $P$.
  \ii Let $W$ be the relative Weyl group for the pair $(G,A)$,
  which acts on $\HH(M, \prescript{\circ}{} M)$.
\end{itemize}
We can now state the Satake isomorphism.
\begin{definition}
  [Satake transform]
  The \emph{Satake transform} is a canonical isomorphism of Hecke algebras
  \[ \Sat \colon \HH(G, K) \to \HH(M, \prescript{\circ}{} M)^W \]
  which is given by defining
  \[ (\Sat(f))(t) \coloneqq \delta(t)^\half \int_N f(nt) \odif n  \]
  for each $t \in M$.
\end{definition}
We are going to apply this momentarily in two situations:
once when $G$ is the general linear group (which is split),
and once when $G$ is a unitary group.

\subsection{The Satake transformation for the particular Hecke algebras $\HH(\GL_n(E))$ and $\HH(\U(\VV_n^+))$}
To take the Satake transform of $\HH(\U(\VV_n^+))$, we define the following abbreviations.
\begin{itemize}
  \ii Let $T$ denote the split diagonal torus of $\GL_n$.
  \ii Let
  \[ N' \coloneqq \left\{ \begin{pmatrix}
      1 & \ast & \dots & \ast \\
        & 1 & \dots & \ast \\
        &   & \ddots & \vdots \\
        &   &   & 1 \end{pmatrix}\right\} \subseteq \GL_n(E) \]
  denote the unipotent upper-triangular matrices.
\end{itemize}
Similarly for $\HH(\U(\VV_n^+))$:
\begin{itemize}
  \ii Set $m \coloneqq \left\lfloor n/2 \right\rfloor$ for brevity.
  \ii Let
  \[ A \coloneqq \left\{
    \diag(x_1, \dots, x_m, 1_{n-2m}, x_m\inv, \dots, x_1\inv) \right\} \]
  so that $A(F)$ is a maximal $F$-split torus of $\U(\VV_n^+)$.
  \ii Let $N \coloneqq N' \cap G$ denote the unipotent upper triangular matrices
  which are also unitary.
  \ii For brevity, let $W_m \coloneqq (\ZZ/2\ZZ)^m \rtimes \Sym(m)$
  be the relative Weyl group of $(G,A)$.
\end{itemize}

We can now introduce the Satake transform for our two
\emph{bona fide} Hecke algebras, using the data in Table~\ref{tab:satakestuff}.

\begin{table}[ht]
  \centering
  \begin{tabular}{lll}
    \toprule
    Group & $G' = \GL_n(E)$ & $G = \U(\VV_n^+)$ \\ \midrule
    Local field & $E$ & $F$ \\\hline
    Hyperspecial compact & $K' = \GL_n(\OO_E)$ & $K = G \cap \GL_n(\OO_E)$ \\\hline
    Max'l split torus & $T(E)$ & $A(F)$ \\\hline
    Centralizer of split torus & $T(\OO_E)$ & $A(\OO_F)$ \\\hline
    Parabolic (Borel) & Upper tri in $G'$ & Upper tri in $G$ \\\hline
    Unipotent rad.\ of parabolic & $N'$ (unipot.\ upper tri) & $N$ (unipot.\ upper tri) \\\hline
    Relative Weyl group & $\Sym(n)$ & $W_m = (\ZZ/2\ZZ)^m \rtimes \Sym(m)$ \\
    \bottomrule
  \end{tabular}
  \caption{Data needed to run the Satake transformation.}
  \label{tab:satakestuff}
\end{table}

Hence, the Satake transformations obtained can be viewed as
\begin{align*}
  \Sat &\colon \HH(\GL_n(E)) \xrightarrow{\sim} \QQ[T(E) / T(\OO_E)]^{\Sym(n)} \\
  \Sat &\colon \HH(\U(\VV_n^+))\xrightarrow{\sim} \QQ[A(F) / A(\OO_F)]^{W_m}
\end{align*}
(In both cases, the modular character $\delta^{1/2}$ gives rational values,
so it is okay to work over $\QQ$.)

To make this further concrete, we remark that the cocharacter groups
involved are free abelian groups with known bases.
This identification lets us rewrite the right-hand sides above as concrete polynomials.
Specifically, we identify
\[ \QQ[T(E) / T(\OO_E)]^{\Sym(n)}
  \xrightarrow{\sim} \QQ[X_1^\pm, \dots, X_n^\pm]^{\Sym(n)} \]
by identifying $X_i$ with the
cocharacter corresponding to injection into the $i$\ts{th} factor.
Similarly, we identify
\[ \QQ[A(F) / A(\OO_F)]^{W_m}
  \xrightarrow{\sim} \QQ[Y_1^{\pm}, \dots, Y_m^{\pm}]^{W_m} \]
by identifying $Y_i + Y_i^{-1}$
with the cocharacter corresponding to
\[ x \mapsto \diag(1, \dots, x, \dots, x\inv, \dots, 1) \]
where $x$ is in the $i$\ts{th} position and $x\inv$ is in the $(n-i)$\ts{th} position,
and all other positions are $1$.
Here $\QQ[Y_1^{\pm}, \dots, Y_m^{\pm}]^{W_m}$
denotes the ring of symmetric polynomials in $Y_i + Y_i^{-1}$.

So, henceforth, we will consider
\begin{align*}
  \Sat &\colon \HH(\GL_n(E)) \xrightarrow{\sim} \QQ[X_1^\pm, \dots, X_n^\pm]^{\Sym(n)} \\
  \Sat &\colon \HH(\U(\VV_n^+)) \xrightarrow{\sim} \QQ[Y_1^{\pm}, \dots, Y_m^{\pm}]^{W_m}.
\end{align*}

\subsection{Relation of Satake transformation to base change}
Let
\[ \BC \colon \HH(\GL_n(E)) \to \HH(\U(\VV_n^+)) \]
denote the stable base change morphism from $\GL_n(E)$ to the unitary group $\U$.
The relevance of the Satake transformation is that
(see e.g.\ \cite[Proposition 3.4]{ref:leslie})
it gives a way to make this $\BC$ completely explicit:
we have a commutative diagram
\begin{center}
\begin{tikzcd}
  \HH(\GL_n(E))  \ar[r, "\sim"', "\Sat"] \ar[d, "\BC"]
    & \QQ[X_1^\pm, \dots, X_n^\pm]^{\Sym(n)} \ar[d, "\BC"] \\
  \HH(\U(\VV_n^+)) \ar[r, "\sim"', "\Sat"]
    & \QQ[Y_1^\pm, \dots, Y_m^\pm]^{W_m}
\end{tikzcd}
\end{center}
Here the right arrow is also denoted $\BC$ following \cite{ref:AFLspherical}
(although it is denoted $\nu$ in \cite{ref:leslie}).
This gives a way in which we can concretely calculate the map $\BC$
in some situations.

\subsection{The map $\BC^{\eta^{n-1}}_{S_n}$}
Before we can define the map $\BC^{\eta^{n-1}}_{S_n}$
we need one more piece of notation.
Consider the following map.
\begin{definition}
  [$\rproj$]
  Denote by $\rproj \colon \GL_n(E) \surjto S_n(F)$ the projection defined by
  \[ \rproj(g) \coloneqq g \bar{g}\inv. \]
\end{definition}
Then $\rproj$ induces a map
\begin{align*}
  \rproj_\ast \colon \HH(\GL_n(E)) &\to \HH(S_n(F)) \\
  \rproj_\ast(f')\left( g\bar{g}\inv \right) &= \int_{\GL_n(F)} f'(gh) \odif h
\end{align*}
by integration on the fibers.
A similar twisted version by $\eta$
\begin{align*}
  \rproj_\ast^\eta \colon \HH(\GL_n(E)) &\to \HH(S_n(F)) \\
  \rproj_\ast^\eta(f')\left( g\bar{g}\inv \right) &= \int_{\GL_n(F)} f'(gh) \eta(gh) \odif h
\end{align*}
is defined analogously,
where as before $\eta(g) = (-1)^{v(\det g)}$ in a slight abuse of notation.

Then Leslie \cite{ref:leslie} shows the following result.
\begin{theorem}
  [{\cite[Theorem 3.2 and Proposition 3.4]{ref:leslie}}]
  Both maps $\rproj_\ast$ and $\rproj_\ast^\eta$ induce isomorphisms
  \begin{align*}
    \BC_{S_n} \colon \HH(S_n(F)) &\xrightarrow{\sim} \HH(\GL_n(E)) \\
    \BC^{\eta^{n-1}}_{S_n} \colon \HH(S_n(F)) &\xrightarrow{\sim} \HH(\GL_n(E))
  \end{align*}
  such that
  \begin{align*}
    \BC &= \BC_{S_n} \circ \rproj_\ast \\
    \BC &= \BC^{\eta^{n-1}}_{S_n} \circ \rproj_\ast^{\eta^{n-1}}.
  \end{align*}
\end{theorem}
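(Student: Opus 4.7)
My plan is to reduce the problem to a computation on Satake transforms, following the general strategy used elsewhere in this section. The key observation is that $\rproj_\ast$ and $\rproj_\ast^{\eta^{n-1}}$ are fiber integration maps from $\HH(\GL_n(E))$ to $\HH(S_n(F))$, and to establish that they descend to isomorphisms with target $\HH(\U(\VV_n^+))$ it suffices to show that (a) both maps are surjective with explicitly computable kernels, and (b) these kernels coincide with $\ker(\BC)$ so that the factorization $\BC = \BC_{S_n} \circ \rproj_\ast$ (and its $\eta$-twisted analog) is well-defined and bijective onto the image of $\BC$.

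First I would set up Satake-style coordinates for $\HH(S_n(F))$. By a Cartan-type decomposition for the symmetric space $S_n(F)$ under $\GL_n(E)$-conjugation, the $K'$-orbits on $S_n(F)$ are parameterized by cocharacters of a maximal split torus of $\GL_n(E)$ modulo an appropriate Weyl action, so one obtains a natural $\QQ$-vector space isomorphism
\[
\HH(S_n(F)) \xrightarrow{\sim} \QQ[X_1^\pm, \dots, X_n^\pm]^{\Sym(n)}.
\]
Compared to the Satake isomorphism for $\HH(\GL_n(E))$, the difference is in which invariance condition is imposed; up to a modular-character normalization, $\rproj_\ast$ corresponds to an explicit linear map between these symmetric polynomial rings.

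Second, I would compute the Satake-side description of $\rproj_\ast$ and $\rproj_\ast^{\eta^{n-1}}$. The standard computation with Iwasawa decomposition and the definition of fiber integration shows that in Satake coordinates $\rproj_\ast$ acts by restriction-of-variables or by symmetrization over a subgroup; in particular $\rproj_\ast$ is surjective, and its kernel is generated by the relations that witness the descent from $\GL_n$-Satake parameters $(X_1,\dots,X_n)$ to the $S_n$-parameters. The twist by $\eta^{n-1}$ modifies this by a sign on the generators corresponding to cocharacters of odd valuation, which one checks still produces a surjection with the same rank. Simultaneously, using the commutative diagram already stated in this section, the base change $\BC \colon \HH(\GL_n(E)) \to \HH(\U(\VV_n^+))$ is given in Satake coordinates by a specific substitution sending $X_i, X_{n+1-i}$ to $Y_i^{\pm 1}$. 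One then verifies by direct inspection of generators that this substitution is invariant under (equivalently, factors through) the relations defining $\ker(\rproj_\ast)$ or $\ker(\rproj_\ast^{\eta^{n-1}})$, respectively. This factorization plus surjectivity of $\rproj_\ast$ forces $\BC_{S_n}$ and $\BC_{S_n}^{\eta^{n-1}}$ to exist uniquely; a dimension count (or, equivalently, comparison of the Weyl-invariants on both sides) then shows they are isomorphisms onto $\HH(\U(\VV_n^+))$.

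The main obstacle I anticipate is keeping the two twists straight and getting the parity condition on $\eta^{n-1}$ correct: for the map $\rproj_\ast$ (no twist) to land in the right subspace, the relevant Satake parameters must match a specific symmetry, and for $\rproj_\ast^{\eta^{n-1}}$ to do so, one must chase signs coming from $\eta(\det)$ carefully through the Iwasawa integration. The $n$-dependent sign is what distinguishes the two base change maps and explains why \emph{both} $\rproj_\ast$ and $\rproj_\ast^{\eta^{n-1}}$ can serve, depending on the parity of $n-1$. The rest of the argument (surjectivity, kernel identification, commutativity) is then a direct unwinding of definitions using the Satake data already tabulated in Table~\ref{tab:satakestuff}.
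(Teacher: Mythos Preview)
The paper does not prove this theorem at all: it is quoted verbatim from Leslie's work \cite{ref:leslie} and the text immediately following states ``We take these isomorphisms promised by this theorem as the definition of $\BC_{S_n}$ and $\BC^{\eta^{n-1}}_{S_n}$.'' There is therefore no proof in this paper to compare your proposal against.

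That said, your sketch contains a concrete error that would block the argument. You claim a vector-space isomorphism
\[
\HH(S_n(F)) \xrightarrow{\sim} \QQ[X_1^\pm, \dots, X_n^\pm]^{\Sym(n)},
\]
i.e.\ the same target as the Satake isomorphism for $\HH(\GL_n(E))$. This is false: the Cartan decomposition of $S_n(F)$ under the $K'$-action is indexed by dominant cocharacters modulo the \emph{Galois}-twisted Weyl action (equivalently, by $m = \lfloor n/2 \rfloor$ nonnegative integers), not by the full $\GL_n$ cocharacter lattice mod $\Sym(n)$. You can see this already in the $n=2$ case treated in this paper, where $\HH(S_2(F))$ has the basis $\{\mathbf{1}_{K'_{S,\le r}}\}_{r \ge 0}$ indexed by a single integer, matching the basis $\{\fr\}_{r \ge 0}$ of $\HH(\U(\VV_2^+))$ and \emph{not} the two-parameter basis of $\HH(\GL_2(E))$. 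So $\HH(S_n(F))$ is already ``the right size'' to be isomorphic to $\HH(\U(\VV_n^+))$, and your proposed factorization through an $n$-variable polynomial ring collapses.

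The actual content of Leslie's result is closer to what you describe in your second step: one must show that $\rproj_\ast$ (resp.\ its twist) is surjective onto $\HH(S_n(F))$ and that its kernel coincides with $\ker(\BC)$. But establishing the relevant spherical-function identities on the symmetric space --- essentially constructing the correct Satake-type map for $\HH(S_n(F))$ with target $\QQ[Y_1^\pm,\dots,Y_m^\pm]^{W_m}$ and checking it intertwines $\rproj_\ast$ with $\BC$ --- is the substance of the cited theorem, not a preliminary one can assume.
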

We take these isomorphisms promised by this theorem
as the definition of $\BC_{S_n}$ and $\BC^{\eta^{n-1}}_{S_n}$ in our conjectures
(noting when $n$ is odd they coincide, as $\eta^{n-1} = 1$).

When combined with the Satake information we have, we get the following diagram.
\begin{center}
\begin{tikzcd}
  \HH(\GL_n(E)) \ar[dd, "\rproj_\ast^{\eta^{n-1}}"', bend right = 60] \ar[r, "\sim"', "\Sat"] \ar[d, "\BC"]
    & \QQ[X_1^\pm, \dots, X_n^\pm]^{\Sym(n)} \ar[d, "\BC"] \\
  \HH(\U(\VV_n^+)) \ar[r, "\sim"', "\Sat"]
    & \QQ[Y_1^\pm, \dots, Y_m^\pm]^{W_m} \\
    \HH(S_n(F)) \ar[u, "\sim", "\BC_{S_n}^{\eta^{n-1}}"']
\end{tikzcd}
\end{center}

\subsection{Example: Calculation of $\BC_{S_n}$ when $n = 3$}
As an example, we make the base change completely explicit
in the special case $n = 3$, where $m = \left\lfloor n/2 \right\rfloor = 1$.
(In this case $\BC_{S_n}^{\eta^{n-1}} = \BC_{S_n}$ as $\eta^2 = 1$.)
The completed result is \Cref{prop:BC_S3}.
This can be considered a digression and this section will not be needed later.

This calculation parallels the $n = 2$ case that was done in \cite[Lemma 7.1.1]{ref:AFLspherical}.
When it is not more difficult, some of the results will be stated for all $n$,
rather than $n = 3$ specifically.

\subsubsection{Overview}
Throughout this subsection, we use the shorthand
\[ \varpi^{(n_1, n_2, n_3)} \coloneqq \diag(\varpi^{n_1}, \varpi^{n_2}, \varpi^{n_3}). \]
As a $\QQ$-module, the spaces $\HH(\U(\VV_n^+))$ and $\HH(S_n(F))$
have a canonical basis of indicator functions indexed by $\ZZ$:
\begin{itemize}
  \ii $\HH(S_n(F))$ has $\QQ$-module basis $\mathbf{1}_{K'_{S,j}}$ for $j \ge 0$.
  \ii $\HH(\U(\VV_n^+))$ has a $\QQ$-module basis given by the indicator functions
  \[ \mathbf{1}_{\varpi^{-r} \Mat(\OO_E) \cap \U(\VV_n^+)} \]
  for $r \ge 0$.
\end{itemize}
On the other hand, the natural $\QQ$-module basis for $\HH(\GL_n(E))$, namely
\[ \mathbf{1}_{K'\varpi^{(n_1, n_2, n_3)}K'} \]
is given by triples of integers $n_1 \ge n_2 \ge n_3 \ge 0$, and is much larger.
So explicit calculations for the $\rproj_\ast$ or the Satake transforms viewed in
$\CC[X_1, X_2, X_3]^{\Sym(n)}$ are nontrivial if one works with the entire basis.

Hence the overall strategy, to reduce the amount of work we have to do,
is to focus on only the $\ZZ$-indexed elements
\[
  \mathbf{1}_{\Mat_3(\OO_E), v\circ\det=r}
  = \sum_{\substack{n_1 \ge n_2 \ge n_3 \\ n_1 + n_2 + n_3 = r}}
  \mathbf{1}_{K'\varpi^{(n_1, n_2, n_3)}K'} \in \HH(\GL_n(E))
\]
for $r \ge 0$.
This aggregated indicator function is easier to compute,
because given an explicit matrix it is somewhat easier
to evaluate \[ \mathbf{1}_{\Mat_3(\OO_E), v\circ\det=r} \]
at it (one only needs to check it has $\OO_E$ entries
and that the determinant has valuation $r$,
rather than determining the exact coset $K'\varpi^{(n_1, n_2, n_3)}K'$).

\subsubsection{Satake transform of the determinant characteristic function on the top arrow}
This is the easiest calculation, and we do it for all $n$ rather than just $n = 3$.
\begin{proposition}
  [Satake transform for $v \circ \det = r$]
  For every integer $r \ge 0$, we have
  \[ \Sat(\mathbf{1}_{\Mat_n(\OO_E), v\circ\det=r})
    = q^{(n-1)r} \sum_{e_1 \dots + e_n = r} X_1^{e_1} \dots X_n^{e_n}. \]
\end{proposition}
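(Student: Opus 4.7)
The plan is to unfold the Satake integral directly. Parametrize $n\in N$ by its strictly upper triangular entries $n_{ij}$ for $i<j$, and take $t=\diag(\varpi^{e_1},\ldots,\varpi^{e_n})$; then $nt$ is upper triangular with diagonal $(\varpi^{e_1},\ldots,\varpi^{e_n})$, so $v(\det(nt))=\sum_i e_i$. Hence $\mathbf{1}_{\Mat_n(\OO_E),\,v\circ\det=r}(nt)$ vanishes unless $\sum_i e_i=r$ and every $e_i\ge 0$ (the latter because the diagonal must itself lie in $\OO_E$), matching exactly the support asserted by the proposition.

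For such a tuple, the remaining conditions $(nt)_{ij}\in\OO_E$ cut out each $n_{ij}$ inside a fractional ideal of the form $\varpi^{-e_\ast}\OO_E$, where the index $e_\ast$ is either $e_i$ or $e_j$ depending on whether one works with $nt$ or $tn$ (the two integrals differ by a factor $\delta(t)$ via the change of variables $n\mapsto tnt\inv$). With Haar measure on $N$ normalized so that $\Vol(N\cap K)=1$, each slot contributes $q^{2e_\ast}$, so the $N$-integral collapses to $q^{2\sum_i\alpha_i e_i}$ with explicit coefficients $\alpha_i$ depending only on $n$. Combining this with $\delta(t)^{1/2}=q^{-\sum_i(n+1-2i)e_i}$ (the half-power of the modulus character of the upper triangular Borel of $\GL_n$), the key observation is that the exponents $2\alpha_i$ from the volume and $-(n+1-2i)$ from $\delta^{1/2}$ cancel so that the total power of $q$ depends only on $r=\sum_i e_i$, namely it equals $(n-1)r$.

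This uniformity is precisely what makes every monomial $X_1^{e_1}\cdots X_n^{e_n}$ carry the same coefficient $q^{(n-1)r}$, so summing over admissible tuples gives the stated formula. I do not anticipate any deeper obstacle: once one fixes a convention for $\delta^{\pm 1/2}$ and $nt$ vs.\ $tn$ consistent with $W$-invariance of the Satake image, everything is bookkeeping, and the telescoping of exponents is the essential and already visible content of the proof.
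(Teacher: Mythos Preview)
Your proposal is correct and follows essentially the same approach as the paper: evaluate the Satake integral at $t=\varpi^{(e_1,\ldots,e_n)}$, observe that the integrality of the diagonal forces $e_i\ge 0$ and $\sum e_i=r$, compute the $N$-integral as a product of volumes $\prod_{i<j} q_E^{e_i}$ (the paper uses the $tn$ convention, giving index $e_i$), and then check that $\delta^{1/2}$ combines with this to collapse the exponent to $(n-1)r$. The paper carries out the bookkeeping explicitly rather than leaving it as a cancellation claim, but there is no substantive difference.
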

\begin{proof}
  We evaluate the coefficient $X_1^{e_1} \dots X_n^{e_n}$.
  Choose a cocharacter $\mu$,
  and suppose $\mu(\varpi) = \varpi^{(e_1, \dots, e_n)}$ with $n_1 \ge n_2 \ge n_3$.
  Let $q_E = q^2$ be the residue characteristic of $E$.
  Take the upper triangular matrices as our Borel subgroup as usual,
  so the unipotent radical of this Borel subgroup
  are the unipotent upper triangulars $N'$ which we describe as
  \[ N' \coloneqq \left\{
      \begin{pmatrix}
      1 & y_{12} & y_{13} & \dots & y_{1n} \\
        & 1 & y_{23} & \dots & y_{2n} \\
        &   & 1 & \dots & y_{3n} \\
        &   &   & \ddots & \vdots  \\
        &   &   &   & 1
      \end{pmatrix}
    \mid y_{12}, \dots, y_{(n-1)n}\in E \right\} \]
  and with additive Haar measure is $\odif{y_{12}, y_{23} \dotso, y_{(n-1)n}}$.
  Recall also the Weyl vector for $\GL_n(E)$ is just
  \[ \rho_{\GL_n(E)} = \left< \frac{n-1}{2}, \frac{n-3}{2}, \dots, -\frac{n-1}{2} \right>. \]
  Compute
  \begin{align*}
    &\Sat(\mathbf{1}_{\Mat_n(\OO_E), v\circ\det=r})(\mu(\varpi)) \\
    &= \delta(\mu(\varpi))^\half \int_{n' \in N'}
      \mathbf{1}_{\Mat_n(\OO_E, v\circ \det = r)} (\mu(\varpi) n') \odif{n'} \\
    &= q_E^{-\left< \mu, \rho\right>}
    \underbrace{\int_{y_{12} \in E} \int_{y_{13} \in E} \dotso \int_{y_{(n-1)n} \in E}}_{\binom n2 \text{ integrals}} \\
    &\qquad
      \mathbf{1}_{\Mat_3(\OO_E), v \circ \det = r}
      \left( \begin{pmatrix}
        \varpi^{e_1} & \varpi^{e_1} y_{12} & \varpi^{e_1} y_{13} & \dots & \varpi^{e_1} y_{1n} \\
        & \varpi^{e_2} & \varpi^{e_2} y_{23} & \dots & \varpi^{e_2} y_{2n} \\
        &   & \varpi^{e_3} & \dots & \varpi^{e_3} y_{3n} \\
        &   &   & \ddots & \vdots  \\
        &   &   &   & \varpi^{e_n}
        \end{pmatrix} \right) \\
    &\qquad \odif{y_{12}, y_{23} \dotso, y_{(n-1)n}} \\
    &= q_E^{-\left(\frac{n-1}{2}e_1 + \frac{n-3}{2}e_2 + \dots + -\frac{n-1}{2} e_n \right)}
    \mathbf{1}_{e_1 + \dots + e_n = r}
    \underbrace{\int_{y_{12} \in E} \int_{y_{13} \in E} \dotso \int_{y_{(n-1)n} \in E}}_{\binom n2 \text{ integrals}} \\
    &\qquad \prod_{1 \le i < j \le n} \mathbf{1}_{\OO_E}(\varpi^{e_i} y_{ij}) \odif{y_{ij}} \\
    &= q_E^{-\left(\frac{n-1}{2}e_1 + \frac{n-3}{2}e_2 + \dots + -\frac{n-1}{2} e_n \right)}
    \mathbf{1}_{e_1 + \dots + e_n = r} \prod_{1 \le i < j \le n} q_E^{e_i} \\
    &= q_E^{-\left(\frac{n-1}{2}e_1 + \frac{n-3}{2}e_2 + \dots + -\frac{n-1}{2} e_n \right)}
    \mathbf{1}_{e_1 + \dots + e_n = r} \prod_{1 \le i \le n} q_E^{(n-i)e_i} \\
    &= \mathbf{1}_{e_1 + \dots + e_n = r} \prod_{1 \le i \le n}^n q_E^{\frac{n-1}{2} e_i} \\
    &= q_E^{\frac{n-1}{2} r} \mathbf{1}_{e_1 + \dots + e_n = r} \\
    &= \begin{cases}
      q^{\frac{n-1}{2} r} & \text{if } e_1 + \dots + e_n = r \\
      0 & \text{otherwise}.
    \end{cases}
  \end{align*}
  This gives the sum claimed earlier.
\end{proof}

\subsubsection{Satake transform of the indicator on the bottom arrow}
\begin{proposition}
  [Satake transform for $\varpi^{-r} \Mat_3(\OO_E) \cap \U(\VV_3^+)$]
  For each $r \ge 0$ we have
  \[ \Sat\left(\mathbf{1}_{\varpi^{-r} \Mat_3(\OO_E) \cap \U(\VV_3^+)}\right)
    = \sum_{i=0}^r q^{2r - \mathbf{1}_{r \equiv i \bmod 2}} Y_1^{\pm i} \]
  where we adopt the shorthand
  \[
    Y_1^{\pm i} \coloneqq
    \begin{cases}
      Y_1^i + Y_1^{-i} & i > 0 \\
      1 & i = 0 .
    \end{cases}
  \]
\end{proposition}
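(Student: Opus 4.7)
The strategy is to unfold the definition of the Satake transform and compute the integral over the unipotent radical $N$ directly, following the same blueprint as the preceding proposition for the top arrow. First, I would explicitly parametrize $N$: a unipotent upper-triangular matrix
\[ n = \begin{pmatrix} 1 & a & b \\ 0 & 1 & c \\ 0 & 0 & 1 \end{pmatrix} \]
lies in $\U(\VV_3^+)$ precisely when $c = -\bar a$ and $b + \bar b = -a \bar a$. Since $p > 2$, we may write $b = -\tfrac{1}{2} a \bar a + t\sqrt\eps$ uniquely for $t \in F$, giving an identification $N \cong E \times F$ with additive Haar measure $da\,dt$ normalized so that $\Vol(N \cap K) = 1$.

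Next, I would compute the modular character. Conjugation by $\mu(\varpi^i) = \diag(\varpi^i, 1, \varpi^{-i})$ on $N$ sends the parameters $(a, t) \mapsto (\varpi^i a, \varpi^{2i} t)$, so the resulting determinant contributes $|\varpi^i|_F^2 = q^{-2i}$ from the $2$-dimensional $a$-direction (over $F$) and $|\varpi^{2i}|_F = q^{-2i}$ from the $1$-dimensional $t$-direction. This yields $\delta(\mu(\varpi^i)) = q^{-4i}$, and hence $\delta^{1/2}(\mu(\varpi^i)) = q^{-2i}$.

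The main step is to determine, for each $i$ with $0 \le i \le r$, the region in $N$ where $n(a,t) \cdot \mu(\varpi^i) \in \varpi^{-r}\Mat_3(\OO_E)$, then compute its volume. Writing out $n(a,t)\mu(\varpi^i)$ entry-by-entry, the integrality condition on the $(1,2)$, $(2,3)$, and $(1,3)$ entries translate respectively to $v(a) \ge -r$, $v(a) \ge i-r$, and $v(b) \ge i-r$. Since $E/F$ is unramified and $b = -\tfrac12 a\bar a + t\sqrt\eps$ decomposes in the $F$-basis $\{1, \sqrt\eps\}$, the last condition splits into $2 v(a) \ge i-r$ and $v(t) \ge i-r$; for $0 \le i \le r$ the dominant constraint on $v(a)$ is thus $v(a) \ge \lceil (i-r)/2 \rceil$. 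The region is a product of an open disk in $E$ (for $a$) and one in $F$ (for $t$), whose volumes are computed directly. Multiplying by $\delta^{1/2}$ gives $(\Sat f)(\mu(\varpi^i))$, which under the identification $\QQ[A(F)/A(\OO_F)]^{W_1} \cong \QQ[Y_1^{\pm 1}]^{W_1}$ is the coefficient $c_i$ of $Y_1^i$. Weyl symmetry ($c_i = c_{-i}$) then packages the answer as $\sum_{i=0}^r c_i Y_1^{\pm i}$.

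The main obstacle is the careful bookkeeping of valuation conditions, specifically the interaction between $v(a)$ (which independently bounds both the $a$-entries and the real part $-\tfrac12 a\bar a$ of $b$) and $v(t)$. The floor/ceiling from the half-integer threshold $(i-r)/2$ is precisely what produces the parity-dependent exponent $\mathbf{1}_{r \equiv i \bmod 2}$: when $r \equiv i \pmod 2$, the bound $2v(a) \ge i - r$ saturates the integer lattice at the same point as the bound $v(a) \ge i - r$, while in the opposite parity case one picks up an additional factor of $q$ in the $a$-volume. A parity analysis at the end then identifies the coefficient with the claimed expression.
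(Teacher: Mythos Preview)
Your approach is the paper's: both parametrize $N$ by a pair $(a,t)\in E\times F$ (the paper uses real coordinates $z_1,z_2,z_3\in F$ with your $a$ equal to $-z_1+z_2\sqrt\eps$ and $t=z_3$), both find $\delta^{1/2}(\mu(\varpi^i))=q^{-2i}$, and both reduce the integral to the volume of a product of balls whose radii involve $\lfloor(r\pm i)/2\rfloor$, which is exactly where the parity enters.

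There is, however, one slip that would make your final step fail. You integrate $f\bigl(n\cdot\mu(\varpi^i)\bigr)$, which gives the constraints $v(a)\ge\lceil(i-r)/2\rceil$ and $v(t)\ge i-r$ and hence volume $q^{2\lfloor(r-i)/2\rfloor+(r-i)}$; multiplying by your $\delta^{1/2}=q^{-2i}$ yields $q^{2\lfloor(r-i)/2\rfloor+r-3i}$, which is \emph{not} invariant under $i\mapsto -i$ and therefore cannot be a value of the $W_1$-invariant Satake transform. The paper's proof (despite the $f(nt)$ in the stated definition) actually integrates $f\bigl(\mu(\varpi^i)\cdot n\bigr)$, as does the ``preceding proposition for the top arrow'' whose blueprint you invoke; with that order the constraints become $v(a)\ge-\lfloor(i+r)/2\rfloor$ and $v(t)\ge-(i+r)$, and the product with $q^{-2i}$ collapses to the $W$-invariant $q^{2\lfloor(i+r)/2\rfloor-i+r}$, equal to $q^{2r}$ when $i\equiv r\pmod 2$ and $q^{2r-1}$ otherwise. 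Equivalently you may keep the $nt$ order but multiply by $\delta^{-1/2}=q^{+2i}$, since the two integrals differ by exactly $\delta(\mu(\varpi^i))$ via the substitution $n\mapsto \mu(\varpi^i)^{-1}n\,\mu(\varpi^i)$. With that single correction your argument is identical in substance to the paper's.
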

\begin{proof}
  We first need to describe \[ N = N' \cap \U(\VV_3^+) \] a little more carefully.
  For $n \in N'$ we have
  \[
    n^\ast \beta n
    =
    \begin{pmatrix} 1 \\ \bar{y_1} & 1 \\ \bar{y_2} & \bar{y_3} & 1 \end{pmatrix}
    \beta
    \begin{pmatrix}
      1 & y_1 & y_2 \\
        & 1 & y_3 \\
        & & 1
    \end{pmatrix}
    = \begin{pmatrix}
      & & 1 \\
      & 1 & y_3 + \bar{y_1} \\
      1 & y_1 + \bar{y_3} & y_2 + \bar{y_2} + y_3 \bar{y_3}
    \end{pmatrix}.
  \]
  So $n \in N$ if and only if the above matrix equals $\beta$, which means
  \[ 0 = y_3 + \bar{y_1} = y_2 + \bar{y_2} + y_3 \bar{y_3}. \]
  Then we can re-parametrize by $z_1, z_2, z_3 \in F$ according to
  \begin{align*}
    y_3 &= z_1 + z_2 \sqrt\eps \\
    y_2 &= -\frac{z_1^2 + z_2^2 \eps}{2} + z_3\sqrt\eps \\
    y_1 &= -z_1 + z_2\sqrt\eps.
  \end{align*}
  Back to the original task.
  For each $i \ge 0$ we can evaluate the Satake transform at the element
  $\nu(\varpi) = \diag(\varpi^i, 1, \varpi^{-i})$, for the cocharacter $\nu$
  corresponding to $Y_1^i + Y_1^{-i}$:
  \begin{align*}
    &\Sat\left( \mathbf{1}_{\varpi^{-r} \Mat_3(\OO_E) \cap \U(\VV_3^+)}\right)
      \left( \nu(\varpi)  \right) \\
    &= \delta(\nu(\varpi))^\half \int_{n \in N}
      \mathbf{1}_{\varpi^{-r} \Mat_3(\OO_E) \cap \U(\VV_3^+)}
      \left( \nu(\varpi) n' \right) \odif n \\
    &= \delta(\nu(\varpi))^\half \int_{n \in N}
      \mathbf{1}_{{\varpi^{-r}} \Mat_3(\OO_E) \cap \U(\VV_3^+)}
      \left( \begin{pmatrix} \varpi^i & \varpi^i y_1 & \varpi^i y_2 \\
               & 1 & y_3 \\
               & & \varpi^{-i} \end{pmatrix} \right) \odif n
  \end{align*}
  The matrix itself is always in $\U(\VV_3^+)$, because it's the product of two unitary matrices.
  So the indicator needs to check whether all the entries have valuation at least $-r$.
  If we switch characterization to the coordinates $z_1$, $z_2$, $z_3$ we described earlier,
  we see that the conditions are
  \begin{align*}
    i &\le r, \\
    v(z_1) &\ge -r, \\
    v(z_2) &\ge -r,\\
    v(z_3) &\ge -(r+i),\\
    v(z_1^2 + z_2^2 \eps) &\ge -(r+i).
  \end{align*}
  Assume $i \le r$ henceforth.
  The condition for $z_1$ and $z_2$ then really says
  \[ \min(v(z_1), v(z_2)) \ge -\left\lfloor \frac{r+i}{2} \right\rfloor. \]
  So the integral factors as a triple integral
  \[
    \int_{z_1 \in F}
    \int_{z_2 \in F}
    \int_{z_3 \in F}
    \mathbf{1}_{\varpi^{-\left\lfloor \frac{r+i}{2} \right\rfloor} \OO_F}(z_1)
    \mathbf{1}_{\varpi^{-\left\lfloor \frac{r+i}{2} \right\rfloor} \OO_F}(z_2)
    \mathbf{1}_{\varpi^{-(r+i)} \OO_F}(z_3)
    \odif{z_1,z_2,z_3}
  \]
  which is equal to
  \[ q^{2\left\lfloor \frac{r+i}{2} \right\rfloor+r+i}. \]
  Meanwhile, $\delta(\nu(\varpi))^{\half} = q^{-2i}$.
  In summary,
  \[
    \Sat\left( \mathbf{1}_{\varpi^{-r} \Mat_3(\OO_E) \cap \U(\VV_3^+)}\right) \left( \nu(\varpi) \right)
    =
    \begin{cases}
      q^{2\left\lfloor \frac{r+i}{2} \right\rfloor - i + r} & i \le r \\
      0 & i > r
    \end{cases}
  \]
  Finally, since
  \[ 2\left\lfloor \frac{r+i}{2} \right\rfloor - i + r
    = \begin{cases}
      2r & r+i \text{ is even} \\
      2r-1 & r+i \text{ is odd}
    \end{cases}
  \]
  we get the formula claimed.
\end{proof}

\subsubsection{Integration over fiber}
\begin{proposition}
  [Integration over fiber]
  For every integer $r \ge 0$, we have
  \begin{align*}
    &\rproj_\ast(\mathbf{1}_{\Mat_3(\OO_E), v\circ\det=r}) \\
    &= \sum_{j=0}^r \left(
      \sum_{i=0}^{2(r-j)} \min \left( 1 + \left\lfloor \frac i2 \right\rfloor,
        1 + \left\lfloor \frac{2(r-j)-i}{2} \right\rfloor \right) q^i \right)
        \mathbf{1}_{K'_{S,j}}.
  \end{align*}
\end{proposition}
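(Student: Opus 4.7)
The plan is to compute $\rproj_\ast(\mathbf{1}_{\Mat_3(\OO_E), v\circ\det=r})$ by evaluating it on a system of representatives for the $K'$-orbits of $S_3(F)$. Since $\HH(S_3(F))$ has $\QQ$-basis indexed by $j \ge 0$, the set of $K'$-orbits in the support of $\rproj_\ast(\mathbf{1}_{\Mat_3(\OO_E), v\circ\det=r})$ is parametrized by a single non-negative integer; by a Cartan-type decomposition for the symmetric space $S_3(F) \simeq \GL_3(E)/\GL_3(F)$ I would pick a diagonal representative $\gamma_j \in S_3(F)$ together with a preimage $g_j \in \GL_3(E)$ satisfying $g_j\bar g_j^{-1} = \gamma_j$ (so that $v(\det g_j) = j$, reflecting the invariant of the orbit $K'_{S,j}$). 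First I would verify that these $\gamma_j$ really do exhaust the $K'$-orbits carrying nonzero mass and that $g_j$ can be taken to be diagonal or at worst in a standard parabolic form.

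Next, for each fixed $j$ in the range $0 \le j \le r$, I would rewrite the pushforward value at $\gamma_j$ as
\[
  \rproj_\ast(\mathbf{1}_{\Mat_3(\OO_E), v\circ\det=r})(\gamma_j)
  = \int_{\GL_3(F)} \mathbf{1}_{\Mat_3(\OO_E), v\circ\det=r}(g_j h)\,\odif h.
\]
The crucial observation is that the integrand is right-invariant under $K' = \GL_3(\OO_E)$, in particular under $K_F := \GL_3(\OO_F)$. Applying the Iwasawa decomposition $\GL_3(F) = N_F A_F K_F$ with its standard Haar measure, the $K_F$-integration is trivial, and one is left with an integral over the upper-triangular unipotent $N_F$ and the split diagonal torus $A_F$. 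The torus part contributes $v(\det g_j h) = r$, forcing $v(\det a) = r - j$ for $a \in A_F$; the unipotent part is then constrained by the integrality condition $g_j n a \in \Mat_3(\OO_E)$, which becomes a finite list of valuation inequalities on the entries of $n$.

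The main computational step is to evaluate this finite collection of integrals and show they sum to
\[
  \sum_{i=0}^{2(r-j)} \min\!\left( 1 + \left\lfloor \tfrac{i}{2} \right\rfloor, 1 + \left\lfloor \tfrac{2(r-j)-i}{2} \right\rfloor \right) q^i.
\]
The exponent $i$ should arise as one of the torus valuations (the "middle" slot), while the coefficient counts the volume of the admissible unipotent entries compatible with both the integrality condition and the torus valuations summing to $r - j$. The $\min$ and floor structure comes from the symmetric constraint imposed by $\gamma_j \bar\gamma_j = 1$, which forces the unipotent entries to satisfy upper bounds from both the $g_j$ side and the $\bar g_j^{-1}$ side; the parity-dependent floor arises because one of the entries is governed by a norm-like relation, forcing its valuation to be even with respect to an auxiliary parameter.

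The main obstacle will be the bookkeeping in this case analysis: three unipotent entries and two free torus coordinates (after fixing $v(\det a) = r - j$) need to be tracked simultaneously, and the interaction between the integrality $g_j n a \in \Mat_3(\OO_E)$ and the weights prescribed by $a$ produces several regimes separated by sign changes of certain linear combinations of valuations. The cleanest presentation is probably to split first on the torus cocharacter and only then on the residual unipotent freedom, organizing the final assembly so that fixed $i := v(a_{11}) - v(a_{33})$ (or the analogous middle-slot exponent) produces the claimed triangular coefficient. As a sanity check I would compare the $j = r$ case (where the formula degenerates to $\mathbf{1}_{K'_{S,r}}$) against the explicit evaluation on $\gamma_r$, and confirm agreement with the Satake-side calculations of the two preceding propositions via the commutative diagram $\Sat \circ \rproj_\ast = \BC \circ \Sat$.
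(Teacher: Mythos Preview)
Your overall strategy---pick a representative $g_j$ with $g_j\bar g_j^{-1}\in K'_{S,j}$, apply Iwasawa to $h\in\GL_3(F)$, and reduce the integral to a lattice-point count---is exactly what the paper does. But two of your concrete expectations are wrong and would cost you time.

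First, a \emph{diagonal} $g_j$ cannot work: if $g_j=\diag(a_1,a_2,a_3)$ then $g_j\bar g_j^{-1}=\diag(a_i/\bar a_i)$ has unit entries and lies in $K'_{S,0}$ for every $j$. (Relatedly, your claim that $v(\det g_j)=j$ is the orbit invariant is off: elements of $S_3(F)$ have determinant of valuation zero, and $j$ instead measures the most negative entry valuation.) The paper's key choice is the unipotent
\[
  g=\begin{pmatrix}1&&\varpi^{-j}\sqrt\eps\\&1&\\&&1\end{pmatrix},
\]
which has $\det g=1$ but $g\bar g^{-1}\in K'_{S,j}$ because of the off-diagonal $2\varpi^{-j}\sqrt\eps$. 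With this $g$, writing $h=\diag(x_1,x_2,x_3)\cdot n(y_1,y_2,y_3)\cdot k$ and setting $\alpha_i=v(x_i)$, the integrality of $gh$ reduces to $\alpha_1,\alpha_2\ge 0$, $\alpha_3\ge j$ (the $\varpi^{-j}$ enters only here), and $v(y_1),v(y_2)\ge -\alpha_1$, $v(y_3)\ge -\alpha_2$.

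Second, the shape of the formula has nothing to do with ``norm-like relations'' or ``constraints from the $\bar g_j^{-1}$ side''; there is no case analysis at all. The unipotent volume is simply $q^{2\alpha_1+\alpha_2}$, so the whole integral becomes
\[
  \sum_{\substack{\alpha_1+\alpha_2+\alpha_3=r\\ \alpha_1,\alpha_2\ge 0,\ \alpha_3\ge j}} q^{2\alpha_1+\alpha_2}
  =\sum_{\substack{\alpha_1,\alpha_2\ge 0\\ \alpha_1+\alpha_2\le r-j}} q^{2\alpha_1+\alpha_2},
\]
and collecting by $i=2\alpha_1+\alpha_2$ gives the $\min(1+\lfloor i/2\rfloor,1+\lfloor(2(r-j)-i)/2\rfloor)$ coefficient directly. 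The floors come from the parity of $i$ in this elementary count, not from any Hermitian structure.
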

\begin{proof}
  The coefficient of $\mathbf{1}_{K'_{S,j}}$ will be equal to
  the evaluation of the integral at any $g$ such that $g\bar{g} \in K'_{S,j}$.
  Fixing $j \ge 0$, we are going to take the choice
  \[
    g = \begin{pmatrix}
      1 &   & \varpi^{-j} \sqrt{\eps} \\
      & 1 \\
      &   & 1
    \end{pmatrix}.
  \]
  We need to check this choice of $g$ indeed satisfies $g\bar{g}\inv \in K'_{S,j}$.
  This follows as
  \[ \bar{g} = \begin{pmatrix} 1 &   & -\varpi^{-j} \sqrt{\eps} \\ & 1 \\ &   & 1 \end{pmatrix}
    \implies \bar{g}\inv = \begin{pmatrix} 1 &   & \varpi^{-j} \sqrt{\eps} \\ & 1 \\ &   & 1 \end{pmatrix}
  \]
  and therefore
  \[
    g\bar{g}\inv = \begin{pmatrix}
      1 &   & 2\varpi^{-j} \sqrt{\eps} \\
      & 1 \\
      &   & 1
    \end{pmatrix} \in K'_{S,j}
  \]
  as needed.

  Having chosen the representative $g$, we aim to calculate the right-hand side of
  \[
    \rproj_\ast(\mathbf{1}_{\Mat_3(\OO_E), v\circ\det=r})(g\bar{g})
    = \int_{h \in \GL_3(F)} \mathbf{1}_{\Mat_3(\OO_E), v\circ\det=r}(gh) \odif h.
  \]
  We take (non-Archimedean) Iwasawa decomposition of $h \in \GL_3(F)$ to rewrite it as
  \[
    h =
    \begin{pmatrix} x_1 \\ & x_2 \\ && x_3 \end{pmatrix}
    \begin{pmatrix} 1 & y_1 & y_2 \\ & 1 & y_3 \\ & & 1 \end{pmatrix}
    k
  \]
  for $k \in \GL_3(\OO_F) \subseteq K'$, which does not affect the indicator function.
  Here $x_1, x_2, x_3 \in F^\times$ and $y_1, y_2, y_3 \in F$.
  In that case, note that
  \begin{align*}
    gh
    &=
    \begin{pmatrix}
      1 &   & \varpi^{-j}\sqrt\eps \\
      & 1 \\
      &   & 1
    \end{pmatrix}
    \begin{pmatrix} x_1 \\ & x_2 \\ && x_3 \end{pmatrix}
    \begin{pmatrix} 1 & y_1 & y_2 \\ & 1 & y_3 \\ & & 1 \end{pmatrix} k \\
    &=
    \begin{pmatrix}
      1 &   & \varpi^{-j}\sqrt\eps \\
      & 1 \\
      &   & 1
    \end{pmatrix}
    \begin{pmatrix} x_1 & x_1 y_1 & x_1 y_2 \\ & x_2 & x_2 y_3 \\ & & x_3 \end{pmatrix} k \\
    &=
    \begin{pmatrix}
      x_1 & x_1 y_1 & x_1 y_2 + x_3 \varpi^{-j} \sqrt\eps \\
      & x_2 & x_2 y_3 \\
      & & x_3
    \end{pmatrix}
    k.
  \end{align*}
  Hence, we can rewrite the $\rproj_\ast(\mathbf{1}_{\Mat_3(\OO_E), v\circ\det=r})$
  as a six-fold integral
  \begin{align*}
    &\rproj_\ast(\mathbf{1}_{\Mat_3(\OO_E), v\circ\det=r}) \\
    &= \int_{x_1 \in F^\times} \int_{x_2 \in F^\times} \int_{x_3 \in F^\times}
    \int_{y_1 \in F} \int_{y_2 \in F} \int_{y_3 \in F} \\
    &\quad \mathbf{1}_{\Mat_3(\OO_E), v\circ\det=r} \left(
    \begin{pmatrix}
      x_1 & x_1 y_1 & x_1 y_2 + x_3 \varpi^{-j} \sqrt\eps \\
      & x_2 & x_2 y_3 \\
      & & x_3
    \end{pmatrix}
    \right) \\
    &\quad \odif[{\times,\times,\times}]{x_1,x_2,x_3,y_1,y_2,y_3}.
  \end{align*}
  Apparently the indicator function only depends on the valuations,
  so accordingly we rewrite the six-fold integral as a discrete sum over the valuations
  $\alpha_i \coloneqq v(x_i)$.
  Then the conditions are that
  \begin{align*}
    &\alpha_1 \ge 0, \quad \alpha_2 \ge 0, \quad \alpha_3 \ge j \\
    &v(y_1) \ge - \alpha_1, \quad v(y_2) \ge - \alpha_1, \quad v(y_3) \ge -\alpha_2.
  \end{align*}
  We have $\Vol(\varpi^{\alpha_i} \OO_F^\times) = 1$
  and $\Vol(\varpi^{-\alpha_i} \OO_F) = q^{\alpha_i}$.
  Hence the integral can be rewritten as the discrete sum
  \begin{align*}
    \sum_{\substack{\alpha_1 + \alpha_2 + \alpha_3 = r \\ \alpha_1 \ge 0 \\ \alpha_2 \ge 0 \\ \alpha_3 \ge j}}
    q^{\alpha_1} \cdot q^{\alpha_1} \cdot q^{\alpha_2}
    &= \sum_{\substack{\alpha_1 + \alpha_2 \le r-j \\ \alpha_1 \ge 0 \\ \alpha_2 \ge 0}}
    q^{2\alpha_1+\alpha_2} \\
    &= \sum_{i=0}^{2(r-j)}
    \min \left( 1 + \left\lfloor \frac i2 \right\rfloor,
      1 + \left\lfloor \frac{2(r-j)-i}{2} \right\rfloor
    \right) q^i
  \end{align*}
  as desired.
\end{proof}

\subsubsection{Base change from $\HH(\U(\VV_3^+))$ to $\HH(S_3(F))$}
We first need to determine an element of $\HH(\U(\VV_n^+))$
which is in the pre-image of
\[ \mathbf{1}_{\varpi^{-r} \Mat_3(\OO_E) \cap \U(\VV_3^+)} \]
under $\BC \colon \HH(\GL_3(E)) \to \HH(\U(\VV_3^+))$.

For convenience, we define the shorthand
\[
  \HH(\GL_3(E)) \ni
  f'_r \coloneqq \begin{cases}
    \mathbf{1}_{\Mat_3(\OO_E), v \circ \det = r} & r \ge 0 \\
    0 & r < 0
  \end{cases}
\]
for every integer $r$.
We start with the following intermediate calculation.
\begin{align*}
  &\BC\left( \Sat \left( f'_r - q^2 f'_{r-1} \right) \right) \\
  &= \BC \left(
    q^{2r} \sum_{n_1+n_2+n_3=r} X_1^{n_1} X_2^{n_2} X_3^{n_3}
    - q^2 \cdot q^{2(r-1)} \sum_{n_1+n_2+n_3=(r-1)} X_1^{n_1} X_2^{n_2} X_3^{n_3} \right) \\
  &= q^{2r} \left( \sum_{n_1+n_2+n_3=r} Y_1^{n_1-n_3} - \sum_{n_1+n_2+n_3=(r-1)} Y_1^{n_1-n_3} \right) \\
  &= q^{2r} \left( \sum_{n_1+n_3=r} Y_1^{n_1-n_3} \right) \\
  &= q^{2r} \left( Y_1^{r} + Y_1^{r-2} + \dots + Y_1^{-r} \right).
  \intertext{Replacing $r$ with $r-1$ gives}
  &\BC\left( \Sat \left(f'_{r-1} - q^2 f'_{r-2} \right) \right) \\
  &= q^{2r-2} \left( Y_1^{r-1} + Y_1^{r-3} + \dots + Y_1^{-(r-1)} \right).
  \intertext{Adding the former equation to $q$ times the latter gives}
  &\BC\left( \Sat\left( f'_r + (q-q^2) f'_{r-1} - q^3 f'_{r-2} \right) \right) \\
  &= q^{2r} \left( Y_1^r + Y_1^{r-2} + \dots + Y_1^{-r} \right)
  + q^{2r-1} \left( Y_1^{r-1} + Y_1^{r-3} + \dots + Y_1^{-(r-1)} \right) \\
  &= \Sat(\mathbf{1}_{\varpi^{-r} \Mat_3(\OO_E) \cap \U(\VV_3^+)}).
\end{align*}
This shows that
\[ \BC(f'_r + (q-q^2) f'_{r-1} - q^3 f'_{r-2}) =
  \mathbf{1}_{\varpi^{-r} \Mat_3(\OO_E) \cap \U(\VV_3^+)} \]
so indeed $f'_r + (q-q^2) f'_{r-1} - q^3 f'_{r-2}$
lies in the desired pre-image of the map $\BC \colon \HH(\GL_3(E)) \to \HH(\U(\VV_3^+))$.

On the other hand, it is easy to check that
\begin{align*}
  &\rproj_\ast(f'_r -q^2 f'_{r-1}) \\
  &= \sum_{j=0}^r \Bigg[
      \sum_{i=0}^{2(r-j)} \min \left( 1 + \left\lfloor \frac i2 \right\rfloor,
      1 + \left\lfloor \frac{2(r-j)-i}{2} \right\rfloor \right) q^i \\
  &\qquad - \sum_{i=0}^{2(r-1-j)} \min \left( 1 + \left\lfloor \frac i2 \right\rfloor,
    1 + \left\lfloor \frac{2((r-1)-j)-i}{2} \right\rfloor \right) q^{i+2}
  \Bigg] \mathbf{1}_{K'_{S,j}} \\
  &= \sum_{j=0}^r \left[ 1+q+q^2+\dots+q^{r-j} \right] \mathbf{1}_{K'_{S,j}} \\
  \intertext{so}
  &\rproj_\ast(f'_r -q^2 f'_{r-1} + q \left( f'_{r-1} - q^2 f'_{r-3} \right)) \\
  &= \sum_{j=0}^r \left[ (1+q+q^2+\dots+q^{r-j})+(q+q^2+\dots+q^{r-j}) \right] \mathbf{1}_{K'_{S,j}} \\
  &= \sum_{j=0}^r \left[ 1 + 2q + 2q^2 + \dots + 2q^{r-j} \right] \mathbf{1}_{K'_{S,j}}.
\end{align*}
To summarize, the completed commutative diagram can be written in full as
\begin{center}
\begin{tikzcd}
  \begin{tabular}{c} $f'_r + (q-q^2) f'_{r-1}$ \\ $- q^3 f'_{r-2} \in \HH(\GL_3(E))$ \end{tabular}
    \ar[dd, "\rproj_\ast"', mapsto, bend right = 50]
    \ar[r, "\Sat", mapsto] \ar[d, "\BC", mapsto]
    & \dots \in \QQ[X_1^\pm, X_2^\pm, X_3^\pm]^{\Sym(3)} \ar[d, "\BC", mapsto] \\
  \begin{tabular}{c} $\mathbf{1}_{\varpi^{-r} \Mat_3(\OO_E) \cap \U(\VV_3^+)}$ \\ $\in \HH(\U(\VV_3^+))$ \end{tabular}
    \ar[r, "\Sat", mapsto]
    & \begin{tabular}{l}
      $q^{2r} \left( Y_1^{\pm r} + \dotsb + Y_1^{\mp r} \right)$ \\
      $+ q^{2r-1} \left( Y_1^{\pm(r-1)} + \dotsb + Y_1^{\mp(r-1)} \right)$ \\
      $\in \QQ[Y_1^\pm]^{W_1}$
      \end{tabular} \\
  \begin{tabular}{c}
    $\sum_{j=0}^r \big[ 1 + 2q + 2q^2$ \\
    $+ \dots + 2q^{r-j} \big] \mathbf{1}_{K'_{S,j}}$ \\
    $\in \HH(S_3(F))$
  \end{tabular} \ar[u, "\sim", "\BC_{S_3}"', mapsto]
\end{tikzcd}
\end{center}

Thus, we arrive at the following:
\begin{proposition}
  [Base change $\BC_{S_3}$]
  \label{prop:BC_S3}
  For $n = 3$, we have
  \begin{align*}
    \BC_{S_3} \left( \sum_{j=0}^r \left[ 1 + 2q + 2q^2 + \dots + 2q^{r-j} \right]
    \mathbf{1}_{K'_{S,j}} \right)
    &= \mathbf{1}_{\varpi^{-r} \Mat_3(\OO_E) \cap \U(\VV_3^+)} \\
    \BC_{S_3} \left( \mathbf{1}_{K'_{S,r}}
    + \sum_{j=0}^{r-1} 2q^{r-j} \mathbf{1}_{K'_{S,j}} \right)
    &= \mathbf{1}_{K\varpi^{(r,0,-r)}K}
  \end{align*}
  for every integer $r \ge 0$.
\end{proposition}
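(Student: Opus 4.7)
The plan is to read the two claimed identities straight off the commutative diagram
\[
  \HH(\GL_3(E)) \xrightarrow{\rproj_\ast} \HH(S_3(F)) \xrightarrow{\BC_{S_3}} \HH(\U(\VV_3^+)),
\]
whose composition equals $\BC$, together with the three explicit formulas already established: the Satake transforms of $f'_r \coloneqq \mathbf{1}_{\Mat_3(\OO_E),\,v\circ\det=r}$ and of $\mathbf{1}_{\varpi^{-r}\Mat_3(\OO_E)\cap\U(\VV_3^+)}$, and the fiber integral formula for $\rproj_\ast(f'_r)$. Concretely, if I can find an $f'\in\HH(\GL_3(E))$ whose image under $\BC$ is $\mathbf{1}_{\varpi^{-r}\Mat_3(\OO_E)\cap\U(\VV_3^+)}$, then $\rproj_\ast(f')$ is automatically the preimage of this indicator under $\BC_{S_3}$, and the proposition becomes a matter of simplifying $\rproj_\ast(f')$.

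For the first identity, I would hunt for such an $f'$ as a $\QQ$-linear combination of $f'_r, f'_{r-1}, f'_{r-2}$, comparing Satake transforms. Since $\BC$ is induced on polynomials by $X_i \mapsto Y_1^{\pm 1}$, the image of $\Sat(f'_r)$ is a palindromic polynomial in $Y_1$ of degree $r$, and taking the telescoping difference $f'_r - q^2 f'_{r-1}$ already kills all monomials except $Y_1^r + Y_1^{r-2}+\dots+Y_1^{-r}$ (with coefficient $q^{2r}$). Adding $q$ times the analogous expression with $r$ replaced by $r-1$ exactly reproduces the mixed-parity coefficients $q^{2r}$ and $q^{2r-1}$ appearing in $\Sat(\mathbf{1}_{\varpi^{-r}\Mat_3(\OO_E)\cap\U(\VV_3^+)})$. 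This singles out $f' = f'_r + (q-q^2)f'_{r-1} - q^3 f'_{r-2}$. Pushing this combination through the fiber integral formula gives a double sum in which most terms telescope; a short bookkeeping exercise collapses the result to $\sum_{j=0}^r(1+2q+2q^2+\dots+2q^{r-j})\mathbf{1}_{K'_{S,j}}$, which is the first claimed identity.

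For the second identity, I would use the Cartan decomposition $\varpi^{-r}\Mat_3(\OO_E)\cap\U(\VV_3^+) = \coprod_{i=0}^r K\varpi^{(i,0,-i)}K$, which immediately yields
\[
  \mathbf{1}_{K\varpi^{(r,0,-r)}K} = \mathbf{1}_{\varpi^{-r}\Mat_3(\OO_E)\cap\U(\VV_3^+)} - \mathbf{1}_{\varpi^{-(r-1)}\Mat_3(\OO_E)\cap\U(\VV_3^+)}.
\]
Applying $\BC_{S_3}^{-1}$ and subtracting the first identity at levels $r$ and $r-1$ gives, for $j<r$, a term $(1+2q+\dots+2q^{r-j}) - (1+2q+\dots+2q^{r-1-j}) = 2q^{r-j}$, and contributes a single $\mathbf{1}_{K'_{S,r}}$ at $j=r$, which is precisely the second identity.

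The only nontrivial step is Step~1: matching the nonuniform coefficients $q^{2r}$ and $q^{2r-1}$ of $\Sat(\mathbf{1}_{\varpi^{-r}\Mat_3(\OO_E)\cap\U(\VV_3^+)})$ against the uniform $q^{(n-1)r}$ appearing in $\Sat(f'_r)$. This is what forces the particular combination $f'_r + (q-q^2)f'_{r-1} - q^3 f'_{r-2}$; after it is identified, Steps~2 and~3 are essentially routine bookkeeping with geometric series. No deeper input (beyond the three preceding propositions and the diagram $\BC = \BC_{S_3}\circ \rproj_\ast$) should be required.
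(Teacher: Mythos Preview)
Your proposal is correct and follows essentially the same route as the paper: you identify the same preimage $f'_r + (q-q^2)f'_{r-1} - q^3 f'_{r-2}$ via the Satake comparison, push it through $\rproj_\ast$ with the same telescoping simplification, and derive the second identity by subtracting the first at levels $r$ and $r-1$ using the Cartan decomposition. The only minor imprecision is the description of $\BC$ on polynomials (it sends $X_1,X_2,X_3 \mapsto Y_1,1,Y_1^{-1}$ respectively), but this does not affect the argument.
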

\begin{proof}
  The first equation is the one we just proved.
  The second one follows by noting that
  \[
    \mathbf{1}_{K\varpi^{(r,0,-r)}K}
    = \mathbf{1}_{\varpi^{-r} \Mat_3(\OO_E) \cap \U(\VV_3^+)}
    - \mathbf{1}_{\varpi^{-(r-1)} \Mat_3(\OO_E) \cap \U(\VV_3^+)}
  \]
  so one merely subtracts the left-hand sides evaluated at $r$ and $r-1$ for $r \ge 1$
  to get
  \begin{align*}
    &\phantom= \sum_{j=0}^r \left[ 1 + 2q + 2q^2 + \dots + 2q^{r-j} \right] \mathbf{1}_{K'_{S,j}} \\
    &\qquad - \sum_{j=0}^{r-1} \left[ 1 + 2q + 2q^2 + \dots + 2q^{(r-1)-j} \right] \mathbf{1}_{K'_{S,j}} \\
    &= \mathbf{1}_{K'_{S,r}} +
      \sum_{j=0}^{r-1} \left[ 1 + 2q + 2q^2 + \dots + 2q^{r-j} \right] \mathbf{1}_{K'_{S,j}} \\
    &\qquad - \sum_{j=0}^{r-1} \left[ 1 + 2q + 2q^2 + \dots + 2q^{(r-1)-j} \right] \mathbf{1}_{K'_{S,j}} \\
    &= \mathbf{1}_{K'_{S,r}} + \sum_{j=0}^{r-1} \left[ 2q^{r-j}\mathbf{1}_{K'_{S,j}} \right].
  \end{align*}
  as claimed.
\end{proof}

\section{Definition of the weighted orbital integral for $S_n(F)$}
\label{ch:orbital0}
We briefly mention the definition of the weighted orbital integral
that appears in \Cref{conj:inhomog_spherical}.
However, we will not reuse this definition later on.
This definition is only used for comparison with later definitions
and to provide context for \Cref{conj:inhomog_spherical}.

Let $H = \GL_{n-1}(F)$.
Then $H$ has a natural embedding into $\GL_n(E)$ by
\[ h \mapsto \begin{pmatrix} h & 0 \\ 0 & 1 \end{pmatrix} \]
which endows it with an action $S_n(F)$.
Then our weighted orbital integral is defined as follows.
\begin{definition}
  [{\cite[Equation (3.2.3)]{ref:AFLspherical}}]
  \label{def:orbital0}
  For brevity let $\eta(h) \coloneqq \eta(\det h)$ for $h \in H$.
  For $\gamma \in S_n(F)$, $\phi \in \HH(S_n(F))$, and $s \in \CC$,
  we define the \emph{weighted orbital integral} by
  \[ \Orb(\gamma, \phi, s) \coloneqq
    \int_{h \in H} \phi(h\inv \gamma h) \eta(h)
    \left\lvert \det(h) \right\rvert_F^{-s} \odif h. \]
\end{definition}
\begin{definition}
  [The abbreviation $\partial \Orb(\gamma, \phi)$]
  From now on we will abbreviate
  \[ \partial \Orb(\gamma, \phi)
    \coloneqq \left. \pdv{}{s} \right\rvert_{s=0} \Orb(\gamma, \phi, s). \]
\end{definition}

We remark that this weighted orbital integral is related to
an (unweighted) orbital integral on the unitary side
by the so-called relative fundamental lemma.
Specifically, for $g \in \U(\VV_n^+)$ and $f \in \HH(\U(\VV_n^+))$,
we define the (unweighted) orbital integral by
\[ \Orb^{\U(\VV_n^+)}(g, f) \coloneqq \int_{\U(\VV_n^+)} f(x^{-1}gx) \odif x. \]
Then the following result is true.
\begin{theorem}
  [Relative fundamental lemma; {\cite[Theorem 1.1]{ref:leslie}}]
  \label{thm:rel_fundamental_lemma}
  Let $\phi \in \HH(S_n(F))$ and $\gamma \in S_n(F)\rs$.
  \[ \omega(\gamma) \Orb(\phi, \gamma, 0)
    = \begin{cases}
      0 & \text{if }\gamma \in S_n(F)\rs^- \\
      \Orb^{\U(\VV_n^+)}(g, \BC^{\eta^{n-1}}_{S_n}(\phi)) & \text{if } \gamma \in S_n(F)\rs^+
    \end{cases}
  \]
  where the transfer factor $\omega$ is defined in \Cref{ch:geo}.
\end{theorem}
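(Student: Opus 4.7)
The plan is to deduce the theorem directly from Leslie's generalization of the Jacquet--Rallis fundamental lemma to the full spherical Hecke algebra \cite[Theorem 1.1]{ref:leslie}, which upgrades the classical identity \eqref{eq:old_FL} to the Hecke-algebra statement \eqref{eq:eq:leslie_FL}. The first preliminary task is to pass from Leslie's homogeneous statement, set on the pair $(G')^\flat \times G' \leftrightarrow G^\flat \times G$, to the inhomogeneous form involving $S_n(F)$ and $\U(\VV_n^+)$ that appears in the theorem above; this uses the standard reduction via the projection $\rproj \colon \GL_n(E) \twoheadrightarrow S_n(F)$ from Section~\ref{ch:satake}, together with the base-change isomorphism $\BC^{\eta^{n-1}}_{S_n}$ promised there.

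At $s=0$ the factor $|\det h|_F^{-s}$ in Definition~\ref{def:orbital0} becomes trivial, so the weighted orbital integral collapses to an ordinary $\eta$-twisted orbital integral of exactly the shape appearing on the $S_n(F)$ side of Leslie's theorem. For a matching pair $\gamma \leftrightarrow g$, Leslie's result then yields
\[ \omega(\gamma)\,\Orb(\gamma, \phi, 0) \;=\; \Orb^{\U(\VV_n^{\pm})}\!\left(g,\; \BC^{\eta^{n-1}}_{S_n}(\phi)\right), \]
where the sign $\pm$ is dictated by the parity of $v(\Delta)$ via Proposition~\ref{prop:valuation_delta_matching_group}.

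One now splits into cases. If $\gamma \in S_n(F)\rs^+$, then the matching element $g$ lies in $\U(\VV_n^+)\rs$ and we obtain directly the second branch of the asserted formula. If instead $\gamma \in S_n(F)\rs^-$, the matching element lies in $\U(\VV_n^-)\rs$, and by the bijection in Proposition~\ref{prop:valuation_delta_matching_group} no regular semisimple orbit of $\U(\VV_n^+)$ with these invariants exists; hence the right-hand side of the transfer identity vanishes (there is no orbit over which to integrate), forcing $\omega(\gamma)\,\Orb(\gamma, \phi, 0) = 0$ as claimed.

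The main obstacle, already discharged in \cite{ref:leslie} rather than here, is constructing the base change $\BC^{\eta^{n-1}}_{S_n}$ and proving it intertwines the two sides of the orbital identity for \emph{every} element of the spherical Hecke algebra; Leslie does this by reducing to the classical case $\phi = \mathbf{1}_{K'_S}$ (where it is the original Jacquet--Rallis fundamental lemma proved by Beuzart-Plessis and W.~Zhang) and then propagating via the compatibility of Satake transforms recorded in our Section~\ref{ch:satake}. In the present paper the theorem is invoked purely as a black box.
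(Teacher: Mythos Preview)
The paper does not prove this theorem at all: it is stated as a citation to \cite[Theorem~1.1]{ref:leslie} and used purely as input, with no accompanying argument or even sketch. So there is no ``paper's own proof'' to compare against; your final sentence already identifies this correctly.

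That said, as a standalone explanation your sketch is a bit loose in one place. You say that when $\gamma \in S_n(F)\rs^-$ ``the right-hand side of the transfer identity vanishes (there is no orbit over which to integrate),'' but that is not quite how the vanishing arises in Leslie's formulation: the transfer statement is phrased so that $\phi$ and $\BC^{\eta^{n-1}}_{S_n}(\phi)$ are \emph{matching functions}, and a matching function on the non-split unitary side is necessarily zero because $\U(\VV_n^-)$ has no hyperspecial subgroup and hence no nontrivial spherical Hecke algebra. The orbital integral on the right is thus against the zero function, not an integral over an empty orbit. This is a minor point of phrasing rather than a genuine gap, but worth tightening if you intend to keep the explanatory paragraph.
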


\section{Synopsis of the weighted orbital integral
  $\Orb((\gamma, \uu, \vv^\top), \phi \otimes \oneV, s)$
  for $(\gamma, \uu, \vv^\top) \in (S_2(F) \times V'_2(F))\rs$
  and $\phi \in \HH(S_2(F))$}
\label{ch:orbitalFJ0}

Throughout this section, $H = \GL_n(F)$ (rather than $H = \GL_{n-1}(F)$)
and $K' = \GL_n(\OO_F)$.
For the concrete calculation, we are mostly interested in the case $n = 2$.
The goal of this chapter is to define the orbital integral in
\Cref{thm:semi_lie_formula} and give a precise statement
of the parameters used to state the formula,
as well as the dependencies between the parameters that needs to hold
in order for the matching to work.

\subsection{Definition}
For our conjecture, it will be enough to define the weighted orbital integral
in the case where our function is of the form
\[ \phi \otimes \oneV \]
where $\phi \in \HH(S_n(F))$ is the left component, and
the right component is the indicator function defined in the obvious way:
\begin{align*}
  \mathbf{1}_{\OO_F^n \times (\OO_F^n)^\vee} \colon V'_n(F) &\to \{0,1\} \\
  (\uu, \vv^\top) &\mapsto
  \begin{cases}
    1 & \uu \text{ and } \vv^\top \text{ have } \OO_F \text{-entries} \\
    0 & \text{otherwise}.
  \end{cases}
\end{align*}

Then, unsurprisingly from the definition of our action as
\[ h \cdot \guv = (h\gamma h\inv, h\uu, \vv^\top h\inv) \]
we analogously define the weighted orbital integral as follows.
\begin{definition}
  [{\cite[\S1.3]{ref:liuFJ}}]
  \label{def:orbitalFJ}
  For brevity let $\eta(h) \coloneqq \eta(\det h)$ for $h \in H$.
  For $\guv \in S_n(F) \times V'_n(F)$,
  $\phi \in \HH(S_n(F))$, and $s \in \CC$,
  we define the weighted orbital integral by
  \begin{align*}
    & \Orb(\guv, \phi \otimes \oneV, s) \\
    &\coloneqq \int_{h \in H} \phi(h\inv \gamma h) \oneV(h \uu, \vv^\top h^{-1})
    \eta(h) \left\lvert \det(h) \right\rvert_F^{-s} \odif h.
  \end{align*}
\end{definition}
\begin{definition}
  [The abbreviation $\partial \Orb(\guv, \phi)$]
  Moving forward we abbreviate
  \[ \partial \Orb(\guv, \phi) \coloneqq
    \left. \pdv{}{s} \right\rvert_{s=0}
    \Orb((\gamma, \uu, \vv^\top), \phi \otimes \oneV, s). \]
\end{definition}

As before it seems this weighted orbital integral should be related to an ordinary one.
To define it, fix a self-dual lattice $\Lambda_n$ in $\VV_n^+$ of full rank.
First, if $(g,u) \in \U(\VV_n^+) \times \VV_n^+$ and $f \in \HH(\U(\VV_n^+))$,
then we define an orbital integral for $\U(\VV_n^+) \times \VV_n^+$ by
\begin{equation}
  \Orb^{\U(\VV_n^+) \times \VV_n^+}\left( (g,u), f \otimes \mathbf{1}_{\Lambda_n} \right)
  \coloneqq \int_{\U(\VV_n^+)} f(x\inv g x) \mathbf{1}_{\Lambda_n}(x^{-1} u) \odif x.
  \label{eq:unweighted_orbital_semi_lie}
\end{equation}
Then in the spirit of \cite[Conjecture 1.9]{ref:liuFJ}
and \Cref{thm:rel_fundamental_lemma}, we propose the following.
\begin{conjecture}
  [Relative fundamental lemma in the semi-Lie case]
  \label{conj:rel_fundamental_lemma_semilie}
  Let $\phi \in \HH(S_n(F))$ and $\guv \in (S_n(F) \times V'_n(F))\rs$.
  Then
  \begin{align*}
    &\omega\guv \Orb(\phi \otimes \oneV, \guv, 0) \\
    &=
    \begin{cases}
      0 & \text{if } \guv \in (S_n(F) \times V'_n(F))\rs^- \\
      \Orb^{\U(\VV_n^+) \times \VV_n^+}((g,u), \BC^{\eta^{n-1}}_{S_n}(\phi) \otimes \mathbf{1}_{\Lambda_n})
      & \text{if } \guv \in (S_n(F) \times V'_n(F))\rs^+
    \end{cases}
  \end{align*}
  where the transfer factor $\omega$ is defined in \Cref{ch:geo}.
\end{conjecture}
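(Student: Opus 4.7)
The plan is to adapt Leslie's proof of Theorem \ref{thm:rel_fundamental_lemma} to the augmented Fourier--Jacobi setting, splitting into two cases according to whether $\guv$ lies in $(S_n(F) \times V'_n(F))\rs^-$ or $(S_n(F) \times V'_n(F))\rs^+$.

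For the vanishing in the $-$ case, the goal is $\Orb(\guv, \phi \otimes \oneV, 0) = 0$. My first instinct is to construct an involution on the integration variable $h \in H = \GL_n(F)$ that preserves the integrand $\phi(h\inv \gamma h) \cdot \oneV(h\uu, \vv^\top h\inv)$ while negating $\eta(h) = (-1)^{v(\det h)}$. The parity obstruction coming from $\guv$ being of type $-$, namely that $\Delta \coloneqq \det[\vv^\top \gamma^{i+j-2} \uu]_{i,j=1}^n$ has odd valuation by Proposition \ref{prop:valuation_delta_matching_semilie}, should supply exactly the required sign cancellation. For $n=2$, this vanishing can in fact be read off directly from Theorem \ref{thm:semi_lie_formula}: the leading sum $\sum_k (-1)^k (1 + q + \dots + q^{\nn_\guv(k)})$ at $s = 0$ telescopes to zero by the parity of its summation range, furnishing an independent verification in the case of interest.

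For the $+$ case, I would proceed in two stages. First, push $\phi$ back along the fiber integration $\rproj_\ast^{\eta^{n-1}}$ (see Section \ref{ch:satake}) to obtain some $f' \in \HH(\GL_n(E))$, so that the $H$-orbital integral on $S_n(F) \times V'_n(F)$ is recast as an integral on $\GL_n(E) \times V'_n(F)$. Second, use the matching bijection $\guv \leftrightarrow (g, u)$ together with Leslie's base change square from Section \ref{ch:satake} to identify the resulting integral with the unweighted orbital integral of $\BC^{\eta^{n-1}}_{S_n}(\phi) \otimes \mathbf{1}_{\Lambda_n}$ on $\U(\VV_n^+) \times \VV_n^+$ defined in \eqref{eq:unweighted_orbital_semi_lie}. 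The transfer factor $\omega\guv$ would absorb the sign twist coming from $\eta^{n-1}$ in the base change.

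The main obstacle will be the passage between the analytic side, where $V'_n(F) = F^n \times (F^n)^\vee$ carries a pair of $F$-rational vectors cut out by the integrality condition $\oneV$, and the geometric side, where a single $E$-Hermitian vector $u \in \VV_n^+$ is cut out by the self-dual lattice condition $\mathbf{1}_{\Lambda_n}$. This is not a routine extension of Leslie's scalar argument; morally, it is the Fourier--Jacobi descent sketched in \cite[\S1.4]{ref:liuFJ} that converts the group AFL into the semi-Lie AFL. A clean proof might therefore bootstrap Conjecture \ref{conj:rel_fundamental_lemma_semilie} from Leslie's Theorem \ref{thm:rel_fundamental_lemma} by integrating both sides against a Weil-representation theta kernel on $V'_n(F)$ and $\VV_n^\pm$, thereby inserting the extra vector degree of freedom while preserving the base change relation. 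Making this precise requires a careful matching of Haar measures and centralizers on both sides. For the specific $n = 2$ case actually needed in Theorem \ref{thm:semi_lie_n_equals_2}, one may sidestep the full argument and simply confirm the vanishing assertion from the explicit formulas developed in the orbital chapters, as anticipated by the paper.
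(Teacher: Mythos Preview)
The paper does not prove this statement. It is deliberately recorded as a conjecture, and immediately afterward the paper writes: ``Wei Zhang suggests that this conjecture can be proven by similar means to \Cref{thm:rel_fundamental_lemma}, but since it is not necessary for this paper we do not pursue this proof here.'' The only further comment is that the vanishing $\Orb(\guv, \phi \otimes \oneV, 0) = 0$ in the minus case ``can be checked independently'' (for $n=2$, from the explicit formula of \Cref{thm:semi_lie_formula}).

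Your proposal is therefore not being compared against a proof in the paper --- there is none. What you have written is a plausible strategy sketch that is entirely consistent with the paper's one-sentence remark: adapt Leslie's argument for \Cref{thm:rel_fundamental_lemma} to the semi-Lie setting, and verify the $n=2$ vanishing directly from \Cref{thm:semi_lie_formula}. You also correctly flag the genuine obstacle, namely the passage from the pair $(\uu, \vv^\top) \in F^n \times (F^n)^\vee$ with integrality condition $\oneV$ to a single Hermitian vector $u \in \VV_n^+$ with the lattice condition $\mathbf{1}_{\Lambda_n}$; this is not addressed by Leslie's scalar argument and would be the heart of any actual proof. But your proposal remains a sketch (``my first instinct is'', ``I would proceed'', ``a clean proof might''), not a proof, and the paper offers nothing more concrete to measure it against.
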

Wei Zhang suggests that this conjecture can be proven by similar means
to \Cref{thm:rel_fundamental_lemma},
but since it is not necessary for this paper we do not pursue this proof here.

\subsection{Basis for the indicator functions in $\HH(S_2(F))$}
\label{sec:hecke_basis_FJ}
From now on assume $n = 2$.
Set
\[ S_2(F) \coloneqq \left\{ g \in \GL_2(E) \mid g \bar{g} = \id_2 \right\}. \]

We again have a Cartan decomposition indexed by a single integer $r \ge 0$:
\begin{lemma}
  [Cartan decomposition of $S_2(F)$]
  For each integer $r \ge 0$ let
  \[ K'_{S,r} \coloneqq \GL_2(\OO_E) \cdot
    \begin{pmatrix} 0 & \varpi^r \\ \varpi^{-r} & 0 \end{pmatrix} \]
  denote the orbit of
  $\begin{pmatrix} 0 & \varpi^r \\ \varpi^{-r} & 0 \end{pmatrix}$
  under the left action of $\GL_2(\OO_E)$.
  Then we have a decomposition
  \[ S_2(F) = \coprod_{r \geq 0} K'_{S,r}. \]
\end{lemma}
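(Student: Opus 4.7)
My plan is to construct a complete $K'$-invariant on $S_2(F)$, valued in $\ZZ_{\ge 0}$, and show that it evaluates to $r$ on the standard representative $\gamma_0 \coloneqq \begin{pmatrix} 0 & \varpi^r \\ \varpi^{-r} & 0 \end{pmatrix}$.

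First, for any $\gamma \in S_2(F) \subseteq \GL_2(E)$, the elementary divisor theorem produces a pair $(\varpi^{a}, \varpi^{b})$ with $a \ge b$ encoding its $K' \times K'$-orbit type. The condition $\gamma \bar{\gamma} = \id_2$ forces $N_{E/F}(\det \gamma) = 1$, and since $E/F$ is unramified this gives $v(\det \gamma) = 0$, hence $a + b = 0$. So $a = -b \ge 0$, and I set $r(\gamma) \coloneqq a$. Because elementary divisors are preserved by left and right multiplication by $K'$, the action $k \cdot \gamma = k \gamma \bar{k}^{-1}$ preserves $r(\gamma)$. A direct row reduction shows $r(\gamma_0) = r$, so the $K'_{S,r}$ for distinct $r$ are pairwise disjoint.

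The remaining content is to show every $\gamma \in S_2(F)$ with $r(\gamma) = r$ lies in the orbit of $\gamma_0$. I plan to identify $S_2(F) \cong \GL_2(E)/\GL_2(F)$ via $g \mapsto g \bar{g}^{-1}$; surjectivity here is Hilbert 90 applied to $\GL_2$, and the stabilizer of $\id_2$ under the twisted action is exactly $\GL_2(F)$. Under this identification, the $K'$-action on $S_2(F)$ corresponds to the left-multiplication action on $\GL_2(E)/\GL_2(F)$, so the claim becomes a description of the double cosets $K' \backslash \GL_2(E) / \GL_2(F)$. Applying the Iwasawa decomposition $\GL_2(E) = K' \cdot B(E)$ with $B$ the upper-triangular Borel, each $K'$-coset is represented by an element of $B(E)$; analyzing the resulting cosets modulo the right action of $\GL_2(F)$, one finds a representative whose valuation structure matches that of the explicit lift $g_r = \begin{pmatrix} \varpi^r & -\sqrt{\eps}\varpi^r \\ 1 & \sqrt{\eps} \end{pmatrix}$ of $\gamma_0$ (a short direct check gives $g_r \bar{g_r}^{-1} = \gamma_0$, using $p > 2$ so that $1/2 \in \OO_F$).

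The main obstacle is this last step, which requires carefully tracking how the right action of $\GL_2(F)$ interacts with the Iwasawa decomposition of $\GL_2(E)$ in order to bring an arbitrary representative into the specific form $g_r \cdot \GL_2(F)$. The degenerate case $r = 0$ reduces to the vanishing of $H^1(\Gal(E/F), \GL_2(\OO_E))$, which ensures every $\gamma \in \GL_2(\OO_E) \cap S_2(F)$ arises as $k \bar{k}^{-1}$ for some $k \in K'$.
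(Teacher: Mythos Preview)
The paper does not give its own argument here: it simply cites \cite[Equation (7.1.7) and (7.1.8)]{ref:AFLspherical} and moves on. Your proposal is therefore a genuinely different contribution, namely an attempt at a self-contained proof.

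Your strategy is the standard one and is correct in outline. The elementary-divisor invariant $r(\gamma)$ is well-defined and $K'$-invariant for the reason you give, and your lift $g_r$ does satisfy $g_r \bar{g_r}^{-1} = \gamma_0$, so disjointness and the identification $S_2(F) \cong \GL_2(E)/\GL_2(F)$ are fine. What remains, as you yourself flag, is the surjectivity step: showing that every double coset in $K' \backslash \GL_2(E) / \GL_2(F)$ contains some $g_r$. Your Iwasawa sketch is the right idea but is not actually carried out---you describe the shape of the reduction without performing it. Concretely, after writing a representative as $\begin{pmatrix} t_1 & x \\ 0 & t_2 \end{pmatrix}$ with $t_i \in E^\times$, $x \in E$, you still need to show that right multiplication by $\GL_2(F)$ lets you normalize this to the specific form of $g_r$; this requires a short but genuine case analysis on the valuations and on how much of $x$ can be absorbed. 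Until that step is written down, the proof is incomplete.

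For the $r=0$ case, invoking $H^1(\Gal(E/F), \GL_2(\OO_E)) = 1$ is correct but somewhat heavy; since $E/F$ is unramified with cyclic Galois group of order $2$, this is just Hilbert 90 for $\GL_2$ over the residue field lifted via smoothness, and you could equally well handle it within the same Iwasawa computation as the general case.
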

\begin{proof}
  This is \cite[Equation (7.1.7) and (7.1.8)]{ref:AFLspherical}.
\end{proof}

Like last time, $K'_{S,r}$ is the part of $S_2(F)$
for which the most negative valuation among the nine entries is $-r$.
And as before we abbreviate the $r = 0$ term specifically:
\begin{align*}
  K'_S
  &\coloneqq K'_{S,0} \\
  &= \GL_2(\OO_E) \cdot \begin{pmatrix} & 1 \\ 1 \end{pmatrix} \\
  &= \GL_2(\OO_E) \cdot \id_2 = S_2(F) \cap \GL_2(\OO_E).
\end{align*}

Repeating the definition
\[ K'_{S, \le r} \coloneqq S_2(F) \cap \varpi^{-r} \GL_2(\OO_E)
  = K'_{S,0} \sqcup K'_{S,1} \sqcup \dots \sqcup K'_{S,r} \]
we get a basis of indicator functions for the Hecke algebra $\HH(S_2(F))$:
\begin{corollary}
  [Basis of $\HH(S_2(F))$]
  For $r \ge 0$, the indicator functions $\mathbf{1}_{K'_{S, \le r}}$
  form a basis of $\HH(S_2(F))$.
\end{corollary}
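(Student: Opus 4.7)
The plan is a two-step reduction. First, by the Cartan decomposition in the preceding lemma, $\mathbf{1}_{K'_{S, \le r}} = \sum_{j=0}^{r} \mathbf{1}_{K'_{S,j}}$; so the transition between the candidate collections $\{\mathbf{1}_{K'_{S,r}}\}_{r \ge 0}$ and $\{\mathbf{1}_{K'_{S, \le r}}\}_{r \ge 0}$ is upper unitriangular (each row finite) and hence invertible. It therefore suffices to prove that $\{\mathbf{1}_{K'_{S,r}}\}_{r \ge 0}$ is a $\QQ$-basis of $\HH(S_2(F))$.

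Next I would verify that each $\mathbf{1}_{K'_{S,r}}$ actually belongs to $\HH(S_2(F))$. The $K'$-invariance is automatic since $K'_{S,r}$ is a $K'$-orbit. Compact support holds because $K'_{S,r}$ is the continuous image of the compact group $\GL_2(\OO_E)$. Local constancy (smoothness) follows from the identity $K'_{S, \le r} = S_2(F) \cap \varpi^{-r} \GL_2(\OO_E)$ recorded just before the statement: the right-hand side is open and closed in $S_2(F)$, since $\varpi^{-r} \GL_2(\OO_E)$ is open and closed in $\GL_2(E)$, so $K'_{S,r} = K'_{S, \le r} \setminus K'_{S, \le r-1}$ is likewise open-closed in $S_2(F)$.

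For spanning, let $\phi \in \HH(S_2(F))$. By $K'$-invariance, the restriction $\phi|_{K'_{S,r}}$ is a constant $c_r \in \QQ$. Since the support of $\phi$ is compact and since $\{K'_{S,r}\}_{r \ge 0}$ is a disjoint open cover of $S_2(F)$, the compactness forces $c_r = 0$ for all but finitely many $r$. Thus $\phi = \sum_{r \ge 0} c_r \mathbf{1}_{K'_{S,r}}$ as a \emph{finite} sum. Linear independence of the $\mathbf{1}_{K'_{S,r}}$ is immediate from the pairwise disjointness of their supports, completing the argument that $\{\mathbf{1}_{K'_{S,r}}\}_{r \ge 0}$ is a basis, and hence so is $\{\mathbf{1}_{K'_{S, \le r}}\}_{r \ge 0}$.

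I do not anticipate any real obstacle: the statement is a formal consequence of the Cartan-style decomposition of $S_2(F)$ already provided, together with the standard dictionary between smooth compactly supported $K$-invariant functions on a space with a compact-orbit decomposition and finitely supported families of orbit indicators. The only verification that deserves care is the topological identification $K'_{S, \le r} = S_2(F) \cap \varpi^{-r} \GL_2(\OO_E)$, which is asserted as a definition in the excerpt.
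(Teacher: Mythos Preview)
Your proposal is correct and follows the same approach as the paper: the paper states this corollary as an immediate consequence of the Cartan decomposition $S_2(F) = \coprod_{r \ge 0} K'_{S,r}$ without giving any further proof, and you have simply spelled out the standard details (unitriangular change of basis, membership in $\HH(S_2(F))$, spanning and independence from the orbit decomposition) that the paper leaves implicit.
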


\subsection{Parametrization of $\gamma$}
From now on assume $n = 2$,
and that $(\gamma, \uu, \vv^\top) \in (S_2(F) \times V'_2)\rs$ is regular.

\subsubsection{Identifying an orbit representative}
The weighted orbital integral depends only on the $H$-orbit of $(\gamma, \uu, \vv^\top)$.
Consequently, we may assume without loss of generality
(via multiplication by a suitable change-of-basis $h \in H = \GL_2(F)$) that
\[ \uu = \begin{pmatrix} 0 \\ 1 \end{pmatrix}, \qquad
  \vv^\top = \begin{pmatrix} 0 & e \end{pmatrix} \qquad e \in F. \]
(We know $\uu$ is not the zero vector from the regular condition
applied on $(\gamma, \uu, \vv^\top)$.)

Meanwhile, we will let
$\gamma = \begin{pmatrix} a & b \\ c & d \end{pmatrix} \in \GL_2(F)$
for $a,b,c,d \in F$.
Then, viewed as an element of $\GL_3(F)$ via the embedding we described earlier, we have
\[
  (\gamma, \uu, \vv^\top)
  \mapsto \begin{pmatrix}
    a & b & 0 \\
    c & d & 1 \\
    0 & e & 0
  \end{pmatrix} \in \Mat_3(F).
\]
Thus, our definition of regular requires that
$\begin{pmatrix} 0 \\ 1 \end{pmatrix}$
is linearly independent from $\begin{pmatrix} b \\ d \end{pmatrix}$
and
$\begin{pmatrix} 0 & e \end{pmatrix}$
is linearly independent from $\begin{pmatrix} c & d \end{pmatrix}$.
This is just saying that $b$, $c$, $e$ are all nonzero.
We also know that $\gamma \in S_2(F)$, which gives us relations on $a$, $b$, $c$, $d$
(the same as \cite[Equation (7.3.2)]{ref:AFLspherical}); we have
\[
  \begin{pmatrix} 1 & 0 \\ 0 & 1 \end{pmatrix}
  = \begin{pmatrix} a & b \\ c & d \end{pmatrix} \begin{pmatrix} \bar a & \bar b \\ \bar c & \bar d \end{pmatrix}
  \implies
  \begin{aligned}
    \bar b c = b \bar c &= 1 - a \bar a = 1 - d \bar d, \\
    \qquad d &= - \bar a c / \bar c = -\bar a b / \bar b.
  \end{aligned}
\]

\subsubsection{Simplification due to the matching of non-split unitary group}
Like before, we focus on the case where regular $(\gamma, \uu, \vv^\top)$
matches an element in the non-split unitary group.
As we described in \Cref{prop:valuation_delta_matching_semilie},
this is controlled by the parity of $v(\Delta)$, where
\[ \Delta = \det \left( \left( \vv^\top \gamma^{i+j} \uu \right)_{0 \le i,j \le n-1} \right). \]
When $n=2$, for the representatives we described before,
we have
\[ \left( \vv^\top \gamma^{i+j} \uu \right)_{0 \le i,j \le n-1}
  = \begin{pmatrix} e & de \\ de & bce + d^2e \end{pmatrix} \]
so
\[ \Delta = bce^2 = \frac{b}{\bar b}(1-a \bar a) e^2 . \]
Hence, $v(\Delta)$ is odd if and only if $v(1-a \bar a)$ is odd.
Thus, we restrict attention to the following situation:
\begin{assume}
  \label{assume:a_odd}
  We will assume that
  \[ v(1-a \bar a) \equiv 1 \pmod 2. \]
\end{assume}
In particular, $a$ must be a unit.
And since $d = -\bar a c / \bar c$, it follows $d$ is a unit.
In other words, \Cref{assume:a_odd} gives the direct corollary
\[ v(a) = v(d) = 0. \]

\subsection{Parameters used in the calculation of the weighted orbital integral}
We will state our derivative in terms of the five integers
$r$, $v(b)$, $v(c)$, $v(e)$ and $v(d-a)$.
From \Cref{assume:a_odd}, we actually get that
\begin{assume}
  \label{assume:FJ}
  We have that
  \begin{itemize}
    \ii $v(b) + v(c)$ is an odd positive integer;
    \ii $v(d-a) \ge 0$.
  \end{itemize}
\end{assume}
These are the only constraints between these five numbers we will consider
(together with $r \ge 0$).
However, we mention that we will only be interested in the case when $v(e) \ge 0$
since in the case $v(e) < 0$ we will shortly see that
$\Orb(\guv, \phi \otimes \oneV, s) = 0$ identically in $s$ in that situation.

For convenience, we summarize all the assumptions on the shape of $\guv$ in the following single lemma.
\begin{lemma}
  [Parameters for $\guv \in (S_2(F) \times V'_2(F))\rs$]
  \label{lem:semi_lie_params}
  Suppose $\guv \in (S_2(F) \times V'_2(F))\rs$.
  Then one can choose a representative of the $\GL_2(F)$-orbit of $\guv$ of the form
  \[
      \left( \begin{pmatrix} a & b \\ c & d \end{pmatrix},
      \begin{pmatrix} 0 \\ 1 \end{pmatrix}, \begin{pmatrix} 0 & e \end{pmatrix} \right)
  \]
  where $a,b,c,d,e \in F$ satisfy $bce \neq 0$,
  \begin{align*}
    \bar b c = b \bar c &= 1 - a \bar a = 1 - d \bar d, \\
    d &= - \bar a c / \bar c = -\bar a b / \bar b.
  \end{align*}

  Moreover, we always assume $\guv$ matches an element of
  $(\U(\VV_2^-) \times \VV_2^-)\rs$
  rather than $(\U(\VV_2^+) \times \VV_2^+)\rs$;
  this is equivalent to \Cref{assume:a_odd} which states that
  \[ v(1 - a \bar a) \equiv 1 \pmod 2. \]
  In particular, we may assume $v(a) = v(d) = 0$ and $v(b) + v(c) \ge 1$ is odd
  (\Cref{assume:FJ}).
\end{lemma}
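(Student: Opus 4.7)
The plan is to exhibit the normalization via an explicit $\GL_2(F)$-change of basis, read off the algebraic identities from $\gamma \bar\gamma = \id_2$, and verify the valuation assertions by unwinding \Cref{prop:valuation_delta_matching_semilie}. First I would observe that $\vv^\top \uu$ is a $\GL_2(F)$-invariant of $(\uu, \vv^\top)$, since $(\vv^\top h\inv)(h\uu) = \vv^\top \uu$ for every $h \in \GL_2(F)$. The target form forces $e = \vv^\top \uu$, and I would argue this is nonzero: if $\vv^\top \uu = 0$, then the matrix $(\vv^\top \gamma^{i+j} \uu)_{i,j = 0,1}$ becomes antidiagonal, so $\Delta = -(\vv^\top \gamma \uu)^2$ has even valuation, contradicting the matching to $\U(\VV_2^-)$ via \Cref{prop:valuation_delta_matching_semilie}. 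With $\vv^\top \uu \neq 0$, I can then construct $h$ explicitly by letting $h\inv$ have second column $\uu$ and first column any nonzero vector $w_1 \in \ker \vv^\top$; linear independence of $(w_1, \uu)$ follows because $\uu \notin \ker \vv^\top$, and then $h \uu = (0,1)^\top$ and $\vv^\top h\inv = (0, \vv^\top \uu)$ as required.

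Next, I would unpack $\gamma \bar \gamma = \id_2$ entry by entry. The diagonal equations $a\bar a + b\bar c = 1 = c\bar b + d\bar d$ give $b \bar c = 1 - a\bar a$ and $\bar b c = 1 - d \bar d$, both lying in $F$; conjugation then forces $b \bar c = \bar b c$ and hence $a \bar a = d \bar d$. The off-diagonal equations $a \bar b + b \bar d = 0$ and $c \bar a + d \bar c = 0$ yield $d = -\bar a b / \bar b = -\bar a c / \bar c$ after dividing by $b$ and $\bar c$ (both nonzero). Nonvanishing of $b, c, e$ comes directly from regularity: the pairs $(\uu, \gamma \uu) = ((0,1)^\top, (b,d)^\top)$ and $(\vv^\top, \vv^\top \gamma) = ((0, e), (ce, de))$ must each be $E$-bases of $E^2$, forcing $b \neq 0$ and $ce^2 \neq 0$.

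Finally, a direct calculation using $\gamma \uu = (b, d)^\top$ and $\gamma^2 \uu = (b(a+d), bc + d^2)^\top$ gives $\Delta = e \cdot e(bc + d^2) - (de)^2 = bce^2$. Combined with $b \bar c = 1 - a \bar a$ and $v(\bar c) = v(c)$, this yields $v(\Delta) \equiv v(b) + v(c) \equiv v(1 - a \bar a) \pmod 2$, so \Cref{prop:valuation_delta_matching_semilie} tells us that matching with $\U(\VV_2^-)$ is equivalent to \Cref{assume:a_odd}. Under that assumption, $v(a) = 0$ because otherwise $v(1 - a\bar a) = \min(0, 2v(a))$ would be even; and then $v(d) = v(a) = 0$ follows from $d = -\bar a b / \bar b$. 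I do not foresee any significant obstacle; the only delicate step is the cancellation in computing $\Delta$, where the two $d^2 e^2$ contributions from $e^2(bc + d^2)$ and $(de)^2$ must subtract to yield the clean answer $bc e^2$.
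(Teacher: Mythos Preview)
Your argument is correct and follows the same route as the paper's own discussion in \S6.3, which the lemma summarizes: normalize $(\uu,\vv^\top)$ by a $\GL_2(F)$-change of basis, read the identities off $\gamma\bar\gamma=\id_2$, compute $\Delta=bce^2$, and extract the valuation consequences. One small point of care: you divide by $\bar b$ and $\bar c$ to obtain $d=-\bar a b/\bar b=-\bar a c/\bar c$ before formally recording $b,c\neq 0$; since the regularity argument for $b,c\neq 0$ is independent of those identities, simply swap the order of those two sentences. Your preliminary argument that $\vv^\top\uu\neq 0$ (via $\Delta=-(\vv^\top\gamma\uu)^2$ having even valuation otherwise) is in fact slightly more careful than the paper, which asserts the normalized form and then reads off $e\neq 0$ from regularity \emph{after} normalization; your ordering is the logically cleaner one, since the normalization to $\vv^\top=(0,e)$ genuinely requires $\vv^\top\uu\neq 0$ in advance.
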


\section{Support of the weighted orbital integral for $S_2(F) \times V_2'(F)$}
\label{ch:orbitalFJ1}

We assume $\guv$ is as in \Cref{lem:semi_lie_params} throughout this chapter.

\subsection{Iwasawa decomposition}
The overall method is to take the Iwasawa decomposition in $KAN$ form:
\begin{lemma}
  [Iwasawa decomposition]
  Every element in $h \in \GL_2(F)$ may be parametrized as
  \[ h = k \begin{pmatrix} x_1 & 0 \\ 0 & x_2 \end{pmatrix}
    \begin{pmatrix} 1 & y \\ 0 & 1 \end{pmatrix} \]
  where $k \in K' = \GL_2(\OO_F)$, $x_1, x_2 \in \OO_F^\times$ and $y \in \OO_F$.
\end{lemma}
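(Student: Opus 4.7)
The plan is to prove the parametrization via the standard row-reduction argument for the Iwasawa decomposition, using the transitive action of $K' = \GL_2(\OO_F)$ on primitive vectors of $\OO_F^2$.

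Given $h \in \GL_2(F)$, I would consider the second column $(h_{12}, h_{22})^\top$, which is nonzero since $h$ is invertible. Setting $\alpha = \min(v(h_{12}), v(h_{22}))$, the scaled vector $\varpi^{-\alpha}(h_{12}, h_{22})^\top$ lies in $\OO_F^2$ with at least one unit coordinate, i.e.\ is primitive. Choose any $k \in K'$ whose second column is this primitive vector; then $k^{-1}h$ has second column $(0, \varpi^\alpha)^\top$ (up to sign) and hence is upper triangular. Writing $k^{-1}h = \begin{pmatrix} x_1 & z \\ 0 & x_2 \end{pmatrix}$ and setting $y = z/x_1$ gives the desired factorization $h = k \cdot \mathrm{diag}(x_1, x_2) \cdot \begin{pmatrix} 1 & y \\ 0 & 1 \end{pmatrix}$.

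In full generality this produces $x_1, x_2 \in F^\times$ and $y \in F$. To match the stated integrality $x_1, x_2 \in \OO_F^\times$ and $y \in \OO_F$, I would extract the uniformizer valuations as separate discrete indices --- writing $x_i = \varpi^{\alpha_i} u_i$ with $u_i \in \OO_F^\times$ and decomposing $y$ similarly --- so that the continuous parameters range over the stated compact sets while the discrete valuation choices $(\alpha_1, \alpha_2, \ldots) \in \ZZ^\bullet$ are summed over explicitly in the subsequent orbital integral calculation. Uniqueness of the decomposition (up to the Iwahori intersection $K' \cap B(F)$ acting on the right of $k$) guarantees this enumeration avoids double counting.

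The main bookkeeping obstacle is to relate the Haar measure on $\GL_2(F)$ to the product measure on $K' \times \OO_F^\times \times \OO_F^\times \times \OO_F$, which requires tracking the modulus character $\delta$ of the Borel subgroup and produces a factor of the form $|x_1/x_2|^{-1}$ (up to normalization). Verifying that this Jacobian correctly accounts for both the continuous integration over units and the discrete summation over valuations is the only nonformal step, and it controls how the parametrization feeds into the derivation of the formulas of \Cref{thm:semi_lie_formula} in the chapters that follow.
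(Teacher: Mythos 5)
The paper states this lemma without proof (it is a standard fact), so there is no authorial argument to compare against; what follows is a critique of your proposed proof on its own terms.

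\textbf{A column error.} When decomposing $h = kAN$ with $N$ unipotent upper-triangular, the matrix $AN = \begin{pmatrix} x_1 & x_1 y \\ 0 & x_2 \end{pmatrix}$ has a zero in the $(2,1)$ position, so you need $k^{-1}h$ to satisfy $(k^{-1}h)_{21} = 0$. That forces you to choose $k$ with its \emph{first} column proportional to the \emph{first} column of $h$: if the first column of $k$ is the primitive vector $\varpi^{-\alpha}(h_{11},h_{21})^\top$, then $k^{-1}$ carries the first column of $h$ to $(\varpi^\alpha,0)^\top$. Your proposal instead sets the second column of $k$ proportional to $(h_{12},h_{22})^\top$, which gives $(k^{-1}h)_{12} = 0$, i.e.\ a \emph{lower}-triangular product, not what is claimed. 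The fix is a one-word change (``first'' for ``second''), but as written the mechanism does not land on an upper-triangular factor.

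\textbf{The integrality conditions are a typo in the paper, not something to work around.} Your row-reduction argument correctly produces $x_1, x_2 \in F^\times$ and $y \in F$, and you correctly observe that these cannot be forced into $\OO_F^\times$ and $\OO_F$; your device of ``extracting discrete valuation indices'' as separate summation variables is an implicit admission that the statement as displayed is false (if both $x_i$ were units and $y$ integral, every entry of $h$ would lie in $\OO_F$, forcing $h \in \GL_2(\OO_F)$). In fact the paper's subsequent text confirms the lemma should read $x_1, x_2 \in F^\times$ and $y \in F$: the very next sentence integrates $|x_1/x_2|\,\odif[\times] x_1\,\odif[\times] x_2\,\odif y$ over multiplicative Haar measure \emph{on} $F^\times$, and \Cref{ch:orbitalFJ1} explicitly notes that $n_1 := v(x_1)$ ``will often be non-positive.'' So you should prove the standard statement with $F^\times$ and $F$ rather than contort the argument to rescue the mistyped version; the discrete summation over $v(x_i)$ is indeed what happens in the subsequent orbital integral computation, but it lives downstream of the decomposition, not inside it.
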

Because the orbits are invariant under conjugation by $K'$,
the parameter $k$ can be discarded.
The Haar measure in these coordinates
\[ \left\lvert \frac{x_1}{x_2} \right\rvert \odif[\times] x_1 \odif[\times] x_2 \odif y \]
where we take multiplicative Haar measure on $F^\times$
(normalized so that $\OO_F^\times$ has volume $1$)
and additive Haar measure on $F$ (so $\OO_F$ has volume $1$).

\subsection{Action of upper triangular matrices on $(\gamma, \uu, \vv^\top)$}
We now compute the action of an arbitrary
\[ h = \begin{pmatrix} x_1 & 0 \\ 0 & x_2 \end{pmatrix}
  \begin{pmatrix} 1 & y \\ 0 & 1 \end{pmatrix} \]
on $(\gamma, \uu, \vv^\top)$.
The main term is given by
\begin{align*}
  h \gamma h^{-1}
  &=
  \begin{pmatrix} x_1 & 0 \\ 0 & x_2 \end{pmatrix}
  \begin{pmatrix} 1 & y \\ 0 & 1 \end{pmatrix}
  \begin{pmatrix} a & b \\ c & d \end{pmatrix}
  \begin{pmatrix} 1 & -y \\ 0 & 1 \end{pmatrix}
  \begin{pmatrix} x_1^{-1} & 0 \\ 0 & x_2^{-1} \end{pmatrix} \\
  &=
  \begin{pmatrix} x_1 & 0 \\ 0 & x_2 \end{pmatrix}
  \begin{pmatrix} cy + a & -cy^2+(d-a)y+b \\ c & -cy+d \end{pmatrix}
  \begin{pmatrix} x_1^{-1} & 0 \\ 0 & x_2^{-1} \end{pmatrix} \\
  &=
  \begin{pmatrix} cy + a & \frac{x_1}{x_2} \cdot \left( -cy^2+(d-a)y+b \right) \\
    \frac{x_2}{x_1} \cdot c & -cy+d \end{pmatrix}
\end{align*}
Meanwhile, we have
\begin{align*}
  h \uu &=
    \begin{pmatrix} x_1 & 0 \\ 0 & x_2 \end{pmatrix}
    \begin{pmatrix} 1 & y \\ 0 & 1 \end{pmatrix}
    \begin{pmatrix} 0 \\ 1 \end{pmatrix}
    = \begin{pmatrix} x_1 y \\ x_2 \end{pmatrix} \\
  \vv^\top h^{-1} &=
    \begin{pmatrix} 0 & e \end{pmatrix}
    \begin{pmatrix} 1 & -y \\ 0 & 1 \end{pmatrix}
    \begin{pmatrix} x_1^{-1} & 0 \\ 0 & x_2^{-1} \end{pmatrix}
    = \begin{pmatrix} 0 & \frac{e}{x_2} \end{pmatrix}.
\end{align*}

\subsection{Description of support}
From now on we fix the notation
\begin{align*}
  n_1 &\coloneqq v(x_1) \\
  n_2 &\coloneqq v(x_2).
\end{align*}
Note that although $n_2 \ge 0$, the value of $n_1$ will often be non-positive.
In fact $n_1$ is not particularly simple to work with and we will
prefer to introduce the notation
\begin{equation}
  m \coloneqq n_2 + v(c) + r - n_1
  \label{eq:def_semi_lie_m}
\end{equation}
instead to use as a summation variable.
This is chosen so that $\frac{x_2}{x_1} \cdot c \in \varpi^{-r} \OO_F \iff m \ge 0$.
Note that it follows we have
\begin{equation}
  n_1 + n_2 = 2n_2 - m + v(c) + r
  \label{eq:n1_plus_n2_semi_lie}
\end{equation}

\subsubsection{Collating the linear constraints}
For a given $r \ge 0$, we find that $h$ contributes to the integral exactly
if $h\uu$ and $\vv^\top h\inv$ have $\OO_F$-entries,
and all the entries of $h \gamma h\inv$ are in $\varpi^{-r}\OO_F$.
The former condition is just saying that
\begin{align*}
  v(y) &\ge -n_1, \\
  0 &\le n_2 \le v(e).
\end{align*}
Now we consider the entries of $h\gamma h\inv$.
First, because $a$ and $d$ are units by \Cref{assume:a_odd},
and $r \ge 0$, it follows that
\begin{align*}
  cy + a, -cy + d \in \varpi^{-r}\OO_F
  &\iff cy \in \varpi^{-r}\OO_F \\
  &\iff v(y) \ge -v(c) - r.
\end{align*}
Moreover,
\[ \frac{x_2}{x_1} \cdot c \in \varpi^{-r} \OO_F
  \iff n_2 + v(c) -n_1 \ge - r \iff m \ge 0. \]
In summary, up until now we have the following requirements imposed:
\begin{equation}
  \begin{aligned}
  0 &\le n_2 \le v(e) \\
  0 &\le m \\
  v(y) &\ge \max(-n_1, -v(c) - r) \\
  &= \max(m-n_2, 0) - v(c) - r.
  \end{aligned}
  \label{eq:linear_constraints}
\end{equation}

\subsubsection{The quadratic constraint}
As for the quadratic constraint, we seek $y$ such that
\begin{align*}
  \phantom\iff \frac{x_1}{x_2} \cdot (-cy^2+(d-a)y+b) &\in \varpi^{-r} \OO_F \\
  \iff v\left(-y^2+ \frac{d-a}{c} y + \frac bc \right) &\ge n_2 - n_1 - v(c) - r \\
  &= m - 2v(c) - 2r.
\end{align*}

As before, we complete the square:
\[
  -y^2+ \frac{d-a}{c} y + \frac bc
  = -\left( y - \frac{d-a}{2c} \right)^2 + \frac bc + \frac{(d-a)^2}{4c^2}.
\]
Because $b \bar c = 1 - a \bar a$ has odd valuation,
it follows that $\frac b c = \frac{1-a \bar a}{c\bar c}$ has odd valuation to.
On the other hand, $\frac{(d-a)^2}{4c^2}$ has even valuation.

This motivates us to introduce the following parameter:
\begin{definition}
  [$\theta$]
  We define
  \[ \theta \coloneqq \min \left( v(b)+v(c), 2v(d-a) \right) \ge 0. \]
\end{definition}
Note that $v(b) + v(c)$ is odd,
so $\theta$ takes the odd value if $v(b)+v(c) < 2v(d-a)$ and the even value otherwise.
This definition ensures that
\[ v \left( \frac bc + \frac{(d-a)^2}{4c^2} \right) = \theta - 2v(c). \]

\subsection{Cases based on $\theta$}
Henceforth we consider two cases based on $\theta$.
\begin{description}
  \item[Case 1]
  Let's assume first that
  \[ \theta - 2v(c) \ge m - 2v(c) - 2r \iff m \le \theta + 2r. \]
  Then the only additional condition on $y$ is that
  \[
    v\left( y - \frac{d-a}{2c} \right)
    \ge \left\lceil \frac{m}{2} \right\rceil - v(c) - r.
  \]
  We refer to this as \textbf{Case 1}.

  \item[Case 2\ts+ / Case 2\ts-]
  Otherwise assume that
  \[ \theta - 2v(c) < m - 2v(c) - 2r \iff m > \theta + 2r. \]
  Then in order for $y$ to satisfy the constraint,
  we would need to be in a situation where $2v(y - \frac{d-a}{2c}) = \theta - 2v(c)$.
  So this case could only arise at all when $\theta$ is even, that is
  \[ 0 \le 2v(d-a) = \theta < v(b) + v(c) \]
  (note that $v(d-a) \ge 0$ because $a$ and $d$ are units).
  As the quantity $\frac bc + \frac{(d-a)^2}{4c^2}$ must be a perfect square,
  we denote it by $\tau^2$, with
  \[ v(\tau) = \frac{\theta}{2} - v(c). \]
  This gives us the factorization
  \[ \frac bc = \tau^2 - \frac{(d-a)^2}{4c^2}
    = \left( \tau - \frac{d-a}{2c} \right) \left( \tau + \frac{d-a}{2c} \right). \]
  The left-hand side has odd valuation $v(b) - v(c)$,
  so the two factors on the right have unequal valuations
  and hence exactly one of them has valuation the same as $v(\frac{d-a}{2c}) = v(\tau)$.
  Hence, we agree to fix the choice of the square root $\tau$ so that
  \begin{align*}
    v\left( \tau + \frac{d-a}{2c} \right) &= v(b) - v(c) - v(\tau) = v(b) - \frac{\theta}{2} \\
    v\left( \tau - \frac{d-a}{2c} \right) &= v(\tau) = \frac{\theta}{2} - v(c)
  \end{align*}
  and in particular
  $v\left( \tau + \frac{d-a}{2c} \right) > v\left( \tau - \frac{d-a}{2c} \right)$.

  In any case, the constraint on $y$ is that
  \begin{align*}
    v\left( y - \left( \frac{d-a}{2c} \pm \tau \right) \right)
      &\ge \left( m - 2v(c) - 2r \right) - v(\tau) \\
      &= \left( m - 2v(c) - 2r \right) - \left( \frac{\theta}{2} - v(c) \right) \\
      &= m - \frac{\theta}{2} - v(c) - 2r \\
    v\left( y - \left( \frac{d-a}{2c} \mp \tau \right) \right) &= v(\tau)
      = \frac{\theta}{2} - v(c).
  \end{align*}
  By assumption, the second equation is true
  whenever the first inequality is and we may disregard it.
  \textbf{Case 2\ts+} refers to the situation where the $\pm$ sign is $+$
  and \textbf{Case 2\ts-} refers to the situation where the $\mp$ sign is $-$.
  And these cases must be disjoint because the right-hand sides above are unequal.
\end{description}

\subsubsection{Analysis of Case 1}
The triple $(x_1, x_2, y) \in \OO_F^\times \times \OO_F^\times \times \OO_F$
contributes to the weighted orbital integral in Case 1 exactly if the following identities hold:
\begin{align*}
  0 &\le n_2 \le v(e) \\
  0 &\le m \le \theta + 2r \\
  v(y) &\ge \max(m-n_2,0) - v(c) - r \\
  v\left( y - \frac{d-a}{2c} \right) &\ge \left\lceil \frac{m}{2} \right\rceil - v(c) - r.
\end{align*}
However, from the definitions we already know that
\[ v\left( \frac{d-a}{2c} - 0 \right)
  \ge \frac{\theta - 2v(c)}{2} \ge \frac{m}{2} - v(c) - r \]
so the disks in the last two conditions have nonempty intersection.
Hence the earlier \Cref{lem:no_mastercard} applies to tell us that
the locus of valid $y$ is a single disk whose volume in $\OO_F$ is given by
\[ q^{-\max\left( m-n_2, \left\lceil m/2 \right\rceil, 0 \right) + v(c) + r}
  = q^{-\max\left( m-n_2, \left\lceil m/2 \right\rceil \right) + v(c) + r}. \]
The volume contribution for and $x_1 \in \OO_F^\times$ and $x_2 \in \OO_F^\times$
is also $1$, because $v(x_1)$ and $v(x_2)$ are fixed.
Hence the overall volume of the support in $H$ for this pair $(m, n_2)$ is given by
\begin{align*}
  \left\lvert \frac{x_1}{x_2} \right\rvert q^{n_2 - n_1} \Vol(\{ y \mid \dots \})
  &= q^{n_2 - n_1 -\max\left( m-n_2, \left\lceil m/2 \right\rceil \right) + v(c) + r} \\
  &= q^{m - \max\left( m-n_2, \left\lceil m/2 \right\rceil \right)}.
\end{align*}
And again, this case is summed over
\[ 0 \le n_2 \le v(e), \qquad 0 \le m \le \theta + 2r. \]

\subsubsection{Analysis of Case 2\ts+ and Case 2\ts-}
Again, this case could only occur if $\theta$ is even.
The triple $(x_1, x_2, y) \in \OO_F^\times \times \OO_F^\times \times \OO_F$
contributes to the weighted orbital integral in Case 2\ts+ and Case 2\ts-
exactly if the following identities hold:
\begin{align*}
  0 &\le n_2 \le v(e) \\
  \theta + 2r &< m \\
  v(y) &\ge \max(m-n_2,0) - v(c) - r \\
  v\left( y - \left( \frac{d-a}{2c} \pm \tau \right) \right) &\ge m - \frac{\theta}{2} - v(c) - 2r.
\end{align*}
The last two inequalities specify disks.
So in each case, via \Cref{lem:no_mastercard}
we get a nonzero contribution if and only if the distance between the centers
$0$ and $\frac{d-a}{2c} \pm \tau$ has valuation at least
that of the smaller of the two right-hand sides, that is
\begin{align*}
  v\left( \frac{d-a}{2c} \pm \tau \right)
  &\ge \min\left( \max(m-n_2,0) - v(c) - r, m - \frac{\theta}{2} - v(c) - 2r \right) \\
  &= \min\left( \max(m-n_2,0), m - \frac{\theta}{2} - r \right) - v(c) - r.
\end{align*}
Hence the upper bound on $m$ is given by two different requirements,
depending on which of the two values of
$v\left( \frac{d-a}{2c} \pm \tau \right) + v(c) + r$ is given by the case:
\begin{itemize}
  \ii In Case 2\ts+, we need at least one of the inequalities
  \[
    \begin{cases}
    \max(m-n_2, 0) \le v(b) - \frac{\theta}{2} + v(c) + r, \\
    m \le v(b) + v(c) + 2r
    \end{cases}
  \]
  to hold.
  Now the inequality $0 \le v(b) - \frac{\theta}{2} + v(c) + r$ is always true,
  as $\theta < v(b) + v(c)$, so we can disregard it.
  Therefore this can be rewritten as just
  \[ m \le \max\left( r, n_2 - \frac{\theta}{2} \right) + v(b) + v(c) + r. \]

  \ii In Case 2\ts-, we need at least one of the inequalities
  \[
    \begin{cases}
      \max(m-n_2, 0) \le \frac{\theta}{2} + r \\
      m \le \theta + 2r
    \end{cases}
  \]
  to hold.
  But $m \le \theta+2r$ is always false and $0 \le \frac{\theta}{2} + r$ is always true,
  so this simplifies to
  \[ m \le n_2 + \frac{\theta}{2} + r. \]
\end{itemize}
Assuming $m$ lies in the valid range so that the locus of valid $y$ is nonempty,
it follows that the volume is given exactly by
\[ q^{-\max(m-n_2, m - \frac{\theta}{2} - r, 0) - v(c) - r}
  = q^{-\max(m-n_2, m - \frac{\theta}{2} - r) - v(c) - r}. \]
Hence the overall volume of the support in $H$ for this pair $(m, n_2)$ is given by
\begin{align*}
  \left\lvert \frac{x_1}{x_2} \right\rvert q^{n_2 - n_1} \Vol(\{ y \mid \dots \})
  &= q^{n_2 - n_1 - \max\left( m-n_2, m - \frac{\theta}{2} - r \right) + v(c) + r} \\
  &= q^{m - \max\left( m-n_2, m - \frac{\theta}{2} - r \right)} \\
  &= q^{\min\left( n_2, \frac{\theta}{2} + r \right)}.
\end{align*}
And this sum is over two ranges of $m$
(although the ranges obviously overlap, they set of $y$ they cover is disjoint):
\begin{align*}
  \theta + 2r & < m \le \max\left( r, n_2 - \frac{\theta}{2} \right) + v(b) + v(c) + r \\
  \theta + 2r & < m \le n_2 + \frac{\theta}{2} + r.
\end{align*}
Note the second range could be empty if $n_2$ is small enough,
but the first range is always nonempty.

\section{Evaluation of the weighted orbital integral for $S_2(F) \times V'_2(F)$}
\label{ch:orbitalFJ2}

We continue to assume $\guv$ is as in \Cref{lem:semi_lie_params} throughout this chapter.

We now aggregate the supports we found in the previous section together with the
definition of the weighted orbital integral to extract the desired formulas.

Recall that the weighted orbital integral was defined as
\begin{align*}
  & \Orb(\guv, \phi \otimes \oneV, s) \\
  &\coloneqq
  \int_{h \in H} \phi(h\inv \gamma h)
  \oneV(h \uu, \vv^\top h^{-1})
  \eta(h) \left\lvert \det(h) \right\rvert_F^{-s} \odif h
\end{align*}
and that after taking Iwasawa decomposition as
\[ h = k \begin{pmatrix} x_1 & 0 \\ 0 & x_2 \end{pmatrix}
  \begin{pmatrix} 1 & y \\ 0 & 1 \end{pmatrix} \]
we broke the sum based on $n_1 = v(x_1)$ and $n_2 = v(x_2)$.
For $h$ as above, we know that
\begin{align*}
  \eta(h) &= (-1)^{n_1 + n_2} \\
  \left\lvert \det(h) \right\rvert^{-s}_F &= (q^s)^{n_1 + n_2}.
\end{align*}
Applying \eqref{eq:n1_plus_n2_semi_lie} we find that
\[
  \eta(h)
  \left\lvert \det(h) \right\rvert^{-s}_F
  = (q^s)^{2n_2 - m + v(c) + r}.
\]

\subsection{The contribution for Case 1}
We assume $\theta + 2r \ge 0$, because otherwise the entire sum is empty.
Hence, the total contribution for \textbf{Case 1} is
\begin{align*}
  I^{\text{5}}
  &\coloneqq \sum_{n_2 = 0}^{v(e)} \sum_{m = 0}^{\theta + 2r}
  q^{m - \max\left( m-n_2, \left\lceil m/2 \right\rceil \right)}
  (-q^s)^{2n_2 - m + v(c) + r} \\
  &\coloneqq \sum_{n_2 = 0}^{v(e)} \sum_{m = 0}^{\theta + 2r}
  q^{\min\left( n_2, \left\lfloor m/2 \right\rfloor \right)}
  (-q^s)^{2n_2 - m + v(c) + r}.
\end{align*}
We'll change the summation variable to
\[ k \coloneqq 2n_2 - m + v(c) + r
  \iff m = 2n_2 - k + v(c) + r. \]
Then
\begin{align*}
  I^{\text{5}}
  &\coloneqq \sum_{n_2 = 0}^{v(e)}
  \sum_{k = 2n_2 - \theta + v(c) - r}^{2n_2 + v(c) + r}
  q^{\min\left( n_2, n_2 + \left\lfloor \frac{v(c)+r-k}{2} \right\rfloor \right)} (-q^s)^{k} \\
  &= \sum_{n_2 = 0}^{v(e)}
  \sum_{k = 2n_2 - \theta + v(c) - r}^{2n_2 + v(c) + r}
  q^{n_2 - \max\left( 0, \left\lceil \frac{k-(v(c)+r)}{2} \right\rceil \right)} (-q^s)^{k}.
\end{align*}
We then interchange the order of summation so that $k$ is outside.
Then $k$ runs from the lowest value of $k = - \theta + v(c) - r$
to the largest value $k = 2v(e) + v(c) + r$ over all choices of $n_2$.
Since
\[ 2n_2 - \theta + v(c) - r \le k \le 2 n_2 + v(c) + r \]
then in addition to $0 \le n_2 \le v(e)$ we also need
\[ \frac{k-v(c)-r}{2} \le n_2 \le \frac{k + \theta - v(c) + r}{2}. \]
In other words, we obtain
\begin{align*}
  I^{\text{5}}
  &= \sum_{k = - \theta + v(c) - r}^{2v(e) + v(c) + r}
  (-1)^k (q^s)^k \sum_{n_2 = \max\left(0, \left\lceil \frac{k - v(c) - r}{2} \right\rceil \right)}
  ^{\min\left(v(e), \left\lfloor \frac{k + \theta - v(c) + r}{2} \right\rfloor\right)}
  q^{n_2 - \max\left( 0, \left\lceil \frac{k-(v(c)+r)}{2} \right\rceil \right)} \\
  &= \sum_{k = - \theta + v(c) - r}^{2v(e) + v(c) + r}
  (-1)^k (q^s)^k
  \left( q^{\min\left( v(e), \left\lfloor \frac{k+\theta-v(c)+r}{2} \right\rfloor \right) - \max\left( 0, \left\lceil \frac{k-v(c)-r}{2} \right\rceil \right)} + \dots + q^0 \right).
\end{align*}
Here, we adopt the convention that ellipses of the form
\[ q^i + \dots + q^{i'} \]
denote the expression $q^i + q^{i-1} + \dots + q^{i'}$
(i.e.\ within any ellipses, the exponents are understood to decrease by $1$,
and the sums are always nonempty, meaning $i \ge i'$).

To simplify the exponent, write
\begin{equation}
  \begin{aligned}
    &\min\left( v(e), \left\lfloor \tfrac{k+\theta-v(c)+r}{2} \right\rfloor \right)
    - \max\left( 0, \left\lceil \tfrac{k-v(c)-r}{2} \right\rceil \right) \\
    &= \min\left( v(e), \left\lfloor \tfrac{k+\theta-v(c)+r}{2} \right\rfloor \right)
    + \min\left( 0, \left\lfloor \tfrac{v(c)+r-k}{2} \right\rfloor \right) \\
    &= \min\left( \left\lfloor \tfrac{k+\theta-v(c)+r}{2} \right\rfloor,
      v(e) + \left\lfloor \tfrac{v(c)+r-k}{2} \right\rfloor,
      v(e),
      \left\lfloor \tfrac{k+\theta-v(c)+r}{2} \right\rfloor
      + \left\lfloor \tfrac{v(c)+r-k}{2} \right\rfloor \right).
  \end{aligned}
  \label{eq:case_5_exponent}
\end{equation}
This already completes \Cref{thm:semi_lie_formula}
in the situation when $\theta$ is odd since
\textbf{Case 2\ts+} and \textbf{Case 2\ts-} do not appear at all.
However, let's turn to the remaining cases first.

\subsection{The contribution for Case 2\ts+ and Case 2\ts-}
Herein we assume $\theta = 2v(d-a) > v(b) + v(c)$ is even,
and in particular $\theta \ge 0$.
We get a contribution of
\begin{align*}
  I^{\text{6+}}
  &\coloneqq \sum_{n_2 = 0}^{v(e)}
  \sum_{m = \theta + 2r + 1}
  ^{\max\left( r, n_2 - \frac{\theta}{2} \right) + v(b) + v(c) + r}
    q^{\min\left( n_2, \frac{\theta}{2} + r \right)}
    (-q^s)^{2n_2 - m + v(c) + r} \\
  I^{\text{6-}}
  &\coloneqq \sum_{n_2 = 0}^{v(e)}
  \sum_{m = \theta + 2r + 1}^{n_2 + \frac{\theta}{2} + r}
    q^{\min\left( n_2, \frac{\theta}{2} + r \right)}
    (-q^s)^{2n_2 - m + v(c) + r}.
\end{align*}
We will split $I^{\text{6+}}$ into two parts:
\begin{align*}
  I^{\text{6+}}
  &= \sum_{n_2 = 0}^{\frac{\theta}{2} + r}
  \sum_{m = \theta + 2r + 1}^{v(b) + v(c) + 2r}
    q^{n_2} (-q^s)^{2n_2 - m + v(c) + r} \\
  &+ q^{\frac{\theta}{2} + r} \sum_{n_2 = \frac{\theta}{2} + r + 1}^{v(e)}
  \sum_{m = \theta + 2r + 1}^{n_2 - \frac{\theta}{2} + v(b) + v(c) + r}
    (-q^s)^{2n_2 - m + v(c) + r}.
\end{align*}
Note that the second sum is nonempty only when $v(e) > \frac{\theta}{2} + r$.
So we consider cases on this in what follows.

\subsubsection{Sub-case where $v(e) \le \frac{\theta}{2} + r$}
First, suppose $v(e) \le \frac{\theta}{2} + r$.
Then the contribution of \textbf{Case 2\ts{-}} is void,
since the inner sum of $I^{\text{6-}}$ contributes only when $n_2 > \frac{\theta}{2} + r$.
We only need to consider
\begin{align*}
  I^{\text{6+}}
  &= \sum_{n_2=0}^{v(e)} \sum_{m=\theta+2r+1}^{v(b)+v(c)+2r}
    q^{n_2} (-q^s)^{2n_2-m+v(c)+r} \\
  &= \sum_{n_2=0}^{v(e)} \sum_{k=2n_2-v(b)-r}^{2n_2-\theta+v(c)-r-1}
    q^{n_2} (-q^s)^k.
\end{align*}
Swapping the summation order so that $k$ is outside,
the sum runs from the lowest value $k = -v(b) - r$
up to the highest value $k = 2v(e) - \theta + v(c) - r - 1$,
subject to $0 \le n_2 \le v(e)$ and
\begin{align*}
  2n_2 - v(b) - r &\le k \le 2n_2 - \theta + v(c) - r - 1 \\
  \iff \left\lceil \frac{k + \theta - v(c) + r + 1}{2} \right\rceil
  &\le n_2 \le \left\lfloor \frac{k + v(b) + r}{2} \right\rfloor.
\end{align*}
Thus,
\[ I^{\text{6+}}
  = \sum_{k = -v(b) - r}^{2v(e) - \theta + v(c) - r -1}
    \sum_{n_2 = \max(0, \left\lceil \frac{k + \theta - v(c) + r + 1}{2} \right\rceil)}
    ^{\min(v(e), \left\lfloor \frac{k + v(b) + r}{2} \right\rfloor)}
    q^{n_2} (-q^s)^k.
\]

\subsubsection{Sub-case where $v(e) > \frac{\theta}{2} + r$}
We start on $I^{\text{6-}}$; note if $n_2 \le \frac{\theta}{2} + r$
then the inner sum of $I^{\text{6-}}$ has empty range anyway.
Consequently, we can simply write
\begin{align*}
  I^{\text{6-}}
  &= q^{\frac{\theta}{2} + r} \sum_{n_2 = \frac{\theta}{2} + r + 1}^{v(e)}
  \sum_{m = \theta + 2r + 1}^{n_2 + \frac{\theta}{2} + r} (-q^s)^{2n_2 - m + v(c) + r}
\end{align*}
which in particular is nonempty.
In that case, simplifying the inner sum gives
\[
  I^{\text{6-}}
  = q^{\frac{\theta}{2} + r} \sum_{n_2 = \frac{\theta}{2} + r + 1}^{v(e)}
  \left(
    (-q^s)^{2n_2 - \theta + v(c) - r - 1}
    + \dots
    + (-q^s)^{n_2 - \frac{\theta}{2} + v(c)}
  \right).
\]
We collect the coefficient of $(-q^s)^k$ for each $k$.
The lowest value of $k$ which appears is $k = v(c) + r + 1$;
the highest one is $k = 2v(e) - \theta + v(c) - r - 1$.
For these $k$,
the coefficient is the number of integers $n_2$ such that
\[ \frac{\theta}{2} + r + 1 \le n_2 \le v(e) \]
and
\begin{align*}
  n_2 - \frac{\theta}{2} + v(c) &\le k \le 2n_2 - \theta - r - 1 + v(c) \\
  \iff \frac{k + \theta - v(c) + r + 1}{2} &\le n_2 \le k + \frac{\theta}{2} - v(c).
\end{align*}
Note we already have $\frac{k + \theta - v(c) + r + 1}{2} \ge \frac{\theta}{2}+r+1$
for $k$ in the desired range.
Hence we have
\begin{align*}
  I^{\text{6-}}
  &=
  q^{\frac{\theta}{2} + r}
  \sum_{k = v(c) + r + 1}^{2v(e) - \theta + v(c) - r - 1}
  \bigg( 1 + \min\left( v(e), k + \frac{\theta}{2} - v(c) \right) \\
    &\hspace{16ex} - \max\left( \frac{\theta}{2} + r + 1,
      \left\lceil \frac{k + \theta - v(c) + r + 1}{2} \right\rceil \right) \bigg) (-q^s)^k \\
  &=
  q^{\frac{\theta}{2} + r}
  \sum_{k = v(c) + r + 1}^{2v(e) - \theta + v(c) - r - 1}
  \left( 1 + \min\left( v(e), k + \frac{\theta}{2} - v(c) \right)
    - \left\lceil \frac{k + \theta - v(c) + r + 1}{2} \right\rceil
  \right) (-q^s)^k.
\end{align*}

The second double sum of $I^{\text{6+}}$ is again nonempty
since $v(e) > \frac{\theta}{2} + r$.
So we compute it
in a similar way to $I^{\text{6-}}$ by putting
\begin{align*}
  &q^{\frac{\theta}{2} + r} \sum_{n_2 = \frac{\theta}{2} + r + 1}^{v(e)}
  \sum_{m = \theta + 2r + 1}^{n_2 - \frac{\theta}{2} + v(b) + v(c) + r}
    (-q^s)^{2n_2 - m + v(c) + r} \\
  &= q^{\frac{\theta}{2} + r} \sum_{n_2 = \frac{\theta}{2} + r + 1}^{v(e)}
  \left(
    (-q^s)^{2n_2 - \theta + v(c) - r - 1}
    + \dots
    + (-q^s)^{n_2 + \frac{\theta}{2} - v(b)}
  \right).
\end{align*}
Again we calculate the coefficient of $(-q^s)^k$.
The values of $k$ run from the lowest value $k = \theta - v(b) + r + 1$
and end at the highest value $k = 2v(e) - \theta + v(c) - r - 1$.
In this range we need $\frac{\theta}{2} + r + 1 \le n_2 \le v(e)$ and
\begin{align*}
  n_2 + \frac{\theta}{2} - v(b) &\le k \le 2n_2 - \theta + v(c) - r - 1 \\
  \iff \frac{k + \theta - v(c) + r + 1}{2} &\le n_2 \le k - \frac{\theta}{2} + v(b).
\end{align*}
The double sum therefore becomes
\begin{align*}
  &\phantom=
  q^{\frac{\theta}{2} + r}
  \sum_{k = \theta - v(b) + r + 1}^{2v(e) - \theta + v(c) - r - 1}
  \bigg( 1 + \min\left( v(e), k - \frac{\theta}{2} + v(b) \right) \\
    &\hspace{16ex} - \max\left( \frac{\theta}{2} + r + 1,
      \left\lceil \frac{k + \theta - v(c) + r + 1}{2} \right\rceil \right) \bigg) (-q^s)^k.
\end{align*}
It is natural to split this sum into $k \le v(c) + r$ and $k > v(c) + r$.
In the former case, we have both $k - \frac{\theta}{2} + v(b) \le v(e)$
and $\frac{\theta}{2} + r + 1 \ge \left\lceil \frac{k + \theta - v(c) + r + 1}{2} \right\rceil$;
in the latter case we have just
$\frac{\theta}{2} + r + 1 \le \left\lceil \frac{k + \theta - v(c) + r + 1}{2} \right\rceil$
instead.
Hence, the double sum simplifies further to
\begin{align*}
  &\phantom= q^{\frac{\theta}{2} + r}
  \sum_{k = \theta - v(b) + r + 1}^{v(c) + r}
  \bigg( 1 + \left( k - \frac{\theta}{2} + v(b) \right)
    - \left( \frac{\theta}{2} + r + 1 \right) \bigg) (-q^s)^k \\
  &+ q^{\frac{\theta}{2} + r}
  \sum_{k = v(c) + r + 1}^{2v(e) - \theta + v(c) - r - 1}
  \bigg( 1 + \min\left( v(e), k - \frac{\theta}{2} + v(b) \right)
    - \left\lceil \frac{k + \theta - v(c) + r + 1}{2} \right\rceil \bigg) (-q^s)^k \\
  &= q^{\frac{\theta}{2} + r}
  \sum_{k = \theta - v(b) + r + 1}^{v(c) + r}
  \left( k - \theta + v(b) - r \right) (-q^s)^k \\
  &+ q^{\frac{\theta}{2} + r}
  \sum_{k = v(c) + r + 1}^{2v(e) - \theta + v(c) - r - 1}
  \bigg( 1 + \min\left( v(e), k - \frac{\theta}{2} + v(b) \right)
    - \left\lceil \frac{k + \theta - v(c) + r + 1}{2} \right\rceil \bigg) (-q^s)^k.
\end{align*}
Meanwhile, the first sum within $I^{\text{6+}}$ can be computed as
\begin{align*}
  \sum_{n_2 = 0}^{\frac{\theta}{2} + r}
  \sum_{m = \theta + 2r + 1}^{v(b) + v(c) + 2r}
    q^{n_2} (-q^s)^{2n_2 - m + v(c) + r}
  &= \sum_{n_2 = 0}^{\frac{\theta}{2} + r} q^{n_2}
    \sum_{m = \theta + 2r + 1}^{v(b) + v(c) + 2r}
      (-q^s)^{2n_2 - m + v(c) + r} \\
  &= \sum_{n_2 = 0}^{\frac{\theta}{2} + r} q^{n_2}
    \sum_{k = 2n_2 - v(b) - r}^{2n_2 - \theta + v(c) - r - 1} (-q^s)^k.
\end{align*}
We now interchange the summation so that $k$ is outside,
running from the lowest value $k = -v(b) - r$
to the highest value $k = v(c) + r - 1$.
From
\[ 2n_2 - v(b) - r \le k \le 2n_2 - \theta + v(c) - r - 1 \]
we require that $0 \le n_2 \le \frac{\theta}{2} + r$ and
\[ \frac{k + \theta - v(c) + r + 1}{2} \le n_2 \le \frac{k + r + v(b)}{2}. \]
In other words, we get
\[
  \sum_{k = - v(b) - r}^{v(c) + r - 1}
  (-q^s)^k
  \sum_{n_2 = \max\left(0, \left\lceil \frac{k + \theta - v(c) + r + 1}{2} \right\rceil \right)}
  ^{\min\left( \frac{\theta}{2} + r, \left\lfloor \frac{v(b) + r + k}{2} \right\rfloor \right) } q^{n_2}.
\]
Hence the total contribution from \textbf{Case 2} can be written as
\begin{align*}
  I^{\text{6+}} + I^{\text{6-}}
  &=
  \sum_{k = - v(b) - r}^{v(c) + r - 1} (-q^s)^k \left(
    q^{\min\left( \frac{\theta}{2} + r, \left\lfloor \frac{v(b) + r + k}{2} \right\rfloor \right)}
    + \dots
    + q^{\max\left(0, \left\lceil \frac{k + \theta - v(c) + r + 1}{2} \right\rceil \right)}
    \right) \\
  &+ q^{\frac{\theta}{2} + r}
  \sum_{k = \theta - v(b) + r + 1}^{v(c) + r}
  \left( k - \theta + v(b) - r \right) (-q^s)^k \\
  &+ q^{\frac{\theta}{2} + r}
  \sum_{k = v(c) + r + 1}^{2v(e) - \theta + v(c) - r - 1}
  \bigg( 2 +
    \min\left( v(e), k - \frac{\theta}{2} + v(b) \right) \\
    &+\hspace{16ex} \min\left( v(e), k + \frac{\theta}{2} - v(c) \right) \\
    &\hspace{16ex} - 2\left\lceil \frac{k + \theta - v(c) + r + 1}{2} \right\rceil \bigg) (-q^s)^k.
\end{align*}
We'd like to further simplify the coefficient of $q^{\frac{\theta}{2}+r}$ as follows.
First, we may as well write
\begin{align*}
  2 - 2\left\lceil \frac{k + \theta - v(c) + r + 1}{2} \right\rceil
  &= 2 - \left( (k + \theta - v(c) + r + 1)
  + \mathbf{1}_{k + \theta + v(c) + r \equiv 1 \bmod 2} \right) \\
  &= \mathbf{1}_{k + \theta + v(c) + r \equiv 0 \bmod 2}
  + v(c) - \theta - k - r.
\end{align*}
Set aside the indicator function
$\mathbf{1}_{k + \theta + v(c) + r \equiv 0 \bmod 2}$ momentarily;
we will merge it in a moment.
To consolidate the minimum's in the third double sum,
note that we have
\[ v(c) + r + 1 \le v(e) + \frac{\theta}{2} - v(b)
< v(e) + v(c) - \frac{\theta}{2} \le 2v(e) - \theta + v(c) - r - 1. \]
Hence, based on the value of $k$, we get the following coefficients:
\begin{itemize}
  \ii If $v(c) + r + 1 \le k \le v(e) + \frac{\theta}{2} - v(b)$, we get
  \begin{align*}
    \left( v(c) - \theta - k - r \right)
    &+ \left( k - \frac{\theta}{2} + v(b) \right)
    + \left( k + \frac{\theta}{2} - v(c) \right)  \\
    &= k - \theta + v(b) - r.
  \end{align*}

  \ii If $v(e) + \frac{\theta}{2} - v(b) \le k \le v(e) + v(c) - \frac{\theta}{2}$, we get
  \begin{align*}
    \left( v(c) - \theta - k - r \right)
    &+ v(e)
    + \left( k + \frac{\theta}{2} - v(c) \right)  \\
    &= v(e) - \frac{\theta}{2} - r.
  \end{align*}

  \ii If $v(e) + v(c) - \frac{\theta}{2} \le k \le 2v(e) - \theta + v(c) - r$, we get
  \begin{align*}
    \left( v(c) - \theta - k - r \right)
    &+ v(e) + v(e) \\
    &= 2v(e) + v(c) - \theta - r.
  \end{align*}
\end{itemize}
Noting the expression in the first bullet also matches the coefficient of $(-q^s)^k$
for $\theta - v(b) + r + 1 \le k \le v(c) + r$, we can now write
\begin{align*}
  I^{\text{6+}} + I^{\text{6-}}
  &=
  \sum_{k = - v(b) - r}^{v(c) + r - 1} (-q^s)^k \left(
    q^{\min\left( \frac{\theta}{2} + r, \left\lfloor \frac{v(b) + r + k}{2} \right\rfloor \right)}
    + \dots
    + q^{\max\left(0, \left\lceil \frac{k + \theta - v(c) + r + 1}{2} \right\rceil \right)}
    \right) \\
  &+ q^{\frac{\theta}{2} + r}
  \sum_{k = \theta - v(b) + r + 1}^{2v(e) - \theta + v(c) - r - 1} \cc_\guv(k)(-q^s)^k \\
  &+ q^{\frac{\theta}{2} + r}
  \sum_{k = v(c) + r + 1}^{2v(e) - \theta + v(c) - r - 1}
  \mathbf{1}_{k + \theta + v(c) + r \equiv 0 \bmod 2} (-q^s)^k
\end{align*}
as the overall contribution from \textbf{Case 2}, where
\[ \cc_\guv(k) \coloneqq \min \left( k - \theta + v(b) - r,
  v(e) - \frac{\theta}{2} - r,
  2v(e) + v(c) - \theta - r \right). \]

\subsection{Proof of \Cref{thm:semi_lie_formula}}
\label{sec:proof_semi_lie_formula}
We now prove \Cref{thm:semi_lie_formula}, which we restate here.

\semilieformula*

For reference, we provide \Cref{fig:semi_lie_sketch} sketching the shapes
of $\nn_\guv$ and $\cc_\guv$, which may be easier to think about.

\begin{figure}
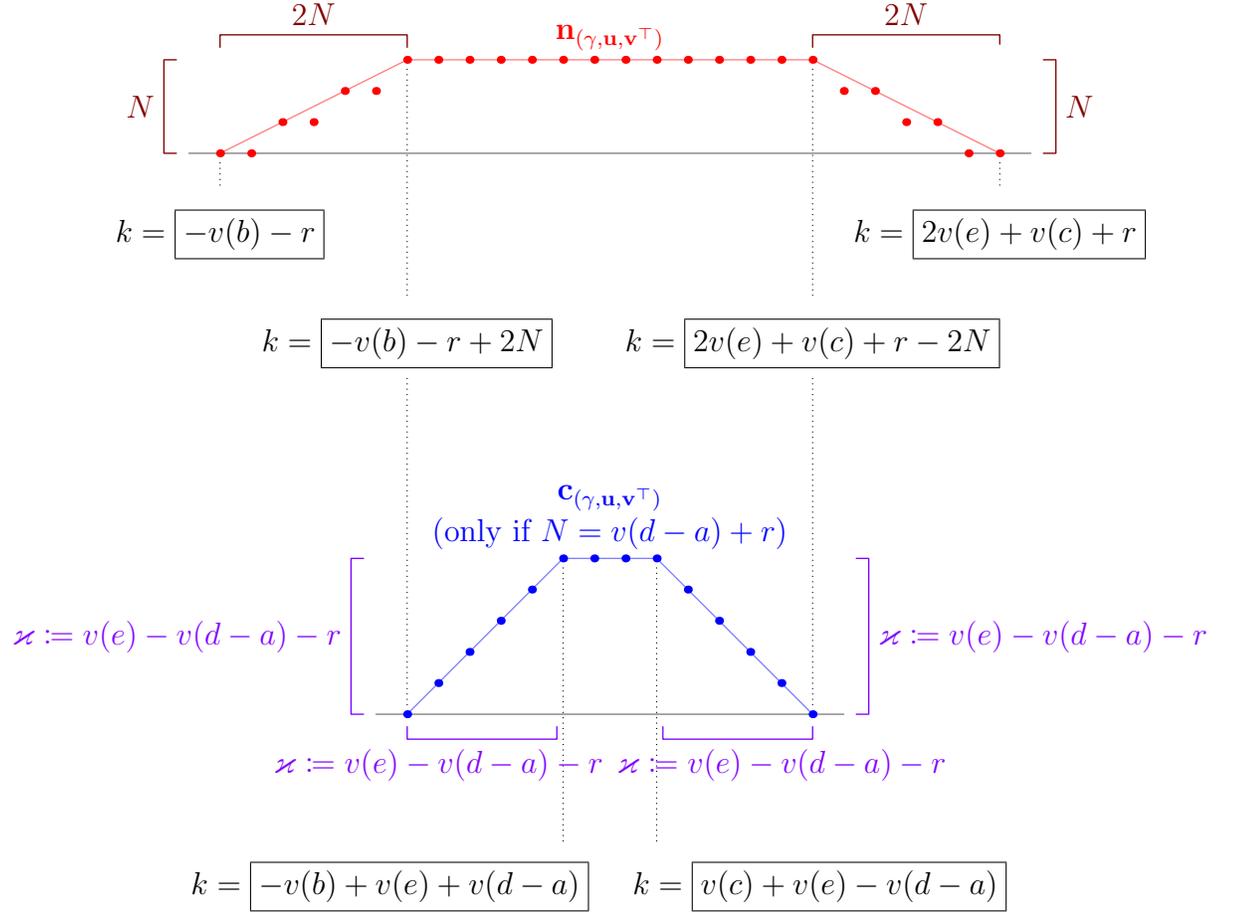

  \centering
  \begin{asy}
    usepackage("amsmath");
    usepackage("amssymb");
    usepackage("mathtools");
    size(16cm);
    draw((0,0)--(6,3)--(19,3)--(25,0), lightred);
    draw((-1,0)--(26,0), grey);
    draw((-1.4,0)--(-1.8,0)--(-1.8,3)--(-1.4,3), brown);
    label("$N$", (-1.8,1.5), dir(180), brown);
    draw((26.4,0)--(26.8,0)--(26.8,3)--(26.4,3), brown);
    label("$N$", (26.8,1.5), dir(0), brown);

    label("$2N$", (3,3.8), dir(90), brown);
    label("$2N$", (22,3.8), dir(90), brown);
    draw((0,3.4)--(0,3.8)--(6,3.8)--(6,3.4), brown);
    draw((19,3.4)--(19,3.8)--(25,3.8)--(25,3.5), brown);

    draw((0,0)--(0,-1.5), dotted, Margins);
    draw((6,3)--(6,-5), dotted, Margins);
    draw((19,3)--(19,-5), dotted, Margins);
    draw((25,0)--(25,-1.5), dotted, Margins);

    for (int i=0; i<=25; ++i) {
      real y = min(floor(i/2), floor((25-i)/2), 3);
      dot((i, y), red);
    }

    label("$k=\boxed{-v(b)-r}$", (0,-1.5), dir(-90));
    label("$k=\boxed{-v(b)-r+2N}$", (6,-5), dir(-90));
    label("$k=\boxed{2v(e)+v(c)+r-2N}$", (19,-5), dir(-90));
    label("$k=\boxed{2v(e)+v(c)+r}$", (25,-1.5), dir(-90));
    label("$\mathbf{n}_{(\gamma, \mathbf{u}, \mathbf{v}^\top)}$", (12.5,3), dir(90), red);

    real h = 18; // offset downwards
    draw((6,-h)--(11,5-h)--(14,5-h)--(19,-h), lightblue);
    draw((5,-h)--(20,-h), grey);
    for (int i=6; i<=19; ++i) {
      real y = min(i-6, 19-i, 5);
      dot((i, y - h), blue);
    }
    label("$\mathbf{c}_{(\gamma, \mathbf{u}, \mathbf{v}^\top)}$", (12.5,5-h), 5*dir(90), blue);
    label("(only if $N = v(d-a) + r$)", (12.5,5-h), dir(90), blue);

    draw((6,-h)--(6,-6.8), dotted, Margins);
    draw((19,-h)--(19,-6.8), dotted, Margins);
    draw((11,5-h)--(11,-4.5-h), dotted, Margins);
    draw((14,5-h)--(14,-4.5-h), dotted, Margins);
    label("$k=\boxed{-v(b)+v(e)+v(d-a)}$", (11,-4.5-h), dir(230));
    label("$k=\boxed{v(c)+v(e)-v(d-a)}$", (14,-4.5-h), dir(310));

    draw((6,-h-0.4)--(6,-h-0.8)--(10.8,-h-0.8)--(10.8,-h-0.4), purple);
    draw((14.2,-h-0.4)--(14.2,-h-0.8)--(19,-h-0.8)--(19,-h-0.4), purple);
    label("$\varkappa \coloneqq v(e)-v(d-a)-r$", (7, -h-0.8), dir(-90), purple);
    label("$\varkappa \coloneqq v(e)-v(d-a)-r$", (18, -h-0.8), dir(-90), purple);
    draw((4.6,-h)--(4.2,-h)--(4.2,5-h)--(4.6,5-h), purple);
    draw((20.4,-h)--(20.8,-h)--(20.8,5-h)--(20.4,5-h), purple);
    label((4.2,2.5-h), "$\varkappa \coloneqq v(e)-v(d-a)-r$", dir(180), purple);
    label((20.8,2.5-h), "$\varkappa \coloneqq v(e)-v(d-a)-r$", dir(0), purple);
  \end{asy}
  \caption{Sketch of the functions in \Cref{thm:semi_lie_formula}.
    The boxed numbers indicate values of $k$.}
  \label{fig:semi_lie_sketch}
\end{figure}

\begin{proof}[Proof of \Cref{thm:semi_lie_formula}]
  First, suppose $\theta = v(b) + v(c) < 2v(d-a)$ is odd.
  Then
  \[ \nn_\guv(k) \coloneqq \min\left( \left\lfloor \tfrac{k + (v(b)+r)}{2} \right\rfloor,
      \left\lfloor \tfrac{(2v(e)+v(c)+r)-k}{2} \right\rfloor,
      v(e), \tfrac{v(b)+v(c)-1}{2} + r \right) \]
  and $\cc_\guv$ terms do not appear.
  Now, in that case, the exponent \eqref{eq:case_5_exponent} can be simplified, because
  \begin{align*}
    \left\lfloor \frac{k+\theta-v(c)+r}{2} \right\rfloor
    &= \left\lfloor \frac{k+v(b)+r}{2} \right\rfloor \\
    v(e) + \left\lfloor \frac{v(c)+r-k}{2} \right\rfloor
    &= \left\lfloor \frac{(2v(e) + v(c) + r) - k}{2} \right\rfloor \\
    \left\lfloor \frac{k+\theta-v(c)+r}{2} \right\rfloor
    + \left\lfloor \frac{v(c)+r-k}{2} \right\rfloor
    &= \frac{k+\theta-v(c)+r}{2} + \frac{v(c)+r-k}{2} - \half \\
    &= \frac{\theta-1}{2} + r \\
    &= \frac{v(b)+v(c)-1}{2} + r < v(d-a) + r.
  \end{align*}
  Hence, $\nn_\guv$ coincides with the exponent in \eqref{eq:case_5_exponent}.
  So the result is true in this case.

  Now assume instead $\theta = 2v(d-a) < v(b) + v(c)$ is even.
  Notice that
  \[ \left\lfloor \frac{k+\theta-v(c)+r}{2} \right\rfloor
    + \left\lfloor \frac{v(c)+r-k}{2} \right\rfloor
    = \frac{\theta}{2} + r - \mathbf{1}_{k + v(c) + r \equiv 0 \bmod 2}. \]

  First assume that $v(e) \le \frac{\theta}{2} + r$ and
  consider \eqref{eq:case_5_exponent}.
  For the range of values of $k$ in $I^{\text{6+}}$, that is
  \[ -v(b) - r \le k \le 2v(e)-\theta+v(c)-r-1 \]
  we have the first term of \eqref{eq:case_5_exponent} is smallest, as
  \begin{align*}
    \left\lfloor \frac{k + \theta - v(c) + r}{2} \right\rfloor &< v(e) \\
    &\le v(e) + \frac{v(b)+v(c)-1}{2}
      \le v(e) + \left\lfloor \frac{v(c)+r-k}{2} \right\rfloor \\
    \left\lfloor \frac{k + \theta - v(c) + r}{2} \right\rfloor &\le v(e)-1
      \le \frac{\theta}{2} + r -1.
  \end{align*}
  So the contributions from \textbf{Case 1} and \textbf{Case 2}
  fit together to give
  \begin{align*}
    \sum_{j=0}^{\left\lfloor \frac{k+\theta-v(c)+r}{2} \right\rfloor} q^j
    &+
    \left(
    q^{\min\left( v(e), \left\lfloor \frac{v(b) + r + k}{2} \right\rfloor \right)}
    + \dots
    + q^{\max\left(0, \left\lceil \frac{k + \theta - v(c) + r + 1}{2} \right\rceil \right)}
    \right) \\
    &=
    q^{\min(v(e), \left\lfloor \frac{v(b) + r + k}{2} \right\rfloor)} + \dots + q^0
  \end{align*}
  which thus matches the formula for $\nn_\guv(k)$.

  Now suppose instead $v(e) > \frac{\theta}{2} + r$.
  First, a similar analysis gives that the first part of $I^{\text{6+}}$ fits
  together with $I^{\text{5}}$ again.
  Indeed if
  \[ -v(b) - r \le k \le v(c) + r - 1 \]
  then in \eqref{eq:case_5_exponent} we get the first exponent again, and hence
  we again get the fit
  \begin{align*}
    \sum_{j=0}^{\left\lfloor \frac{k+\theta-v(c)+r}{2} \right\rfloor} q^j
    &+
    \left(
    q^{\min\left( \frac{\theta}{2} + r, \left\lfloor \frac{v(b) + r + k}{2} \right\rfloor \right)}
    + \dots
    + q^{\max\left(0, \left\lceil \frac{k + \theta - v(c) + r + 1}{2} \right\rceil \right)}
    \right) \\
    &=
    q^{\min(\frac{\theta}{2}+r, \left\lfloor \frac{v(b) + r + k}{2} \right\rfloor)} + \dots + q^0
  \end{align*}
  which matches the claimed formula for $\nn_\guv$ in this range.

  The remaining contribution from \textbf{Case 2\ts{+} and Case 2\ts{-}} is
  \begin{align*}
  &\phantom+ q^{\frac{\theta}{2} + r}
  \sum_{k = \theta - v(b) + r + 1}^{2v(e) - \theta + v(c) - r - 1} \cc_\guv(k)(-q^s)^k \\
  &+ q^{\frac{\theta}{2} + r}
  \sum_{k = v(c) + r + 1}^{2v(e) - \theta + v(c) - r - 1}
  \mathbf{1}_{k + \theta + v(c) + r \equiv 0 \bmod 2} (-q^s)^k.
  \end{align*}

  The first sum matches the claimed coefficient $\cc_\guv$
  (except the summation in the theorem statement includes
  endpoints at $k = \theta - v(b) + r$
  and $k = 2v(e) - \theta + v(c) - r$,
  but $\cc_\guv(k) = 0$ at these two endpoints,
  so there is no change).

  Meanwhile the second sum accounts for the discrepancy between
  the final term of \eqref{eq:case_5_exponent} and the formula for $\nn_\guv$.
  That is, the range of $k$ for which \eqref{eq:case_5_exponent}
  achieves the last minimum is exactly
  \[ v(c) + r + 1 \le k \le 2v(e) - \theta + v(c) - r - 1 \]
  and only in those cases does \eqref{eq:case_5_exponent}
  differs from $\nn_\guv$ by exactly
  $\mathbf{1}_{k + \theta + v(c) + r \equiv 0 \bmod 2}$.
  This final step shows the claimed formulas coincide.
\end{proof}

\begin{example}
  [The special case $v(e)=0$]
  When $v(e) = 0$ the expression is particularly simple.
  The assumption $v(d-a) \ge v(e)-r$ is automatically true, and
  $\nn_{\guv}$ is identically zero, so
  \[ \Orb(\guv, \mathbf{1}_{K'_{S, \le r}} \otimes \oneV, s)
    = \sum_{k=-(v(b)+r)}^{v(c)+r} (-q^s)^k. \]
\end{example}

\begin{example}
  Suppose $r = 14$, $v(b) = -5$, $v(c) = 100$, $v(e) = 3$.
  We have $v(d-a) \ge 0 > -11 = v(e) - r$.
  Hence the above formula reads
  \begin{align*}
    \Orb(\guv, \mathbf{1}_{K'_{S, \le 14}} \otimes \oneV, s)
    &= -q^{-9s} \\
    &+ q^{-8s} \\
    &- (q+1) \cdot q^{-7s} \\
    &+ (q+1) \cdot q^{-6s} \\
    &- (q^2+q+1) \cdot q^{-5s} \\
    &+ (q^2+q+1) \cdot q^{-4s} \\
    &- (q^3+q^2+q+1) \cdot q^{-3s} \\
    &+ (q^3+q^2+q+1) \cdot q^{-2s} \\
    &- (q^3+q^2+q+1) \cdot q^{-s} \\
    &+ (q^3+q^2+q+1) \cdot q^{0} \\
    &- (q^3+q^2+q+1) \cdot q^{s} \\
    &+ (q^3+q^2+q+1) \cdot q^{2s} \\
    &\vdotswithin= \\
    &- (q^3+q^2+q+1) \cdot q^{111s} \\
    &+ (q^3+q^2+q+1) \cdot q^{112s} \\
    &- (q^3+q^2+q+1) \cdot q^{113s} \\
    &+ (q^3+q^2+q+1) \cdot q^{114s} \\
    &- (q^2+q+1) \cdot q^{115s} \\
    &+ (q^2+q+1) \cdot q^{116s} \\
    &- (q+1) \cdot q^{117s} \\
    &+ (q+1) \cdot q^{118s} \\
    &- q^{119s} \\
    &+ q^{120s}.
  \end{align*}
\end{example}
\begin{example}
  Suppose $r = 2$, $v(b) = -5$, $v(c) = 100$, $v(e) = 20$, $v(d-a) = 1$.
  Then we have
  \begin{align*}
    \Orb(\guv, \mathbf{1}_{K'_{S, \le 2}} \otimes \oneV, s)
    &= -q^{3s} \\
    &+ q^{4s} \\
    &- (q+1) \cdot q^{5s} \\
    &+ (q+1) \cdot q^{6s} \\
    &- (q^2+q+1) \cdot q^{7s} \\
    &+ (q^2+q+1) \cdot q^{8s} \\
    &- (q^3+q^2+q+1) \cdot q^{9s} \\
    &+ (2q^3+q^2+q+1) \cdot q^{10s} \\
    &- (3q^3+q^2+q+1) \cdot q^{9s} \\
    &+ (4q^3+q^2+q+1) \cdot q^{10s} \\
    &- (5q^3+q^2+q+1) \cdot q^{9s} \\
    &+ (6q^3+q^2+q+1) \cdot q^{10s} \\
    &\vdotswithin= \\
    &- (17q^3+q^2+q+1) \cdot q^{25s} \\
    &+ (18q^3+q^2+q+1) \cdot q^{26s} \\
    &- (18q^3+q^2+q+1) \cdot q^{27s} \\
    &+ (18q^3+q^2+q+1) \cdot q^{28s} \\
    &\vdotswithin= \\
    &- (18q^3+q^2+q+1) \cdot q^{117s} \\
    &+ (18q^3+q^2+q+1) \cdot q^{118s} \\
    &- (18q^3+q^2+q+1) \cdot q^{119s} \\
    &+ (17q^3+q^2+q+1) \cdot q^{120s} \\
    &- (16q^3+q^2+q+1) \cdot q^{121s} \\
    &+ (15q^3+q^2+q+1) \cdot q^{122s} \\
    &\vdotswithin= \\
    &+ (3q^3+q^2+q+1) \cdot q^{134s} \\
    &- (2q^3+q^2+q+1) \cdot q^{135s} \\
    &+ (q^3+q^2+q+1) \cdot q^{136s} \\
    &- (q^2+q+1) \cdot q^{137s} \\
    &+ (q^2+q+1) \cdot q^{138s} \\
    &- (q+1) \cdot q^{139s} \\
    &+ (q+1) \cdot q^{140s} \\
    &- q^{141s} \\
    &+ q^{142s}.
  \end{align*}
\end{example}

\subsection{Proof of \Cref{cor:semi_lie_derivative_single}}
With \Cref{thm:semi_lie_formula} established, we aim to calculate the derivative of
\[ \Orb(\guv, \mathbf{1}_{K', S, \le r} \otimes \oneV, s) \]
now at $s = 0$.
Our goal is to prove \Cref{cor:semi_lie_derivative_single}:
\semiliederiv*

\begin{proof}[Proof of \Cref{cor:semi_lie_derivative_single}]
For this calculation it will be more convenient to reformat \Cref{thm:semi_lie_formula}
as a sum over $q^j$ rather than $(-1)^k (q^s)^k$.
To that, continuing to write
\[ N \coloneqq \min \left(
    v(e), \frac{v(b)+v(c)-1}{2} + r,
    v(d-a) + r \right) \]
consider any index $0 \le j \le N$.
Then
\[ \nn_\guv(k) \ge j \iff 2j - v(b) - r \le k \le 2v(e)+v(c)+r-2j. \]
In other words, the first part of \Cref{cor:semi_lie_combo} can be rewritten as
\[ \sum_{k = -(v(b)+r)}^{2v(e)+v(c)+r} (-1)^k
  \left( 1 + \dots + q^{\nn_\guv(k)} \right) (q^s)^k
  = q^j \sum_{j=0}^N \left( \sum_{k=2j-v(b)-r}^{2v(e)+v(c)+r-2j} (-q^s)^k \right). \]
If we take the derivative at $s = 0$ with respect to $k$, we get
\[ \log q \sum_{j=0}^N \left( q^j \sum_{k=2j-v(b)-r}^{2v(e)+v(c)+r-2j} (-1)^k k \right). \]
The number of terms inside the summation is
$2v(e)+v(b)+v(c)+2r-4j+1$, an even number.
Each consecutive pair differs by $(-1)^{v(c)+r}$.
Hence we get
\[ (-1)^{v(c)+r} \log q \sum_{j=0}^N \left( q^j
  \cdot \left( \frac{2v(e)+v(b)+v(c)+1}{2} + r - 2j \right) \right). \]
Now in the case that $v(e) > v(d-a) + r$ and $2v(d-a) < v(b) + v(c)$,
we have to handle the additional contribution obtained when we differentiate
\begin{equation}
  q^{v(d-a)+r}
  \sum_{k = 2v(d-a)-v(b)+r}^{2v(e)+v(c)-2v(d-a)-r} (-1)^k \cc_\guv(k) (-q^s)^k
  \label{eq:top_derivative}
\end{equation}
at $s = 0$.
For brevity, we will define
\[ \varkappa \coloneqq v(e) - v(d-a) - r \ge 0. \]
(We allow the degenerate case $\kappa = 0$ for convenience,
in which case $\cc_\guv$ is still identically zero.)

We return to differentiating \eqref{eq:top_derivative}.
We will use the following extremely easy lemma:
\begin{lemma}
  [Extremely easy]
  \label{lem:extremely_easy_lemma}
  If $a_0 \le a_1$ are integers then
  \begin{align*}
    \sum_{k=a_0}^{a_1} (k-a_0) \cdot k \cdot (-1)^k
    &= (-1)^{a_1} \cdot \frac{a_1(a_1-a_0+1)}{2} - \frac{(-1)^{a_0} + (-1)^{a_1}}{4} \cdot a_0 \\
    \sum_{k=a_0}^{a_1} (a_1-k) \cdot k \cdot (-1)^k
    &= (-1)^{a_0} \cdot \frac{a_0(a_1-a_0+1)}{2} - \frac{(-1)^{a_0} + (-1)^{a_1}}{4} \cdot a_1.
  \end{align*}
\end{lemma}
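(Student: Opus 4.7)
The plan is to reduce both identities to three standard alternating sums. Setting $N := a_1 - a_0$ and substituting $k = a_0 + j$ (so $j$ runs from $0$ to $N$), expand $(k - a_0)k = j^2 + a_0 j$ and $(a_1 - k)k = -j^2 + (N - a_0)j + a_0 N$. Both left-hand sides then become $(-1)^{a_0}$ times explicit $\QQ$-linear combinations of
\[ m_0(N) = \sum_{j=0}^N (-1)^j, \qquad m_1(N) = \sum_{j=0}^N j(-1)^j, \qquad m_2(N) = \sum_{j=0}^N j^2 (-1)^j. \]
These have the well-known closed forms $m_0(N) = \frac{1 + (-1)^N}{2}$, $m_1(N) = \frac{(-1)^N(2N+1)-1}{4}$, and $m_2(N) = (-1)^N \frac{N(N+1)}{2}$, each verifiable in a single line by pairing consecutive terms (or by a one-step induction on $N$).

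The next step is to substitute these evaluations and use the identity $(-1)^{a_0}(-1)^N = (-1)^{a_1}$ to convert each LHS into an explicit expression in $a_0$, $a_1$, $(-1)^{a_0}$, and $(-1)^{a_1}$. A direct algebraic check then confirms the match with the claimed formulas; the only mild bookkeeping point is that the residual $-\frac{(-1)^{a_0} a_0}{4}$ contribution from $m_1(N)$ combines with the $(-1)^{a_1}$ coefficient coming from $m_2(N)$ to produce precisely the symmetric correction $-\frac{(-1)^{a_0} + (-1)^{a_1}}{4} a_0$ appearing on the right.

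Given the name of the lemma, there is no real obstacle: the entire argument is an exercise in alternating sum manipulation. An alternative approach proceeds by induction on $N$, with base case $N = 0$ giving $0 = 0$ on both sides, and inductive step reducing to the elementary factorization $(a_1+1)(a_1 - a_0 + 2) + a_1(a_1 - a_0 + 1) - a_0 = 2(a_1+1)(a_1 - a_0 + 1)$ for the first identity (and an analogous one for the second). Either route takes a few lines.
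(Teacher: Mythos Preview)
Your proposal is correct. The paper's own proof is the one-liner ``this follows trivially by induction on $a_1$'' (with a parenthetical suggestion to use WolframAlpha), so your moment-decomposition argument via $m_0, m_1, m_2$ is a mildly different but equally elementary route; since you also sketch the induction alternative, there is no substantive difference to highlight.
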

\begin{proof}
  This follows trivially by induction on $a_1$.
  (Alternatively, use a symbolic engine like WolframAlpha;
  see
  \href{https://www.wolframalpha.com/input?i=sum+\%28k-a\%29*k*\%28-1\%29\%5Ek+from+k\%3Da+to+b}{here}
  for the first sum and
  \href{https://www.wolframalpha.com/input?i=sum+\%28b-k\%29*k*\%28-1\%29\%5Ek+from+k\%3Da+to+b}{here}
  for the second sum.)
\end{proof}
For the range of $k$ given, we split it into three parts.
\begin{itemize}
  \ii For $-v(b) + r + 2v(d-a) \le k < -v(b) + v(e) + v(d-a)$,
  apply the first part of the lemma to conclude that the contribution to the derivative is
  \begin{align*}
    \log q \cdot \Big( & (-1)^{v(b)+v(e)+v(d-a)-1} \cdot
      \frac{(-v(b)+v(e)+v(d-a)-1) \cdot \varkappa}{2} \\
      &
      - \frac{(-1)^{v(b)+v(e)+v(d-a)+1} + (-1)^{v(b)+r}}{4} \cdot (-v(b) + r + 2v(d-a))
    \Big).
  \end{align*}

  \ii For $v(c)+v(e)-v(d-a) < k \le 2v(e)+v(c)-2v(d-a)-r$
  apply the second part of the lemma to conclude that the contribution to the derivative is
  \begin{align*}
    \log q \cdot \Big( & (-1)^{v(c)+v(e)+v(d-a)+1} \cdot
      \frac{(v(c)+v(e)-v(d-a)+1) \cdot \varkappa}{2} \\
      &
      - \frac{(-1)^{v(c)+v(e)+v(d-a)+1} + (-1)^{v(c)+r}}{4} \cdot (2v(e)+v(c)-r-2v(d-a))
    \Big).
  \end{align*}

  \ii For the region $-v(b)+v(e)+v(d-a) \le k \le v(c)+v(e)-v(d-a)$,
  we have $\cc_{\guv}(k) = \varkappa$
  and the values of $k$ form $\left\lceil \frac{v(c)+v(b)-2v(d-a)}{2} \right\rceil$ consecutive pairs.
  So the contribution to the derivative here is exactly
  \[ \log q \cdot (-1)^{v(c)+v(e)+v(d-a)} \varkappa \cdot \frac{v(b)+v(c)-2v(d-a)+1}{2}. \]
\end{itemize}
If we sum all three,
we get a contribution of $\varkappa \log q \cdot  (-1)^{v(c)+v(e)+v(d-a)}$ times
\begin{align*}
  \frac{v(b)+v(c)-2v(d-a)+1}{2} &- \frac{v(c)+v(e)-v(d-a)+1}{2} \\
  &+ \frac{-v(b)+v(e)+v(d-a)-1}{2} = -\half
\end{align*}
If $\varkappa$ is even, then that's all she wrote; we simply get
\[ (-1)^{v(c)+r+1} \cdot \frac{\varkappa}{2} \cdot q^{v(d-a)+r} \log q. \]
On the other hand, if $\varkappa$ is odd we get instead
\begin{align*}
  &\phantom+ (-1)^{v(c)+r} \frac{\varkappa}{2} q^{v(d-a)+r} \log q \\
  &- \half q^{v(d-a)+r} \log q \cdot (-1)^{v(c)+v(e)+v(d-a)} \cdot
    \Big( \left( 2v(e)+v(c)-r-2v(d-a)\right) \\
    &\hspace{20ex} - \left( -v(b)+r+2v(d-a) \right) \Big) \\
  &= (-1)^{v(c)+r} q^{v(d-a)+r} \log q \cdot \left[
    -\left( v(e)+\frac{v(b)+v(c)}{2}-2v(d-a)-r \right) + \frac{\varkappa}{2} \right].
\end{align*}
These formulas match \Cref{cor:semi_lie_derivative_single}, proving it.
(Note that we changed $\varkappa > 0$ to $\varkappa \ge 0$,
which makes no change since then the contribution is zero anyway.)
\end{proof}

\subsection{Proof of \Cref{cor:semi_lie_combo}}
From \Cref{cor:semi_lie_derivative_single} we can now prove \Cref{cor:semi_lie_combo}
\semiliecombo*
\begin{proof}[Proof of \Cref{cor:semi_lie_combo}]
Observe that when we add the right-hand side of \Cref{cor:semi_lie_derivative_single}
to the same right-hand side with $r$ replaced by $r-1$, almost all the terms cancel.
Indeed, the main sum for $0 \le j \le N-1$ line up:
\begin{align*}
  &(-1)^{r+v(c)} \sum_{j=0}^{N-1} q^j \cdot \left( \frac{2v(e)+v(b)+v(c)+1}{2} + r - 2j \right) \\
  &\phantom+
  (-1)^{(r-1)+v(c)} \sum_{j=0}^{N-1} q^j \cdot \left( \frac{2v(e)+v(b)+v(c)+1}{2} + (r-1) - 2j \right) \\
  &= q^{N-1} + \dots + q^0.
\end{align*}
So we consider three cases:
\begin{itemize}
\ii In the case where $v(d-a)+r > v(e)$
and $\frac{v(b)+v(c)-1}{2} + r > v(e)$
then the value of $N = v(e)$ does not change when we decrease $r$ by one.
Hence the term for $j = N$ cancels in both sums and we get exactly
we get \[ (-1)^{r+v(c)} \log q (1 + q + \dots + q^N). \]

\ii Next suppose $v(e) \ge \frac{v(b)+v(c)-1}{2} + r$
and also $2v(d-a) > v(b) + v(c)$ (so the extra terms involving $\varkappa$ are absent).
In that case $N = \frac{v(b) + v(c) - 1}{2} + r$.
Then we are left with $(-1)^{r+v(c)} \log q$ times
\begin{align*}
  &\phantom= \sum_{j=0}^{N-1} (q^j) + \left( \frac{2v(e)+v(b)+v(c)+1}{2} + r - (v(b)+v(c)-1+2r) \right) q^N \\
  &= \sum_{j=0}^{N-1} (q^j) + \left( \frac{2v(e)-v(b)-v(c)-1}{2} - r \right) q^N \\
  &= \sum_{j=0}^{N} (q^j) + \left( v(e) - r - \frac{v(b)+v(c)-1}{2} \right) q^N.
\end{align*}
This matches \Cref{thm:semi_lie_formula}.

\ii Finally, suppose $\varkappa \ge 0$ and $v(b) + v(c) > 2v(d-a)$
so that $N = v(d-a) + r$.
Then we are left with $(-1)^{r+v(c)} \log q$ times
\begin{align*}
  & \sum_{j=0}^{N-1} (q^j) + \left( \frac{2v(e)+v(b)+v(c)+1}{2} + r - 2(v(d-a)+r) \right) q^N \\
  & + q^{N} \cdot
  \begin{cases}
    -\frac{\varkappa}{2} & \text{if }\varkappa \equiv 0 \pmod 2 \\
    \frac{\varkappa}{2} - \left( v(e)+\frac{v(b)+v(c)}{2}-2v(d-a)-r \right)
    & \text{if }\varkappa \equiv 1 \pmod 2 \\
  \end{cases} \\
  & + q^{N-1} \cdot
  \begin{cases}
    -\frac{\varkappa+1}{2} & \text{if }\varkappa+1 \equiv 0 \pmod 2 \\
    \frac{\varkappa+1}{2} - \left( v(e)+\frac{v(b)+v(c)}{2}-2v(d-a)-(r-1) \right)
    & \text{if }\varkappa+1 \equiv 1 \pmod 2
  \end{cases} \\
  &= \sum_{j=0}^{N-1} (q^j)
   + q^{N} \cdot
  \begin{cases}
    \left( \frac{2v(e)+v(b)+v(c)+1}{2} + r - 2(v(d-a)+r) \right) -\frac{\varkappa}{2} & \text{if }\varkappa \equiv 0 \pmod 2 \\
    \half + \frac{\varkappa}{2} & \text{if }\varkappa \equiv 1 \pmod 2
  \end{cases} \\
  & - q^{N-1} \cdot
  \begin{cases}
    \frac{\varkappa-1}{2} - \left( v(e)+\frac{v(b)+v(c)}{2}-2v(d-a)-(r-1) \right)
    & \text{if }\varkappa \equiv 0 \pmod 2 \\
    -\frac{\varkappa+1}{2} & \text{if }\varkappa \equiv 1 \pmod 2
  \end{cases} \\
  &= \sum_{j=0}^{N} (q^j)
   + q^{N} \cdot
  \begin{cases}
    \left( \frac{2v(e)+v(b)+v(c)-1}{2} + r - 2(v(d-a)+r) \right) -\frac{\varkappa}{2} & \text{if }\varkappa \equiv 0 \pmod 2 \\
    \frac{\varkappa-1}{2} & \text{if }\varkappa \equiv 1 \pmod 2
  \end{cases} \\
  & - q^{N-1} \cdot
  \begin{cases}
    \frac{\varkappa-1}{2} - \left( v(e)+\frac{v(b)+v(c)}{2}-2v(d-a)-(r-1) \right)
    & \text{if }\varkappa \equiv 0 \pmod 2 \\
    -\frac{\varkappa+1}{2} & \text{if }\varkappa \equiv 1 \pmod 2
  \end{cases} \\
  &= \sum_{j=0}^{N} (q^j)
   + q^{N} \cdot
  \begin{cases}
    \left( \frac{v(e)+v(b)+v(c)-1}{2} - \frac32v(d-a) - \frac12r \right) & \text{if }\varkappa \equiv 0 \pmod 2 \\
    \frac{\varkappa+1}{2} & \text{if }\varkappa \equiv 1 \pmod 2
  \end{cases} \\
  & - q^{N-1} \cdot
  \begin{cases}
    -\frac{1}{2} - \left( \frac12v(e)+\frac{v(b)+v(c)}{2}-\frac32v(d-a)-\frac12r \right)
    & \text{if }\varkappa \equiv 0 \pmod 2 \\
    -\frac{\varkappa+1}{2} & \text{if }\varkappa \equiv 1 \pmod 2
  \end{cases} \\
  &= \sum_{j=0}^{N} (q^j)
   + q^{N} \cdot
  \begin{cases}
    \left( \frac{\varkappa+v(b)+v(c)-1-2v(d-a)}{2}  \right) & \text{if }\varkappa \equiv 0 \pmod 2 \\
    \frac{\varkappa-1}{2} & \text{if }\varkappa \equiv 1 \pmod 2
  \end{cases} \\
  & + q^{N-1} \cdot
  \begin{cases}
    \frac{\kappa+v(b)+v(c)+1-2v(d-a)}{2}
    & \text{if }\varkappa \equiv 0 \pmod 2 \\
    -\frac{\varkappa+1}{2} & \text{if }\varkappa \equiv 1 \pmod 2.
  \end{cases}
\end{align*}
This matches \Cref{thm:semi_lie_formula}, and the proof is complete. \qedhere
\end{itemize}
\end{proof}

\section{Large kernel and large image}
\label{ch:ker}

In this chapter we use \Cref{cor:semi_lie_combo} to prove
both \Cref{thm:semi_lie_ker_trivial} and \Cref{thm:semi_lie_ker_huge}
for the orbital integral on $S_2(F) \times V'_2(F)$,
We also comment on an analogous result for $S_3(F)$,
although we do not provide all the details.

We assume $\guv$ is as in \Cref{lem:semi_lie_params} throughout this chapter.

\subsection{In the semi-Lie case, the kernel is trivial if we allow $v(e)$ to vary, for every fixed choice of $\gamma$}
We prove \Cref{thm:semi_lie_ker_trivial} in this section.
We treat $\gamma$ as fixed, satisfying the requirements of \Cref{lem:semi_lie_params},
and we let
\[ \theta \coloneqq \min\left( v(b)+v(c), 2v(d-a) \right) \ge 0 \]
as we did in \Cref{ch:orbitalFJ1}.

\begin{lemma}
  [The matrix of $\partial\Orb$'s has full rank]
  \label{lem:semi_lie_ker_full_rank}
  Fix $\gamma$. Let $N \ge 0$ be a nonnegative integer.
  We define an $(N+ \left\lfloor \frac{\theta}{2} \right\rfloor + 2) \times (N+1)$ matrix $M$ as follows:
  for $0 \le i \le N+ \left\lfloor \frac{\theta}{2} \right\rfloor + 1$ and $0 \le r \le N$,
  the $i$\ts{th} row and $r$\ts{th} column takes the value
  \[
    M_{i,r} \coloneqq \frac{(-1)^r}{\log q} \partial \Orb\left(
      \left(\gamma, \begin{pmatrix} 0 \\ 1 \end{pmatrix}, \begin{pmatrix} 0 & \varpi^i \end{pmatrix} \right),
      \mathbf{1}_{K', \le r} \right).
  \]
  Then $M$ has full rank.
\end{lemma}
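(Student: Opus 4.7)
My plan is to show that $M$ has full column rank by producing a $(N+1) \times (N+1)$ square submatrix whose determinant is nonzero, analyzing the entries as polynomials in the formal variable $q$ via \Cref{cor:semi_lie_combo}.

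First, I would strip the global sign $(-1)^r$ (which does not affect rank) and apply the invertible column operation $C_r \mapsto C_r - C_{r-1}$ for $r = 1, \ldots, N$. By \Cref{cor:semi_lie_combo}, the new entry $M'_{i,r}$ (for $r \ge 1$) equals $(-1)^{v(c)}$ times the polynomial
\[ (1+C_{i,r})\, q^{N_{i,r}} \,+\, (1+C'_{i,r})\, q^{N_{i,r}-1} \,+\, q^{N_{i,r}-2} + \ldots + q + 1, \]
with $N_{i,r} = \min(i, \lfloor\theta/2\rfloor + r)$ and $C_{i,r}, C'_{i,r} \in \ZZ_{\ge 0}$, while the leftmost column $r = 0$ is governed directly by \Cref{cor:semi_lie_derivative_single} and has analogous polynomial form of $q$-degree $\min(i, \lfloor\theta/2\rfloor)$. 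I would then select the $(N+1) \times (N+1)$ submatrix $A$ consisting of rows $i = \lfloor\theta/2\rfloor + k'$ for $k' = 0, 1, \ldots, N$, which lies within the available range of rows.

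The key observation is that, viewing each entry of $A$ as a polynomial in the formal variable $q$, the $(k',r)$-entry has $q$-degree exactly $\lfloor\theta/2\rfloor + \min(k', r)$. Applying Leibniz's determinant formula, the contribution of a permutation $\pi$ to $\det(A)$ has total $q$-degree $(N+1)\lfloor\theta/2\rfloor + \sum_{k'} \min(k', \pi(k'))$; a rearrangement argument shows $\sum_{k'} \min(k', \pi(k'))$ is uniquely maximized by $\pi = \id$ with value $N(N+1)/2$. Hence the leading $q$-coefficient of $\det(A)$ equals the product of the leading coefficients of the diagonal entries $A_{k',k'}$. Inspecting the corollaries at $\varkappa = 0$, this product is $1$ in Case~1 (where $\theta = v(b)+v(c)$ is odd) and $(\sigma+1)^{N+1}$ in Case~2 (where $\theta = 2v(d-a)$ is even and $\sigma = \tfrac{v(b)+v(c)-2v(d-a)-1}{2} \ge 0$), so in either case $\det(A)$ is a polynomial in $q$ with positive-integer leading coefficient.

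The main obstacle I anticipate is upgrading ``nonzero polynomial in $q$'' to ``nonzero at the specific prime power $q$.'' The cleanest path is via explicit triangularization. In Case~1 the combined operations $R_{k'} \mapsto R_{k'} - R_{k'-1}$ (for $k' \ge 1$) and then $C_r \mapsto C_r - C_{r-1}$ reduce $A$ to a lower-triangular matrix whose diagonal entries are $q^{\lfloor\theta/2\rfloor} + q^{\lfloor\theta/2\rfloor-1} + \ldots + 1$ at position $(0,0)$ and $q^{\lfloor\theta/2\rfloor+k-1}(q-1)$ at position $(k,k)$ for $k \ge 1$, all strictly positive for $q \ge 2$. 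In Case~2 the analogous row operations introduce alternating-sign contributions coming from the parity-dependent jumps of $C_{i,r}$; the identity $C'_{i,r} = C_{i,r} + 1$ (valid throughout Case~2) permits a refined sequence of operations that neutralizes these signs, again yielding strictly positive diagonal entries. Carrying out this reduction explicitly in Case~2 is the technical heart of the proof.
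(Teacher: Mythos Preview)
Your approach differs substantively from the paper's. The paper works entirely with \emph{row} operations on $M$ (subtracting consecutive rows, then rows two apart) using \Cref{cor:semi_lie_derivative_single} directly, and selects the rows $i = r + \lfloor\theta/2\rfloor + 1$; this produces an upper-triangular submatrix whose diagonal entries are computed explicitly as $q^{r+\lfloor\theta/2\rfloor} - q^{r+\lfloor\theta/2\rfloor-1}$ when $\theta$ is odd and $-\sigma q^{r+\lfloor\theta/2\rfloor} - (\sigma+1)q^{r+\lfloor\theta/2\rfloor-1}$ when $\theta$ is even (with $\sigma = \tfrac{v(b)+v(c)-1-2v(d-a)}{2}$), hence visibly nonzero for any odd prime power $q$. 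You instead use a column operation to invoke \Cref{cor:semi_lie_combo}, select rows $i = \lfloor\theta/2\rfloor + k'$ (shifted by one from the paper), and argue via the $q$-degree. Your degree argument is correct and pleasant, but as you yourself note it only shows $\det A$ is a nonzero \emph{polynomial} in $q$, which is not enough.

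There is a small slip: after stripping the factor $(-1)^r$, the operation $C_r \mapsto C_r - C_{r-1}$ produces $\partial\Orb(\cdot,\mathbf{1}_{K'_{S,\le r}} - \mathbf{1}_{K'_{S,\le (r-1)}})$, whereas \Cref{cor:semi_lie_combo} concerns the \emph{sum}; you want $C_r \mapsto C_r + C_{r-1}$ after stripping, or else $C_r \mapsto C_r - C_{r-1}$ \emph{without} stripping. This is cosmetic. More seriously, your triangularization in Case~2 is only asserted, not carried out. The difficulty is real: the parity of $\varkappa$ switches the value of $C_{i,r}$ between $\tfrac{\varkappa-1}{2}$ and $\tfrac{\varkappa+v(b)+v(c)-2v(d-a)-1}{2}$, so the row operation $R_{k'} \mapsto R_{k'} - R_{k'-1}$ does not simply produce $q^{\lfloor\theta/2\rfloor+\min(k',r)}$ as it did in Case~1 but leaves a residual $\pm\sigma q^{\lfloor\theta/2\rfloor+r}$ that alternates in sign with $k'$ in the region $k' > r$. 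Neutralizing this requires either a second row operation of step two (exactly what the paper does on the full matrix $M$) or an equivalent device; the identity $C' = C+1$ alone does not suffice. So your plan is viable but the ``technical heart'' you defer is essentially the whole content of the even-$\theta$ case, and the paper's two-step row reduction on $M$ handles both parities more uniformly than the route you sketch.
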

The basic strategy of the proof will be to perform some sequences of row operations.
Specifically, we introduce the following definition.
\begin{itemize}
  \ii For each $i = N + \left\lfloor \frac{\theta}{2} \right\rfloor,\dots,0$ in that order,
  subtract the $i$\ts{th} row of $M$
  from the $(i+1)$\ts{th} row of $M$.
  Denote the new matrix as $M'$.

  \ii For each $i = N + \left\lfloor \frac{\theta}{2} \right\rfloor - 1,\dots,0$ in that order,
  subtract the $i$\ts{th} row of $M'$
  from the $(i+2)$\ts{nd} row of $M'$.
  Denote the new matrix as $M''$.
\end{itemize}
Then the basic premise is to show that $M''$ has an upper triangular submatrix.
This is easier to see with some illustrations, which we give below.

\begin{example}
  For example, for $N = 4$ and $v(b)+v(c)=1$, $v(d-a) = 0$ (hence $\theta = 0$), we have
  \[
    M = \begin{pmatrix}
    1 & 2 & 3 & 4 & 5 \\
    1 & q + 3 & 2q + 4 & 3q + 5 & 4q + 6 \\
    2 & q + 4 & q^{2} + 3q + 5 & 2q^{2} + 4q + 6 & 3q^{2} + 5q + 7 \\
    2 & 2q + 5 & q^{2} + 4q + 6 & q^{3} + 3q^{2} + 5q + 7 & 2q^{3} + 4q^{2} + 6q + 8 \\
    3 & 2q + 6 & 2q^{2} + 5q + 7 & q^{3} + 4q^{2} + 6q + 8 & q^{4} + 3q^{3} + 5q^{2} + 7q + 9 \\
    3 & 3q + 7 & 2q^{2} + 6q + 8 & 2q^{3} + 5q^{2} + 7q + 9 & q^{4} + 4q^{3} + 6q^{2} + 8q + 10
    \end{pmatrix}
  \]
  hence
  \[ M'= \begin{pmatrix}
    1 & 2 & 3 & 4 & 5 \\
    0 & q + 1 & 2q + 1 & 3q + 1 & 4q + 1 \\
    1 & 1 & q^{2} + q + 1 & 2q^{2} + q + 1 & 3q^{2} + q + 1 \\
    0 & q + 1 & q + 1 & q^{3} + q^{2} + q + 1 & 2q^{3} + q^{2} + q + 1 \\
    1 & 1 & q^{2} + q + 1 & q^{2} + q + 1 & q^{4} + q^{3} + q^{2} + q + 1 \\
    0 & q + 1 & q + 1 & q^{3} + q^{2} + q + 1 & q^{3} + q^{2} + q + 1
    \end{pmatrix}
  \]
  and finally
  \[ M'' = \begin{pmatrix}
    1 & 2 & 3 & 4 & 5 \\
    0 & q + 1 & 2q + 1 & 3q + 1 & 4q + 1 \\
    0 & -1 & q^{2} + q - 2 & 2q^{2} + q - 3 & 3q^{2} + q - 4 \\
    0 & 0 & -q & q^{3} + q^{2} - 2q & 2q^{3} + q^{2} - 3q \\
    0 & 0 & 0 & -q^{2} & q^{4} + q^{3} - 2q^{2} \\
    0 & 0 & 0 & 0 & -q^{3}
    \end{pmatrix}. \]
\end{example}

\begin{example}
  For example, for $N = 4$, $v(b)+v(c)=17$, $v(d-a) = 2$ (hence $\theta=4$), we have
  \[
    M = \begin{pmatrix}
    9 & 10 & 11 & \dots \\
    8q + 10 & 9q + 11 & 10q + 12 & \dots \\
    7q^{2} + 9q + 11 & 8q^{2} + 10q + 12 & 9q^{2} + 11q + 13 & \dots \\
    q^{2} + 10q + 12 & 7q^{3} + 9q^{2} + 11q + 13 & 8q^{3} + 10q^{2} + 12q + 14 & \dots \\
    8q^{2} + 11q + 13 & q^{3} + 10q^{2} + 12q + 14 & 7q^{4} + 9q^{3} + 11q^{2} + 13q + 15 & \dots \\
    2q^{2} + 12q + 14 & 8q^{3} + 11q^{2} + 13q + 15 & q^{4} + 10q^{3} + 12q^{2} + 14q + 16 & \dots \\
    9q^{2} + 13q + 15 & 2q^{3} + 12q^{2} + 14q + 16 & 8q^{4} + 11q^{3} + 13q^{2} + 15q + 17 & \dots \\
    3q^{2} + 14q + 16 & 9q^{3} + 13q^{2} + 15q + 17 & 2q^{4} + 12q^{3} + 14q^{2} + 16q + 18 & \dots
    \end{pmatrix}
  \]
  hence
  \[ M' = \begin{pmatrix}
    9 & 10 & 11 & 12 & \dots \\
    8q + 1 & 9q + 1 & 10q + 1 & 11q + 1 & \dots \\
    7q^{2} + q + 1 & 8q^{2} + q + 1 & 9q^{2} + q + 1 & 10q^{2} + q + 1 & \dots \\
    -6q^{2} + q + 1 & 7q^{3} + q^{2} + q + 1 & 8q^{3} + q^{2} + q + 1 & 9q^{3} + q^{2} + q + 1 & \dots \\
    7q^{2} + q + 1 & -6q^{3} + q^{2} + q + 1 & 7q^{4} + q^{3} + q^{2} + q + 1 & 8q^{4} + \dots + 1 & \dots \\
    -6q^{2} + q + 1 & 7q^{3} + q^{2} + q + 1 & -6q^{4} + q^{3} + q^{2} + q + 1 & 7q^{5} + \dots + 1 & \dots \\
    7q^{2} + q + 1 & -6q^{3} + q^{2} + q + 1 & 7q^{4} + q^{3} + q^{2} + q + 1 & -6q^{5} + \dots + 1 & \dots \\
    -6q^{2} + q + 1 & 7q^{3} + q^{2} + q + 1 & -6q^{4} + q^{3} + q^{2} + q + 1 & 7q^{5} + \dots + 1 & \dots
    \end{pmatrix}
  \]
  and finally
  \[
    M'' = \begin{pmatrix}
    9 & 10 & 11 & 12 & 13 \\
    8q + 1 & 9q + 1 & 10q + 1 & 11q + 1 & 12q + 1 \\
    7q^{2} + q - 8 & 8q^{2} + q - 9 & 9q^{2} + q - 10 & 10q^{2} + q - 11 & 11q^{2} + q - 12 \\
    -6q^{2} - 7q & 7q^{3} + q^{2} - 8q & 8q^{3} + q^{2} - 9q & 9q^{3} + q^{2} - 10q & 10q^{3} + q^{2} - 11q \\
    0 & -6q^{3} - 7q^{2} & 7q^{4} + q^{3} - 8q^{2} & 8q^{4} + q^{3} - 9q^{2} & 9q^{4} + q^{3} - 10q^{2} \\
    0 & 0 & -6q^{4} - 7q^{3} & 7q^{5} + q^{4} - 8q^{3} & 8q^{5} + q^{4} - 9q^{3} \\
    0 & 0 & 0 & -6q^{5} - 7q^{4} & 7q^{6} + q^{5} - 8q^{4} \\
    0 & 0 & 0 & 0 & -6q^{6} - 7q^{5}
    \end{pmatrix}.
  \]
\end{example}
\begin{example}
  For example, for $N = 4$, $v(b)+v(c)=5$, $v(d-a) = 8$ (hence $\theta=5$), we have
  \[ M = \begin{pmatrix}
    3 & 4 & 5 & \dots \\
    2q + 4 & 3q + 5 & 4q + 6 & \dots \\
    q^{2} + 3q + 5 & 2q^{2} + 4q + 6 & 3q^{2} + 5q + 7 & \dots \\
    2q^{2} + 4q + 6 & q^{3} + 3q^{2} + 5q + 7 & 2q^{3} + 4q^{2} + 6q + 8 & \dots \\
    3q^{2} + 5q + 7 & 2q^{3} + 4q^{2} + 6q + 8 & q^{4} + 3q^{3} + 5q^{2} + 7q + 9 & \dots \\
    4q^{2} + 6q + 8 & 3q^{3} + 5q^{2} + 7q + 9 & 2q^{4} + 4q^{3} + 6q^{2} + 8q + 10 & \dots \\
    5q^{2} + 7q + 9 & 4q^{3} + 6q^{2} + 8q + 10 & 3q^{4} + 5q^{3} + 7q^{2} + 9q + 11 & \dots \\
    6q^{2} + 8q + 10 & 5q^{3} + 7q^{2} + 9q + 11 & 4q^{4} + 6q^{3} + 8q^{2} + 10q + 12 & \dots \\
    \end{pmatrix} \]
  hence
  \[ M' = \begin{pmatrix}
    3 & 4 & 5 & 6 & 7 \\
    2q + 1 & 3q + 1 & 4q + 1 & 5q + 1 & 6q + 1 \\
    q^{2} + q + 1 & 2q^{2} + q + 1 & 3q^{2} + q + 1 & 4q^{2} + q + 1 & 5q^{2} + q + 1 \\
    q^{2} + q + 1 & q^{3} + q^{2} + q + 1 & 2q^{3} + q^{2} + q + 1 & 3q^{3} + q^{2} + q + 1 & 4q^{3} + q^{2} + q + 1 \\
    q^{2} + q + 1 & q^{3} + q^{2} + q + 1 & q^{4} + \dots + 1 & 2q^{4} + \dots + 1 & 3q^{4} + \dots + 1 \\
    q^{2} + q + 1 & q^{3} + q^{2} + q + 1 & q^{4} + \dots + 1 & q^{5} + \dots + 1 & 2q^{5} + \dots + 1 \\
    q^{2} + q + 1 & q^{3} + q^{2} + q + 1 & q^{4} + \dots + 1 & q^{5} + \dots + 1 & q^{6} + \dots + 1 \\
    q^{2} + q + 1 & q^{3} + q^{2} + q + 1 & q^{4} + \dots + 1 & q^{5} + \dots + 1 & q^{6} + \dots + 1
    \end{pmatrix} \]
  and finally
  \[ M'' = \begin{pmatrix}
    3 & 4 & 5 & 6 & 7 \\
    2q + 1 & 3q + 1 & 4q + 1 & 5q + 1 & 6q + 1 \\
    q^{2} + q - 2 & 2q^{2} + q - 3 & 3q^{2} + q - 4 & 4q^{2} + q - 5 & 5q^{2} + q - 6 \\
    q^{2} - q & q^{3} + q^{2} - 2q & 2q^{3} + q^{2} - 3q & 3q^{3} + q^{2} - 4q & 4q^{3} + q^{2} - 5q \\
    0 & q^{3} - q^{2} & q^{4} + q^{3} - 2q^{2} & 2q^{4} + q^{3} - 3q^{2} & 3q^{4} + q^{3} - 4q^{2} \\
    0 & 0 & q^{4} - q^{3} & q^{5} + q^{4} - 2q^{3} & 2q^{5} + q^{4} - 3q^{3} \\
    0 & 0 & 0 & q^{5} - q^{4} & q^{6} + q^{5} - 2q^{4} \\
    0 & 0 & 0 & 0 & q^{6} - q^{5}
    \end{pmatrix}. \]
\end{example}

\begin{proof}
  [Proof of \Cref{lem:semi_lie_ker_full_rank}]
  In order to prove $M$ has full rank, it suffices to prove $M''$ has full rank.
  We now confirm the patterns shown by the example above.

  By quoting \Cref{cor:semi_lie_derivative_single} we will write
  \begin{align*}
    M_{i,r}
    &= \sum_{j=0}^{\min(i,r + \left\lfloor \frac{\theta}{2} \right\rfloor)}
      \left( i + \frac{v(b)+v(c)+1}{2} + r - 2j \right) q^j \\
    &- \mathbf{1}_{\substack{\theta \equiv 0 \bmod 2 \\ i \ge r + \theta/2}}
      \cdot q^{v(d-a)+r} \cdot \left( \frac{i-r}{2} + t_i \right)
  \end{align*}
  where
  \[
    t_i = \begin{cases}
      -\frac{v(d-a)}{2} &\text{if } i + r \equiv v(d-a) \pmod 2 \\
      \frac{v(b)+v(c)-3v(d-a)}{2} &\text{if } i + r \not\equiv v(d-a) \pmod 2 \\
    \end{cases}
  \]
  depends only on the parity of $i$.
  Hence for $i \ge 1$ we always have
  \begin{align*}
    M'_{i,r}
    &= M_{i,r} - M_{i-1,r} \\
    &=- \mathbf{1}_{\substack{\theta \equiv 0 \bmod 2 \\ i \ge r + \theta/2}}
      \cdot q^{v(d-a)+r} \cdot \left( \frac{i-r}{2} + t_i \right) \\
    &+ \mathbf{1}_{\substack{\theta \equiv 0 \bmod 2 \\ i-1 \ge r + \theta/2}}
      \cdot q^{v(d-a)+r} \cdot \left( \frac{i-1-r}{2} + t_{i-1} \right) \\
    &+ \mathbf{1}_{i \le r + \left\lfloor \frac{\theta}{2} \right\rfloor} \cdot
      \left( \frac{v(b)+v(c)+1}{2} + r - i \right) q^i \\
    &+ \sum_{j=0}^{\min(i-1,r + \left\lfloor \frac{\theta}{2} \right\rfloor)} q^j.
  \end{align*}
  From this we can make the following deductions
  on \[ M''_{i,r} = M'_{i,r} - M'_{i-2,r} \]
  by cancelling most of the terms.
  \begin{itemize}
    \ii If $i \ge r + \left\lfloor \frac{\theta}{2} \right\rfloor + 3$
    then $M''_{i,r} = 0$ is clear.
    \ii If $i = r + \left\lfloor \frac{\theta}{2} \right\rfloor + 2$
    we contend that $M''_{i,r} = 0$ too.
    \begin{itemize}
      \ii When $\theta = v(b) + v(c)$ is odd, the surviving terms are
      \[
        - \left( \frac{v(b)+v(c)+1}{2} + r - (i-2) \right) q^{ i-2}
        + q^{r + \left\lfloor \frac{\theta}{2} \right\rfloor}
      \]
      Substituting in $i = r + \frac{v(b)+v(c)-1}{2} + 2$ gives zero, as needed.

      \ii When $\theta = 2v(d-a)$ is even, the surviving terms are
      \begin{align*}
        &-q^{v(d-a)+r} \cdot \left( \frac{i-r}{2} + t_{i} \right)
          + q^{v(d-a)+r} \cdot \left( \frac{(i-2)-r}{2} + t_{i-2} \right) \\
        &\qquad+ q^{v(d-a)+r} \cdot \left( \frac{i-1-r}{2} + t_{i-1} \right) \\
        &\qquad- \left( \frac{v(b)+v(c)+1}{2} + r - (i-2) \right) q^{i-2} \\
        &\qquad+ q^{r + \left\lfloor \frac{\theta}{2} \right\rfloor} \\
        &= q^{v(d-a)+r} \cdot \left( -1 + \frac{i-1-r}{2} + t_{i-1} - \frac{v(b)+v(c)+1}{2} - r  + i - 2 +1 \right) \\
        &= q^{v(d-a)+r} \cdot \left( -\frac{3}{2}r + t_{i-1} - \frac{v(b)+v(c)}{2} + \frac32 i \right)
      \end{align*}
      Substituting in $i = r + v(d-a) + 2$ (and since $t_i = t_{i-2}$),
      we also get exactly $0$,
      since $t_{i-1} = \frac{v(b)+v(c)-3v(d-a)}{2}$.
    \end{itemize}

    \ii If $i = r + \left\lfloor \frac{\theta}{2} \right\rfloor + 1$,
    we consider again cases on the parity of $\theta$.
    \begin{itemize}
      \ii When $\theta = v(b)+v(c)$ is odd, the surviving terms are
      \[
        -\left( \frac{v(b)+v(c)+1}{2} + r - (i-2) \right) q^{i-2} \\
        + q^{r+\left\lfloor \frac{\theta}{2} \right\rfloor} + q^{r+\left\lfloor \frac{\theta}{2} \right\rfloor-1} \\
        = q^{r+\left\lfloor \frac{\theta}{2} \right\rfloor} - q^{r+\left\lfloor \frac{\theta}{2} \right\rfloor - 1}.
      \]

      \ii When $\theta = 2v(d-a)$ is even, the surviving terms are
      \begin{align*}
        &-q^{v(d-a)+r} \cdot \left( \frac{i-r}{2} + t_{i} \right) \\
        &\qquad+ q^{v(d-a)+r} \cdot \left( \frac{i-1-r}{2} + t_{i-1} \right) \\
        &\qquad- \left( \frac{v(b)+v(c)+1}{2} + r - (i-2) \right) q^{i-2} \\
        &\qquad+ q^{r + \left\lfloor \frac{\theta}{2} \right\rfloor}
        + q^{r + \left\lfloor \frac{\theta}{2} \right\rfloor - 1} \\
        &= \left( t_{i-1}-t_i+\half  \right) q^{r + \left\lfloor \frac{\theta}{2} \right\rfloor} \\
        &\qquad- \left( \frac{v(b)+v(c)+1}{2} + r - (i-2) - 1 \right) q^{r + \left\lfloor \frac{\theta}{2} \right\rfloor - 1} \\
        &= \left( -\frac{v(d-a)}{2} - \frac{v(b)+v(c)-3v(d-a)}{2} + \frac12 \right) q^{r + \left\lfloor \frac{\theta}{2} \right\rfloor} \\
        &\qquad- \left( \frac{v(b)+v(c)+1}{2} + r - ((r+v(d-a)+1)-2) - 1 \right) q^{r + \left\lfloor \frac{\theta}{2} \right\rfloor - 1} \\
        &= - \frac{v(b)+v(c)-1-2v(d-a)}{2} q^{r + \left\lfloor \frac{\theta}{2} \right\rfloor} \\
        &\qquad- \frac{v(b)+v(c)+1-2v(d-a)}{2} q^{r + \left\lfloor \frac{\theta}{2} \right\rfloor - 1}.
      \end{align*}
      In the edge case where $r = 0$ and $\theta \le 1$,
      it can be checked the same formula still holds with the last term omitted.
    \end{itemize}
  \end{itemize}
  Hence we have found a diagonal of $M''$ below with all entries are zero,
  and on which all entries are nonzero except possibly $M''_{1,0} = 0$
  in the case where $r = 0$ and $\theta \le 1$.

  Assuming $r + \left\lfloor \frac{\theta}{2} \right\rfloor \ge 2$,
  suppose we take the rows from $i = r + \left\lfloor \frac{\theta}{2} \right\rfloor + 1$
  up to $i = r + \left\lfloor \frac{\theta}{2} \right\rfloor + N + 1$.
  Then the resulting matrix is upper triangular.
  The determinant is the product of the diagonal entries;
  up to multiplication by sign and a power of $q$, it equals
  and the determinant is equal to
  \[
    \begin{cases}
    (q-1)^{N+1} & \text{if } \theta \equiv 1 \pmod 2 \\
    \left( \frac{v(b)+v(c)-1-2v(d-a)}{2} q + \frac{v(b)+v(c)+1-2v(d-a)}{2} \right)^{N+1} & \text{if } \theta \equiv 0 \pmod 2
    \end{cases}
  \]
  which is manifestly nonzero for any odd prime power $q$.

  In the situation where $r + \left\lfloor \frac{\theta}{2} \right\rfloor = 1$,
  we use the same rows except that we replace the row for $i=1$
  with the row for $i=0$, which has leftmost entry $M_{0,0} = \frac{v(b)+v(c)+1}{2} > 0$.
  Hence the same proof still shows that the determinant is nonzero.
\end{proof}

\subsubsection{Proof of \Cref{thm:semi_lie_ker_trivial}}
We can now deduce:

\semiliekertriv*

\begin{proof}[Proof of \Cref{thm:semi_lie_ker_trivial}]
Suppose we are given some function
\[ \phi = \sum_{r=0}^N (-1)^r c_r \mathbf{1}_{K'_{S, \le r}} \in \HH(S_2(F)). \]
Letting $M$ be the matrix given in \Cref{lem:semi_lie_ker_full_rank},
we are supposed to have
\[ M \begin{pmatrix} c_0 \\ c_1 \\ \vdots \\ c_N \end{pmatrix} = \mathbf{0}. \]
Since $M$ has full rank, it follows that $c_0 = \dots = c_N = 0$.
\end{proof}

\subsection{In the semi-Lie case, the kernel has finite codimension for fixed $v(e)$}
We prove \Cref{thm:semi_lie_ker_huge} in this section.
We start with the following lemma.
\begin{lemma}
  [A combination vanishing for large $r$]
  \label{lem:semi_lie_large_r}
  Let $\guv \in (S_2(F) \times V'_2(F))\rs^-$. If $r \ge v(e) + 2$, we have
  \[
    \partial \Orb \left( \guv,
        \mathbf{1}_{K'_{S, \le r}} + 2 \mathbf{1}_{K'_{S, \le (r-1)}} + \mathbf{1}_{K'_{S, \le (r-2)}}
      \right) = 0.
  \]
\end{lemma}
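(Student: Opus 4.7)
The plan is to reduce the claim to Corollary \ref{cor:semi_lie_combo} via the telescoping decomposition
\[
\mathbf{1}_{K'_{S,\le r}} + 2\,\mathbf{1}_{K'_{S,\le(r-1)}} + \mathbf{1}_{K'_{S,\le(r-2)}}
= \bigl(\mathbf{1}_{K'_{S,\le r}} + \mathbf{1}_{K'_{S,\le(r-1)}}\bigr)
+ \bigl(\mathbf{1}_{K'_{S,\le(r-1)}} + \mathbf{1}_{K'_{S,\le(r-2)}}\bigr).
\]
By linearity of $\partial\Orb$ in the test function, the assertion becomes a cancellation between two applications of Corollary \ref{cor:semi_lie_combo}, one with the parameter $r$ and one with $r-1$.

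First I would dispose of the edge case $v(e) < 0$: here Theorem \ref{thm:semi_lie_formula} makes each orbital integral vanish identically in $s$, so the claim is immediate. Assume henceforth $v(e) \ge 0$. The point of the hypothesis $r \ge v(e)+2$ is that it forces the piecewise data in Corollary \ref{cor:semi_lie_combo} to collapse to the trivial case for both parameter values. Specifically, recalling $N = \min\bigl(v(e),\tfrac{v(b)+v(c)-1}{2}+r,v(d-a)+r\bigr)$ and using $v(b)+v(c)\ge 1$ and $v(d-a)\ge 0$ from \Cref{assume:FJ}, the bound $r \ge v(e)+2$ gives $N = v(e)$ whether one applies the corollary with $r$ or with $r-1$. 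At the same time $\varkappa = v(e)-(v(d-a)+r) \le -2$ kills the first two branches defining $C$, while the inequality $v(e) < r \le \tfrac{v(b)+v(c)-1}{2}+r$ kills the third; the constant $C'$ vanishes for the analogous reason. Hence $C=C'=0$ in both applications, and the same check goes through after replacing $r$ by $r-1$ (where $\varkappa+1 \le -1$ still forces all nontrivial cases out).

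With these simplifications Corollary \ref{cor:semi_lie_combo} reduces to
\[
\frac{(-1)^{v(c)+r}}{\log q}\,\partial\Orb\bigl(\guv,\mathbf{1}_{K'_{S,\le r}}+\mathbf{1}_{K'_{S,\le(r-1)}}\bigr) = 1+q+\dots+q^{v(e)},
\]
together with the analogous identity obtained by replacing $r$ with $r-1$ throughout. Since $(-1)^{v(c)+r} = -(-1)^{v(c)+(r-1)}$, the two right-hand sides differ only by a sign, so their sum vanishes and the lemma follows. The only nontrivial step is the bookkeeping verifying that $C = C' = 0$ in both invocations of Corollary \ref{cor:semi_lie_combo} under the hypothesis $r \ge v(e)+2$; this is a direct case check from the definitions rather than a conceptual obstacle.
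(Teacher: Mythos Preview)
Your proposal is correct and follows essentially the same approach as the paper: both proofs split the test function as $(\mathbf{1}_{K'_{S,\le r}}+\mathbf{1}_{K'_{S,\le(r-1)}})+(\mathbf{1}_{K'_{S,\le(r-1)}}+\mathbf{1}_{K'_{S,\le(r-2)}})$ and apply \Cref{cor:semi_lie_combo} to each summand, observing that under $r\ge v(e)+2$ both evaluate to $(-1)^{v(c)+r}\log q\sum_{j=0}^{v(e)}q^j$ up to the sign flip in $r$. Your write-up is in fact more explicit than the paper's in verifying that $N=v(e)$ and $C=C'=0$ in both invocations, and in handling the edge case $v(e)<0$.
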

\begin{proof}
  This follows directly from \Cref{cor:semi_lie_combo} which gives
  \begin{align*}
    \sum_{j=0}^{v(e)} q^j
    &= \frac{(-1)^r}{\log q} \partial
    \Orb \left( \guv,
      \left(\mathbf{1}_{K'_{S, \le r}} + \mathbf{1}_{K'_{S, \le (r-1)}}\right)
      \right) \\
    &= \frac{(-1)^{r-1}}{\log q} \partial
    \Orb \left( \guv,
      \left(\mathbf{1}_{K'_{S, \le (r-1)}} + \mathbf{1}_{K'_{S, \le (r-2)}}\right)
      \right). \qedhere
  \end{align*}
\end{proof}

We need one more lemma:
\begin{lemma}
  \label{lem:chebyshev}
  There is no $Y \in \CC^\times$ such that
  \[ q^3(Y^r + Y^{-r}) - 3q^2(Y^{r-1} + Y^{-(r-1)})
    + 3q(Y^{r-2} + Y^{-(r-2)}) - (Y^{r-3} + Y^{-(r-3)}) = 0 \]
  holds for all sufficiently large integers $r$.
\end{lemma}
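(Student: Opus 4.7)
The plan is to reduce the identity to a simple algebraic equation by exploiting binomial expansions. Observe that
\[ q^{3} Y^{r} - 3q^{2} Y^{r-1} + 3q\, Y^{r-2} - Y^{r-3} = Y^{r-3}(qY-1)^{3}, \]
and after substituting $Z = Y^{-1}$ in the analogous calculation,
\[ q^{3} Y^{-r} - 3q^{2} Y^{-(r-1)} + 3q\, Y^{-(r-2)} - Y^{-(r-3)} = Y^{-r}(q-Y)^{3}. \]
Thus the putative vanishing condition is equivalent to
\[ Y^{r-3}(qY-1)^{3} + Y^{-r}(q-Y)^{3} = 0, \]
or equivalently (after multiplying through by $Y^{r}$ and rearranging)
\[ Y^{2r-3} (qY-1)^{3} = (Y-q)^{3}. \]

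I will now rule out any $Y \in \CC^{\times}$ for which the displayed equation holds for all sufficiently large $r$, by separating into cases on whether the two cubed factors vanish. First, if $qY - 1 = 0$, then $Y = 1/q$ and the left-hand side is identically zero, but $(Y-q)^{3} = ((1-q^{2})/q)^{3} \ne 0$ since $q > 1$. Second, if $Y - q = 0$, then $Y = q$ and the right-hand side is zero, while the left-hand side equals $q^{2r-3}(q^{2}-1)^{3} \ne 0$. Third, if neither factor vanishes, dividing gives
\[ Y^{2r-3} = \frac{(Y-q)^{3}}{(qY-1)^{3}}, \]
whose right-hand side is a constant independent of $r$; this forces $Y^{2} = 1$, i.e.\ $Y = \pm 1$.

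The remaining step is to rule out $Y = \pm 1$ directly in the original equation. For $Y = 1$ the equation becomes $(q-1)^{3} = -(q-1)^{3}$, forcing $q = 1$, a contradiction. For $Y = -1$ it becomes $(q+1)^{3} = -(q+1)^{3}$, forcing $q = -1$, again a contradiction. Hence no such $Y$ exists. The only subtlety is the bookkeeping of signs in the factorization $q^{3} - 3q^{2}Y + 3qY^{2} - Y^{3} = -(Y-q)^{3} = (q-Y)^{3}$, but beyond this the argument is entirely elementary and there is no genuine obstacle.
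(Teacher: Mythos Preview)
Your proof is correct and is genuinely different from the paper's argument. The paper's approach is to write $Y^{\pm k} \coloneqq Y^{k} + Y^{-k}$ and use the Chebyshev-style recursion $Y^{\pm(r-1)} = Y^{\pm 1} \cdot Y^{\pm(r-2)} - Y^{\pm(r-3)}$ (and similarly for $Y^{\pm r}$) to rewrite the given expression as a linear combination of $Y^{\pm(r-2)}$ and $Y^{\pm(r-3)}$ with coefficients depending only on $Y^{\pm 1}$; it then argues that either both coefficients vanish (which leads to $(q^{2}-1)^{3}/q^{3}=0$, a contradiction) or there is a fixed constant $c$ with $Y^{\pm(r-2)} = c \cdot Y^{\pm(r-3)}$ for all large $r$, which forces $Y = \pm 1$ and is ruled out by direct substitution.

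Your route is more direct: you immediately spot the binomial pattern $1,-3,3,-1$ and factor the two halves of the sum as $Y^{r-3}(qY-1)^{3}$ and $Y^{-r}(q-Y)^{3}$, reducing the problem to the single equation $Y^{2r-3}(qY-1)^{3} = (Y-q)^{3}$. This bypasses the recursion machinery entirely and makes the endgame (forcing $Y^{2}=1$, then checking $Y=\pm 1$) essentially a one-line computation. The paper's approach would generalize more readily to other linear combinations of the $Y^{\pm k}$ whose coefficients do not factor so nicely, but for the specific identity at hand your argument is cleaner and shorter.
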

\begin{proof}
  Assume for contradiction such a $Y \in \CC^\times$ existed.
  Let $Y^{\pm k} \coloneqq Y^k + Y^{-k}$ for brevity for every integer $k \ge 1$.
  By writing the recursion relations
  \begin{align*}
    Y^{\pm (r-1)} &= Y^{\pm 1} \cdot Y^{\pm (r-2)} - Y^{\pm (r-3)} \\
    Y^{\pm r} &= Y^{\pm 1} \cdot Y^{\pm (r-1)} - Y^{\pm (r-2)} \\
    &= ((Y^{\pm 1})^2 - 1) \cdot Y^{\pm (r-2)} - Y^{\pm 1} \cdot Y^{\pm (r-3)}
  \end{align*}
  we can deduce that
  \begin{align*}
    0 &= q^3 \cdot Y^{\pm r} - 3q^2 \cdot Y^{\pm (r-1)} + 3q \cdot Y^{\pm (r-2)} - Y^{\pm (r-3)} \\
    &= \left[ q^3 \cdot \left( (Y^{\pm 1})^2 - 1 \right)
      - 3q^2 \cdot Y^{\pm 1} + 3q \right] \cdot Y^{\pm (r-2)}
    - \left[ q^3 \cdot Y^{\pm 1} - 3q^2 + 1 \right] \cdot Y^{\pm (r-3)}.
  \end{align*}

  Now, in general there is no complex number $Y \in \CC^\times$ such that
  $Y^r + Y^{-r} = 0$ for two consecutive values of $r$.
  Hence, if either bracketed coefficient is zero, then so must be the other one.
  However, in that case, we would conclude that
  $Y^{\pm 1} = \frac{3q^2-1}{q^3}$ from the second bracketed coefficient, meaning
  \[ 0 = q^3 \cdot \left( \left( \frac{3q^2-1}{q^3} \right)^2 - 1 \right)
    - 3q^2 \cdot \frac{3q^2-1}{q^3} + 3q = \frac{(q^2-1)^3}{q^3} \]
  which is a contradiction, because $q > 1$.

  Hence neither bracketed coefficient can be zero,
  from which we conclude that there is some nonzero constant $c$ such that
  \[ Y^{\pm (r-2)} = c \cdot Y^{\pm(r-3)} \neq 0 \]
  holds for all large $r$.
  But then
  \[ c \cdot Y^{\pm(r-3)}
    = Y^{\pm 1} \cdot Y^{\pm (r-3)} - Y^{\pm (r-4)}
    = Y^{\pm 1} \cdot Y^{\pm (r-3)} - \frac{Y^{\pm (r-3)}}{c} \]
  and hence $c = Y^{\pm 1} - \frac 1c$.
  So either $c = Y$ or $c = \frac 1Y$.
  Then from $Y^{\pm (r-2)} = c \cdot Y^{\pm(r-3)}$ we derive that $Y = \pm 1$.

  But substituting $Y = 1$ in the original equation would imply $(q-1)^3 = 0$
  while $Y = -1$ would imply $(q+1)^3 = 0$ neither of which is possible.
  This contradiction completes the proof of the lemma.
\end{proof}

\subsubsection{Proof of \Cref{thm:semi_lie_ker_huge}}
We now prove:
\semiliekerlarge*

\begin{proof}[Proof of \Cref{thm:semi_lie_ker_huge}]
The first part of \Cref{thm:semi_lie_ker_huge} follows directly from
\Cref{lem:semi_lie_large_r}.

It remains to show the kernel is not contained in any maximal ideal.
Consider the composed isomorphism from \Cref{ch:satake}
given by
\[ \HH(S_2(F)) \xrightarrow{\BC^\eta_S} \HH(\U(\VV_2^+)) \xrightarrow{\Sat} \QQ[Y + Y^{-1}]. \]
By combining \cite[Equation (7.1.9)]{ref:AFLspherical}
(which is also \Cref{lem:finale_base_change} later)
and \cite[Equation (7.1.4)]{ref:AFLspherical}
we find that
\begin{align*}
  \Sat\left(\BC^\eta_S\left(\mathbf{1}_{K', \le r} + \mathbf{1}_{K', \le (r-1)}\right)\right)
  &= (-1)^r \Sat(\mathbf{1}_{\varpi^{-r} \Mat_2(\OO_E) \cap \VV_n^+}) \\
  &= (-1)^r \left( q^r \sum_{j=-r}^r Y^j - q^{r-1} \sum_{j=-(r-1)}^{r-1} Y^j \right).
\end{align*}
Hence, if we define the polynomial
\begin{align*}
  P_r(Y) &\coloneqq (-1)^r \Sat\left(\BC^\eta_S\left(
    \mathbf{1}_{K', \le r} + 2\mathbf{1}_{K', \le (r-1)}
    + \mathbf{1}_{K', \le (r-2)} \right)\right) \\
  &= \left( q^r \sum_{j=-r}^r Y^j - q^{r-1} \sum_{j=-(r-1)}^{r-1} Y^j \right)
    - \left( q^{r-1} \sum_{j=-(r-1)}^{r-1} Y^j - q^{r-2} \sum_{j=-(r-2)}^{r-2} Y^j \right) \\
  &= q^r \sum_{j=-r}^r Y^j - 2q^{r-1} \sum_{j=-(r-1)}^{r-1} Y^j + q^{r-2} \sum_{j=-(r-2)}^{r-2} Y^j
\end{align*}
then for any $r \ge N+2$,
all the polynomials $P_r(Y) - P_{N+2}(Y)$ lie in the kernel.

We now prove there is no choice of a single $Y \in \CC^\times$ for which
$P_r(Y)$ is eventually constant, which would complete the proof.
Indeed, if we write
\begin{align*}
  &P_r(Y) - q P_{r-1}(Y) \\
  &= q^r(Y^r + Y^{-r}) - 2q^{r-1} (Y^{r-1} + Y^{-(r-1)}) + q^{r-2}(Y^{r-2} + Y^{-(r-2)})
\end{align*}
then
\begin{align*}
  &(P_r(Y) - q P_{r-1}(Y)) - (P_{r-1}(Y) - q P_{r-2}(Y)) \\
  &= q^3(Y^r + Y^{-r}) - 3q^2(Y^{r-1} + Y^{-(r-1)}) \\
  &\qquad + 3q(Y^{r-2} + Y^{-(r-2)}) - (Y^{r-3} + Y^{-(r-3)}).
\end{align*}
So we are done by \Cref{lem:chebyshev}.
\end{proof}

\subsection{A sequence of test functions almost lying in the kernel in the semi-Lie case}
We make one additional remark on the kernel that unifies both of the preceding two sections.
For this section, we define the following indicator function for $r \ge 3$:
\[ \phi_r \coloneqq \mathbf{1}_{K', \le r} + \mathbf{1}_{K', \le (r-1)}
   - q^2 (\mathbf{1}_{K', \le (r-2)} + \mathbf{1}_{K', \le (r-3)}). \]
We give the following theorem which can be thought of as a simultaneously
refined version of both \Cref{lem:semi_lie_ker_full_rank} and
\Cref{lem:semi_lie_large_r}.
Roughly, it says that we can define a sequence of test functions
\[ \phi_r + (q+1)\phi_{r-1} + q\phi_{r-2} \qquad r \ge 5 \]
such that for any fixed $\guv$, there are at most three values of $r$
for which the orbital integral does not vanish.
\begin{theorem}
  [A sequence in $\HH(S_2(F))$]
  \label{thm:semi_lie_finite_codim_full}
  Suppose $\guv \in (S_2(F) \times V_2'(F))\rs^-$ is as in \Cref{lem:semi_lie_params}.
  Then
  \[ \partial \Orb \left( \guv, \phi_r + (q+1)\phi_{r-1} + q\phi_{r-2} \right) = 0 \]
  holds for all $r \ge 5$ with at most three exceptions,
  namely those $r$ with
  \begin{align*}
    &v(e) - \min\left(\frac{v(b)+v(c)-1}{2}, v(d-a)\right) + 2 \\
    &\le r \le v(e) - \min\left(\frac{v(b)+v(c)-1}{2}, v(d-a)\right) + 4.
  \end{align*}
\end{theorem}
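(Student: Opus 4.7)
The plan is to reduce the statement to a linear recurrence on a scalar sequence obtained from \Cref{cor:semi_lie_combo}. Let $\tilde F(r) \coloneqq \partial\Orb(\guv, \mathbf{1}_{K'_{S,\le r}} + \mathbf{1}_{K'_{S,\le(r-1)}})$ and write $\tilde F(r) = (-1)^{v(c)+r} \log q \cdot F(r)$, where $F(r)$ is the closed-form expression supplied by \Cref{cor:semi_lie_combo}. Define $H(r) \coloneqq F(r) - q^2 F(r-2)$. Using linearity of the orbital integral together with the sign pattern $(-1)^{v(c)+r-1} = -(-1)^{v(c)+r}$, a short bookkeeping computation shows that $\partial\Orb(\guv, \phi_r) = (-1)^{v(c)+r}\log q \cdot H(r)$, and more generally
\[ \partial\Orb\bigl(\guv, \phi_r + (q+1)\phi_{r-1} + q\phi_{r-2}\bigr) = (-1)^{v(c)+r}\log q \cdot \bigl(H(r) - (q+1)H(r-1) + q H(r-2)\bigr). \]
Thus the theorem reduces to showing that the second-order recurrence $L H(r) \coloneqq H(r) - (q+1)H(r-1) + q H(r-2)$ vanishes for $r \ge 5$ outside the specified window of three values.

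The operator $L$ has characteristic polynomial $(X-1)(X-q)$, so it annihilates every sequence of the form $a + b q^r$. The plan is to establish that $H(r)$ has this form in two separate ranges of $r$, split by $M \coloneqq \min\bigl(\tfrac{v(b)+v(c)-1}{2}, v(d-a)\bigr)$. For the \emph{small} regime $r \le v(e) - M + 1$, the Corollary yields $F(r) = \sum_{j=0}^N q^j + Cq^N + C'q^{N-1}$ with $N = M + r$. Substituting and using the parity-matching between the formulas at $r$ and $r-2$ (and splitting cases along $v(b)+v(c) > 2v(d-a)$ versus $v(b)+v(c) < 2v(d-a)$), the massive cancellation reduces $H(r)$ to either $(1+q)(1 - q^{v(d-a)+r-1})$ or $(1+q) - 2 q^{(v(b)+v(c)-1)/2 + r}$, both of the form $a + b q^r$. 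For the \emph{large} regime, $F(r)$ stabilizes to the constant $F_\infty \coloneqq \sum_{j=0}^{v(e)} q^j$: in the scenario $v(b)+v(c) > 2v(d-a)$ this holds once $r \ge v(e) - M + 1$, and in the opposite scenario once $r \ge v(e) - M$. In either case $H(r) = (1-q^2) F_\infty$ is constant for $r \ge v(e) - M + 3$.

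Combining the two ranges, $L H(r) = 0$ whenever $r-2, r-1, r$ all lie in the same regime, i.e., for $r \le v(e) - M + 1$ or $r \ge v(e) - M + 5$, which gives the vanishing claim. The three exceptional values $r \in \{v(e)-M+2,\, v(e)-M+3,\, v(e)-M+4\}$ are exactly those for which the triple $\{r-2, r-1, r\}$ straddles the boundary, so the two local forms of $H$ disagree. A direct calculation at these boundary values, using the precise values of $C, C'$ at $r = v(e)-M$ (where in the scenario $v(b)+v(c) > 2v(d-a)$ these coefficients are $\tfrac{\beta-1}{2}$ and $\tfrac{\beta+1}{2}$ with $\beta = v(b)+v(c)-2v(d-a)$), pins down the nonzero residues. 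The main technical obstacle is this boundary bookkeeping: one must carefully track which case of the piecewise definition of $C, C'$ applies at $r = v(e) - M$ and propagate this through the subtraction $F(r) - q^2 F(r-2)$, since it is the residual terms in $C, C'$ at this boundary that produce the exceptional values and prevent extending the "small" formula one step further.
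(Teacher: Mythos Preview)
Your proof is correct and follows essentially the same route as the paper: both reduce to computing $H(r)=\tfrac{(-1)^{r+v(c)}}{\log q}\partial\Orb(\guv,\phi_r)$ from \Cref{cor:semi_lie_combo}, observe it has the form $a+bq^r$ in a small-$r$ regime and is constant in a large-$r$ regime, and conclude vanishing outside the three boundary values. Your packaging via the recurrence operator $L$ with characteristic polynomial $(X-1)(X-q)$ is slightly cleaner than the paper's, which instead checks the two factorizations $(\phi_r+\phi_{r-1})+q(\phi_{r-1}+\phi_{r-2})$ and $(\phi_r+q\phi_{r-1})+(\phi_{r-1}+q\phi_{r-2})$ separately, but the content is identical. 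One remark: your explicit small-$r$ formulas $(1+q)(1-q^{v(d-a)+r-1})$ and $(1+q)-2q^{(v(b)+v(c)-1)/2+r}$ are in fact the correct ones (the paper's displayed formulas appear to have the two cases swapped in \Cref{tab:semi_lie_change_by_two}, a typo that does not affect the argument since both are of the form $a+bq^r$), and your final paragraph about ``pinning down the nonzero residues'' is unnecessary since the theorem only claims \emph{at most} three exceptions.
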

\begin{proof}
  As always $\theta \coloneqq \min\left( v(b)+v(c), 2v(d-a) \right)$
  as in \Cref{ch:orbitalFJ1}.
  Consider \Cref{cor:semi_lie_combo}, and let $N$, $C$, $C'$ be as in the statement.
  Let $N^{\flat\flat}$, $C^{\flat\flat}$, $(C')^{\flat\flat}$
  be the changes to those constants when one replaces $r$ by $r-2$.
  Then one can record the changes to these parameters explicitly, see \Cref{tab:semi_lie_change_by_two}.

  \begin{table}
    \begin{tabular}{lll}
      \toprule
      Assumptions & Parameters for $r$ & Parameters for $r-2$ \\
      \toprule
      $r \ge v(e) - \left\lfloor \tfrac{\theta}{2} \right\rfloor + 3$
        & $\begin{aligned} N &= v(e) \\ C &= C' = 0 \end{aligned}$
        & $\begin{aligned} N^{\flat\flat} &= v(e) \\ C^{\flat\flat} &= (C')^{\flat\flat} = 0 \end{aligned}$ \\
      \midrule
      $\begin{aligned} r &= v(e) - \left\lfloor \tfrac{\theta}{2} \right\rfloor + 2 \\ \theta &= 2v(d-a) \\ &\text{(exceptional)} \end{aligned}$
        & $\begin{aligned} N &= v(e) \\ C &= C' = 0 \end{aligned}$
        & $\begin{aligned} N^{\flat\flat} &= (r-2) + \left\lfloor \tfrac{\theta}{2} \right\rfloor = v(e) \\
          C^{\flat\flat} &= \tfrac{-1 + (v(b)+v(c)-2v(d-a))}{2} \\
          (C')^{\flat\flat} &= C^{\flat\flat}+1 \end{aligned}$ \\
      \midrule
      $\begin{aligned} r &= v(e) - \left\lfloor \tfrac{\theta}{2} \right\rfloor + 1 \\ \theta &= 2v(d-a) \end{aligned}$
        & $\begin{aligned} N &= v(e) \\ C &= C' = 0 \end{aligned}$
        & $\begin{aligned} N^{\flat\flat} &= (r-2) + \left\lfloor \tfrac{\theta}{2} \right\rfloor =v(e)-1 \\
          C^{\flat\flat} &= 0 \\
          (C')^{\flat\flat} &= 1 \end{aligned}$ \\
      \midrule
      $\begin{aligned} r &\le v(e) - \left\lfloor \tfrac{\theta}{2} \right\rfloor \\ \theta &= 2v(d-a) \\ \varkappa &\equiv 0 \pmod 2 \end{aligned}$
        & $\begin{aligned} N &= r + \left\lfloor \tfrac{\theta}{2} \right\rfloor \\
          C &= \tfrac{\varkappa-1 + (v(b)+v(c)-2v(d-a))}{2} \\
          C' &= C+1 \end{aligned}$
        & $\begin{aligned} N^{\flat\flat} &= (r-2) + \left\lfloor \tfrac{\theta}{2} \right\rfloor \\
          C^{\flat\flat} &= \tfrac{(\varkappa-2)-1 + (v(b)+v(c)-2v(d-a))}{2} \\
          (C')^{\flat\flat} &= C^{\flat\flat}+1 \end{aligned}$ \\
      \midrule
      $\begin{aligned} r &\le v(e) - \left\lfloor \tfrac{\theta}{2} \right\rfloor \\ \theta &= 2v(d-a) \\ \varkappa &\equiv 1 \pmod 2 \end{aligned}$
        & $\begin{aligned} N &= r + \left\lfloor \tfrac{\theta}{2} \right\rfloor \\
          C &= \tfrac{\varkappa-1}{2} \\
          C' &= C+1 \end{aligned}$
        & $\begin{aligned} N^{\flat\flat} &= (r-2) + \left\lfloor \tfrac{\theta}{2} \right\rfloor \\
          C^{\flat\flat} &= \tfrac{(\varkappa-2)-1}{2} \\
          (C')^{\flat\flat} &= C^{\flat\flat}+1 \end{aligned}$ \\
      \midrule
      $\begin{aligned} r &\le v(e) - \left\lfloor \tfrac{\theta}{2} \right\rfloor \\ \theta &= v(b)+v(c) \end{aligned}$
        & $\begin{aligned} N &= r + \left\lfloor \tfrac{\theta}{2} \right\rfloor \\
          C &= v(e) - \left\lfloor \tfrac{\theta}{2} \right\rfloor + r \\
          C' = 0 \end{aligned}$
        & $\begin{aligned} N^{\flat\flat} &= (r-2) + \left\lfloor \tfrac{\theta}{2} \right\rfloor \\
          C^{\flat\flat} &= v(e) - \left\lfloor \tfrac{\theta}{2} \right\rfloor + (r-2) \\
          (C')^{\flat\flat} &= 0 \end{aligned}$ \\
      \bottomrule
    \end{tabular}
    \caption{Comparison of $N$ to $N^{\flat\flat}$, etc.,
      needed to carry out the proof of \Cref{thm:semi_lie_finite_codim_full}.
      Note the exceptional case $r = v(e) - \left\lfloor \frac{\theta}{2} \right\rfloor + 2$
      differs from all the others because $C-C^{\flat\flat}$ can be large.}
    \label{tab:semi_lie_change_by_two}
  \end{table}
  Note in particular except for the single exceptional value
  $r = v(e) - \left\lfloor \frac{\theta}{2} \right\rfloor + 2$
  we should always have $(N^{\flat\flat}+2)-N \in \{0,1,2\}$,
  $C - C^{\flat\flat} \in \{0,1,2\}$, $(C') - (C')^{\flat\flat} \in \{0,1\}$.
  More explicitly, we have the following result from \Cref{tab:semi_lie_change_by_two}
  for every $r \ge 3$:
  \begin{align*}
    &\frac{(-1)^{r+v(c)}}{\log q}
    \partial \Orb \left( \guv, \phi_r \right) \\
    &=
    \begin{cases}
      -q^{r+\left\lfloor \frac{\theta}{2} \right\rfloor} -q^{r+\left\lfloor \frac{\theta}{2} \right\rfloor - 1} + q + 1
        & \text{if } r \le v(e) - \left\lfloor \frac{\theta}{2} \right\rfloor + 1 \text{ and } \theta = v(b)+v(c) \\
      -2q^{r+\left\lfloor \frac{\theta}{2} \right\rfloor} + q + 1 & \text{if } r \le v(e) - \left\lfloor \frac{\theta}{2} \right\rfloor + 1 \text{ and } \theta = 2v(d-a) \\
      -q^{v(e)+2} - q^{v(e)+1} + q + 1  & \text{if } r \ge v(e) - \left\lfloor \frac{\theta}{2} \right\rfloor + 3.
    \end{cases}
  \end{align*}
  It follows that for $r \ge v(e) - \left\lfloor \frac{\theta}{2} \right\rfloor + 4$ we have
  \begin{equation}
    \frac{(-1)^{r+v(c)}}{\log q}
    \partial \Orb \left( \guv, \phi_r + \phi_{r-1} \right) = 0
    \label{eq:semi_lie_finite_codim_large_r}
  \end{equation}
  while when $r \le v(e) - \left\lfloor \frac{\theta}{2} \right\rfloor + 1$ we have
  \begin{equation}
    \frac{(-1)^{r+v(c)}}{\log q}
    \partial \Orb \left( \guv, \phi_r + q\phi_{r-1} \right)
    = 1-q^2.
    \label{eq:semi_lie_finite_codim_small_r}
  \end{equation}
  Then \Cref{thm:semi_lie_finite_codim_full} follows directly
  from \eqref{eq:semi_lie_finite_codim_large_r} and \eqref{eq:semi_lie_finite_codim_small_r}.
\end{proof}

\section{Transfer factors}
\label{ch:transf}

In this short chapter we document the definitions of the transfer
factors appearing in \Cref{conj:inhomog_spherical} and \Cref{conj:semi_lie_spherical}.

\subsection{Transfer factor for the inhomogeneous group AFL}
The definition of the transfer factor in \Cref{conj:inhomog_spherical} is given below:
\begin{definition}
  [{\cite[Equation 2.7]{ref:ZhiyuParahoric}}]
  Choose any $\gamma \in S_n(F)\rs$.
  Let $\mathbf{e} = \begin{pmatrix} 0 & \dots & 0 & 1 \end{pmatrix}^\top \in F^n$
  be a column vector.
  Then the transfer factor $\omega(\gamma)$ is defined by
  \[ \omega(\gamma) \coloneqq
    \eta\left( \det\left( \left( \gamma^i \mathbf{e} \right)_{i=0}^{n-1} \right) \right).
  \]
\end{definition}

\subsection{Transfer factor for the semi-Lie AFL}
\begin{definition}
  [{\cite[Equation 2.2]{ref:ZhiyuParahoric}}]
  Choose any $\guv \in (S_n(F) \times V'_n(F))\rs$.
  Then the transfer factor $\omega(\gamma)$ is defined by
  \[ \omega\guv \coloneqq
    \eta\left( \det\left( \left( \gamma^i \uu \right)_{i=0}^{n-1} \right) \right).
  \]
\end{definition}

For the purposes of our $n=2$ calculation, we compute the transfer factor when
\[ \guv = \left( \begin{pmatrix} a & b \\ c & d \end{pmatrix},
  \begin{pmatrix} 0 \\ 1 \end{pmatrix}, \begin{pmatrix} 0 & e \end{pmatrix} \right)
  \in (S_2(F) \times V'_2(F))\rs^-. \]
is as described in \Cref{lem:semi_lie_params}.
Applying the definition above, we find that
\begin{equation}
    \omega\guv
    = \eta\left( \det
      \left( \gamma^0 \uu, \gamma^1 \uu \right)
    \right)
    = \eta\left( \det
      \begin{pmatrix} 0 & b \\ 1 & d \end{pmatrix}
    \right)
    = (-1)^{v(b)} = (-1)^{v(c)+1}.
  \label{eq:semi_lie_transfer}
\end{equation}

\section{The geometric side}
\label{ch:geo}

\subsection{Rapoport-Zink spaces}
We briefly recall the theory of Rapoport-Zink spaces.
This follows the exposition in \cite[\S4.1]{ref:survey}.

Let $\breve F$ denote the completion of a maximal unramified extension of $F$,
and let $\FF$ denote the residue field of $\OO_{\breve F}$.
Suppose $S$ is a $\Spf \OO_{\breve F}$-scheme.
Then we can consider triples $(X, \iota, \lambda)$ consisting of the following data.
\begin{itemize}
  \ii $X$ is a formal $\varpi$-divisible $n$-dimensional $\OO_F$-module over $S$
  whose relative height is $2n$.

  \ii $\iota \colon \OO_E \to \End(X)$ is an action of $\OO_E$
  such that the induced action of $\OO_F$ on $\Lie X$
  is via the structure morphism $\OO_F \to \Sheaf_S$.

  We require that $\iota$ satisfies the Kottwitz condition of signature $(n-1,1)$,
  meaning that for all $a \in \OO_E$,
  the characteristic polynomial of $\iota(a)$ on $\Lie X$
  is exactly \[ (T-a)^{n-1} (T-\bar a) \in \Sheaf_S[T]. \]

  \ii $\lambda \colon X \to X^\vee$ is a principal $\OO_F$-relative polarization.

  We require that the Rosati involution of $\lambda$
  induces the map $a \mapsto \bar a$ on $\OO_F$
  (i.e.\ the nontrivial automorphism of $\Gal(E/F)$).
\end{itemize}
The triple is called supersingular if $X$ is a supersingular strict $\OO_F$-module.

For each $n \ge 1$, over $\FF$
we choose a supersingular triple $(\XX_n, \iota_{\XX_n}, \lambda_{\XX_n})$;
it's unique up to $\OO_E$-linear quasi-isogeny compatible with the polarization,
and refer to it as the \emph{framing object}.
We can now define the Rapoport-Zink space:
\begin{definition}
  [Rapoport-Zink space; {\cite[\S5.1]{ref:AFLspherical}}]
  For each $n \ge 1$, we let $\RZ_n$ denote the
  functor over $\Spf \OO_{\breve F}$ defined as follows.
  Let $S$ be an $\Spf \OO_{\breve F}$ scheme, and let
  $\ol S \coloneqq S \times_{\Spf \OO_{\breve F}} \Spec \FF$.
  For every $\Spf \OO_{\breve F}$ scheme, we let $\RZ_n(S)$
  be the set of isomorphism classes of quadruples
  \[ (X, \iota, \lambda, \rho) \]
  where $(X, \iota, \lambda)$ is one of the triples as we described, and
  \[ \rho \colon X \times_S \ol S \to \XX_n \times_{\Spec \FF} \ol S \]
  is a \emph{framing}, meaning it is a height zero $\OO_F$-linear quasi-isogeny
  and satisfies
  \[ \rho^\ast((\lambda_{\XX_n})_{\ol S}) = \lambda_{\ol S}. \]
\end{definition}
Then $\RZ_n$ is formally smooth over $\OO_{\breve F}$ of relative dimension $n-1$.

Henceforth, we also make the following abbreviation.
\begin{definition}
  [$\RZ_{m,n}$]
  For integers $m$ and $n$,
  \[ \RZ_{m,n} \coloneq \RZ_{m} \times_{\Spf \OO_{\breve F}} \RZ_n. \]
\end{definition}

\subsection{A realization of the non-split Hermitian space $\VV_n^-$ of dimension $n$}
For the following definition (and later on), we need a variation of $\RZ_1$:
\begin{definition}
  [$\EE$]
  Let $(\EE, \iota_\EE, \lambda_\EE)$ be the unique triple over $\FF$
  whose Rosati involution has signature $(1,0)$
  (note this is different from $\RZ_1$ where the signature is $(0,1)$ instead).
\end{definition}

At the same time, we can define the following Hermitian space.
\begin{definition}
  [{\cite[\S5.2]{ref:AFLspherical}}]
  For each $n \ge 1$, let
  \[ \VV_n^- \coloneqq \Hom_{\OO_E}^\circ (\EE, \XX_n) \]
  which we call the space of special homomorphisms.
  When endowed with the form
  \[ \left< x,y \right> = \lambda_{\EE}^{-1} \circ y^\vee \circ \lambda_{\XX_n} \circ x
    \in \End_{E}^\circ(\EE) \simeq E \]
  it becomes an $n$-dimensional $E/F$-Hermitian space.
  \label{def:VV_n_nonsplit}
\end{definition}
\begin{proposition}
  [Realization of $\VV_n^-$]
  Up to isomorphism, $\VV_n^-$ is the unique $n$-dimensional
  nondegenerate non-split $E/F$-Hermitian space.
\end{proposition}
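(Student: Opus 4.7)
The plan splits into two essentially independent tasks: establishing that there is a unique non-split $n$-dimensional $E/F$-Hermitian space up to isomorphism, and then verifying that $\VV_n^- = \Hom_{\OO_E}^\circ(\EE, \XX_n)$ as constructed from the supersingular framing object falls into this non-split isomorphism class.

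For uniqueness I would invoke the standard classification of nondegenerate Hermitian forms over an unramified quadratic extension of a non-Archimedean local field: the discriminant provides a complete invariant valued in $F^\times / \Nm_{E/F}(E^\times)$, and since $E/F$ is unramified this quotient has order two. Hence in each dimension $n\ge 1$ there are exactly two isomorphism classes, one of which is $\VV_n^+$ (split) in the paper's notation. The work is to show $\VV_n^-$ is not isomorphic to $\VV_n^+$.

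For the identification I would first confirm the dimension. Since $\XX_n$ is supersingular of relative height $2n$ with an $\OO_E$-action and $\EE$ has relative height $2$ with an $\OO_E$-action, $\Hom_{\OO_E}^\circ(\EE,\XX_n)$ is free of rank $n$ over $E \simeq \End_{\OO_E}^\circ(\EE)$; this is a short rational Dieudonn\'e-module count after base-change to $\FF$. Then I would compute the discriminant of
\[
\langle x, y\rangle = \lambda_\EE^{-1}\circ y^\vee\circ \lambda_{\XX_n}\circ x
\]
in an explicit basis coming from a Dieudonn\'e-module description of $(\XX_n, \iota_{\XX_n}, \lambda_{\XX_n})$. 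The mismatch between the signature $(n-1,1)$ Kottwitz condition on $\XX_n$ and the signature $(1,0)$ condition on $\EE$ forces the discriminant to land in the nontrivial coset $\varpi\cdot \Nm_{E/F}(E^\times)$ of $F^\times / \Nm_{E/F}(E^\times)$; combined with uniqueness this pins $\VV_n^-$ down as the non-split class.

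The main obstacle is the discriminant computation: although conceptually routine, writing out a Dieudonn\'e-theoretic basis and tracking the effect of $\lambda_{\XX_n}^\vee$ against $\lambda_\EE^{-1}$ to extract the sign of the determinant is the substantive step. In practice I would short-circuit this by citing the Kudla--Rapoport framework of \cite{ref:KR} (and the same identification in \cite{ref:AFLspherical}), where exactly this non-splitness is established in the course of defining the relevant special cycles; this would be consistent with the surrounding exposition and avoid redoing the local Dieudonn\'e calculation from scratch.
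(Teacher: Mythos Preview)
Your proposal is correct and in fact considerably more detailed than the paper's own proof, which consists entirely of a citation: ``See the comment in \cite[\S5.2]{ref:AFLspherical} or the comment after \cite[Equation (4.2)]{ref:survey}.'' Your final paragraph, where you suggest short-circuiting the Dieudonn\'e computation by citing \cite{ref:KR} and \cite{ref:AFLspherical}, is exactly what the paper does; the preceding outline of the classification and discriminant argument is a helpful sketch of what those references actually contain, but the paper does not reproduce any of it.
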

\begin{proof}
  See the comment in \cite[\S5.2]{ref:AFLspherical}
  or the comment after \cite[Equation (4.2)]{ref:survey}.
\end{proof}

As described in \cite[Equation (4.3)]{ref:survey}, there is an action of
$\U(\VV_n^-)$ on $(\XX_n, \iota_{\XX_n}, \lambda_{\XX_n})$
and hence each $g \in \U(\VV_n^-)$ acts on $\RZ_n$ by
\[ g \cdot (X, i, \lambda, \rho) \coloneqq (X, i, \lambda, g \circ \rho). \]

\subsection{Intersection numbers for the group version of AFL for the full spherical Hecke algebra}
Here we reproduce the definition of the intersection number used in \Cref{conj:inhomog_spherical}.

Compared to the formulation of the group version and semi-Lie version of the AFL,
the intersection number requires the introduction of a
\emph{Hecke operator} $\TT_{\varphi}$ for an element
\[ \varphi \in \HH(G^\flat \times G, K^\flat \times K) \]
as introduced in \cite{ref:AFLspherical}.
This definition is too involved to reproduce here in its entirety,
we give a summary for this special cases in which we need.

First consider the given $f \in \HH(\U(\VV_n^+))$.
The main work of the construction is to define another
derived formal scheme $\mathcal{T}_{\mathbf{1}_{K^\flat} \otimes f}$
(see \cite[\S6.1]{ref:AFLspherical}) together with two projection maps
\begin{center}
\begin{tikzcd}
  & \ar[ld] \mathcal T_{\mathbf{1}_{K^\flat} \otimes f} \ar[rd] & \\
  \RZ_{n-1,n} && \RZ_{n-1,n}.
\end{tikzcd}
\end{center}
This definition is carried out in \cite[\S5]{ref:AFLspherical},
by defining it first for so-called \emph{atomic elements} of the spherical Hecke algebra,
which form basis elements of a certain presentation of this Hecke algebra
as the unitary group for a polynomial algebra;
we refer the reader to \emph{loc.\ cit.}~for the full details.

Now, take the natural closed embedding
\[ \RZ_{n-1} \to \RZ_n \]
and let
\[ \Delta \colon \RZ_{n-1} \hookrightarrow \RZ_{n-1,n} \]
be the associated graph morphism; let $\Delta_{\RZ_{n-1}}$ denote the image,
with an inclusion $\iota \colon \Delta_{\RZ_{n-1}} \hookrightarrow \RZ_{n-1,n}$.
Once this is done, consider then the diagram
\begin{center}
\begin{tikzcd}
  & \pi_1^\ast(\Delta_{\RZ_{n-1}}) \ar[ld] \ar[r] \ar[rd]
    & \ar["\pi_1" near end, ld] \mathcal T_{\mathbf{1}_{K^\flat} \otimes f}
      \ar["\pi_2" near end, rd] \\
  \Delta_{\RZ_{n-1}} \ar[r, "\iota"', hook] & \RZ_{n-1,n}
  & (\pi_2)_\ast(\pi_1^\ast(\Delta_{\RZ_{n-1,n}})) \ar[r] & \RZ_{n-1,n}.
\end{tikzcd}
\end{center}
That is, one takes the pullback of
$\Delta_{\RZ_{n-1}} \xhookrightarrow{\iota} \RZ_{n-1,n}$
along the projection
\[ \RZ_{n-1,n} \xleftarrow{\pi_1} \mathcal{T}_{\mathbf{1}_{K^\flat} \otimes f} \]
and then takes the pushforward along the other projection
\[ \mathcal{T}_{\mathbf{1}_{K^\flat} \otimes f} \xrightarrow{\pi_2} \RZ_{n-1,n}. \]
\begin{definition}
  [Hecke operator]
  Set
  \[
    \TT_{\mathbf{1}_{K^\flat} \otimes f} (\Delta_{\RZ_{n-1}})
    \coloneqq (\pi_2)_\ast(\pi_1^\ast(\Delta_{\RZ_{n-1}})).
  \]
\end{definition}
This is the part of the intersection number depending on $f$
(or rather $\TT_{\mathbf{1}_{K^\flat} \otimes f}$).
As for our $g \in \U(\VV_n^-)\rs$,
consider the translation $(1,g) \cdot \Delta_{\RZ_{n-1}}$.
The intersection number is then defined as by taking the intersection
of these two objects using the derived tensor product $\jiao$ of the structure sheaves.

\begin{definition}
  [$\Int((1,g), \mathbf{1}_{K^\flat} \otimes f)$; {\cite[Equation (6.1.1)]{ref:AFLspherical}}
  or {\cite[Equation (4.4)]{ref:survey}}]
  \label{def:intersection_number_inhomog}
  We define the intersection number in \Cref{conj:inhomog_spherical} by
  \begin{align*}
    \Int((1,g), \mathbf{1}_{K^\flat} \otimes f)
    &\coloneqq \left\langle
      \TT_{\mathbf{1}_{K^\flat} \otimes f} \Delta_{\RZ_{n-1}},
      (1,g) \cdot \Delta_{\RZ_{n-1}} \right\rangle _{\RZ{n-1,n}} \\
    &\coloneqq \chi_{\RZ_{n-1,n}} \left(
      \Sheaf_{\TT_{\mathbf{1}_{K^\flat} \otimes f} (\Delta_{\RZ_{n-1}})}
      \jiao_{\Sheaf_{\RZ_{n-1,n}}} \Sheaf_{(1,g) \cdot \Delta_{\RZ_{n-1}}} \right).
  \end{align*}
\end{definition}

Here $\chi$ denotes the Euler-Poincar\'{e} characteristic,
meaning that if $X$ is a formal scheme over $\Spf \OO_{\breve F}$
then given a finite complex $\mathcal{F}$ of $\Sheaf_X$-modules we set
\[ \chi_X(\mathcal{F}) = \sum_i \sum_j (-1)^{i+j}
  \operatorname*{len}_{\OO_{\breve F}} H^j(X, H_i(\mathcal F)) \]
provided all the lengths are finite.

\begin{remark}
  In general we could adapt this definition so $(1,g)$ is replaced by
  by an element of $(\U(\VV_{n-1}^-) \times \U(\VV_n^-))\rs$
  if we wish to work with the full group version of the AFL
  rather than just the inhomogeneous version.
  However, this simpler definition will be sufficient for our purposes.
\end{remark}

\subsection{Intersection numbers for the semi-Lie version of AFL for the full spherical Hecke algebra}
Now we continue to define an intersection number needed for the proposed
\Cref{conj:semi_lie_spherical} from earlier.
The definition mirrors the one given in the last section.
Here we reproduce the definition of the intersection number used in \Cref{conj:inhomog_spherical}.

We work here with $\RZ_{n,n}$ rather than $\RZ_{n-1,n}$.
The change is that we need to incorporate the new $u \in \VV_n^-$ that was not present before.
In order to do this one considers a certain relative Cartier divisor $\ZD(u)$
on $\RZ_n$ for each nonzero $u \in \VV_n^-$.
This divisor was defined by Kudala and Rapport in \cite{ref:KR}
and accordingly we call it a \emph{KR-divisor} following \cite[\S4.3]{ref:survey}.
The definition is given as follows.
\begin{definition}
  [$\ZD(u)$; {\cite[Definition 3.2]{ref:KR}}]
  Recall that $(\EE, \iota_\EE, \lambda_\EE)$ is the unique triple over $\FF$
  whose Rosati involution has signature $(1,0)$.
  Hence the formal $\OO_F$-module has a unique lifting called its \emph{canonical lifting},
  which we denote by the triple $(\mathcal{E}, \iota_{\mathcal{E}}, \lambda_{\mathcal E})$.

  Then the KR-divisor $\ZD(u)$ is the locus where the quasi-homomorphism
  \[ u \colon \EE \to \XX_n \]
  lifts to a homomorphism from $\mathcal{E}$ to the universal object over $\RZ_n$.
  More explicitly, it consists of those $X \in \RZ_n$
  such that there exists a map $\varphi : \mathcal{E} \to X$ with the following property.
  Let $S$ be an $\Spf \OO_{\breve F}$ scheme and consider the map on special fiber
  \[ \mathcal{E} = \EE \times_S \ol S \xrightarrow{\varphi \times_S \ol S} X' \times_S \ol S. \]
  Since $X \in \RZ_n$ we also have $\rho \colon X \times_S \ol S \to \XX_n \times_{\Spec \FF} \ol S$.
  Moreover, $u$ gives a map
  \[
    \EE \times_{\Spec \FF} \ol S
    \xrightarrow{u \times _{\Spec \FF} \ol S}
    \XX_n \times_{\Spec \FF} \ol S.
  \]
  Then we require the following diagram to commute:
  \begin{center}
  \begin{tikzcd}
    \EE \times_S \ol S \ar[rr, "\varphi \times_S \ol S"] \ar[d, "\rho_{\mathcal E}"] && X \times_S \ol S\ar[d, "\rho"] \\
    \EE \times_{\Spec \FF} \ol S \ar[rr, "u \times_{\Spec \FF} \ol S"] && \XX_n \times_{\Spec \FF} \ol S.
  \end{tikzcd}
  \end{center}
\end{definition}
That is, $\ZD(u)$ is the locus where $u$ lifts to a homomorphism $\mathcal{E} \to \mathcal{X}_n$.
Note also by the definition that $g \ZD(u) = \ZD (gu)$.
See \cite{ref:KR} for a full definition.

The main change is then that we can consider $\Delta_{\ZD(u)}$ as the image of
\[ \ZD(u) \hookrightarrow \RZ_n \xrightarrow{\Delta} \RZ_{n,n} \]
where $\Delta \colon \RZ_n \to \RZ_{n,n}$ now denotes the diagonal map.
If one defines an appropriate space $\mathcal T_{\mathbf{1}_K \otimes f}$
for $f \in \HH(\U(\VV_n^+))$ again following \cite{ref:AFLspherical}, together with
\begin{center}
\begin{tikzcd}
  & \ar[ld] \mathcal T_{\mathbf{1}_K \otimes f} \ar[rd] & \\
  \RZ_{n,n} && \RZ_{n,n}
\end{tikzcd}
\end{center}
then one can then repeat the diagram from before:
\begin{center}
\begin{tikzcd}
  & \pi_1^\ast(\Delta_{\ZD(u)}) \ar[ld] \ar[r] \ar[rd]
    & \ar["\pi_1" near end, ld] \mathcal T_{\mathbf{1}_K \otimes f}
      \ar["\pi_2" near end, rd] \\
  \Delta_{\ZD(u)} \ar[r, "\iota"', hook] & \RZ_{n,n}
  & (\pi_2)_\ast(\pi_1^\ast(\Delta_{\ZD(u)})) \ar[r] & \RZ_{n,n}.
\end{tikzcd}
\end{center}
In other words, we again take a pullback followed by a pushforward
but this time of $\Delta_{\ZD(u)} \hookrightarrow \RZ_{n,n}$.
This lets us write an analogous definition:
\begin{definition}[Hecke operator]
  Set
  \[ \TT_{\mathbf{1}_K \otimes f} (\Delta_{\ZD(u)})
    \coloneqq (\pi_2)_\ast(\pi_1^\ast(\Delta_{\ZD(u)})). \]
\end{definition}
Meanwhile to replace $(1,g) \Delta_{\RZ_{n,n-1}}$, we let
\[ \Gamma_g \subseteq \RZ_{n,n} \]
denote the graph of the automorphism of $\RZ_n$ induced by $g$.
This finally allows us to write a definition of the intersection number in the semi-Lie case:
\begin{definition}
  [$\Int((g,u), f)$]
  \label{def:intersection_number_semi_lie_spherical}
  In analog to \Cref{def:intersection_number_inhomog} for the group version of the AFL,
  we now define the intersection number in \Cref{conj:semi_lie_spherical} as
  \begin{align*}
    \Int((g,u), f)
    &\coloneqq \left\langle \TT_{\mathbf{1}_{K} \otimes f} \Delta_{\ZD(u)}, \Gamma_g \right\rangle _{\RZ{n,n}} \\
    &\coloneqq \chi_{\RZ_{n,n}} \left(
      \Sheaf_{\TT_{\mathbf{1}_K \otimes f}(\Delta_{\ZD(u)})}
      \jiao_{\Sheaf_{\RZ_{n,n}}} \Sheaf_{\Gamma_g} \right).
    \end{align*}
\end{definition}
In the situation where $f = \mathbf{1}_K$,
this coincides with the existing definition in \cite[Equation (4.9)]{ref:survey}
and \cite[Remark 3.1]{ref:annalsAFL}.

\subsection{An analogy between the geometric and analytic sides}
With the intersection number now defined for \Cref{conj:semi_lie_spherical},
we provide some intuitive discussion about the connection.
All of this is for philosophical cheerleading only,
and is not meant to formally assert any definitions or results.
But it may help in motivating the formulation of the conjecture.

For this section write $G \coloneqq \U(\VV_n^+)$ and $K \coloneqq G \cap \GL_n(\OO_E)$
the hyperspecial maximal compact subgroup of $G$.
For simplicity we only focus on the semi-Lie AFL originally proposed by Liu to start;
which is the special case of \Cref{conj:semi_lie_spherical}
when $f = \mathbf{1}_K$ and $\phi = \mathbf{1}_{K'}$.

\paragraph{The geometric side}
On the geometric side, $\RZ_n$ is the RZ-space acted on by $\U(\VV_n^-)$,
and hence $\U(\VV_n^-) \times \U(\VV_n^-)$ acts on $\RZ_{n,n}$.
Roughly speaking, we are considering the two morphisms
\[ \RZ_n \xrightarrow{\Delta} \RZ_{n,n} \xleftarrow{\Gamma_g} \RZ_n \]
with $\Delta$ being thought of as the diagonal morphism
and $\Gamma_g$ as the graph under multiplication by $g \in \U(\VV_n^-)\rs$.

Hence loosely speaking, the intersection $\Int\left( (g,u), \mathbf{1}_K \right)$
can be thought of as the intersection of three images in $\RZ_{n,n}$:
\begin{itemize}
  \ii A ``diagonal'' object $\Delta$;
  \ii A ``graph'' object $\Gamma$;
  \ii A third object $\ZD(u)$, the KR-divisor, parametrized by diagrams
  \begin{center}
  \begin{tikzcd}
    \mathcal{E}\ar[r] \ar[d, dash] & \mathcal{X}_n \ar[d, dash] \\
    \EE \ar[r, "u"] & \XX_n.
  \end{tikzcd}
  \end{center}
\end{itemize}
The derived tensor product $\jiao$
is used together with some formalism to make this intersection idea precise.
The intersection of the ``diagonal'' and ``graph'' is the \emph{fixed point locus},
and in fact could be formally defined as the intersection
\[ \Gamma_g \cap \Delta_{\RZ_n} \]
viewed as a closed formal subscheme of $\RZ_n$ (or $\RZ_{n,n}$);
see \cite[Equation (4.6)]{ref:survey}.

\paragraph{The analytic side}
On the other hand, consider the analytic side.
We will try to explain how the weighted orbital integral in \Cref{def:orbitalFJ}
can be thought of as some weighted intersection of analogous objects.

Note the quotient $G/K$ can be identified as
\[ G/K \simeq \left\{ \Lambda \subseteq \VV_n^+ \mid \Lambda^\vee = \Lambda \right\} \]
that is, the set of self-dual lattices $\Lambda$ of full rank,
which thus has a natural action of $G$.
Henceforth we denote elements of $G/K$ by $h$,
and fix one particular such lattice $\Lambda_0$, acted on by $\OO_E$.
Hence $G/K$ can be thought of as
\[ G/K \simeq \left\{ h \Lambda_0 \mid h \in G/K \right\}. \]

Recall from \eqref{eq:unweighted_orbital_semi_lie}
that we have an orbital integral on the unitary side of the shape
\[ \int_{h \in G/K} \mathbf{1}_K(h^{-1} g h) \mathbf{1}_{\Lambda_0}(h \cdot u) \odif h \]
where $u \in \VV_n^+$, and $\Lambda_0$ is a fixed particular lattice in $G/K$.
See for example the ``relative fundamental lemma''
stated as \cite[Conjecture 1.9]{ref:liuFJ}.

Like before, we can consider two maps
\[ G/K \xrightarrow{\Delta} G/K \times G/K \xleftarrow{\Gamma_g} G/K \]
which are the diagonal morphism and the graph of the action of $g$.
Hence the intersection are those cosets $hK$ for which
\[ hK = ghK \iff h^{-1} g h \in K. \]
Hence the indicator function $\mathbf{1}_K(h^{-1} g h)$
plays the analog of the fixed point locus in the geometric side.

Meanwhile, the term $h \cdot u$ plays a role analogous
to the KR-divisor on the geometric side, giving the third intersection object.
We have
\[ h \cdot u \in \Lambda_0 \iff u \in h^{-1} \Lambda_0 \]
and so the object corresponding to the KR-divisor $\ZD(u)$
is the subset in $G/K$ of those lattices containing $u$, that is
\[ \left\{ \Lambda' \mid \Lambda' \ni u \right\}. \]
The analog to the earlier diagram that we described for $\ZD(u)$ is then
\begin{align*}
  \OO_E &\to \Lambda' \\
  1 &\mapsto u.
\end{align*}

Up until now this whole section is written for $f = \mathbf{1}_K$ and $\phi = \mathbf{1}_{K'}$.
In the general situation,
if one replaces $\mathbf{1}_K$ in the above integral by a general $f$,
then this corresponds to changing the analog of the fixed point locus;
the idea of \cite{ref:AFLspherical} is that
this should correspond to replacing $\Delta_{\ZD(u)}$
with $\TT_f(\Delta_{\ZD(u)})$ on the geometric side.

\section{Intersection numbers for $\Int((g,u), f)$ for $n = 2$}
\label{ch:jiao}
This chapter is dedicated to computing intersection numbers
for the semi-Lie version of AFL in the special case $n = 2$.

\subsection{Background on quaternion division algebra}
Through this section we let $\DD$ be a quaternion division algebra over $F$,
with a fixed maximal order $\OO_\DD$.
We will make $\DD$ explicit in the following way for our calculations to follow.

\subsubsection{Structure as a noncommutative algebra}
As $F$-vector spaces we will write
\[ \DD = E \oplus E \Pi \]
where $\Pi$ is selected so that $\Pi^2 = \varpi$.
We endow $\DD$ with a noncommutative multiplication according to
\[ \Pi t = \bar t \Pi \qquad \text{ for all } t \in E \]
where $\ol{t}$ is the image of $t \in E$ under the nontrivial element of $\Gal(E/F)$.

\subsubsection{Conjugation of elements of $\DD$}
In general, suppose $x \in \DD$ is any element
decomposed as $x = a + b \Pi$ for $a,b \in E$.
Then we denote by $\bar x \in \DD$ the conjugate in $\DD$ defined by
\[ \bar x \coloneqq \bar a - b \Pi \]
where, again, $\bar a$ is the image of $a \in E$ under the nontrivial element of $\Gal(E/F)$.
It is an anti-involution, meaning that $\ol{\bar x} = x$ and $\ol{xy} = \bar y \bar x$.

(Notice that we have a slight abuse of notation here in that we have
used the same notation to denote both conjugation under the Galois action of $\Gal(E/F)$
as well as the conjugation in $\DD$.
However, there is no ambiguity resulting because when $E$ is viewed as a subset of $\DD$,
the two symbols denote the same element of $E$:
that is we have
\[ \ol{a + 0 \Pi} = \bar{a} + 0 \Pi \]
in any event.
In other words, the restriction of the quaternion conjugation to $E$
coincides with the nontrivial element of $\Gal(E/F)$,
so we do not need to introduce a separate notation for it.)

This allows us to define the reduced norm and trace in $\DD$.
The reduced trace is given by
\[ \tr x \coloneqq x + \bar x = \Tr{E/F}(a) = 2x_0 \in F. \]
We may thus define
\[ \DT \coloneqq \left\{ u \in \DD \mid \tr u = 0 \right\} \]
which has codimension $1$ inside $\DD$ (i.e.\ is three-dimensional as an $F$-vector space).
Since $\tr(a+b\Pi) = \Tr_{E/F}(a)$, we could also write
\[ \DT = \{ a+b\Pi \mid a,b \in E \text{ and } \Tr_{E/F}(a) = 0 \}. \]

The reduced norm is similarly defined by
\begin{align*}
  \Nm x &= x \bar x = (a + b \Pi)(\bar a - b \Pi) \\
  &= a \bar a + b \Pi \bar a - a b \Pi - b \Pi b \Pi \\
  &= a \bar a - b \bar b \varpi \\
  &= \Norm_{E/F}(a) - \Norm_{E/F}(b) \varpi \in F.
\end{align*}

As an $F$-vector space, $\DD$ has a basis given by
$\{1, \sqrt{\eps}, \Pi, \sqrt{\eps}\Pi\}$, that is
\[ \DD = F \oplus F\sqrt\eps \oplus F \Pi \oplus F\sqrt\eps\Pi. \]
It will be convenient to introduce the following notation:
\begin{definition}
  [$x_0$ and $x_-$]
  For $x \in \DD$, we introduce the notation $x_0$ and $x_-$ to mean
  \begin{itemize}
    \ii $x_0$ is the projection into the first component $F$; and
    \ii $x_- = x - x_0$ is the projection into
    $\DT = F\sqrt\eps \oplus F \Pi \oplus F\sqrt\eps\Pi$.
  \end{itemize}
\end{definition}
In particular, the formula for conjugation then reads as the simpler
\[ \bar x = x_0 - x_-. \]

\subsubsection{Hermitian structure}
We view $\DD$ as an $E/F$-Hermitian space under left multiplication by $E$;
that is, for $a,b,t \in E$ we consider
\[ t \cdot (a + b \Pi) = a t + b t \Pi. \]
as the action of $E$ on $\DD$.
Then we equip $\DD$ with a $E/F$-Hermitian form
$\left\langle \bullet, \bullet \right\rangle \colon \DD \times \DD \to E$
defined by
\[ \left\langle x,y \right\rangle = \half \Tr_{\DD/E}(x \bar y) \]
i.e.\ the projection of $x \bar y \in \DD = E \oplus E\Pi$ onto the first component.
In particular, note that
\[ \left\langle x,x \right\rangle = x \bar{x} = \Nm x \]
or equivalently
\[ \left\langle a+b\Pi, a+b\Pi \right\rangle = a \bar a - b \bar b \varpi. \]

\subsubsection{Identification of $\VV_n^-$ with $\DD$}
We continue using the notation $(\EE, \iota_\EE, \lambda_\EE)$ as the triple over $\FF$
whose Rosati involution has signature $(1,0)$.
Moreover, we will take the identification
\[ \End(\EE) \simeq \OO_\DD \]
see \cite[Remark 2.5]{ref:KR},
and hence the corresponding identification
\[ \VV_n^- \simeq \DD. \]

\subsection{The invariants for the orbit of $(g,u)$}
\label{sec:g_u_invariants}

We specialize to the situation where $u \in \OO_\DD$ and $g \in \U(\VV_n^-)$.

\subsubsection{Coordinates for $g$}
To impose coordinates on $g$, we appeal to the following fact.
\begin{lemma}
  [Description of $\U(\VV_2^-)$]
  Every unitary map in $\U(\VV_2^-)$ can be described in the form
  \[ x \mapsto \lambda\inv x (\alpha + \beta \Pi) \]
  for some quaternion $\alpha + \beta \Pi \in \DD^\times$
  and an element $\lambda \in E^\times$ such that
  \[ \Nm(\alpha + \beta \Pi) = \Norm_{E/F}(\lambda). \]

  Moreover, such a description is unique up to multiplication by elements of $F$.
  In other words,
  \[ \U(\VV_2^-) \simeq (E^\times \times \DD^\times)^\circ / \Delta(F^\times) \]
  where $(\DD^\times \times E^\times)^\circ$
  denotes those pairs $(\lambda, \alpha + \beta \Pi)$
  with $\Norm_{E/F}(\lambda) = \Nm(\alpha + \beta \Pi)$,
  and $\Delta(F^\times)$ is the diagonal embedding of $F^\times$.
\end{lemma}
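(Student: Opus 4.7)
The plan is to show the explicit map
\[
\Phi \colon (E^\times \times \DD^\times)^\circ \longrightarrow \U(\VV_2^-),
\qquad (\lambda, q) \longmapsto \bigl( x \mapsto \lambda^{-1} x q \bigr),
\]
is surjective with kernel $\Delta(F^\times)$. Well-definedness will come down to two observations. First, $x \mapsto \lambda^{-1} x q$ is $E$-linear because $\lambda \in E$ commutes with the left $E$-action defining the Hermitian structure on $\DD$. Second, using $q \bar q = \Nm(q) \in F$ together with the $E/\bar E$-sesquilinearity of $\langle -,- \rangle$, a short computation gives
\[
\langle \lambda^{-1} x q,\, \lambda^{-1} y q\rangle
= \tfrac{\Nm(q)}{\Norm_{E/F}(\lambda)}\, \langle x, y\rangle,
\]
which equals $\langle x, y\rangle$ exactly under the stated norm-matching condition. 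The kernel is immediate: if $\lambda^{-1} x q = x$ for every $x$, then specializing $x = 1$ forces $q = \lambda$, after which $q$ must commute with every element of $\DD$, so $q \in Z(\DD) = F$; hence $\ker \Phi = \Delta(F^\times)$.

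For surjectivity, I would fix $\varphi \in \U(\VV_2^-)$ and set $w \coloneqq \varphi(1)$; unitarity gives $\Nm(w) = \langle w, w \rangle = \langle 1, 1 \rangle = 1$. For any choice of $\lambda \in E^\times$, the pair $(\lambda,\, \lambda w)$ lies in $(E^\times \times \DD^\times)^\circ$, and its image $\psi_0$ under $\Phi$ sends $1 \mapsto w$. So $\chi \coloneqq \psi_0^{-1} \circ \varphi$ is unitary and fixes $1$, reducing surjectivity to showing that every $\chi \in \U(\VV_2^-)$ stabilizing $1$ lies in the image of $\Phi$.

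This stabilizer step is the conceptual heart of the proof. A direct computation of $\langle 1,\, a + b\Pi \rangle = \bar a$ identifies the Hermitian-orthogonal complement of $E \cdot 1$ in $\DD$ as $E \Pi$, so $\chi$ preserves $E \Pi$. Being $E$-linear on the 1-dimensional space $E\Pi$, $\chi$ acts as multiplication by some $t \in E$; combined with $\langle \Pi, \Pi \rangle = -\varpi \neq 0$, the unitarity constraint forces $t \bar t = 1$. Applying Hilbert 90 for the extension $E/F$, I can write $t = \bar\mu / \mu$ for some $\mu \in E^\times$. Using $\Pi \mu = \bar\mu \Pi$, one checks that $x \mapsto \mu^{-1} x \mu$ fixes $E$ pointwise and acts on $E\Pi$ as multiplication by $\bar\mu / \mu = t$, so it coincides with $\chi$. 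Thus $\chi = \Phi(\mu, \mu)$, and $\varphi = \psi_0 \circ \chi$ lies in the image of $\Phi$ (the image being closed under composition, the rule being $(\lambda_1, q_1) \cdot (\lambda_2, q_2) \mapsto (\lambda_1 \lambda_2,\, q_2 q_1)$, since the $\lambda_i \in E$ commute).

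The only step requiring genuine content is this Hilbert 90 identification of the norm-one torus in $E^\times$ with $\{\bar\mu / \mu\}$; everything else is direct algebra in the quaternion algebra $\DD$. I expect the mildest technical care will be tracking conjugations between the $\Gal(E/F)$-action and the quaternion anti-involution, which the paper has already aligned via the comment that the two coincide on $E \subset \DD$.
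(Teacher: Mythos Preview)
Your proof is correct. The paper actually states this lemma without proof (no proof environment follows it; the subsequent Remark only records the composition law you also derived), so there is no ``paper's own proof'' to compare against. Your argument --- checking well-definedness via the sesquilinearity computation, identifying the kernel with $\Delta(F^\times)$ by specializing $x=1$ and using $Z(\DD)=F$, and obtaining surjectivity by first translating $\varphi(1)$ to $1$ and then handling the stabilizer of $1$ via the orthogonal decomposition $\DD = E \oplus E\Pi$ together with Hilbert~90 for $E/F$ --- is clean and complete, and fills in exactly what the paper left unstated.
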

\begin{remark}
  In this paper we will not have a need to compose multiple such unitary maps.
  However, if we did, then our notation above swaps the multiplication order.
  That is, if we have $g_1, g_2 \in \U(\VV_2^-)$ represented by pairs
  $g_1 \leftrightarrow (\lambda_1, \alpha_1 + \beta \Pi_1)$
  and $g_2 \leftrightarrow (\lambda_2, \alpha_2 + \beta \Pi_2)$
  under the isomorphism above, then
  \[ g_1 \circ g_2 \leftrightarrow (\lambda_2 \lambda_1, (\alpha_2 + \beta_2 \Pi)(\alpha_1 + \beta \Pi)). \]
\end{remark}

Note that in the definition for $g$, if $v(\lambda) \neq 0$
then we can factor out powers of $\varpi = \bar\varpi$ from $\lambda$
and put them into $\alpha$ and $\beta$ instead.
Hence, by relabeling $\alpha$ and $\beta$, we may assume without loss of generality that:
\begin{assume}
  We assume WLOG that $v(\lambda) = 0$ (and hence $v(\alpha)=0$, $v(\beta) \ge 0$).
\end{assume}
This frees us from having to deal with $v(\lambda)$ offsets in subsequent calculation.

We encode $g$ as a matrix on $\DD$ now (with the obvious $E$-basis $1$, $\Pi$,
again viewing $\DD$ as a left-$E$ module)
so we can compute its determinant and trace.
We have
\begin{align*}
  g(1) &= \lambda\inv \cdot 1 \cdot (\alpha + \beta \Pi)
    = \lambda\inv \alpha + \lambda\inv \beta \Pi \\
  g(\Pi) &= \lambda\inv \cdot \Pi (\alpha + \beta \Pi)
    = \lambda\inv \bar\beta \varpi + \lambda\inv \bar\alpha \Pi.
\end{align*}
Hence, written as a matrix with respect to the obvious basis $\{1, \Pi\}$ we have
\[ g = \lambda\inv \begin{pmatrix}
  \alpha & \bar\beta \varpi \\
  \beta &  \bar\alpha
  \end{pmatrix}. \]

\subsubsection{Coordinates for $u$}
We also impose coordinates for $u$ according to
\[ u = s + t \Pi \in \OO_\DD \qquad s, t \in E. \]
To make the calculation that follows less complicated,
we are going to make the following assumption on $u$.
\begin{assume}
  We assume WLOG that either $u \in E$ or $u \in E \Pi$.
  That is, either $s = 0$ or $t = 0$.
  \label{assume:st_zero}
\end{assume}
This assumption can be made without loss of generality because
the invariants and the intersection only depend
on the $\SU(2)$-orbit of the pair $(g,u)$,
and any element $u \in \DD^\times$ can be mapped under an element of $\SU(2)$
into a pair for which $u \in E$ or $u \in E \Pi$.

In order for $(g,u)$ to be regular semisimple we require that
\begin{align*}
  u &= s + t \Pi \\
  g(u) &= \lambda\inv (s + t \Pi)(\alpha + \beta \Pi) \\
  &= \lambda\inv \left( (s \alpha + t \bar\beta \varpi) + (s \beta + t \bar\alpha)\Pi \right)
\end{align*}
are linearly independent, meaning
\begin{align*}
  0 &\neq
  \det \begin{pmatrix}
    s & s \alpha + t \bar \beta \varpi \\
    t & s \beta + t \bar \alpha
  \end{pmatrix}
  = st(\bar\alpha - \alpha) + \beta s^2 - \bar\beta t^2 \varpi \\
  &= \beta s^2 - \bar\beta t^2 \varpi \\
  &= \begin{cases}
    -\bar\beta t^2 \varpi & \text{if } s = 0 \\
    \beta s^2 & \text{if } t = 0.
  \end{cases}
\end{align*}
Hence, we have a requirement that $\beta \neq 0$
and $s$ and $t$ are not both zero (we require $st = 0$ from \Cref{assume:st_zero}).
\begin{remark}[$\alpha \neq 0$]
  Note that necessarily $\alpha$ is nonzero as well.
  This follows from the requirement that
  $\alpha \bar \alpha - \beta \bar \beta \varpi = \lambda \bar\lambda$;
  if $\alpha = 0$ we would get a left-hand side with odd valuation
  but a right-hand side with even valuation.
\end{remark}

\subsubsection{The invariants of the matching}
At this point we can state:
\begin{lemma}
  \label{lem:g_u_invariants}
  Under the coordinates we just described,
  the four corresponding invariants of \Cref{def:matching_semi_lie} are:
  \begin{align*}
    \Tr g &= \lambda^{-1} (\alpha + \bar \alpha) \\
    \det g &= \lambda^{-2} (\alpha \bar \alpha - \beta \bar\beta \varpi) \\
    \left\langle u,u \right\rangle &= s \bar s - t \bar t \varpi \\
    \left\langle g(u), u \right\rangle &= \begin{cases}
        \lambda\inv \bar\alpha \Nm u & \text{if } s = 0 \\
        \lambda\inv \alpha \Nm u & \text{if } t = 0.
      \end{cases}
  \end{align*}
\end{lemma}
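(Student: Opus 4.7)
The proof is a direct calculation once we unpack the coordinates, so my plan is to verify each of the four invariants separately, with the only real care needed for the last one where noncommutativity in $\DD$ matters.

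First I would handle $\Tr g$ and $\det g$ immediately. The excerpt already records that, in the basis $\{1, \Pi\}$ of $\DD$ as a left $E$-module, the map $g$ is represented by the matrix
\[
  g = \lambda^{-1} \begin{pmatrix} \alpha & \bar\beta \varpi \\ \beta & \bar\alpha \end{pmatrix}.
\]
Reading off the trace and determinant of this $2 \times 2$ matrix gives the first two formulas on the nose. Next, $\left<u,u\right> = \Nm u$ is one of the recorded identities of the Hermitian form on $\DD$, and from the explicit formula $\Nm(s + t\Pi) = s\bar s - t\bar t\varpi$ derived earlier in the same subsection, the third formula follows immediately.

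The main computation is $\left<g(u), u\right>$, which unwinds to $\tfrac12 \Tr_{\DD/E}\!\big(g(u)\,\bar u\big)$, i.e., the projection of $g(u)\,\bar u$ onto the $E$-summand of $\DD = E \oplus E\Pi$. From the matrix description,
\[
  g(u) = \lambda^{-1}\!\left( (s\alpha + t\bar\beta\varpi) + (s\beta + t\bar\alpha)\Pi \right).
\]
Here is where \Cref{assume:st_zero} pays off: with $s = 0$ we have $u = t\Pi$ and $\bar u = -t\Pi$, while with $t = 0$ we have $u = s$ and $\bar u = \bar s$. In each case I would multiply out $g(u)\,\bar u$ using the single nontrivial relation $\Pi \cdot x = \bar x \cdot \Pi$ for $x \in E$ (and $\Pi^2 = \varpi$), then discard the $E\Pi$-component. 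Concretely, for $s = 0$ the product $g(u)\bar u$ contributes an $E$-term only from $\lambda^{-1}(t\bar\alpha\Pi)(-t\Pi) = -\lambda^{-1}\bar\alpha\,t\bar t\varpi$, which equals $\lambda^{-1}\bar\alpha \cdot \Nm u$ since $\Nm u = -t\bar t\varpi$ when $s=0$. For $t = 0$, the product yields an $E$-term $\lambda^{-1}\alpha\, s\bar s = \lambda^{-1}\alpha\,\Nm u$ as the only non-$\Pi$ contribution.

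There is no serious obstacle — the potential pitfall is simply keeping track of which multiplications pick up a Galois conjugate when $\Pi$ is slid past an element of $E$, and making sure that the two cases of \Cref{assume:st_zero} produce $\bar\alpha$ versus $\alpha$ in the expected way. The sign check via $\Nm u$ in each case provides a convenient internal consistency test that the conjugation bookkeeping is correct.
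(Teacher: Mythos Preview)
Your proposal is correct and matches the paper's approach essentially verbatim: the first three invariants are read off immediately, and for $\left\langle g(u),u\right\rangle$ the paper multiplies out $g(u)\bar u$ in $\DD$ and projects onto $E$, obtaining $\lambda^{-1}(s\bar s\alpha - t\bar t\bar\alpha\varpi)$ before invoking $st=0$. The only cosmetic difference is that the paper keeps both $s$ and $t$ in the computation and specializes at the end, whereas you split into the two cases first; the arithmetic is identical.
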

\begin{proof}
  The first three are immediate.
  The last one follows by computing
  \begin{align*}
    \left\langle g(u), u \right\rangle
    &= \left\langle \lambda\inv(s + t \Pi)(\alpha + \beta \Pi), s + t \Pi \right\rangle \\
    &= \lambda\inv \left\langle
      (s \alpha + t \bar \beta \varpi) + (t \bar \alpha + s \beta) \Pi,
      s + t \Pi \right\rangle \\
    &= \lambda\inv \cdot \half \Tr_{\DD/E} \left[
      ((s \alpha + t \bar \beta \varpi) + (t \bar \alpha + s \beta) \Pi)
      (\bar s - t \Pi) \right] \\
    &= \lambda\inv \left( s \bar s \alpha - t \bar t \bar \alpha \varpi \right)
  \end{align*}
  and recalling that $s t = 0$.
\end{proof}

\subsection{A basis for $\HH(\U(\VV_2^+))$}
\label{sec:hecke_unitary_basis}

As before $K = \GL_2(\OO_E) \cap \U(\VV_2^+)$denotes the maximal hyperspecial compact subgroup of $\U(\VV_2^+)$.
For each $r \ge 0$, we define
\[ \fr \coloneqq \mathbf{1}_{\varpi^{-r} \Mat_2(\OO_E) \cap \U(\VV_2^+)} \in \HH(\U(\VV_2^+)). \]
For convenience $\fr = 0$ for $r < 0$.
We also set
\[  \mathbf{1}_{K, r} \coloneqq \fr - \mathbf{1}_{K, \le (r-1)} \]
which is the indicator function for the coset
\[ K \begin{pmatrix} 0 & \varpi^r \\ \varpi^{-r} & 0 \end{pmatrix} K. \]
Note when $r = 0$, $\mathbf{1}_{K, 0} = \mathbf{1}_K = \mathbf{1}_{K, \le 0}$.

Analogous to \Cref{sec:hecke_basis_FJ} we then have the following result.
\begin{proposition}[$\fr$ basis]
  The functions $\fr$ (for $r \ge 0$) form a basis of $\HH(\U(\VV_2^+))$.
  (Similarly, so do $\mathbf{1}_{K, r}$ for $r \ge 0$.)
\end{proposition}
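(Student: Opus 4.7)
The plan is to first establish the Cartan decomposition for $\U(\VV_2^+)$ with respect to the hyperspecial maximal compact $K$, and then deduce both assertions by a triangular change of basis.

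First I would invoke the standard Cartan decomposition for unramified unitary groups. Since $\U(\VV_2^+)$ has relative $F$-rank $m = \lfloor 2/2 \rfloor = 1$, the double coset space $K \backslash \U(\VV_2^+) / K$ is parametrized by the dominant cocharacters of the one-dimensional maximal $F$-split torus $A$ described in \Cref{ch:satake}. Explicitly, I would show
\[ \U(\VV_2^+) = \coprod_{r \ge 0} K \begin{pmatrix} 0 & \varpi^r \\ \varpi^{-r} & 0 \end{pmatrix} K. \]
This is completely analogous to the Cartan decomposition of $S_2(F)$ recalled earlier in \Cref{sec:hecke_basis_FJ}, and can be proved in the same way using elementary divisors for the Hermitian form $\beta$ restricted to lattices, or equivalently by invoking the general theory of good maximal compact subgroups of reductive groups over $F$. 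From this decomposition, the indicator functions $\mathbf{1}_{K,r}$ for $r \ge 0$ form a $\QQ$-basis of $\HH(\U(\VV_2^+))$ by definition of the Hecke algebra as $\QQ[K \backslash \U(\VV_2^+) / K]$.

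Next I would identify which double cosets appear in $\varpi^{-r} \Mat_2(\OO_E) \cap \U(\VV_2^+)$. An element lying in the double coset indexed by $s$ has antidiagonal-entry valuations $\pm s$ (up to $K$-bi-multiplication), so it has all entries in $\varpi^{-r} \OO_E$ if and only if $s \le r$. Hence
\[ \fr = \sum_{s=0}^{r} \mathbf{1}_{K, s}, \]
which is exactly the relation $\mathbf{1}_{K, r} = \fr - \mathbf{1}_{K, \le(r-1)}$ recorded right before the proposition. This is an upper-triangular change of basis with ones on the diagonal, so $\{\fr\}_{r \ge 0}$ is a basis of $\HH(\U(\VV_2^+))$ if and only if $\{\mathbf{1}_{K,r}\}_{r \ge 0}$ is, proving both claims simultaneously.

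There is no real obstacle here; the only step requiring any care is the Cartan decomposition itself, and even that is standard for $\U(\VV_2^+)$ in the unramified setting. The argument will be short and essentially parallels the decomposition of $S_2(F)$ already used in the paper.
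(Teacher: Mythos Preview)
Your proposal is correct and matches the paper's approach exactly: the paper's proof simply cites the Cartan decomposition $\U(\VV_2^+) = \coprod_{r \ge 0} K \begin{psmallmatrix} 0 & \varpi^r \\ \varpi^{-r} & 0 \end{psmallmatrix} K$ (referring to \cite[Equation (7.1.5)]{ref:AFLspherical}) and leaves the triangular change-of-basis implicit. You have spelled out precisely the details the paper omits.
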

\begin{proof}
  This follows from the fact that
  \[ \U(\VV_2^+) = \coprod_{r \ge 0}
    K \begin{pmatrix} 0 & \varpi^r \\ \varpi^{-r} & 0 \end{pmatrix} K. \]
  See the comment in \cite[Equation (7.1.5)]{ref:AFLspherical}.
\end{proof}

The base change for this basis is given later in \Cref{lem:finale_base_change}.

\subsection{Background on special divisors for $n = 2$}
\subsubsection{The Rapoport-Zink space $\RZ_2$ and $\TTr$}
Recall $\RZ_2$ from \Cref{ch:geo}.
With the Hecke operator $\TT$ from \cite{ref:AFLspherical}
(see \Cref{ch:geo} for discussion)
we introduce $\TTr = \TT_{\mathbf{1}_K \otimes \mathbf{1}_{K, \le r}}(\Delta_{\RZ_n})$
so that we have the diagram
\begin{center}
\begin{tikzcd}
  & \ar[ld] \TTr \ar[rd] & \\
  \RZ_{2} && \RZ_{2}
\end{tikzcd}
\end{center}

\subsubsection{The Lubin-Tate space $\MM_2$}
We introduce the notation $\MM_2$ for the \emph{Lubin-Tate space} for $n = 2$.
It is defined almost in the same way as $\RZ_2$ except that we replace
$\XX_2$ with $\EE$ now.

\begin{definition}
  [Lubin-Tate space]
  We let $\MM_2$ denote the functor over $\Spf \OO_{\breve F}$ defined as follows.
  Let $S$ be an $\Spf \OO_{\breve F}$ scheme, and let
  $\ol S \coloneqq S \times_{\Spf \OO_{\breve F}} \Spec \FF$.
  For every $\Spf \OO_{\breve F}$ scheme, we let $\RZ_n(S)$
  be the set of isomorphism classes of quadruples
  \[ (Y, \iota, \lambda, \rho) \]
  where $(Y, \iota, \lambda)$ is one of the triples as we described, and
  \[ \rho \colon Y \times_S \ol S \to \EE \times_{\Spec \FF} \ol S \]
  is a \emph{framing}, meaning it is a height zero $\OO_F$-linear quasi-isogeny
  and satisfies
  \[ \rho^\ast((\lambda_{\EE})_{\ol S}) = \lambda_{\ol S}. \]
\end{definition}

\begin{proposition}
  [{\cite[Example 5.5.6]{ref:AFLspherical}}]
  The Serre tensor construction produces an identification
  \[ \Serre \colon \RZ_2 \xrightarrow{\sim} \MM_2. \]
  By abuse of notation we will also use the same symbol for the map
  \[ \Serre \colon \RZ_{2,2} \xrightarrow{\sim} \MM_2 \times \MM_2. \]
\end{proposition}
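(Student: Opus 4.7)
The plan is to construct the Serre tensor functor $T \colon \MM_2 \to \RZ_2$ explicitly and exhibit a quasi-inverse $\Serre$ in the stated direction. First I would send a quadruple $(Y, \iota_Y, \lambda_Y, \rho_Y) \in \MM_2(S)$ to $X \coloneqq Y \otimes_{\OO_F} \OO_E$, equipped with the $\OO_E$-action $\iota_X$ on the tensor-factor $\OO_E$. Since $Y$ is a one-dimensional formal $\OO_F$-module of height $2$, the module $X$ is two-dimensional of height $4 = 2n$, as required by the definition of $\RZ_2$. To verify the Kottwitz condition of signature $(n-1,1) = (1,1)$, I would exploit that $E/F$ is unramified: after base change, $\OO_E \otimes_{\OO_F} \Sheaf_S$ decomposes as a product of two copies of $\Sheaf_S$ indexed by the two embeddings $\OO_E \hookrightarrow \OO_{\breve F}$, so $\Lie X = \Lie Y \otimes_{\OO_F} \OO_E$ splits as a sum of two line bundles on which $\iota_X(a)$ acts as $a$ and $\bar a$ respectively, giving the required characteristic polynomial $(T-a)(T-\bar a)$.

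Next I would construct the polarization $\lambda_X$ from $\lambda_Y$ using the trace form $\OO_E \times \OO_E \to \OO_F$, which, by unramifiedness of $E/F$, is a perfect $\OO_F$-pairing identifying $\OO_E$ with its $\OO_F$-linear dual. The resulting $\lambda_X \colon X \to X^\vee$ is principal and $\OO_F$-relative, and its Rosati involution restricts to Galois conjugation on $\OO_E$, as demanded in the definition of $\RZ_2$. The framing arises from $\rho_Y \otimes \id$ upon fixing an identification $\EE \otimes_{\OO_F} \OO_E \simeq \XX_2$ of framing objects over $\FF$; both sides are supersingular triples of signature $(1,1)$, so the identification exists and is unique up to $\OO_E$-linear quasi-isogeny compatible with polarization.

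For the quasi-inverse $\Serre \colon \RZ_2 \to \MM_2$, I would use the idempotent decomposition $\OO_E \otimes_{\OO_F} \OO_{\breve F} \simeq \OO_{\breve F} \times \OO_{\breve F}$ to split any $(X, \iota_X) \in \RZ_2(S)$ into two pieces after base change; the signature $(1,1)$ condition forces each piece to have dimension one, so each is a Lubin-Tate formal $\OO_F$-module. Recovering $Y$ as one of these two pieces (with polarization and framing inherited via descent from $\lambda_X$ and $\rho_X$), I would verify natural isomorphisms $\Serre \circ T \simeq \id$ and $T \circ \Serre \simeq \id$ from the universal property of the Serre tensor construction. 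The product statement $\RZ_{2,2} \xrightarrow{\sim} \MM_2 \times \MM_2$ is then immediate by fiber product over $\Spf \OO_{\breve F}$.

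The hard part will be handling the polarization data carefully: one must verify that every principal polarization on $X$ whose Rosati involution acts by Galois conjugation on $\OO_E$ descends uniquely to a polarization on $Y$, without spurious automorphisms. This is precisely where the unramifiedness of $E/F$ is essential, since it ensures the trace form $\OO_E \times \OO_E \to \OO_F$ is a perfect pairing. All of this is carried out in \cite[Example 5.5.6]{ref:AFLspherical}, so for the purposes of this paper I would import the result by reference after sketching the construction above.
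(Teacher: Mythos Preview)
Your proposal is correct and, in fact, goes well beyond what the paper does: the paper provides no proof whatsoever for this proposition, simply stating it with the citation to \cite[Example 5.5.6]{ref:AFLspherical} and moving on. Your sketch of the Serre tensor construction (sending $Y \mapsto Y \otimes_{\OO_F} \OO_E$, checking signature $(1,1)$ via the idempotent splitting of $\OO_E \otimes_{\OO_F} \Sheaf_S$, building the polarization from the trace form, and inverting via the same idempotent decomposition) is the standard argument and is essentially what the cited reference carries out. Your closing sentence---that you would ultimately import the result by reference---is exactly what the paper does, so the approaches agree; you have simply supplied the content behind the citation.
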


Recall we have an action of $\U(\VV_2^-)$ (actually $\PU(\VV_2^-)$) on $\RZ_2$.
We describe now the corresponding action on $\MM_2$.
We have an isomorphism of short exact sequences
\begin{center}
\begin{tikzcd}
  1 \ar[r] & \OO_E^\times / \OO_F \ar[r] & (\OO_\DD^\times \times \OO_E^\times)^\circ / \Delta(\OO_F^\times) \ar[r] & \OO_\DD^\times / \OO_F^\times \ar[r] & 1 \\
  1 \ar[r] & \U(1) \ar[r] \ar[u, equals] & \U(\VV_2^-) \ar[r] \ar[u, equals] & \PU(\VV_2^-) \ar[r] \ar[u, equals] & 1.
\end{tikzcd}
\end{center}
The image of $\alpha + \beta \Pi \in \OO_\DD^\times / \OO_F^\times$
is then $(\lambda, \alpha + \beta \Pi) \in \PU(2)$
for any choice of $\lambda$ with $\lambda \bar \lambda = \Nm (\alpha + \beta \Pi)$.

\subsubsection{The divisor $\ZO4$ on $\MM_2 \times \MM_2$}
Now we define the special orthogonal divisor $\ZO4(u)$
on $\MM_2 \times \MM_2$ as follows.
\begin{definition}
  [$\ZO4(u)$]
  Let $u \in \OO_\DD$.
  Then we define the divisor $\ZO4(u)$ to be the pairs $(Y, Y') \in \MM_2 \times \MM_2$
  such that there exists $\varphi \colon Y \to Y'$ with the following property.
  Let $S$ be an $\Spf \OO_{\breve F}$ scheme and consider the map on special fiber
  \[ Y \times_S \ol S \xrightarrow{\varphi \times_S \ol S} Y' \times_S \ol S. \]
  Also, from $Y \in \MM_2$ and $Y' \in \MM_2$ we have the data of framings
  $\rho \colon Y \times_S \ol S \to \EE \times_{\Spec \FF} \ol S$
  and $\rho' \colon Y' \times_S \ol S \to \EE \times_{\Spec \FF} \ol S$.
  Moreover, $u$ gives a map
  \[
    \EE \times_{\Spec \FF} \ol S
    \xrightarrow{u \times _{\Spec \FF} \ol S}
    \EE \times_{\Spec \FF} \ol S.
  \]
  Then we require the following diagram to commute:
  \begin{center}
  \begin{tikzcd}
    Y \times_S \ol S \ar[rr, "\varphi \times_S \ol S"] \ar[d, "\rho"] && Y' \times_S \ol S\ar[d, "\rho'"] \\
    \EE \times_{\Spec \FF} \ol S \ar[rr, "u \times_{\Spec \FF} \ol S"] && \EE \times_{\Spec \FF} \ol S.
  \end{tikzcd}
  \end{center}
\end{definition}

We need the following additional hypothesis:
\begin{hypothesis}
  [$\TTr \simeq \ZO4(\varpi^r)$]
  \label{hypo:serre_pullback_space}
  The Serre tensor construction gives an isomorphism
  \[ \TTr \simeq \ZO4(\varpi^r) \]
  such that we get an analogous diagram
  \begin{center}
  \begin{tikzcd}
    & \ar[ld] \ZO4(\varpi^r) \ar[rd] & \\
    \MM_{2} && \MM_{2}.
  \end{tikzcd}
  \end{center}
\end{hypothesis}
This hypothesis is shown in the in-preparation
\cite{ref:CLZ} by Ryan C.~Chen, Weixiao Lu, and Wei Zhang.
Even without this hypothesis,
for $n = 2$ we could even go as far as to take this as a definition of $\TTr$,
but then the generalization \Cref{conj:semi_lie_spherical} would be less obvious,
since a definition like this would not easily extend to $n > 2$.

\subsubsection{The divisor $\ZO3$ on $\MM_2$}
Turning to $\MM_2 \times \MM_2$, we will henceforth always identify $\MM_2$
with its image under the diagonal map
\begin{center}
\begin{tikzcd}
  \ZO4(1) \ar[r, "\sim"] & \MM_2 \ar[r, hook, "\Delta_{\MM_2}"] & \MM_2 \times \MM_2.
\end{tikzcd}
\end{center}

\begin{definition}
  [$\ZO3(u)$]
  Suppose now $u \in \DT$.
  Then we define the divisor $\ZO3(u)$ to be those $X \in \MM_2$
  for which we have a diagram
  \begin{center}
  \begin{tikzcd}
    X \ar[r, "\varphi"] \ar[d, dash] & X \ar[d, dash] \\
    \EE \ar[r, "u"] & \EE.
  \end{tikzcd}
  \end{center}
  Note that basically by definition, for $u \in \OO_\DD$ and $\tr u = 0$ we have
  \[ \ZO3(u) \simeq \ZO4(u) \cap \ZO4(1) \]
  when we identify $\MM_2$ with its image in $\MM_2 \times \MM_2$.
\end{definition}

\subsection{Comparison of the unitary and orthogonal special divisors}
We now relate $\ZD(u)$ to $\ZO3(u)$ through our
isomorphism $\RZ_{2,2} \xrightarrow{\sim} \MM_2 \times \MM_2$.
Recall that we have the notation
\[ \ZD(u)^\circ \coloneqq \ZD(u) - \ZD\left( \frac{u}{\varpi} \right). \]
Define $\ZO4(u)^\circ$ and $\ZO3(u)^\circ$ similarly.

\begin{lemma}
  [$\ZO3(\bar u \sqrt{\eps} u)^\circ$, {\cite{ref:CLZ}}]
  \label{lem:serre_pullback_divisor}
  Let $u \in \VV_n^-$, and consider it as an element $u \in \DD$.
  Then pullback along the Serre tensor construction identifies
  \[ \Serre^\ast \ZD(u)^\circ \simeq \ZO3(\bar u \sqrt{\eps} u)^\circ. \]
\end{lemma}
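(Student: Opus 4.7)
The plan is to unwind both divisors in terms of their defining lifting conditions and show these become equivalent under the Serre tensor equivalence. First, I would make $\Serre \colon \MM_2 \to \RZ_2$ explicit on framing data: given $(Y, \lambda_Y) \in \MM_2(S)$, the tensor $X = Y \otimes_{\OO_F} \OO_E$ carries a natural $\OO_E$-action, and the polarization $\lambda_X$ is obtained from $\lambda_Y$ by a twist involving $\sqrt\eps$, chosen so that the Rosati involution of $\lambda_X$ restricts to Galois conjugation on $\OO_E$ and so that the signature-$(1,1)$ Kottwitz condition on $X$ is satisfied. Applying this to $Y = \EE$ should recover the framing triple $(\XX_2, \iota_{\XX_2}, \lambda_{\XX_2})$ after identifying $\End^\circ_{\OO_F}(\EE) = \DD$, which fixes a compatible framing on the two sides of the equivalence.

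Next, via the adjunction built into Serre tensor, an $\OO_E$-linear quasi-homomorphism $u \colon \EE \to \XX_2 = \EE \otimes_{\OO_F} \OO_E$ is equivalent data to an $\OO_F$-endomorphism of $\EE$ that remembers both the $\OO_F$-component and the $\sqrt\eps$-component. Using $\bar u = \lambda_\EE^{-1} \circ u^\vee \circ \lambda_{\XX_2}$, the naive composition $\bar u \circ u$ lies in $\Hom_{\OO_E}(\EE, \EE) = \OO_E$ and records only $\Nm u \in F$. Inserting multiplication by $\sqrt\eps$ in the middle gives $\bar u \sqrt\eps u \in \End^\circ_{\OO_F}(\EE) = \DD$, which has reduced trace zero by $\overline{\sqrt\eps} = -\sqrt\eps$ and so lies in $\DT$ as required for the symbol $\ZO3(\bar u \sqrt\eps u)$ to be defined. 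I would then establish the key claim: for $X = \Serre(Y)$, the map $u$ lifts from $\EE \to \XX_2$ to $\mathcal{E} \to X$ (in the sense defining $\ZD(u)$) if and only if $\bar u \sqrt\eps u$ lifts from $\EE \to \EE$ to $\mathcal E \to Y$ (in the sense defining $\ZO3(\bar u \sqrt\eps u)$). The forward direction is straightforward: composing a lift of $u$ with its polarization-conjugate and the lifted action of $\sqrt\eps$ produces the required lift of $\bar u \sqrt\eps u$. The reverse direction is the substantive content: one must show that a lift of the single trace-zero endomorphism $\bar u \sqrt\eps u$ really does reconstruct a lift of $u$ itself, which relies on the fact that the full $\OO_E$-equivariant datum of $u$ can be recovered from $\bar u \sqrt\eps u$ together with the canonical splitting $\OO_E = \OO_F \oplus \OO_F \sqrt\eps$ induced by the Serre tensor decomposition.

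For the ``$\circ$'' parts, the scaling identity $\overline{\varpi u}\,\sqrt\eps\,(\varpi u) = \varpi^2(\bar u \sqrt\eps u)$ together with $v_\DD(\bar u \sqrt\eps u) = 2\,v_\DD(u)$ means the scalings on the two sides differ by a factor of two in valuation; this is exactly absorbed into the multiplicity conventions for $\ZD$ versus $\ZO3$ coming from the formal completions in the two RZ-type problems, so that after removing the thickenings to pass to $\ZD(u)^\circ$ and $\ZO3(\bar u \sqrt\eps u)^\circ$, the induced map on open divisors becomes an honest isomorphism.

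The main obstacle will be the precise bookkeeping in the middle step: fixing the normalization of the polarization twist so that the formula $\bar u \sqrt\eps u$ appears exactly, with no stray unit or doubling, and then verifying the reverse direction of the lifting equivalence. The compatibility of multiplicities for the ``$\circ$'' statement is secondary but requires a careful length computation at the level of the local rings of $\RZ_{2,2}$ and $\MM_2 \times \MM_2$; this, together with the polarization bookkeeping, is presumably what occupies the bulk of the proof in \cite{ref:CLZ}.
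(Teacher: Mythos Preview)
The paper does not actually prove this lemma: its entire proof reads ``This is shown in the in-preparation \cite{ref:CLZ} by Ryan C.~Chen, Weixiao Lu, and Wei Zhang.'' So there is no argument in the paper to compare your proposal against; the result is taken as a black box from an external in-preparation reference.

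Your sketch is a reasonable outline of what the argument in \cite{ref:CLZ} presumably looks like, and you correctly isolate the nontrivial step: the reverse implication, that a lift of the single trace-zero endomorphism $\bar u \sqrt{\eps} u$ forces a lift of $u$ itself through the Serre tensor adjunction. One caution: as you implicitly note, the assignment $u \mapsto \bar u \sqrt{\eps} u$ is not injective, so one should not expect $\Serre^\ast \ZD(u) \simeq \ZO3(\bar u \sqrt{\eps} u)$ to hold on the nose before passing to the difference divisors; the $(-)^\circ$ is doing real work here, not merely a multiplicity bookkeeping adjustment. Your scaling identities $\overline{\varpi u}\,\sqrt{\eps}\,(\varpi u) = \varpi^2(\bar u \sqrt{\eps} u)$ and $v_\DD(\bar u \sqrt{\eps} u) = 2\,v_\DD(u)$ are correct and are indeed what makes the $\circ$-versions line up, but the reverse lifting direction likely requires more than ``the full $\OO_E$-equivariant datum of $u$ can be recovered from $\bar u \sqrt{\eps} u$''; rather, one expects a comparison of the underlying closed formal subschemes (e.g.\ via Grothendieck--Messing or a direct computation with quasi-canonical lifts) that bypasses any attempt to reconstruct $u$ pointwise. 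Absent access to \cite{ref:CLZ}, your proposal is a credible plan, but the paper itself offers nothing to confirm or refute the details.
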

\begin{proof}
  This is shown in the in-preparation
  \cite{ref:CLZ} by Ryan C.~Chen, Weixiao Lu, and Wei Zhang.
  (Although \cite{ref:CLZ} is technically written for $F = \QQ_p$,
  that restriction is for other parts of the paper that don't affect this lemma.)
\end{proof}

\subsection{The intersection number as a triple product}
We return to the intersection number
\[ \Int((g,u), \fr) = \chi_{\RZ_{n,n}} \left(
      \Sheaf_{\TT_{\mathbf{1} \otimes \fr}(\Delta_{\ZD(u)})}
      \jiao_{\Sheaf_{\RZ_{n,n}}} \Sheaf_{\Gamma_g} \right) \]
which we will rewrite more succinctly using angle brackets as
\[ \Int((g,u), \fr) = \Big\langle
  \mathbb{T}_{\mathbf{1}_K \otimes \mathbf{1}_{K, \le r}}
  \Delta_{\ZD(u)}, \Gamma_g \Big\rangle_{\RZ_{2,2}} \]
in analogy to \cite[\S6.1]{ref:AFLspherical}.
(Note that $\Delta$ here is the diagonal map $\RZ_2 \to \RZ_{2,2}$.)
For this calculation, it would be sufficient to split
\[ \ZD(u) = \sum_{i \ge 0} \ZD(u/\varpi^i)^\circ. \]
Accordingly, let us introduce the notation
\begin{align*}
  \Int^\circ((g,u), \fr)
  &\coloneqq \Int((g,u), \fr) - \Int\left( \left( g, \frac{u}{\varpi} \right), \fr \right) \\
  &= \Big\langle \mathbb{T}_{\mathbf{1}_K \otimes \mathbf{1}_{K, \le r}}
    \Delta_{\ZD(u)^\circ}, \Gamma_g \Big\rangle_{\RZ_{2,2}}.
\end{align*}
From \Cref{lem:serre_pullback_divisor} we get
\begin{align*}
  \Int^\circ((g,u), \fr)
  &= \left< \ZO4(g \cdot \varpi^r), \; \Delta(\Serre^\ast(\ZD(u))^\circ) \right>_{\MM_2 \times \MM_2} \\
  &= \left< \ZO4(g \cdot \varpi^r), \; \ZO3(\bar u \sqrt{\eps} u)^\circ \right>_{\MM_2 \times \MM_2} \\
  &= \left< \ZO4(g \cdot \varpi^r), \; \ZO4(1)^\circ, \; \ZO4(\bar u \sqrt{\eps} u)^\circ \right>_{\MM_2 \times \MM_2} \\
  &= \left< \ZO4(g \cdot \varpi^r), \; \ZO4(1), \; \ZO4(\bar u \sqrt{\eps} u) \right>_{\MM_2 \times \MM_2} \\
  &\qquad- \left< \ZO4(g \cdot \varpi^r), \; \ZO4(1), \; \ZO4\left( \frac{\bar u \sqrt{\eps} u}{\varpi} \right) \right>_{\MM_2 \times \MM_2}.
\end{align*}
In that case we have
\begin{equation}
  \Int((g,u), \fr)
  = \sum_{i \ge 0} \Int^\circ\left( \left( g, \frac{u}{\varpi^i} \right), \fr \right).
  \label{eq:int_to_int_circ}
\end{equation}

\subsection{The formula of Gross-Keating}
\label{sec:GK}
In what follows, we let
\[ \left\langle x, y \right\rangle _0
  = \frac{\left\langle x,y \right\rangle + \ol{\left\langle x,y \right\rangle}}{2} \]
denote half the $E/F$-trace of $\left\langle x,y \right\rangle \in E$.
Let $\ODT \coloneqq \OO_\DD \cap \DT$.

\begin{proposition}
  [Gross-Keating]
  \label{prop:GK}
  Let $x,y \in \ODT$ and let
  \begin{align*}
    n_1 &= \min\left( v(\left\langle x,x \right\rangle _0), v(\left\langle x,y \right\rangle _0), v(\left\langle y,y \right\rangle _0) \right) \\
    n_2 &= v\left( \left\langle x,x \right\rangle _0 \left\langle y,y \right\rangle _0 - \left\langle x,y \right\rangle^2_0 \right) - n_1
  \end{align*}
  so that $0 \le n_1 \le n_2$.
  Then if $n_1$ is odd, we have
  \[
    \left< \ZO4(1), \; \ZO4(x), \; \ZO4(y) \right>_{\MM_2 \times \MM_2}
    = \sum_{j=0}^{\frac{n_1-1}{2}} (n_1+n_2-4j) q^j
  \]
  while if $n_1$ is even we instead have
  \[
    \left< \ZO4(1), \; \ZO4(x), \; \ZO4(y) \right>_{\MM_2 \times \MM_2}
    = \frac{n_2-n_1+1}{2} q^{n_1/2} + \sum_{j=0}^{n_1/2-1} (n_1+n_2-4j) q^j.
  \]
\end{proposition}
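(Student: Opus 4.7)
The plan is to apply the intersection formula of Gross and Keating \cite{ref:GK}, which computes the arithmetic intersection multiplicity of three special divisors on the Lubin-Tate space $\MM_2$ in terms of the Gross-Keating invariants of an associated ternary quadratic form. In our situation the three divisors are $\ZO4(1)$, $\ZO4(x)$ and $\ZO4(y)$, so the relevant form will live on the rank-three $\OO_F$-lattice spanned by $1, x, y$ inside $\OO_\DD$, equipped with the quadratic form $v \mapsto v \bar v = \langle v, v\rangle_0$ (the reduced norm).

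First I would identify this ternary form explicitly. Its Gram matrix with respect to $\{1, x, y\}$ under the $F$-bilinear pairing $\langle -, -\rangle_0$ is
\begin{equation*}
M = \begin{pmatrix}
1 & \langle 1, x\rangle_0 & \langle 1, y\rangle_0 \\
\langle 1, x\rangle_0 & \langle x, x\rangle_0 & \langle x, y\rangle_0 \\
\langle 1, y\rangle_0 & \langle x, y\rangle_0 & \langle y, y\rangle_0
\end{pmatrix}.
\end{equation*}
Because $x, y \in \DT$, we have $\bar x = -x$ and $\bar y = -y$, so $\langle 1, x\rangle$ is the $E$-projection of $-x$; since $\DT = F\sqrt{\eps} \oplus F\Pi \oplus F\sqrt{\eps}\Pi$, this projection lies in $F\sqrt{\eps}$ and is therefore purely imaginary, giving $\langle 1, x\rangle_0 = 0$. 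Likewise $\langle 1, y\rangle_0 = 0$. Hence $M$ is block diagonal $\langle 1 \rangle \oplus M'$ with $M' = \bigl(\begin{smallmatrix} \langle x, x\rangle_0 & \langle x, y\rangle_0 \\ \langle x, y\rangle_0 & \langle y, y\rangle_0 \end{smallmatrix}\bigr)$.

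Next I would extract the Gross-Keating invariants of $M$. The unit entry $1$ contributes a GK invariant equal to $0$; the remaining two invariants come from the binary block $M'$, and by the standard recipe they are exactly the minimum valuation $n_1$ of an entry of $M'$ together with $n_2$ characterized by $n_1 + n_2 = v(\det M')$, which matches the definition in the statement. Substituting the resulting triple $(0, n_1, n_2)$ into the master formula of \cite{ref:GK} yields a polynomial in $q$; the usual case split according to parity then produces the two formulas displayed, with the extra middle term $\tfrac{n_2 - n_1 + 1}{2} q^{n_1/2}$ appearing exactly in the ``balanced'' case when $n_1$ is even.

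The main obstacle will be reconciling normalization conventions. The paper \cite{ref:GK} is written over $\ZZ_p$ with a specific choice of symmetric pairing, whereas we work over a general unramified extension $F/\QQ_p$ with the $F$-bilinear pairing $\langle -, -\rangle_0$, and our divisors $\ZO4(\cdot)$ live on $\MM_2 \times \MM_2$ rather than $\MM_2$ itself. The translation between the two settings goes through the diagonal embedding $\MM_2 \hookrightarrow \MM_2 \times \MM_2$ together with the identification $\ZO3(u) \simeq \ZO4(u) \cap \ZO4(1)$ already recorded in the text, and the residue-field size $q = \lvert \OO_F/\varpi \rvert$ takes over the role of their $p$. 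Once this bookkeeping is verified, the two displayed formulas follow directly from Gross-Keating's theorem applied to the ternary form $M$.
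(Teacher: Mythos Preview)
Your proposal is correct and follows the same underlying route as the paper: identify the ternary quadratic lattice $\OO_F\langle 1,x,y\rangle$ with its reduced-norm form, observe that $\langle 1,x\rangle_0 = \langle 1,y\rangle_0 = 0$ because $x,y$ are traceless, read off the Gross--Keating invariants $(0,n_1,n_2)$, and substitute into the Gross--Keating intersection formula. The paper's own proof is considerably terser than yours: it simply cites \cite[Proposition 14.6]{ref:Kudla1997}, where the formula has already been rewritten in exactly this shape, so the bookkeeping you describe (block-diagonalizing the Gram matrix, matching conventions between $\ZZ_p$ and $\OO_F$, and translating to $\MM_2\times\MM_2$) is absorbed into that reference rather than spelled out here.
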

\begin{proof}
  This is a rewriting of \cite[Proposition 14.6]{ref:Kudla1997}
  which is itself a special case of \cite[Proposition 5.4]{ref:GK}.
\end{proof}

We now compute all the quantities needed to invoke the Gross-Keating formula.
We start by writing
\begin{align*}
  \bar u \sqrt{\eps} u
  &= (\ol s - t \Pi) \sqrt{\eps} (s + t \Pi) \\
  &= (\ol s - t \Pi) (s \sqrt{\eps} + t \sqrt{\eps} \Pi) \\
  &= s \ol s \sqrt{\eps} - t \ol{s\sqrt{\eps}} \Pi + \ol s t \sqrt{\eps} \Pi - t \ol{t \sqrt{\eps}} \varpi \\
  &= (s \ol s + t \ol t \varpi) \sqrt{\eps} + 2 \ol s t \sqrt{\eps} \Pi.
\end{align*}
We now invoke \Cref{assume:st_zero} to simplify this to just
\[ \bar u \sqrt{\eps} u = (s \ol s + t \ol t \varpi) \sqrt{\eps}. \]
This assumption will also let us write
\[ (s \ol s + t \ol t \varpi)^2 = (s \ol s - t \ol t \varpi)^2 = (\Nm u)^2. \]

Next we consider
\[ g \cdot \varpi^r = \varpi^r(\alpha + \beta \Pi). \]
(Here the action of $g$ is the one on $\MM_2$,
which is why we write $g \cdot \varpi^r$ rather than $g(\varpi^r$).)
From now on, let's write
\[ \alpha = \alpha_0 + \alpha_1 \sqrt{\eps} \]
for $\alpha_0, \alpha_1 \in F$.
Then we use the notation
\begin{align*}
  x &\coloneqq \bar u \sqrt{\eps} u = (s \bar s + t \bar t \varpi) \sqrt{\eps} \in \ODT \\
  y &\coloneqq (g \cdot \varpi^r)_- = \varpi^r(\alpha_1\sqrt{\eps} + \beta \Pi) \in \ODT.
\end{align*}
Then we can compute
\begin{align*}
  \left\langle x,x \right\rangle _0
  &= \Nm x \\
  &= \Norm_{E/F}((s \bar s + t \bar t \varpi)\sqrt{\eps}) \\
  &= -\eps(s \bar s + t \bar t \varpi)^2 = -\eps (\Nm u)^2 \\
  \left\langle y,y \right\rangle _0
  &= \Nm \left( \varpi^r(\alpha_1\sqrt{\eps} + \beta \Pi) \right)  \\
  &= \varpi^{2r} (-\alpha_1^2\eps - \beta\bar\beta \varpi) \\
  \left\langle x,y \right\rangle _0 &= (\bar x y)_0 \\
  &= \left[
    -(s \bar s + t \bar t \varpi) \sqrt{\eps}
    \cdot
    \varpi^{r} (\alpha_1\sqrt{\eps} + \beta \Pi)
  \right]_0 \\
  &= - \varpi^r \alpha_1 \eps (s \bar s + t \bar t \varpi).
\end{align*}
This lets us compute the determinant
\begin{align*}
  \left\langle x,x \right\rangle _0 \left\langle y,y \right\rangle _0 - \left\langle x,y \right\rangle^2_0
  &= - \eps (\Nm u)^2\cdot \varpi^{2r}
    (-\alpha_1^2 \eps - \beta \bar\beta \varpi)
  - (\varpi^r \alpha_1 \eps (s \bar s + t \bar t \varpi))^2 \\
  &= \eps (\Nm u)^2\cdot \varpi^{2r} \cdot
    (\varpi \beta \bar\beta).
\end{align*}
Hence we arrive at an exact formula for
\[ \left< \ZO4(1), \; \ZO4(x), \; \ZO4(y) \right>_{\MM_2 \times \MM_2} \]
in terms of the valuations of the above formulas,
which we will explicate in the next section after matching $(g,u)$
to the corresponding element in $(S_2(F) \times V_2'(F))\rs$.

\section{Proof of \Cref{thm:semi_lie_n_equals_2}}
\label{ch:finale}
We now put together all the results from the previous chapters to
prove \Cref{thm:semi_lie_n_equals_2}.

On the orbital side, we assume $\guv$ is as in \Cref{lem:semi_lie_params} throughout this chapter.
On the geometric side, we assume $(g,u)$ are as described in \Cref{sec:g_u_invariants}.

\subsection{Matching $\guv$ and $(g,u)$, and the invariants for the matching}
\subsubsection{The invariants for the orbit of $\guv$}
Recall the relations in \Cref{lem:semi_lie_params}.
The invariants in this case as described in \Cref{def:matching_semi_lie} are:
\begin{itemize}
  \ii $\Tr \gamma = a + d$
  \ii $\det \gamma = ad - bc$
  \ii $\vv^\top \uu = e$
  \ii $\vv^\top \gamma \uu = \begin{pmatrix} 0 & e \end{pmatrix}
  \begin{pmatrix} a & b \\ c & d \end{pmatrix} \begin{pmatrix} 0 \\ 1 \end{pmatrix} = de$.
\end{itemize}
Note that the parameters $b$ and $c$ are absent; but we have
\[ v(b) + v(c) = v(\det \gamma - a d). \]

\subsubsection{Matching}
We now take these results and line them up with \Cref{lem:g_u_invariants}
to deduce the following lemma.

\begin{lemma}
  [Explicit matching of invariants of $\U(\VV_2^-)$ and $(S_2(F) \times V'_2(F))\rs^-$]
  \label{lem:finale_match}
  Let
  \[ g = \lambda\inv \begin{pmatrix}
    \alpha & \bar\beta \varpi \\
    \beta & \bar\alpha
    \end{pmatrix} \in \U(\VV_2^-) \]
  and $u = s + t \Pi$ with $s t = 0$ and $v(\lambda) = 0$.
  Suppose $(g,u)$ matches with
  \[ (\gamma, \uu, \vv^\top) = \left( \begin{pmatrix} a & b \\ c & d \end{pmatrix},
    \begin{pmatrix} 0 \\ 1 \end{pmatrix}, \begin{pmatrix} 0 & e \end{pmatrix} \right)
    \in (S_2(F) \times V'_2(F))\rs. \]
  Then we have
  \begin{align*}
    a &= \begin{cases}
      \lambda\inv \bar\alpha & \text{if } s = 0 \\
      \lambda\inv \alpha & \text{if } t = 0 \\
    \end{cases} \\
    d &= \begin{cases}
      \lambda\inv \alpha & \text{if } s = 0 \\
      \lambda\inv \bar\alpha & \text{if } t = 0 \\
    \end{cases} \\
    bc &= \lambda^{-2} \beta \bar\beta \varpi \\
    e &= \Nm u.
  \end{align*}
  Thus we also have the identity
  \[ v(d - a) = v(\alpha_1). \]
\end{lemma}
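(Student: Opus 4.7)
The plan is to unwind the defining conditions of matching from \Cref{def:matching_semi_lie} into four explicit scalar equations, read off the left-hand sides by direct matrix computation from the parametrization of $(\gamma, \uu, \vv^\top)$, substitute the right-hand sides provided by \Cref{lem:g_u_invariants}, and solve the resulting system.

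Since $n = 2$, matching imposes four scalar conditions: $\Tr \gamma = \Tr g$, $\det \gamma = \det g$, $\vv^\top \uu = \left\langle u, u \right\rangle$, and $\vv^\top \gamma \uu = \left\langle g u, u \right\rangle$. Using $\uu = (0, 1)^\top$ and $\vv^\top = (0, e)$, an immediate calculation yields left-hand sides of $a + d$, $ad - bc$, $e$, and $de$ respectively. The right-hand sides are exactly the quantities \Cref{lem:g_u_invariants} already computes.

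From here, I would first use $\vv^\top \uu = \left\langle u, u \right\rangle$ to conclude $e = \Nm u$, which is nonzero by regular semisimplicity. Next, dividing the equation $\vv^\top \gamma \uu = \left\langle g u, u \right\rangle$ by $e$ produces $d$ directly from the relevant case of \Cref{lem:g_u_invariants}, and the trace equation then determines $a$. Finally, the determinant equation, combined with the product $ad = \lambda^{-2} \alpha \bar\alpha$, gives $bc = \lambda^{-2} \beta \bar\beta \varpi$ after cancellation.

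For the asserted valuation identity $v(d - a) = v(\alpha_1)$, I would write $\alpha = \alpha_0 + \alpha_1 \sqrt \eps$ with $\alpha_0, \alpha_1 \in F$. In either subcase $s = 0$ or $t = 0$, $d - a$ is, up to sign, equal to $\lambda\inv (\alpha - \bar\alpha) = 2 \lambda\inv \alpha_1 \sqrt\eps$. Since $p > 2$, since $\lambda \in \OO_E^\times$ by the normalization $v(\lambda) = 0$, and since $\sqrt\eps \in \OO_E^\times$, the valuation of this expression equals $v(\alpha_1)$. The only piece of real bookkeeping is tracking which subcase ($s = 0$ vs.\ $t = 0$) pairs $a$ with $\alpha$ and which with $\bar\alpha$, but this is already dictated by the two sub-cases of $\left\langle gu, u \right\rangle$ recorded in \Cref{lem:g_u_invariants}. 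Beyond that I do not anticipate any genuine obstacle: the argument is essentially a careful substitution, with the heavy lifting having been done in \Cref{lem:g_u_invariants}.
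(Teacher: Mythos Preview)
Your proposal is correct and essentially identical to the paper's proof: both set the four matching invariants from \Cref{def:matching_semi_lie} equal to the values computed in \Cref{lem:g_u_invariants}, read off $e$, then $d$, then $a$ from the trace, and finally $bc$ from the determinant. Your explicit justification of $v(d-a)=v(\alpha_1)$ via $d-a=\pm 2\lambda^{-1}\alpha_1\sqrt\eps$ is a welcome addition, since the paper leaves that last line implicit.
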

\begin{proof}
  Setting the invariants from the previous subsection equal
  to the ones determined in \Cref{lem:g_u_invariants} gives
  \begin{align*}
    a + d &= \lambda\inv (\alpha + \bar\alpha) \\
    \det \gamma = ad - bc &= \det g = \lambda^{-2} (\alpha \bar\alpha - \beta \bar\beta \varpi) \\
    e &= \Nm u \\
    de &= \begin{cases}
      \lambda\inv \bar\alpha \Nm u & \text{if } s = 0 \\
      \lambda\inv \alpha \Nm u & \text{if } t = 0.
    \end{cases}
  \end{align*}
  So the equations for $e$, $d$ and $a$ are immediate.
  In both cases we get $ad = \lambda^{-2} \alpha \bar \alpha$ and hence
  \[ \lambda^{-2} \beta \bar \beta \varpi
    = -(\det g -  \lambda^{-2} \alpha \bar \alpha)
    = -(\det \gamma - ad) = bc. \qedhere \]
\end{proof}

\begin{remark}
  [Deriving \Cref{lem:semi_lie_params} from \Cref{lem:finale_match}]
  Note that many of the assumptions in \Cref{lem:semi_lie_params}
  can also be extracted from \Cref{lem:finale_match}.
  For example, taking the valuation of
  \[ \lambda \bar \lambda = \alpha \bar \alpha - \beta \bar \beta \varpi \]
  implies that (since the left-hand side is a unit)
  \[ v(a) = v(\alpha) = 0 <  2v(\beta) + 1 = v(b) + v(c). \]
  So indeed $v(1-a \bar a) = v(\bar bc)$ must be odd.
\end{remark}

\subsection{Translation of the Gross-Keating data to the orbital side}
We combine the results of \Cref{sec:GK} with \Cref{lem:finale_match}.
Retaining the notation
\begin{align*}
  x &\coloneqq \bar u \sqrt{\eps} u = (s \bar s + t \bar t \varpi) \sqrt{\eps} \in \ODT \\
  y &\coloneqq (g \cdot \varpi^r)_- = (\alpha_1\sqrt\eps + \beta \Pi) \in \ODT
\end{align*}
from \Cref{sec:GK}, we obtain the following:
\begin{align*}
  v(\left\langle x,x \right\rangle _0)
    &= 2 v(\Nm u) \\
    &= 2v(e) \\
  v\left( \left\langle x,y \right\rangle _0 \right)
    &= r + v(\alpha_1) + v(\Nm u) \\
    &= r + v(d-a) + v(e) \\
  v(
    \left\langle x,x \right\rangle _0 \left\langle y,y \right\rangle _0
    - \left\langle x,y \right\rangle^2_0
  )
    &= 2r + 2 v(\Nm u) + v(\beta \bar\beta \varpi) \\
    &= 2r + 2v(e) + v(b) + v(c).
\end{align*}
Notice that the last valuation is odd.
Therefore we can always extract $v(\left\langle y,y \right\rangle _0)$ by writing
\begin{align*}
  v\left(\left\langle x,x \right\rangle _0\right) + v\left(\left\langle y,y \right\rangle _0\right)
  &= \min \left( v\left( \left\langle x,x \right\rangle _0 \left\langle y,y \right\rangle _0 - \left\langle x,y \right\rangle^2_0 \right),
    v(\left\langle x,y \right\rangle^2_0) \right) \\
  \implies  v\left(\left\langle y,y \right\rangle _0\right)
  &= \min(2r + 2v(e) + v(b) + v(c), 2r + 2v(d-a) + 2v(e)) - 2v(e) \\
  &= 2r + \min(v(b) + v(c), 2v(d-a)).
\end{align*}
Hence, we have
\begin{align*}
  &\phantom= \min\left(
    v(\left\langle x,x \right\rangle _0),
    v(\left\langle x,y \right\rangle _0),
    v(\left\langle y,y \right\rangle _0),
  \right) \\
  &= \min\left( 2v(e), r+v(d-a)+v(e), 2r+\min(v(b)+v(c), 2v(\varsigma)) \right) \\
  &= \min\left( 2v(e), r+v(d-a)+v(e), 2r+v(b)+v(c), 2r+2v(\varsigma) \right) \\
  &= \min\left( 2v(e), v(b)+v(c)+2r, 2v(d-a)+2r \right)
\end{align*}
where we can drop $r+v(d-a)+v(e)$ from the minimum because it equals
$\frac{2v(e) + (2v(d-a)+2r)}{2}$.

Now recall the right-hand side of \Cref{prop:GK}, that is
\[
  \begin{cases}
    \sum_{j=0}^{\frac{n_1-1}{2}} (n_1+n_2-4j) \cdot q^j & \text{if } n_1 \equiv 1 \pmod 2 \\
    \frac{n_2-n_1+1}{2} q^{n_1/2} + \sum_{j=0}^{n_1/2-1} (n_1+n_2-4j) \cdot q^j & \text{if } n_1 \equiv 0 \pmod 2.
  \end{cases}
\]
for $0 \le n_1 \le n_2$.
Then apply \Cref{prop:GK} to obtain that
\[
  \left< \ZO4(1), \;
    \ZO4(\bar u \sqrt{\eps} u), \;
    \ZO4((g \cdot \varpi^r)_-) \right>_{\MM_2 \times \MM_2}
\]
is equal to the above formula applied at
\begin{align*}
  n_1 &\coloneqq \min(2v(e), v(b)+v(c)+2r, 2v(d-a)+2r) \\
  n_2 &\coloneqq 2v(e) + v(b) + v(c) + 2r - n_1.
\end{align*}
Note that
\[ n_1 + n_2 = 2v(e) + v(b) + v(c) + 2r. \]
For brevity, we henceforth introduce the symbol $\GK$ for the sum above;
hence we have
\begin{equation}
  \begin{aligned}
    &\GK(r, v(b), v(c), v(e), v(d-a)) \\
    &= \left< \ZO4(1), \; \ZO4(\bar u \sqrt{\eps} u), \; \ZO4((g \cdot \varpi^r)_-) \right>_{\MM_2 \times \MM_2}.
  \end{aligned}
  \label{eq:GKdef}
\end{equation}

In that case we also have
\begin{align*}
  &\GK(r, v(b), v(c), v(e)-1, v(d-a)) \\
  &= \left< \ZO4(1), \;
    \ZO4\left(\frac{\bar u \sqrt{\eps} u}{\varpi}\right), \;
    \ZO4((g \cdot \varpi^r)_-) \right>_{\MM_2 \times \MM_2}.
\end{align*}
Subtracting the two gives
\begin{equation}
  \begin{aligned}
    \Int^\circ((g,u), \fr) &=
      \left< \ZO4(1), \;
        \ZO4(\bar u \sqrt{\eps} u)^\circ, \;
        \ZO4((g \cdot \varpi^r)_-) \right>_{\MM_2 \times \MM_2} \\
    &= \GK(r, v(b), v(c), v(e), v(d-a)) \\
    &\qquad- \GK(r, v(b), v(c), v(e)-1, v(d-a)).
  \end{aligned}
  \label{eq:int_subtract}
\end{equation}

\subsection{Base change}
\label{sec:finale_base_change}
The base change for $n=2$ was already calculated in \cite{ref:AFLspherical}
and we simply recall the result here.

As in \Cref{ch:jiao}, we define
\begin{align*}
  \fr &\coloneqq \mathbf{1}_{\varpi^{-r} \Mat_2(\OO_E) \cap \U(\VV_n^+)} \in \HH(\U(\VV_n^+)) \\
  \mathbf{1}_{K, r} &\coloneqq \fr - \mathbf{1}_{K, \le (r-1)} \\
  &= \mathbf{1}_{\varpi^{-r} \Mat_2(\OO_E) \cap \U(\VV_n^+)} \in \HH(\U(\VV_n^+)) \\
\end{align*}

\begin{lemma}
  [{\cite[Lemma 7.1.1]{ref:AFLspherical}}]
  \label{lem:finale_base_change}
  For $n = 2$ and $r \ge 1$ we have
  \begin{align*}
    \BC_S^{\eta}(
      \mathbf{1}_{K'_{S, \le r}}
      + \mathbf{1}_{K'_{S, \le (r-1)}}
      )
      &= (-1)^r \mathbf{1}_{K, r} \\
      &= (-1)^r(\mathbf{1}_{K, \le r} - \mathbf{1}_{K, \le (r-1)}).
  \end{align*}
\end{lemma}
\begin{proof}
  This follows directly from \cite[Equation (7.1.9)]{ref:AFLspherical}.
\end{proof}

\subsection{Transfer factor}
As stated in \eqref{eq:semi_lie_transfer}, the transfer factor is
\[ \omega\guv = (-1)^{v(c)+1}. \]

\subsection{Comparison to orbital formula}
In what follows, we introduce the shorthand
\[ \partial\Orb(r, v(b), v(c), v(e), v(d-a))
  \coloneqq \partial \Orb(\guv, \mathbf{1}_{K'_{S, \le r}}). \]
The main claim is that the following formula holds:
\begin{theorem}
  [$\GK$ is a difference of two orbitals]
  \label{thm:miracle}
  We have
  \begin{align*}
    \frac{(-1)^{r+v(c)}}{\log q}\Big(&\partial\Orb(r, v(b), v(c), v(e), v(d-a)) \\
    &+ \partial\Orb(r, v(b), v(c), v(e)-1, v(d-a))\Big) \\
    &= \GK(r, v(b), v(c), v(e), v(d-a)).
  \end{align*}
\end{theorem}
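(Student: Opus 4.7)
\medskip

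\noindent\textbf{Proof proposal.}
The plan is to prove \Cref{thm:miracle} by direct computation: expand the LHS using \Cref{cor:semi_lie_derivative_single}, expand the RHS using \Cref{prop:GK} with the valuations computed in the previous subsection, and match the resulting polynomials in $q$ by case analysis. The key algebraic observation driving the approach is that for $0 \le j \le N-1$, the coefficient of $q^j$ in the main sum of \Cref{cor:semi_lie_derivative_single} is $\frac{2v(e)+v(b)+v(c)+1}{2}+r-2j$; adding the corresponding coefficient with $v(e)$ replaced by $v(e)-1$ produces exactly $2v(e)+v(b)+v(c)+2r-4j = n_1+n_2-4j$, which is precisely the coefficient appearing on the Gross-Keating side.

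First, I would split into three cases based on which term realizes the minimum in
\[ n_1 = \min\bigl(2v(e),\; v(b)+v(c)+2r,\; 2v(d-a)+2r\bigr). \]
In \textbf{Case 1} ($n_1 = 2v(e)$, so $v(e) \le \min(v(d-a)+r,\; (v(b)+v(c)-1)/2+r)$), neither orbital carries the correction term of \Cref{cor:semi_lie_derivative_single}. The summation at $j=v(e)$ appears only in the $v(e)$-orbital and contributes $\frac{n_2-n_1+1}{2}q^{v(e)}$, reproducing the middle term of $\GK$ for even $n_1$. In \textbf{Case 2} ($n_1 = v(b)+v(c)+2r$, automatically odd by \Cref{assume:FJ}), both orbitals have $N = (n_1-1)/2$ and again no corrections; the sum matches $\GK$ for odd $n_1$ term-by-term. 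In \textbf{Case 3} ($n_1 = 2v(d-a)+2r$, even), the correction terms of \Cref{cor:semi_lie_derivative_single} are present for both orbitals, and combining them is the substantive step.

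Second, I would handle the transitional values of $v(e)$, where $v(e)$ and $v(e)-1$ fall into different cases: namely $v(e) = v(d-a)+r$ (Case 3 collapses to Case 1 at $v(e)-1$) and $v(e) = (v(b)+v(c)+1)/2+r$ (Case 2 collapses to Case 1 at $v(e)-1$). Each boundary admits a direct verification that the sum still coincides with $\GK$. The edge case $v(e)=0$ needs separate mention, since then $\partial\Orb(r,v(b),v(c),-1,v(d-a))$ vanishes identically by \Cref{thm:semi_lie_formula}, and the single remaining orbital matches $\GK$ with $n_1=0$.

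The hard part will be Case 3, where the correction coefficient in \Cref{cor:semi_lie_derivative_single} is given by formulas depending on the parity of $\varkappa = v(e)-v(d-a)-r$, and this parity flips between $v(e)$ and $v(e)-1$. The crucial identity to check is that
\[ A(\varkappa,v(e)) + A(\varkappa-1,v(e)-1) = v(e) + \tfrac{v(b)+v(c)-1}{2} - 2v(d-a) - r \]
independently of the parity of $\varkappa$, where $A$ denotes the correction coefficient. Adding this to the main-sum coefficient $n_1+n_2-4(v(d-a)+r)$ at $q^{v(d-a)+r}$ should then reproduce the ``middle'' term $\frac{n_2-n_1+1}{2} = \varkappa + \frac{v(b)+v(c)-2v(d-a)+1}{2}$ of Gross-Keating. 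This is only a short rational-arithmetic verification, but the parity-independent cancellation is the linchpin of the identity: it is the algebraic shadow of the compatibility between the base change $\BC_{S_n}^{\eta}$ of \Cref{lem:finale_base_change} and the KR-divisor decomposition of \Cref{lem:serre_pullback_divisor}, and thus is what ultimately makes \Cref{conj:semi_lie_spherical} hold for $n=2$.
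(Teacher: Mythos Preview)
Your proposal is correct and follows essentially the same route as the paper: a three-way case split on which term realizes $n_1$, with the main-sum coefficients pairing up to give $n_1+n_2-4j$ and the substantive work concentrated in the case $n_1 = 2v(d-a)+2r$, where the parity-flip between $\varkappa$ and $\varkappa-1$ produces a parity-independent correction. Your identity $A(\varkappa,v(e))+A(\varkappa-1,v(e)-1) = v(e)+\tfrac{v(b)+v(c)-1}{2}-2v(d-a)-r$ is exactly the computation the paper carries out.

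One small simplification: you anticipate needing to treat the transitional values $v(e)=v(d-a)+r$ and $v(e)=\tfrac{v(b)+v(c)+1}{2}+r$ separately, but in fact the paper's case split absorbs these automatically. Because $v(b)+v(c)$ is odd (\Cref{assume:FJ}), all the thresholds are integers, and the paper's inequalities are chosen so that whenever $v(e)$ lies in Case~2 or Case~3, so does $v(e)-1$ (e.g.\ in Case~3 one has $\varkappa \ge 1$, hence $\varkappa-1 \ge 0$ and the correction formula of \Cref{cor:semi_lie_derivative_single} still applies at $v(e)-1$). The value $v(e)=v(d-a)+r$ already belongs to Case~1 under the paper's conventions. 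Likewise the edge case $v(e)=0$ is handled by the convention that $\sum_{j=0}^{-1}(\cdots)=0$, consistent with \Cref{thm:semi_lie_formula}'s vanishing for $v(e)<0$. So the boundary bookkeeping you outlined is unnecessary, though not incorrect.
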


We continue to use the notation $N$ and $\varkappa$ from \Cref{ch:orbitalFJ2} defined by
\begin{align*}
  N &\coloneqq \min \left( v(e),
    \tfrac{v(b)+v(c)-1}{2} + r, v(d-a) + r \right) \\
  \varkappa &\coloneqq v(e) - v(d-a) - r \ge 0
\end{align*}
and prove \Cref{thm:miracle} by exhausting the
cases based on which value of $N$ is smallest.

\subsubsection{Proof of \Cref{thm:miracle}
  when $v(e) \le \tfrac{v(b)+v(c)-1}{2} + r$
  and $v(e) \le v(d-a) + r$}
In the Gross-Keating formula we have simply
\begin{align*}
  n_1 &= 2v(e) \\
  n_2 &= v(b) + v(c) + 2r.
\end{align*}
Hence, we have
\begin{align*}
  \GK(r, v(b), v(c), v(e), v(d-a))
  &= \frac{-2v(e)+v(b)+v(c)+2r+1}{2} q^{v(e)} \\
  &\qquad+ \sum_{j=0}^{v(e)-1} (2v(e)+v(b)+v(c)+2r-4j) \cdot q^j.
\end{align*}
On the orbital side, we refer to \Cref{cor:semi_lie_derivative_single}
and compare it to the single instance of Gross-Keating above.
The exponent of $j$ runs up to $v(e)$ in one case and $v(e)-1$ in the second;
that is we need
\begin{align*}
  &\phantom= \sum_{j=0}^{v(e)} \left( \frac{2v(e)+v(b)+v(c)+1}{2} + r - 2j \right) \cdot q^j \\
  &\qquad+ \sum_{j=0}^{v(e)-1} \left( \frac{2(v(e)-1)+v(b)+v(c)+1}{2} + r - 2j \right) \cdot q^j \\
  &= \frac{-2v(e)+v(b)+v(c)+2r+1}{2} q^{v(e)}
  + \sum_{j=0}^{v(e)-1} (2v(e)+v(b)+v(c)+2r-4j) \cdot q^j
\end{align*}
which is obvious.

\subsubsection{Proof of \Cref{thm:miracle}
  when $\tfrac{v(b)+v(c)-1}{2} + r < v(e)$
  and $v(b)+v(c) < 2v(d-a)$}

Set $N = \frac{v(b)+v(c)-1}{2} + r$.
In the Gross-Keating formula we have simply
\begin{align*}
  n_1 &= 2N+1 \\
  n_2 &= 2v(e).
\end{align*}
Hence
\[ \GK(r, v(b), v(c), v(e), v(d-a))
  = \sum_{j=0}^N (2v(e)+v(b)+v(c)+2r-4j) \cdot q^j. \]
We compare this to \Cref{cor:semi_lie_derivative_single}; we check
\begin{align*}
  &\phantom= \sum_{j=0}^{N} \left( \frac{2v(e)+v(b)+v(c)+1}{2} + r - 2j \right) \cdot q^j \\
  &\qquad+ \sum_{j=0}^{N} \left( \frac{2(v(e)-1)+v(b)+v(c)+1}{2} + r - 2j \right) \cdot q^j \\
  &= \sum_{j=0}^N (2v(e)+v(b)+v(c)+2r-4j) \cdot q^j
\end{align*}
which is clear.

\subsubsection{Proof of \Cref{thm:miracle}
  when $v(d-a)+r < v(e)$ and $2v(d-a) < v(b)+v(c)$}
Hence $N = v(d-a) + r$ and $\varkappa \coloneqq v(e) - (v(d-a)+r) > 0$.
In the Gross-Keating side formula, we now have
\begin{align*}
  n_1 &\coloneqq 2v(d-a) + 2r = 2N \\
  n_2 &= 2v(e) + v(b) + v(c) - 2v(d-a).
\end{align*}
Hence
\begin{align*}
  \GK(r, v(b), v(c), v(e), v(d-a))
  &= \frac{2v(e)+v(b)+v(c)-4v(d-a)-2r+1}{2} q^{N} \\
    &\qquad+ \sum_{j=0}^{N-1} (2v(e)+v(b)+v(c)+2r-4j) \cdot q^j.
\end{align*}
This time, the relevant combination of \Cref{cor:semi_lie_derivative_single} is
\begin{align*}
  &\phantom= \sum_{j=0}^N q^j
  \cdot \left( \frac{2v(e)+v(b)+v(c)+1}{2} + r - 2j \right) \\
  &\qquad + q^{N} \cdot
  \begin{cases}
    -\frac{\varkappa}{2} & \text{if }\varkappa \equiv 0 \pmod 2 \\
    \frac{\varkappa}{2} - \left( v(e)+\frac{v(b)+v(c)}{2}-2v(d-a)-r \right)
    & \text{if }\varkappa \equiv 1 \pmod 2
  \end{cases} \\
  &\qquad + \sum_{j=0}^N q^j
  \cdot \left( \frac{2(v(e)-1)+v(b)+v(c)+1}{2} + r - 2j \right) \\
  &\qquad + q^{N} \cdot
  \begin{cases}
    -\frac{\varkappa-1}{2} & \text{if }\varkappa-1 \equiv 0 \pmod 2 \\
    \frac{\varkappa-1}{2} - \left( (v(e)-1)+\frac{v(b)+v(c)}{2}-2v(d-a)-r \right)
    & \text{if }\varkappa-1 \equiv 1 \pmod 2 \\
  \end{cases} \\
  &= \sum_{j=0}^N q^j
  \cdot \left( 2v(e)+v(b)+v(c) + 2r - 4j \right) \cdot q^j \\
  &\qquad + q^{N} \cdot
  \begin{cases}
    -\frac{\varkappa}{2} + \frac{\varkappa-1}{2} - \left( (v(e)-1) +\frac{v(b)+v(c)}{2}-2v(d-a)-r \right)
    & \text{if }\varkappa \equiv 0 \pmod 2 \\
    -\frac{\varkappa-1}{2} + \frac{\varkappa}{2} - \left( v(e)+\frac{v(b)+v(c)}{2}-2v(d-a)-r \right)
    & \text{if }\varkappa \equiv 1 \pmod 2 \\
  \end{cases} \\
  &= \sum_{j=0}^N q^j
  \cdot \left( 2v(e)+v(b)+v(c) + 2r - 4j \right) \cdot q^j \\
  &\qquad + q^{N} \cdot
    \left( \half - \left( v(e)+\frac{v(b)+v(c)}{2}-2v(d-a)-r \right) \right) \\
  &= \sum_{j=0}^{N-1} q^j
  \cdot \left( 2v(e)+v(b)+v(c) + 2r - 4j \right) \cdot q^j \\
  &\qquad + q^{N} \cdot
    \left( \left( 2v(e)+v(b)+v(c) + 2r - 4N \right) +
    \half - \left( v(e)+\frac{v(b)+v(c)}{2}-2v(d-a)-r \right) \right).
\end{align*}
The coefficient of $q^N$ is given by
\begin{align*}
  &\phantom= \left( 2v(e)+v(b)+v(c) + 2r - 4(v(d-a)-r) \right) +
  \half - \left( v(e)+\frac{v(b)+v(c)}{2}-2v(d-a)-r \right) \\
  &= \frac{2v(e)+v(b)+v(c)-4v(d-a)-2r+1}{2}
\end{align*}
which matches the one from Gross-Keating.
Hence \Cref{thm:miracle} is completely proved.

\subsection{Conclusion (proof of \Cref{thm:semi_lie_n_equals_2})}
From \Cref{thm:miracle} we have
\begin{align*}
  \GK(r, v(b), v(c), v(e), v(d-a))
  &= \frac{(-1)^{r+v(c)}}{\log q} \Big(
      \partial \Orb(r, v(b), v(c), v(e), v(d-a)) \\
      &\qquad + \partial \Orb(r, v(b), v(c), v(e)-1, v(d-a))
    \Big) \\
  \GK(r, v(b), v(c), v(e)-1, v(d-a))
  &= \frac{(-1)^{r+v(c)}}{\log q} \Big(
      \partial \Orb(r, v(b), v(c), v(e)-1, v(d-a)) \\
      &\qquad + \partial \Orb(r, v(b), v(c), v(e)-2, v(d-a))
    \Big)
\end{align*}
so subtraction (and recalling \eqref{eq:int_subtract}) gives
\begin{equation}
  \begin{aligned}
    \Int^\circ((g,u), \fr)
    &= \GK(r, v(b), v(c), v(e), v(d-a)) \\
    &\qquad- \GK(r, v(b), v(c), v(e)-1, v(d-a)) \\
    &= \frac{(-1)^{r+v(c)}}{\log q} \Big(
        \partial \Orb(r, v(b), v(c), v(e), v(d-a)) \\
        &\qquad- \partial \Orb(r, v(b), v(c), v(e)-2, v(d-a))
      \Big).
  \end{aligned}
  \label{eq:descent_by_two}
\end{equation}
We now show that \eqref{eq:descent_by_two} implies \Cref{thm:semi_lie_n_equals_2}.
Because $r = 0$ is known already, it suffices to verify for $r > 0$.

Suppose we sum \eqref{eq:descent_by_two} with $u$ replaced by
$u/\varpi^i$ for $i = 0, 1, \dots$.
The left-hand side equals $\Int((g,u), \fr)$ by \eqref{eq:int_to_int_circ}.
On the right-hand side this has the effect of decreasing $v(e)$ by $2$ since $e = \Nm u$.
Hence the sum of the right-hand sides telescopes and gives us the identity
\begin{equation}
  \Int((g,u), \mathbf{1}_{K, \le r})
  = \frac{(-1)^{v(c)+r}}{\log q}
    \partial\Orb(\guv, \mathbf{1}_{K'_{S, \le r}}).
  \label{eq:match_penultimate}
\end{equation}
Subtracting the same equation from itself with $r$ replaced by $r-1$ gives
\begin{align*}
  &\Int((g,u), \mathbf{1}_{K, \le r} - \mathbf{1}_{K, \le (r-1)}) \\
  &= \frac{(-1)^{v(c)+r}}{\log q}
    \partial \Orb(\guv, \mathbf{1}_{K'_{S, \le r}} + \mathbf{1}_{K'_{S, \le (r-1)}}).
\end{align*}
which, since $(-1)^{v(c)} = -\omega\guv$, becomes
\begin{equation}
  \begin{aligned}
    &\Int((g,u), (-1)^r(\mathbf{1}_{K, \le r} - \mathbf{1}_{K, \le (r-1)})) \\
    &= \frac{-\omega\guv}{\log q}
    \partial \Orb(\guv, \mathbf{1}_{K'_{S, \le r}} + \mathbf{1}_{K'_{S, \le (r-1)}}).
  \end{aligned}
  \label{eq:match_final}
\end{equation}
But \Cref{lem:finale_base_change} says that
$(-1)^r(\mathbf{1}_{K, \le r} - \mathbf{1}_{K, \le (r-1)}) \in \HH(\U(\VV_n^+))$
matches $\mathbf{1}_{K'_{S, \le r}} + \mathbf{1}_{K'_{S, \le (r-1)}} \in \HH(S_2(F))$
for any $r \ge 0$.
And hence from the $r = 0$ case we can inductively conclude
\Cref{thm:semi_lie_n_equals_2} for $r > 0$, completing the proof.

\subsection{A particularly clean formula for a certain intersection number}
We mention in particular that the value of
\[ \Int^\circ((g,u), 1_{K,r})
  = \Big\langle \mathbb{T}_{\mathbf{1}_K \otimes \mathbf{1}_{K, r}}
    \Delta_{\ZD(u)^\circ}, \Gamma_g \Big\rangle_{\RZ_{2,2}} \]
(note the change from $\fr$ to $\mathbf{1}_{K, \le r}$ here)
has a particularly clean formula that seems worth mentioning.
We phrase this entirely based on the quantities in the geometric side to keep in self-contained.

\begin{theorem}
  [$\Int^\circ((g,u), 1_{K,r})$]
  \label{thm:clean_intersection}
  Let $r \ge 1$ and $v(\Nm u) > 0$ for $u \in \VV_2^-$, and let
  \[ g = \lambda^{-1}
    \begin{pmatrix} \alpha & \bar\beta \varpi \\ \beta & \bar\alpha \end{pmatrix}
    \in \U(\VV_2^-) \]
  where $v(\lambda) = 0$.
  Then
  \[ \Big\langle \mathbb{T}_{\mathbf{1}_K \otimes \mathbf{1}_{K, r}}
    \Delta_{\ZD(u)^\circ}, \Gamma_g \Big\rangle_{\RZ_{2,2}} \]
  is equal to
  \[
    \begin{cases}
      (C+1) q^{N} + (C+2) q^{N-1}
        & \text{if } v(\Nm u)-r = v(\alpha - \bar\alpha) \le v(\beta) \\
      2q^N & \text{if } v(\beta) + r < \min(v(\Nm u), v(\alpha - \bar\alpha) + r) \\
      q^N + q^{N-1} & \text{otherwise}
    \end{cases}
  \]
  where
  \[ N = \min(v(\Nm u), v(\beta) + r, v(\alpha-\bar\alpha) + r) \]
  and we write
  \[ C = v(\beta) - v(\alpha - \bar\alpha) \ge 0 \]
  in the first case.
\end{theorem}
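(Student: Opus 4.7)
The plan is to reduce the statement to the closed-form evaluation of \Cref{cor:semi_lie_combo} via the orbital-intersection identity \eqref{eq:match_final}, followed by a short case analysis. Write $\mathbf{1}_{K,r} = \mathbf{1}_{K,\le r} - \mathbf{1}_{K,\le(r-1)}$; by \Cref{lem:finale_base_change}, $(-1)^r \mathbf{1}_{K,r} \in \HH(\U(\VV_2^+))$ is the base change of $\mathbf{1}_{K'_{S,\le r}} + \mathbf{1}_{K'_{S,\le(r-1)}} \in \HH(S_2(F))$. Combined with \eqref{eq:match_final} and $\omega\guv = (-1)^{v(c)+1}$, this gives
\[
\Int((g,u), \mathbf{1}_{K,r}) = \frac{(-1)^{v(c)+r}}{\log q}\, \partial\Orb\!\left(\guv,\ \mathbf{1}_{K'_{S,\le r}} + \mathbf{1}_{K'_{S,\le(r-1)}}\right).
\]
Since passing from $u$ to $u/\varpi$ decreases $v(e) = v(\Nm u)$ by $2$ while leaving the remaining invariants fixed (\Cref{lem:finale_match}), subtracting the same identity at $(g, u/\varpi)$ yields
\[
\Int^\circ((g,u), \mathbf{1}_{K,r}) = \frac{(-1)^{v(c)+r}}{\log q}\Big[\partial\Orb_{v(e)} - \partial\Orb_{v(e)-2}\Big].
\]

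Next I would substitute \Cref{cor:semi_lie_combo}, which writes each $\frac{(-1)^{v(c)+r}}{\log q}\,\partial\Orb_\bullet$ as $(1 + q + \dots + q^N) + C\,q^N + C'\,q^{N-1}$, with $N$, $C$, $C'$ piecewise in the invariants. The matching of \Cref{lem:finale_match} translates $(v(e), v(d-a), (v(b)+v(c)-1)/2)$ into $(v(\Nm u), v(\alpha-\bar\alpha), v(\beta))$, so that the $N$ of the theorem coincides with the $N$ of \Cref{cor:semi_lie_combo}. The three cases of the theorem correspond to distinguishing which argument achieves the minimum: Case~(i) is the boundary $\varkappa = 0$ of the even branch, giving $C = v(\beta) - v(d-a)$ and $C' = C+1$ at $v(e)$, while at $v(e)-2$ we have $\varkappa = -2$ and both corrections vanish, so the difference telescopes to $(C+1)q^N + (C+2)q^{N-1}$; Case~(ii) has both $v(e)$ and $v(e)-2$ in the branch $C = v(e) - N$, $C' = 0$, producing $2q^N$; in the remaining configurations the corrections at $v(e)$ and $v(e)-2$ cancel in pairs and leave only $q^N + q^{N-1}$.

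The principal obstacle is the boundary transitions, where decreasing $v(e)$ by $2$ either changes the active piecewise region of \Cref{cor:semi_lie_combo} (for instance across the $\varkappa \in \{0, 1, 2\}$ boundaries, or at $v(e) = v(\beta) + r + 1$) or causes $N$ itself to shift by $1$ rather than by $0$ or $2$. Each such sub-case must be checked individually against the three-case output, but every check reduces to a finite comparison of two explicit polynomials and introduces no new ingredients beyond those already assembled in this chapter.
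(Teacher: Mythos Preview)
Your proposal is correct and follows essentially the same approach as the paper: both reduce $\Int^\circ((g,u),\mathbf{1}_{K,r})$ to the difference of two instances of \Cref{cor:semi_lie_combo} (at $v(e)$ and at $v(e)-2$) and then perform the identical case analysis on which argument realizes $N$. The only cosmetic difference is that you invoke \eqref{eq:match_final} directly, whereas the paper re-derives the same four-term orbital expression by going back through \Cref{thm:miracle} and the Gross--Keating identity; since \eqref{eq:match_final} was itself obtained from those ingredients, the two routes are equivalent.
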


\begin{proof}
  Recall that
  \begin{align*}
    &
    \GK(r, v(b), v(c), v(e), v(d-a))
    \\
    &= \frac{(-1)^{r+v(c)}}{\log q} \Big(
      \partial \Orb(r, v(b), v(c), v(e), v(d-a)) \\
      &\qquad+ \partial \Orb(r, v(b), v(c), v(e)-1, v(d-a))
      \Big) \\
    &
    \GK(r-1, v(b), v(c), v(e), v(d-a))
    \\
    &= \frac{(-1)^{r-1+v(c)}}{\log q} \Big(
      \partial \Orb(r-1, v(b), v(c), v(e), v(d-a)) \\
      &\qquad- \partial \Orb(r-1, v(b), v(c), v(e)-1, v(d-a))
      \Big)
  \end{align*}
  when we subtract we obtain that
  \begin{align*}
    \Int^\circ((g,u), \mathbf{1}_{K, r})
    &= \frac{(-1)^{r+v(c)}}{\log q} \Big(
      \partial \Orb(r, v(b), v(c), v(e), v(d-a)) \\
      &\qquad+ \partial \Orb(r-1, v(b), v(c), v(e), v(d-a)) \\
      &\qquad- \partial \Orb(r, v(b), v(c), v(e)-2, v(d-a)) \\
      &\qquad+ \partial \Orb(r-1, v(b), v(c), v(e)-2, v(d-a))
    \Big).
  \end{align*}
  Gathering the first two terms lets us apply the simpler formula \Cref{cor:semi_lie_combo} twice;
  doing so gives
  \begin{align*}
    \Int^\circ((g,u), \mathbf{1}_{K, r})
    &= \left( (q^N + q^{N-1} + \dots + 1) + C q^N + C' q^{N-1} \right) \\
    &- \left( (q^{N^\flat} + q^{N^\flat-1} + \dots + 1) + C^\flat q^{N^\flat} + (C')^\flat q^{N^\flat-1} \right)
  \end{align*}
  where $N$, $C$, $C'$ are is in \Cref{cor:semi_lie_combo},
  and $N^\flat$, $C^\flat$, $(C')^\flat$ are the same quantities
  with $v(e)$ replaced by $v(e)-2$.
  Let $\varkappa$ and $\varkappa^\flat = \varkappa - 2$ be also as in \Cref{cor:semi_lie_combo}.

  We consider cases now.
  \begin{itemize}
    \ii Suppose first $v(e) \le \frac{v(b)+v(c)-1}{2}+r$ and $v(e) < v(d-a)+r$.
    Then $N = v(e)$ and $N^\flat = v(e) - 2$ and $C = C' = (C^\flat) = (C')^\flat = 0$,
    Hence in this case we have
    \[ \Int^\circ((g,u), \mathbf{1}_{K, r}) = q^{N} + q^{N-1}. \]

    \ii Next suppose $2v(d-a) > v(b) + v(c)$ and consider cases on $v(e)$.
    We only need to consider $v(e) > \frac{v(b)+v(c)-1}{2} + r$.
    \begin{itemize}
      \ii If $v(e) = \frac{v(b)+v(c)-1}{2} + r + 1$
      then we have
      \[ N = \frac{v(b)+v(c)-1}{2} + r, \qquad N^\flat = \frac{v(b)+v(c)-1}{2} + r - 1 \]
      and
      $C = 1$, $C^\flat = 0$, and $C' = (C')^\flat = 0$.
      Consequently we get
      \[ \Int^\circ((g,u), \mathbf{1}_{K, r}) = 2 q^N. \]

      \ii Once $v(e) \ge \frac{v(b)+v(c)-1}{2} + r + 2$
      we always have $N = N^\flat = \frac{v(b)+v(c)-1}{2} + r$,
      \[ C - C^\flat = (v(e)-N)-((v(e)-2)-N) = 2 \]
      and $C' = (C')^\flat = 0$.
      Hence in this case we have
      \[ \Int^\circ((g,u), \mathbf{1}_{K, r}) = 2 q^N \]
      as well.
    \end{itemize}

    \ii Finally suppose $2v(d-a) < v(b) + v(c)$ and consider cases on $v(e)$.
    We only need to consider $v(e) \ge v(d-a) + r$.
    \begin{itemize}
      \ii If $v(e) = v(d-a) + r$,
      then \[ N = v(d-a) + r, \qquad N^\flat = v(d-a) + r - 2. \]
      In this case $\varkappa = 0$ (and $\varkappa^\flat=-2$).
      So $C^\flat = (C')^\flat = 0$ but we have larger terms
      \begin{align*}
        C &= \frac{v(b) + v(c)- 2v(d-a) - 1}{2} \\
        C' &= \frac{v(b) + v(c)- 2v(d-a) + 1}{2}.
      \end{align*}
      Hence, we get an exceptional case
      \begin{align*}
        \Int^\circ((g,u), \mathbf{1}_{K, r})
        &= \frac{v(b) + v(c) - 2v(d-a) + 1}{2} q^{N} \\
        &\qquad+ \frac{v(b) + v(c) - 2v(d-a) + 3}{2} q^{N-1}
      \end{align*}

      \ii If $v(e) = v(d-a) + r + 1$,
      then we have
      \[ N = v(d-a) + r, \qquad N^\flat = v(d-a) + r - 1. \]
      In this case $\varkappa = 1$ (and $\varkappa^\flat=-1$) so we have
      $C = 0$, $C' = 1$, $C^\flat = 0 = (C')^\flat = 0$.
      Consequently we get
      \[ \Int^\circ((g,u), \mathbf{1}_{K, r}) = q^{N} + q^{N-1}. \]

      \ii Once $v(e) \ge v(d-a) + r + 2$,
      we always have $N = N^\flat = v(d-a) + r$ and
      \[ C - C^\flat = (C') - (C'^\flat) = 1 \]
      regardless of the parity of $\varkappa$.
      Hence in this case we also get
      \[ \Int^\circ((g,u), \mathbf{1}_{K, r}) = q^{N} + q^{N-1}. \]
    \end{itemize}
  \end{itemize}
  Hence, in summary we get that
  \begin{equation}
  \begin{aligned}
    &\Int^\circ((g,u), \mathbf{1}_{K, r}) \\
    &= \begin{cases}
      (C+1) q^{N} + (C+2) q^{N-1}
        & \text{if } v(e)-r = v(d-a) \le \frac{v(b)+v(c)-1}{2} \\
      2q^N & \text{if } \frac{v(b)+v(c)-1}{2} + r < \min(v(e), v(d-a)+r) \\
      q^N + q^{N-1} & \text{otherwise}.
    \end{cases}
  \end{aligned}
  \label{eq:int_circle_orbital_param}
  \end{equation}
  where
  \[ C = \frac{v(b) + v(c)- 2v(d-a) - 1}{2} \ge 0 \]
  in the first case.
  Then \eqref{eq:int_circle_orbital_param} translates via
  \Cref{lem:finale_match} into the desired claim.
\end{proof}

\printbibliography[title=References]

\end{document}